\newtheorem{lemma}{Lemma}[section]
\newtheorem{theorem}[lemma]{Theorem}
\newtheorem{remark}[lemma]{Remark}
\newtheorem{corollary}[lemma]{Corollary}
\newtheorem{definition}[lemma]{Definition}
\newtheorem{proposition}[lemma]{Proposition}
\title{Well-posedness for the Non-integrable Periodic Fifth Order KdV in Bourgain Spaces}
\author{Ryan McConnell}
\thanks{The author was partially supported by NSF grant  DMS-2154031.}
\begin{document}
\begin{abstract}
    We study well-posedness for a non-integrable generalization of the fifth order KdV, the second member in the KdV heirarchy. In particular, we use differentiation-by-parts to establish well-posedness for $s> 35/64$ in low modulation restricted norm spaces, as well as non-linear smoothing of order $\varepsilon < \min(2(s-35/64), 1)$. As corollaries, we obtain unconditional well-posedness for the non-integrable fifth order KdV for $s > 1$ and global well-posedness for the integrable fifth order KdV for $s\geq 1$. We also show local well-posedness for the non-integrable fifth order KdV for $s > 1/2$, contingent upon the conjectured $L^8$ Strichartz estimate. As an application of the nonlinear smoothing we obtain non-trivial upper bounds on the upper Minkowski dimension of the solution to the non-integrable fifth order KdV.
\end{abstract}
\maketitle
\tableofcontents
\section{Introduction}
We study the local well-posedness of the non-integrable fifth order KdV given by 
\begin{align}\label{Equation: General Fifth Order}
    \begin{cases}
        u_t -\partial_x^5 u +\alpha \partial_x (u^3) + \beta \partial_x(\partial_x u)^2 + \gamma \partial_x(u\partial_x^2u) = 0\\
        u(x,0) = u_0\in H^s(\mathbb{T}),
    \end{cases}
\end{align}
for $\alpha, \beta, \gamma\in\mathbb{R}$, and real initial data $u_0$. Under certain assumptions on the coefficients \eqref{Equation: General Fifth Order} is completely integrable and has been extensively studied, with well-posedness established in $L^2(\mathbb{T})$ by \cite{kappeler2018wellposedness}. Additionally, it has recently been showed that the integrable \eqref{Equation: General Fifth Order} posed on $\mathbb{R}$ is actually well-posed to $ s = -1$, in complete analogy with the KdV, \cite{bringmann2021global}.

In order to attack \eqref{Equation: General Fifth Order}, one must develop tools that don't rely on inverse scattering. In this direction, Kwon \cite{kwon2008fifth} introduced the modified energy method which enabled them to prove well-posedness for the non-integrable fifth order equation in $H^s(\mathbb{R})$ for $s > 5/2$, which was generalized by Kenig and Pilod for higher order equations in the KdV heirarchy, \cite{kenig2016local}. Well-posedness on $\mathbb{R}$ was then pushed to the $H^2$ level by Kenig and Pilod in \cite{kenig2015well}. It's also worth mentioning that \cite{kato2012well} proves local well-posedness in $H^s(\mathbb{R})\cap \dot{H}^a(\mathbb{R})$\footnote{Under the norm $\|\langle\xi\rangle^{s-a}\langle \xi\rangle^a\widehat{f}\|_{L^2_x(\mathbb{R})}$.} for $s\geq -1/4$ and a certain range of $a$, and \cite{kwon2008well} proves well-posedness in $H^{3/4}(\mathbb{R})$ for the the \textit{modified} fifth order KdV.

Similar efforts have taken place on $\mathbb{T}$, where the absence of hallmark dispersive effects causes great difficulty. Indeed, the most recent results for the periodic domain are restricted to considering $\gamma = 2\beta$ as in  \cite{kwak2018low}, where well-posedness is proved for the \textit{nearly} integrable fifth order equation at the $s = 2$ level using the modified energy method and short-time $X^{s,b}$ spaces. Similarly, \cite{kato2018unconditional} proves unconditional well-posedness in $H^s$ for $s\geq 3/2$, and \cite{tsugawa2017parabolic} proved well-posedness using the modified energy method of Kwon for certain polynomials in $\partial_x^3$, $\partial_x^2$ and $\partial_x$ for $s\geq 13$. 

In particular, \cite{tsugawa2017parabolic} begins to explore what they call ``Parabolic Resonances'', which are terms of the form
\[
in^{2\ell}\widehat{u}(n)\sum_{0=n_1+\cdots+n_k}m(n_1,\cdots, n_k)\prod_{j=1}^k\widehat{u}(n_j),
\]
where $m(-n_1, \cdots, -n_k) = -m(n_1, \cdots, n_k)$. The importance of the oddness condition on $m$ is that for real initial data $u_0$, the quantity
\begin{equation}\label{Equation: Introduction Parabolic Resonance}
in^{2\ell}\sum_{0=n_1+\cdots+n_j}m(n_1,\cdots, n_k)\prod_{j=1}^k\widehat{u}(n_j),
\end{equation}
is purely \textit{real}. This then becomes another term in the linear group that smooths the equation in one direction of time and induces ill-posedness in the other. In this language, the result of Tsugawa is that polynomials in $\partial_x^3$, $\partial_x^2$ and $\partial_x$ for which the induced  $\ell=1$ parabolic resonances decay are well-posed for $s\geq 13$ in both directions of time, whereas polynomials for which \eqref{Equation: Introduction Parabolic Resonance} does not decay are well-posed in only one direction of time, and ill-posed in the other.

Much like \cite{kenig2016local}, we will instead work with the non-integrable equation
\begin{align}\label{Equation: Toy Fifth Order}
    \begin{cases}
        u_t -\partial_x^5 u + 2\partial_x(u\partial_x^2u) = 0\\
        u(x,0) = u_0\in H^s(\mathbb{T}),
    \end{cases}
\end{align}
which not only conserves the mean, but also contains the worst portion of the non-linearity in \eqref{Equation: General Fifth Order}. Indeed, there is a $High\times Low$ interaction in which the high term will have to experience $3$ derivatives, which also demonstrates why the fifth order equation is hard to work with. In particular, the standard smoothing associated to the $X^{s,1/2}$ space will result in savings of 
\[
\sqrt{|(n_1+n_2)^5-n_1^5-n_2^5|} = \sqrt{|5nn_1n_2(n_1^2+n_1n_2+n_2^2)|}\sim |n_1|^2\sqrt{|n_2|},
\]
when $n_1\sim n\gg n_2$. As this is a whole derivative off of ameliorating the derivative losses in the non-linearity, we are forced to work a little harder.

The benefit of working with this reduction is twofold: 1) every result obtained for this equation extends to the full equation \eqref{Equation: General Fifth Order} (see the Section \ref{Section: Appendix A}), and 2) dramatically simplified equations. In particular, we show the following theorem, with \textit{no} assumption on the integrability of the equation.

\begin{theorem}\label{Theorem: Wellposedness for Toy}
Let $s> 35/64$ and $u_0\in H^s(\mathbb{T})$. Then for $0 \leq t\leq T = T(\|u_0\|_{H^s_x})$ there is a unique solution $\tilde{u}\in Y^s_T$ (defined in Section \ref{Section: background}) to \eqref{Equation: Tilde u differential equation before smoothing} with a continuous data-to-solution map from $H^s$ to $Y^s_T$. Furthermore, the following bound holds:
\[
\|\tilde{u}\|_{Y^s_T}\lesssim \|u_0\|_{H^s_x}.
\]
In particular, since the solution to \eqref{Equation: Tilde u differential equation before smoothing} is related to the solution of \eqref{Equation: Toy Fifth Order} by a bi-continuous map on $H^s$, we find that \eqref{Equation: Toy Fifth Order} is well-posed in $H^s$ in the sense of (\cite{tao2006nonlinear}, Definition 3.4)\footnote{This is a well-posedness result in the sense of gauge conjugation. See, e.g. \cite{colliander2001global, colliander2004multilinear, oh2020smoothing}.}
\end{theorem}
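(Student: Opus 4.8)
The plan is to solve the gauge-transformed equation \eqref{Equation: Tilde u differential equation before smoothing} by a fixed-point argument for its Duhamel operator in the low-modulation-restricted norm space $Y^s_T$, and then to transfer the conclusion to \eqref{Equation: Toy Fifth Order} through the bi-continuous correspondence between the two equations set up in Section \ref{Section: background}. Writing $\tilde u(t) = e^{t\partial_x^5}\tilde u_0 + \int_0^t e^{(t-t')\partial_x^5}\mathcal{N}(\tilde u)(t')\,dt'$, where $\mathcal{N}$ is the already partially renormalized nonlinearity, the whole content of the theorem is a multilinear estimate of the shape $\|\mathcal{N}(\tilde u)\|_{Y^{s}_{-1/2,T}}\lesssim T^{\theta}\big(\|\tilde u\|_{Y^s_T}^{2}+\|\tilde u\|_{Y^s_T}^{3}+\cdots\big)$ together with its difference version; once these are in hand, existence of a unique $\tilde u\in Y^s_T$, the bound $\|\tilde u\|_{Y^s_T}\lesssim\|u_0\|_{H^s_x}$, and continuity of the data-to-solution map all follow from the contraction, and the same statements for \eqref{Equation: Toy Fifth Order} follow by composing with the gauge map. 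The small power $T^{\theta}$ is extracted in the usual way from the low-modulation localization built into $Y^s$.

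To prove the nonlinear estimate I would dyadically decompose the frequencies of the quadratic interaction $n=n_1+n_2$ and weigh the output weight and the derivative losses against the modulation resonance function $\phi(n_1,n_2)=n^5-n_1^5-n_2^5=5nn_1n_2(n_1^2+n_1n_2+n_2^2)$. In the genuinely non-resonant configurations $|\phi|$ is large enough, relative to the three-derivative loss $|n|^3$ coming from $\partial_x(u\partial_x^2u)$ in the $High\times Low$ regime and the milder losses elsewhere, that the $X^{s,1/2}$ smoothing combined with the bilinear and $L^4$/$L^6$ Strichartz estimates available on $\mathbb{T}$ closes the bound; optimizing the exponents in exactly these estimates is what produces the threshold $s>35/64$. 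The one piece this does not handle is the near-resonant $High\times Low$ contribution where, as noted after \eqref{Equation: Toy Fifth Order}, the modulation gain $|n|^2|n_2|^{1/2}$ falls a full derivative short of ameliorating the loss.

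For that piece I would apply differentiation-by-parts in time: on the region where $|\phi|$ dominates the modulations, write $e^{it\phi}=\tfrac{1}{i\phi}\partial_t e^{it\phi}$ and integrate by parts, producing a boundary term carrying the weight $1/\phi\sim|n|^{-4}|n_2|^{-1}$ — which more than absorbs the three-derivative loss — plus a remainder in which $\partial_t$ falls on one of the factors $\widehat{\tilde u}(n_j)$. Replacing that time derivative using the equation turns the remainder into a cubic expression with the same favorable weight; one then repeats the decomposition on the cubic term, peeling off its own near-resonant part by a second differentiation-by-parts and generating quintic remainders, and continues until every surviving term can be closed by a multilinear $X^{s,b}$ estimate in which the accumulated $1/\phi$ weights supply the missing smoothing. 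The bilinear boundary terms themselves are estimated directly in $Y^s_T$, and their time derivatives are fed back into the scheme; the formal manipulations are justified first for smooth solutions and then passed to the limit.

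The main obstacle is precisely this near-resonant $High\times Low$ interaction, and more sharply the control of the cubic and then quintic terms produced by differentiation-by-parts: one must check that in every sub-configuration of the new three- and five-linear sums the $1/\phi$ weight, together with the available Strichartz and residual modulation gains, beats the derivative losses, and that the normal-form iteration terminates (or converges) rather than worsening at each step. A secondary difficulty is the bookkeeping needed to choose $Y^s$ so that the linear propagator is bounded on it while its dual norm remains weak enough for the nonlinear estimate to hold; it is this balance, rather than any single inequality, that pins down the value $35/64$. With the nonlinear estimates and their difference analogues established, the fixed-point argument yields the uniqueness, the a priori bound, and the continuous dependence asserted for \eqref{Equation: Tilde u differential equation before smoothing}, and the bi-continuity of the gauge transformation promotes these to the well-posedness statement for \eqref{Equation: Toy Fifth Order}.
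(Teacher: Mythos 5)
Your strategy---differentiation-by-parts on the non-resonant regions, a contraction for the resulting Duhamel operator in $Y^s_T$, and transfer back through the gauge---is exactly the paper's (Sections \ref{Section: Reductions} and \ref{Section: Duhamel Bounds}), and the outline of two normal-form steps with boundary terms estimated directly in $Y^s_T$ is right. One ingredient is genuinely missing, and it is the one that no amount of Strichartz or modulation analysis supplies. Your scheme integrates by parts only where $|\Phi|$ is large, but after the first differentiation-by-parts the cubic term has an \emph{exact} resonance ($n_2=n$ or $n_3=n$, where $\Phi_3=0$), whose worst part is $c\,in\,\widehat v(n)\sum_{m}\widehat v(m)\widehat v(-m)$: a full derivative loss with zero modulation to exploit. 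The paper disposes of it in two stages---a symmetry cancellation that kills the odd-in-$n_3$ summands of the resonant multiplier \eqref{Equation: Resonant Multiplier}, and then the secondary exponential gauge \eqref{Definition: W} that absorbs the surviving $n\widehat v(n)\mathcal K(v)$ term into the linear flow (at the cost of the harmless correction terms $R_1,R_2$). Your opening phrase ``gauge-transformed equation'' presupposes this construction, but your plan never builds it, and without it the fixed point fails at every regularity. Two smaller inaccuracies: the threshold $35/64$ is not produced by optimizing $L^4/L^6$ estimates but is $s>(1+a)/2$ with $a=3/32$ coming from the $L^8$ estimate \eqref{Equation: L8 estimate} used in Lemma \ref{Lemma: B portion of D smooth}; and the boundary terms force the \emph{low} modulation exponent $b=1/4$ in $Y^s$ (so that $|\Phi_2|^{b}\lesssim n_1$), which is the concrete resolution of the ``balance'' you correctly identify as a difficulty.
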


The lower bound of $35/64$ is not optimal, but we can say more. In that direction, we define \eqref{Definition: Property Pa} to be the claim that the $L^8$ Strichartz estimate holds with $a+$ losses. 
\begin{definition}[Property $(P_a)$]
Property $P_a$ holds for $a\geq 0$ if for all $f\in H^{a+}(\mathbb{T})$  we have the estimate
\begin{equation}\tag{$P_a$}\label{Definition: Property Pa}
\|W_tf\|_{L^{8}_{x,t}(\mathbb{T}^2)}\lesssim\|f\|_{H^{a+}}.
\end{equation}
\end{definition}

It follows by \eqref{Equation: L8 estimate} that \eqref{Definition: Property Pa} holds with $s\geq 3/32$. As it's conjectured that the $L^{12}$ Strichartz estimate should only produce $\varepsilon$ losses, we see that $a = 0$ is optimal. With this convention, we find the following enhanced version of Theorem \ref{Theorem: Wellposedness for Toy}.
\begin{theorem}\label{Theorem: Wellposedness for Toy with Pa}
Let $a\geq 0$, assume \eqref{Definition: Property Pa} holds, $s> \frac{1+a}{2}$, and $u_0\in H^s(\mathbb{T})$. Then for $0 \leq t\leq T = T(\|u_0\|_{H^s_x})$ there is a unique solution $\tilde{u}\in Y^s_T$ (defined in Section \ref{Section: background}) to \eqref{Equation: Tilde u differential equation before smoothing} with a continuous data-to-solution map from $H^s$ to $Y^s_T$. Furthermore, the following bound holds:
\[
\|\tilde{u}\|_{Y^s_T}\lesssim \|u_0\|_{H^s_x}.
\]
In particular, since the solution to $\eqref{Equation: Tilde u differential equation before smoothing}$ is related to the solution of \eqref{Equation: Toy Fifth Order} by a bi-continuous map on $H^s$, we find that \eqref{Equation: Toy Fifth Order} is well-posed in $H^s$ in the sense of (\cite{tao2006nonlinear}, Definition 3.4).
\end{theorem}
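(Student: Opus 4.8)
The plan is to run a contraction-mapping argument for the differentiated equation \eqref{Equation: Tilde u differential equation before smoothing} in the restricted-norm space $Y^s_T$, the only new ingredient over Theorem~\ref{Theorem: Wellposedness for Toy} being that the binding multilinear estimates are now closed using the $L^8$ Strichartz bound supplied by Property~\eqref{Definition: Property Pa} in place of the unconditional estimate \eqref{Equation: L8 estimate}. Concretely, after the differentiation-by-parts (normal form) reduction one is left with a Duhamel formulation whose nonlinearity splits into: (i) a residual resonant piece, which by oddness of the resonance function is purely real for real data and has already been absorbed into a gauge/phase correction — this is precisely why the conclusion is well-posedness in the sense of gauge conjugation, \cite{tao2006nonlinear}; (ii) finitely many non-resonant bilinear and trilinear terms carrying improved modulation denominators coming from integration by parts in the time-frequency variable; and (iii) higher-order boundary terms produced when the time derivative falls on one of the factors. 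I would define the solution map $\Phi(v) = W_t u_0 + (\text{Duhamel of the reduced nonlinearity})$ and show it is a contraction on a ball in $Y^s_T$ for $T = T(\|u_0\|_{H^s})$ small; this uses the linear and Duhamel bounds in $Y^s_T$ stated in Section~\ref{Section: background}.

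The heart of the matter is the multilinear estimates. First I would isolate the worst interaction — the $\mathrm{High}\times\mathrm{Low}$ term in $\partial_x(u\,\partial_x^2u)$ with $|n_1|\sim|n|\gg|n_2|$, where the high factor carries three derivatives; as noted in the introduction the $X^{s,1/2}$ smoothing recovers only $|n_1|^2|n_2|^{1/2}$, one derivative short. After differentiation-by-parts this term is either moved into the resonant/gauge part of (i) or acquires an extra factor $\sim |5 n n_1 n_2(n_1^2+n_1n_2+n_2^2)|^{-1}$, which in the $\mathrm{High}\times\mathrm{Low}$ regime is $\sim |n_1|^{-2}|n_2|^{-1}$, leaving enough room once combined with the $X^{s,b}$ smoothing. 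For the remaining non-resonant pieces I would Hölder in $x,t$ after removing the Bourgain weights: the low-frequency factors go into $L^\infty_{x,t}$ or $L^4_{x,t}$, the high-frequency factors into $L^8_{x,t}$, and one invokes \eqref{Definition: Property Pa} in the form $\|P_N W_t f\|_{L^8_{x,t}} \lesssim N^{a+}\|P_N f\|_{L^2}$. Tallying derivatives: each high factor costs $N^{a+}$ beyond its $H^s$ norm, and the requirement that the budget close — the full $N^{1/2}$ from the square-root modulation gain together with the $N^{a}$ Strichartz loss balanced against the $N^{2s}$ available from two high $H^s$ factors — is exactly what forces $s > \tfrac{1+a}{2}$. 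The same bookkeeping must be repeated for the trilinear terms and the higher-order boundary terms of (iii), but there the extra modulation denominators and the extra low factor give strictly more room, so those are not the binding constraint; for $a \ge 3/32$ one could equally invoke \eqref{Equation: L8 estimate} directly, recovering the $35/64$ threshold.

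With the nonlinear estimates in hand the fixed point is routine: one obtains a unique $\tilde u \in Y^s_T$ with $\|\tilde u\|_{Y^s_T}\lesssim\|u_0\|_{H^s}$, deduces Lipschitz (hence continuous) dependence on $u_0$ from the corresponding multilinear difference estimates, and transfers the result to \eqref{Equation: Toy Fifth Order} through the bi-continuous change of variables relating it to \eqref{Equation: Tilde u differential equation before smoothing}, exactly as for Theorem~\ref{Theorem: Wellposedness for Toy}. The main obstacle I anticipate is purely at the level of the estimates for the terms in (ii)–(iii): organizing the output of the differentiation-by-parts so that every term either lands in the gauge correction or genuinely buys back the missing derivative, and in particular distributing the Strichartz loss so that only $a+$, and not more, is shed — any slack there would push the threshold above $\tfrac{1+a}{2}$ and, at $a = 0$, past the conjecturally sharp value.
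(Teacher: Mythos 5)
Your overall strategy is the paper's: run the contraction for the operator $\Gamma$ of \eqref{Equation: Contractive Operator Definition} in $Y^s_T$, with Property \eqref{Definition: Property Pa} entering only through the $L^8$ H\"older step, and transfer back to \eqref{Equation: Toy Fifth Order} through the bi-continuous gauge map. Two points need attention. First, the place where the threshold $s>\frac{1+a}{2}$ is actually forced is not the bilinear $High\times Low$ term you single out: that interaction, after the two differentiations by parts, lands in the boundary terms $B_1,B_2$ and the quartilinear terms $D_1,D_2$, all of which close for $s>1/2$ with no appeal to the $L^8$ estimate. The binding term is the \emph{nearly resonant} quartilinear regime $(n_3^*)^4 n_4^*\gtrsim (n_1^*)^4$ with small output modulation (Case II of Lemma \ref{Lemma: B portion of D smooth}), where one places the dual function $J_x^{-s-\varepsilon-(a+)}w$ in $L^{8+}_{x,t}$ and distributes the $a+$ loss over the remaining factors. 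Your derivative bookkeeping happens to land on the right exponent, but the term it must be applied to is a four-linear one with three comparable frequencies, not a descendant of the bilinear $High\times Low$ interaction, so "the trilinear terms and higher-order boundary terms give strictly more room" is not an accurate description of where the constraint lives.

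Second, and more seriously, "contraction on a ball for $T=T(\|u_0\|_{H^s})$ small" does not close as stated: the boundary terms $B_1(\phi)+B_2(\phi)$ produced by differentiation by parts sit \emph{outside} the Duhamel integral, and the corresponding estimate (Lemma \ref{Lemma: boundary Terms}) carries no factor $T^\theta$. Shrinking $T$ therefore does not make these contributions small, and the fixed point closes directly only for $\|u_0\|_{H^s}\ll 1$. You need an extra device to pass to arbitrary data --- the paper uses the KdV scaling, or alternatively a frequency-splitting operator that applies the normal-form formulation only for $|n|\geq N(\|u_0\|_{H^s})$, as carried out explicitly in the proof of Corollary \ref{Theorem: Unconditional Well-posedness}. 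Without one of these your argument proves only a small-data theorem.
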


Theorem \ref{Theorem: Wellposedness for Toy} is proved through a differentiation-by-parts procedure inspired by the work of Shatah \cite{shatah1985normal} and Babin-Ilyin-Titi \cite{babin2011regularization}, with the added intricacy of attempting to perform the fixed point argument in an $X^{s,b}$ type space and not simply $H^s$. In particular, through two differentiation-by-parts applications and the extra smoothing that the $X^{s,b}$ norms are known for, we are able to obtain the above result. The well-posedness itself is proved using a contraction argument in the Bourgain space $Y^s$ on the integral equation \eqref{Equation: Contractive Operator Definition} (which is obtain through the differentiation-by-parts process). While Bourgain spaces and differentiation-by-parts have been used together in the past (e.g. \cite{erdogan2017smoothing}), this is the first occurrence, to the author's knowledge, in which well-posedness itself has been proved using a contraction argument, differentiation-by-parts, and $X^{s,b}$.

Additionally, since it's possible, in principle, to work in $C^0_tL^2_x$, the main new ingredient is fully embracing the non-integrability of the system. That is, the work of, say, \cite{erdogan2013talbot, kato2018unconditional} seek to only remove resonances that correspond to conserved quantities of the equation, whereas work on the KdV such as \cite{bourgain1993fourier, colliander2004multilinear, oh2020smoothing} begin to explore the removal of time dependent (spatially independent) factors. We fully embrace this perspective, turning the problem of solving the rather complicated non-integrable \eqref{Equation: Toy Fifth Order}, into simply applying an, albeit messy, rather straightforward procedure.

In Section \ref{Section: Appendix A} we demonstrate that no \textit{parabolic resonances} are created when considering the full equation, \eqref{Equation: General Fifth Order}. Because of this, we immediately establish the following corollary.
\begin{corollary}
Let $a\geq 0$ be such that \eqref{Definition: Property Pa} holds, $s> \frac{1+a}{2}$, and $\alpha,\beta,\gamma\in\mathbb{R}$. Then \eqref{Equation: General Fifth Order} is well-posed in the sense of (\cite{tao2006nonlinear}, Definition 3.4). In particular, \eqref{Equation: General Fifth Order} is well-posed for $s > 35/64.$
\end{corollary}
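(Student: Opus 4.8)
The plan is to reduce the full equation \eqref{Equation: General Fifth Order} to the toy model \eqref{Equation: Toy Fifth Order}, for which well-posedness is already established by Theorem \ref{Theorem: Wellposedness for Toy with Pa} (and hence Theorem \ref{Theorem: Wellposedness for Toy} when $a=3/32$ via \eqref{Equation: L8 estimate}). The point, as advertised in the introduction, is that the nonlinearity of \eqref{Equation: General Fifth Order} differs from that of \eqref{Equation: Toy Fifth Order} by terms that are strictly better behaved from the point of view of derivative counting, \emph{and} — crucially — that none of these extra terms produce \emph{parabolic resonances} in the sense of \cite{tsugawa2017parabolic}. First I would record the explicit form of the nonlinearity. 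Writing $\partial_x(u\partial_x^2 u) = \partial_x\big(\tfrac12 \partial_x^2(u^2) - (\partial_x u)^2\big) = \tfrac12\partial_x^3(u^2) - \partial_x((\partial_x u)^2)$, one sees that $\gamma\,\partial_x(u\partial_x^2 u) + \beta\,\partial_x(\partial_x u)^2 = \tfrac{\gamma}{2}\partial_x^3(u^2) + (\beta - \gamma)\partial_x(\partial_x u)^2$, and similarly $2\partial_x(u\partial_x^2 u) = \partial_x^3(u^2) - 2\partial_x(\partial_x u)^2$. Thus \eqref{Equation: General Fifth Order} becomes, after a harmless rescaling of $u$ to normalize the coefficient of $\partial_x^3(u^2)$, the equation \eqref{Equation: Toy Fifth Order} plus a perturbation of the schematic form $c_1\,\partial_x(\partial_x u)^2 + c_2\,\partial_x(u^3)$.

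The main step is then to check that these perturbative terms can be absorbed into the differentiation-by-parts scheme of Section \ref{Section: background} without degrading the regularity threshold. For the cubic term $\partial_x(u^3)$ this is immediate: its multiplier $n$ against three factors $\widehat u(n_1)\widehat u(n_2)\widehat u(n_3)$ with $n=n_1+n_2+n_3$ carries only one derivative distributed among three inputs and sits against the resonance function $n^5 - n_1^5 - n_2^5 - n_3^5$, which for non-degenerate frequency configurations is comparable to the largest frequency to the fifth power; the $X^{s,1/2}$ smoothing of $5/2$ derivatives (or a single differentiation-by-parts) more than suffices, so this term is handled by the multilinear estimates of Section \ref{Section: background} exactly as the analogous terms there. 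For the quadratic term $\partial_x(\partial_x u)^2 \sim n\, n_1 n_2\,\widehat u(n_1)\widehat u(n_2)$ with $n=n_1+n_2$, the total derivative count is three — the same as the toy nonlinearity $\partial_x(u\partial_x^2 u)$ — so the same differentiation-by-parts bookkeeping applies verbatim; the only thing to verify is that the modified multiplier is still odd under $(n_1,n_2)\mapsto(-n_1,-n_2)$, which it is since $n\,n_1n_2 \mapsto (-n)(-n_1)(-n_2) = -n\,n_1n_2$, so no new parabolic resonance \eqref{Equation: Introduction Parabolic Resonance} is generated; this oddness is exactly the content of the computation carried out in Section \ref{Section: Appendix A}. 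Having checked this, the contraction argument producing \eqref{Equation: Contractive Operator Definition} goes through for \eqref{Equation: General Fifth Order} with the same function space $Y^s_T$ and the same threshold, yielding well-posedness in the sense of (\cite{tao2006nonlinear}, Definition 3.4) — the gauge transformation relating the modified equation to the original is still bi-continuous on $H^s$ because it is built from the same resonant phase corrections.

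The part requiring genuine care — and the reason this is a corollary rather than a triviality — is the bookkeeping of \emph{which} terms appear after one and two differentiation-by-parts steps applied to the enlarged nonlinearity: new trilinear and quadrilinear terms are spawned from the Leibniz rule hitting the extra quadratic/cubic pieces, and one must confirm each lands in a class already estimated (or estimable by the same techniques) in Section \ref{Section: background}, and that each newly created \emph{spatially independent} correction factor is purely imaginary (so that it defines a legitimate gauge) rather than real (which would signal a parabolic resonance and genuine ill-posedness in one time direction). I would organize Section \ref{Section: Appendix A} around a finite checklist of these interaction types, invoking the symmetry $m(-n_1,\dots,-n_k) = -m(n_1,\dots,n_k)$ to rule out parabolic resonances uniformly; since each of $\partial_x(u^3)$, $\partial_x^3(u^2)$, and $\partial_x(\partial_x u)^2$ has an odd Fourier multiplier, this symmetry is preserved under every differentiation-by-parts substitution, and the checklist closes. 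Once that is done, the quantitative threshold $s > \tfrac{1+a}{2}$ (hence $s > 35/64$ unconditionally) is inherited with no loss, completing the proof of the corollary.
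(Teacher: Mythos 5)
Your overall architecture --- rewrite \eqref{Equation: General Fifth Order} as \eqref{Equation: Toy Fifth Order} plus the pieces $\partial_x(u^3)$ and $\partial_x(\partial_x u)^2$, rerun the differentiation-by-parts scheme, and reduce the corollary to the absence of parabolic resonances --- is exactly the paper's (Appendix \ref{Section: Appendix A} works with the equivalent symmetrized three-term form). The gap is in the one step you yourself flag as requiring genuine care: your claim that the preserved oddness $m(-n_1,\dots,-n_k)=-m(n_1,\dots,n_k)$ rules out parabolic resonances. It cannot, because a parabolic resonance \eqref{Equation: Introduction Parabolic Resonance} is \emph{by definition} a resonant term $in^{2\ell}\widehat u(n)\sum_{0=n_1+\cdots+n_k}m\prod\widehat u(n_j)$ whose coefficient symbol $m$ is odd; viewed as a symbol in all $k+1$ frequencies (including the one carried by $\widehat u(n)$), such a term is itself odd. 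The symmetry you propagate through the differentiation-by-parts is therefore exactly consistent with the presence of parabolic resonances and cannot ``close the checklist uniformly.'' What the oddness of the full symbol does guarantee is only that the flow preserves realness of $u$ --- equivalently, that each resonant coefficient $Q(n)$ satisfies $Q(-n)=-\overline{Q(n)}$ --- which says nothing about whether $inQ(n)$ is imaginary (a removable unimodular gauge) or real and growing in $n$ (parabolic, hence ill-posed in one time direction).

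What is actually needed, and what Appendix \ref{Section: Appendix A} supplies, is a term-by-term computation: expand each resonant symbol (a rational function such as $n(n_1+n_2)((n_1+n_2)^2+n_3^2)(n_1^2+n_2^2)/(\Phi_2\Phi_3)$ restricted to $n_j=n$) in the regime $|n|\gg n_2^*$, isolate the coefficients of $n^2$ and $n^1$, and check by hand that every $n^2$-coefficient with odd symbol vanishes when summed over the symmetric set $\{0=\sum n_i\}$ (e.g.\ $\sum_{n_2}n_2^{-1}|\widehat v(n_2)|^2=0$, which kills the genuine $\ell=1$ candidate $\tfrac{2\alpha_3^2 i n^2}{5}\widehat v(n)\sum n_2^{-1}v_2v_3$ arising from \eqref{Equation: Appendix A3A3 term}), while the surviving $n^1$-coefficients are real and hence removable by a gauge as in \eqref{Definition: W}. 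For the accompanying smoothing statement one further needs the quartilinear $n^0$ cancellations, which occur only after summing contributions from \emph{different} differentiation-by-parts branches (e.g.\ $\tfrac{1}{n_2}+\tfrac{1}{n_2+n_3}$ symmetrizes to zero, and the three $\alpha_2\alpha_3^2$ resonances sum to $(n_2+n_3+n_4)/(n_2(n_2+n_3))=0$ on the resonant hyperplane); no single-term parity principle sees these. Two secondary points: your treatment of $\partial_x(u^3)$ skips its resonant set --- when $n_1+n_2=0$ the phase vanishes identically and one obtains $3\alpha_1 in\,\widehat u(n)\|u\|_{L^2_x}^2$, which is not estimable in $Y^s$ (no modulation gain) and must itself be removed by the gauge; it is harmless only because $\|u\|_{L^2_x}^2$ is real --- and the resonance function $\Phi_3$ is generically of size $(n_1^*)^4$, not $(n_1^*)^5$, per Proposition \ref{Proposition: small symbol decomp}.
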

Additionally, as a consequence of the the integrable fifth order KdV sharing the same conservation laws as the KdV, we find the following global well-posedness result as a corollary.
\begin{corollary}
Let $s\geq 1$, $\alpha,\beta,\gamma\in\mathbb{R}$, $\gamma = 2\beta$, and $u_0\in H^s(\mathbb{T})$. Then the solution, $u\in C^0_tH^s_x$, of \eqref{Equation: General Fifth Order} emanating from $u_0$ extends globally.
\end{corollary}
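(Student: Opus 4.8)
The plan is to upgrade the local theory to a global one by exhibiting, for every $T<\infty$, an a priori bound $\sup_{t\in[0,T]}\|u(t)\|_{H^s_x}\le C(T,\|u_0\|_{H^s_x})$; once this is in hand, global existence is immediate from the blow-up alternative contained in Theorem~\ref{Theorem: Wellposedness for Toy} and its extension to \eqref{Equation: General Fifth Order} in Section~\ref{Section: Appendix A}. Indeed, since $s\ge 1>35/64$, that result produces a unique $u\in Y^s_{T'}\hookrightarrow C^0_tH^s_x$ on $[0,T']$ with existence time $T'=T(\|u_0\|_{H^s_x})$ depending only on the $H^s$-size of the data; hence if the solution existed only on a maximal interval $[0,T^*)$ with $T^*<\infty$, one could pick $t_0<T^*$ with $T^*-t_0<T\big(\sup_{[0,T^*)}\|u\|_{H^s_x}\big)$ and re-solve from $u(t_0)$ to extend past $T^*$, a contradiction.

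The a priori bound will come from the conservation laws. When $\gamma=2\beta$, \eqref{Equation: General Fifth Order} possesses the conservation laws of the KdV hierarchy: in particular $E_0(u)=\|u\|_{L^2_x}^2$, and for each integer $m\ge1$ a functional $E_m(u)=\|\partial_x^m u\|_{L^2_x}^2+R_m(u)$, where $R_m$ is a finite sum of space integrals of polynomials in $u,\partial_x u,\dots,\partial_x^{m-1}u$. By the Gagliardo--Nirenberg inequality on $\mathbb{T}$, each monomial in $R_m(u)$ is bounded by $\|\partial_x^m u\|_{L^2_x}^{\theta}$ with $\theta<2$ times a power of $\|u\|_{L^2_x}$; combined with the conservation of $E_0$ and $E_m$ (rigorous for $H^m_x$ solutions by a standard approximation argument) this gives $\sup_t\|u(t)\|_{H^m_x}\le C(\|u_0\|_{H^m_x})$, so the first paragraph yields global well-posedness in $H^m_x$ for every integer $m\ge 1$.

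It remains to pass from integer exponents to an arbitrary $s\in(m,m+1)$ with $m\ge1$, and this is the only genuine obstacle: the conserved $E_j$ control only \emph{integer} Sobolev norms, whereas the data lies merely in $H^s_x$ and need not belong to $H^{m+1}_x$. One clean route is to invoke that the integrable flow also preserves the KdV conserved quantities equivalent to $\|u\|_{H^s_x}^2$ for all $s\ge-1/2$ (the same structure that underlies the low-regularity results quoted in the introduction), which supplies the $H^s$ bound directly. Alternatively, one can argue by persistence of regularity: the multilinear estimates behind Theorem~\ref{Theorem: Wellposedness for Toy} place the highest-regularity factor essentially linearly, the remaining factors being controlled at a fixed low regularity $s_0\le1$, so on an interval of length $\tau$ one has a Gronwall-type bound $\|u\|_{Y^s_\tau}\lesssim\|u(t_0)\|_{H^s_x}+\tau^{\theta}\|u\|_{Y^s_\tau}\|u\|_{Y^{s_0}_\tau}$; since paragraph two bounds $\|u(t)\|_{H^{s_0}_x}\le\|u(t)\|_{H^1_x}$ uniformly in $t$, this closes for $\tau$ depending only on that uniform bound, and iterating $O(T/\tau)$ times controls $\|u(t)\|_{H^s_x}$ by a constant growing at most exponentially in $t$. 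In either case the a priori bound holds for all $s\ge1$, and with the first paragraph this proves global well-posedness.
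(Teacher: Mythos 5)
Your argument is correct and is essentially the one the paper intends: the corollary is stated with no proof beyond the remark that the $\gamma=2\beta$ flow inherits the KdV conservation laws, and your combination of the local theory, the conserved energies, the blow-up alternative, and a tame persistence-of-regularity step for non-integer $s$ is the standard way to make that remark precise (indeed you are more careful than the paper about the fractional-$s$ issue). The one caveat — which you share with the paper rather than introduce — is that for arbitrary $\alpha$ with $\gamma=2\beta$ the equation is merely Hamiltonian, conserving $\int u$, $\int u^2$, and an $H^2$-level energy, while the full hierarchy including the $H^1$-level functional is guaranteed only for the genuinely integrable coefficient ratios, so the range $1\le s<2$ rests on that same unproved assertion.
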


In addition to proving well-posedness, we continue the study of the regularity of the integral term in the Duhamel representation of the solution $u$ emanating from $u_0$. Specifically, we establish the following bound on the integral term in the Duhamel representation.
\begin{theorem}\label{Theorem: Smoothing}
Let $a\geq 0$ be such that \eqref{Definition: Property Pa} holds, $s > \frac{1+a}{2}$, $\varepsilon < \min(2s-1-a, 1)$, $u_0\in H^s(\mathbb{T})$, and $u$ be the solution to \eqref{Equation: Toy Fifth Order} emanating from $u_0$ for $T = T(\|u_0\|_{H^s})$. Then there is an invertible transformation\footnote{This is the same transformation used for gauge well-posedness.} $u\mapsto \tilde{u}$ with $\|\tilde{u}\|_{C^0_tH^s_x} = \|u\|_{C^0_tH^s_x}$, $u(x,0) = \tilde{u}(x,0)$ so that
\[
\|\tilde{u} - e^{t\partial_x^5}u_0\|_{C^0_t([0,T])H^{s+\varepsilon}_x(\mathbb{T})}\lesssim C(\|u_0\|_{H^s_x(\mathbb{T})}).
\]
\end{theorem}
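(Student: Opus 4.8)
The plan is to carry out the same two differentiation-by-parts reductions used to prove Theorems \ref{Theorem: Wellposedness for Toy} and \ref{Theorem: Wellposedness for Toy with Pa}, and then track where the smoothing gain is actually realized. Writing $u$ as a solution of \eqref{Equation: Toy Fifth Order} and passing to the gauged/normal-form variable $\tilde u$ via the transformation $u \mapsto \tilde u$ (which is unitary on $L^2$ in each frequency and hence isometric on $C^0_t H^s_x$, and fixes the initial data), the equation for $\tilde u$ becomes \eqref{Equation: Tilde u differential equation before smoothing}; after the two integrations-by-parts in time, $\tilde u$ satisfies an integral equation of the form \eqref{Equation: Contractive Operator Definition}, i.e. $\tilde u = e^{t\partial_x^5}u_0 + (\text{boundary terms}) + (\text{new smoothed multilinear terms})$. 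The goal is then precisely to show that $\tilde u - e^{t\partial_x^5}u_0$ lies in $C^0_t H^{s+\varepsilon}_x$ with norm controlled by $C(\|u_0\|_{H^s_x})$.

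First I would isolate the three sources of non-resonant gain that were already exploited for well-posedness: (i) the algebraic denominator $(n_1+n_2+n_3)^5 - n_1^5 - n_2^5 - n_3^5$ (and the quadratic analogue $(n_1+n_2)^5 - n_1^5 - n_2^5 = 5 n n_1 n_2(n_1^2+n_1 n_2 + n_2^2)$) coming from the differentiation-by-parts; (ii) the modulation smoothing of the $X^{s,b}$ spaces with $b$ slightly above $1/2$; and (iii) Property \eqref{Definition: Property Pa} / the $L^8$ Strichartz bound \eqref{Equation: L8 estimate} used to close the multilinear estimates. In the well-posedness proof these gains are spent exactly to cancel the derivative losses in the nonlinearity and land back at regularity $s$; for the smoothing statement I would instead show there is a surplus of size $2s-1-a$ (capped at $1$, the latter cap reflecting that once all of $5 n n_1 n_2(\cdots)$ is used one cannot gain more than one full derivative on the output frequency in the worst $High\times Low$ branch) that can be placed on the output frequency $\langle n\rangle^{\varepsilon}$ for any $\varepsilon < \min(2s-1-a,1)$. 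Concretely, I would rerun each case of the trilinear (and the residual bilinear boundary) estimates from the proof of Theorem \ref{Theorem: Wellposedness for Toy with Pa}, but with an extra factor $\langle n\rangle^{\varepsilon}$ inserted, and verify the same sum/integral still converges; the dichotomy is the standard one — either the resonance function is large, in which case the $X^{s,b}$ modulation weights absorb $\langle n\rangle^{\varepsilon}$ and more, or it is small, in which case the near-resonant geometry forces $\langle n\rangle$ comparable to a lower frequency and $\langle n\rangle^{\varepsilon}$ is cheap. The boundary terms produced by the two integrations by parts are the easiest: they are already of the schematic form (bilinear in $\tilde u$) divided by the resonance denominator, evaluated at fixed time, so they are estimated directly in $H^{s+\varepsilon}_x$ for a full derivative's worth of smoothing, using the algebraic identity above together with $s > (1+a)/2 \geq 1/2$.

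The main obstacle, as in the well-posedness proof, is the $High \times High \to High$ and especially the $High \times Low$ interaction where the high-frequency output must absorb up to three derivatives: here the product $5 n n_1 n_2 (n_1^2 + n_1 n_2 + n_2^2) \sim |n_1|^2 |n_2|$ with $|n_1| \sim |n|$ gives only $|n|^2 |n_2|$ rather than the $|n|^3$ one would want, which is exactly why one differentiation-by-parts is not enough and the second is needed. After the second reduction the remaining trilinear terms have a genuinely non-resonant denominator (no new parabolic resonances are created — this is the content of Section \ref{Section: Appendix A}), and the surplus over what is needed for well-posedness is $2s - 1 - a$; the bookkeeping challenge is to confirm that in every sub-case this surplus is really available \emph{after} one has already spent enough to control the nonlinearity, i.e. that the worst case for smoothing coincides with the worst case for well-posedness and no new borderline configuration appears. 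Having checked the multilinear bounds at the level of the fixed-time norm $H^{s+\varepsilon}_x$ uniformly in $t \in [0,T]$ (using, as in the well-posedness argument, that the contraction already furnishes $\tilde u \in Y^s_T \hookrightarrow C^0_t H^s_x$ on a time interval $T = T(\|u_0\|_{H^s})$, so all the multilinear quantities are finite and can be bootstrapped), one concludes $\|\tilde u - e^{t\partial_x^5}u_0\|_{C^0_t([0,T]) H^{s+\varepsilon}_x} \lesssim C(\|u_0\|_{H^s_x})$, which is the claimed bound.
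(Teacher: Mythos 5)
Your overall strategy --- gauge away the resonance, differentiate by parts, and then rerun the multilinear estimates with an extra $\langle n\rangle^{\varepsilon}$ on the output --- is indeed how most of the terms are treated (this is exactly why the parameter $\varepsilon$ is threaded through Section \ref{Section: Duhamel Bounds}), and your treatment of the boundary terms and of the transformation $u\mapsto\tilde u$ is correct. But there is a genuine gap at the step where you assert that after the \emph{two} differentiations-by-parts every remaining term admits a surplus of $2s-1-a$ derivatives, with the worst case for smoothing coinciding with the worst case for well-posedness. This fails for the quartic term $D_{\mathcal{A},2}$, i.e.\ the non-resonant part of $D_2$, which arises when the time derivative lands on the factor carrying the three derivatives. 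By Lemma \ref{Lemma: Nonresonant Weight Bound} its symbol is only $O\bigl((n_1^*)^2/\max(n_3^*,|n|)\bigr)$, and in the $High\times Low\times Low\times Low$ configuration $|n_1|\sim|n|\gg n_2^*$ the Littlewood--Paley bookkeeping of Lemma \ref{Lemma: Main Duhamel Bounds} closes \emph{exactly} at $Z^s$: the modulation gain $N_1^{-1}$ from Proposition \ref{Proposition: general modulation lemma} is entirely consumed by the symbol, leaving no room for $N^{\varepsilon}$. (Contrast this with $D_{\mathcal{A},1}$, whose symbol has an extra factor of $n$ in the denominator and therefore does smooth, cf.\ Corollary \ref{Corollary: D_1 smoother}.) So inserting $\langle n\rangle^{\varepsilon}$ into the existing estimates does not work for this term, and no amount of re-optimizing the cases fixes it.

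The paper's proof resolves this by performing a \emph{third} differentiation-by-parts, applied only to $D_{\mathcal{A},2}$, at the start of Section \ref{Section: Smoothing}. This produces a new boundary term $B_3$ (harmless, with symbol $O(1/n_1^*)$), a quintilinear term $C$ whose non-resonant part $C_*$ is then estimated with the surplus $\varepsilon$, a new correction term $R_3$, and --- crucially --- a new family of single resonances $\mathcal{RE}_5$. The worst part of $\mathcal{RE}_5$ is \emph{not} controlled by size estimates alone: its high-frequency approximation carries the non-decaying symbol $1/H_{\ell,j}$ of Lemma \ref{Lemma: Final Resonance Bound}, and the corresponding resonant sum vanishes only because of the antisymmetry cancellation recorded in Remark \ref{Remark: Decay of 0 resonance} (the $j$ and $-j$ contributions cancel for real data). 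This cancellation is the analogue, one order deeper, of the removal of $\mathcal{K}$ and of the cancellation in Remark \ref{Remark: Why Resonant Smoother Bound Holds}; it is an essential structural input that your proposal does not anticipate, and without it the argument would produce a parabolic-resonance--type term of size $n^{0}$ that obstructs any gain of regularity.
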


This phenomenon, nonlinear smoothing, was fundamental to Bourgain's  high-low method,  \cite{bourgain1998refinements}, and is of interest in periodic problems due to the lack of hallmark dispersive traits. Indeed, through the use of differentiation-by-parts, Erdogan and Tzirakis were able to establish nonlinear smoothing for the periodic KdV (see: \cite{erdougan2013global, erdogan2011long}) as well as for the fractional cubic NLS and the quintic NLS on the real line with Gurel, \cite{erdogan2017smoothing}. Oh and Stefanov, \cite{oh2020smoothing}, established smoothing for the generalized KdV using a resonant decomposition similar to Proposition \ref{Proposition: Symbol Decomp}, as well as a normal form transformation. Differentation-by-parts was also recently used to show smoothing for the periodic dNLS, \cite{isom2020growth}, solving a once intractable problem. For information about real line smoothing see, e.g. \cite{ correia2020nonlinear, erdogan2017smoothing, keraani2009smoothing}. 

In Section \ref{Section: Appendix A} we demonstrate that the high-frequency approximation to the single resonances of Equation \eqref{Equation: General Fifth Order} experiences the same cancellation effects that were necessary to establish smoothing for Equation \eqref{Equation: Toy Fifth Order}. That is, we show that there are no \textit{parabolic resonances} with $k = 0$, which requires a rather delicate cancellation that does \textit{not} depend on the value of the coefficients $\alpha, \beta, \gamma$. In particular, we immediately find the following corollary.
\begin{corollary}
Let $a\geq 0$ be such that \eqref{Definition: Property Pa} holds, $s > \frac{1+a}{2}$, $\alpha,\beta,\gamma\in\mathbb{R}$, $\varepsilon < \min(2s-1-a, 1)$, $u_0\in H^s(\mathbb{T})$, and $u$ be the solution to \eqref{Equation: General Fifth Order} emanating from $u_0$ for $T = T(\|u_0\|_{H^s})$. Then there is an invertible transformation $u\mapsto \tilde{u}$ with $\|\tilde{u}\|_{C^0_tH^s_x} = \|u\|_{C^0_tH^s_x}$, $u(x,0) = \tilde{u}(x,0)$ so that
\[
\|\tilde{u} - e^{t\partial_x^5}u_0\|_{C^0_t([0,T])H^{s+\varepsilon}_x(\mathbb{T})}\lesssim C(\|u_0\|_{H^s_x(\mathbb{T})}).
\]
\end{corollary}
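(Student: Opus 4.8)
The plan is to mirror the proof of Theorem \ref{Theorem: Smoothing} verbatim, keeping careful track of where the coefficients $\alpha,\beta,\gamma$ enter and checking that none of the additional terms spoils the estimate. First I would apply the same invertible gauge transformation $u\mapsto\tilde u$ that is used for \eqref{Equation: Toy Fifth Order}: since this transformation is built only from the mean and from spatially-independent (conservation-type) factors, it is insensitive to the particular polynomial nonlinearity and still satisfies $\|\tilde u\|_{C^0_tH^s_x}=\|u\|_{C^0_tH^s_x}$ and $\tilde u(x,0)=u(x,0)=u_0$. Writing the Duhamel formula for $\tilde u$ and subtracting the linear evolution $e^{t\partial_x^5}u_0$, one is left with a finite sum of multilinear expressions to which one applies the differentiation-by-parts iteration exactly as before, and the goal is to bound each resulting piece in $C^0_t([0,T])H^{s+\varepsilon}_x$.

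Next I would split the nonlinearity of \eqref{Equation: General Fifth Order} into its cubic part $\alpha\partial_x(u^3)$ and its quadratic part $\beta\partial_x(\partial_xu)^2+\gamma\partial_x(u\partial_x^2u)$. The cubic part carries only one external derivative spread over a trilinear interaction, so the corresponding Duhamel term already maps $H^s$ into $H^{s+1-}$ for $s>\tfrac{1+a}{2}$ by a direct $X^{s,b}$ trilinear estimate (using \eqref{Definition: Property Pa} where needed), which is comfortably inside the claimed range $\varepsilon<\min(2s-1-a,1)$; no differentiation-by-parts is required for it. The quadratic part is the genuinely dangerous one and is treated exactly as in Theorem \ref{Theorem: Smoothing}: passing to the Fourier side, the resolvent identity produces a non-resonant contribution, where the modulation $(n_1+n_2)^5-n_1^5-n_2^5=5nn_1n_2(n_1^2+n_1n_2+n_2^2)$ is large and one integrates by parts in time, and a resonant contribution where it is small. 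The non-resonant piece is estimated by the same $L^8$-Strichartz input and bilinear bounds, yielding the same $\varepsilon$ gain. The resonant piece is the only place where the coefficients could obstruct the argument: one must show that the leading-order ($k=0$) \emph{parabolic resonance} generated by $\beta\partial_x(\partial_xu)^2+\gamma\partial_x(u\partial_x^2u)$, a term of the shape \eqref{Equation: Introduction Parabolic Resonance}, cancels. This is precisely the algebraic identity established in Section \ref{Section: Appendix A}, and crucially it holds for all $\alpha,\beta,\gamma$ rather than only in the nearly-integrable case $\gamma=2\beta$. Once that cancellation is granted, every remaining term matches one already bounded in the proof of Theorem \ref{Theorem: Smoothing}, and the conclusion follows with the identical range of $\varepsilon$.

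The main obstacle is exactly this resonant algebra: verifying that the high-frequency approximation to the single-resonance symbol coming from the general quadratic nonlinearity vanishes independently of the coefficients. This is the delicate cancellation referenced in the statement, and since it is carried out in Section \ref{Section: Appendix A} I would simply invoke it. A secondary, bookkeeping-level point is to confirm that mixing the cubic and quadratic nonlinearities during the differentiation-by-parts iteration produces no new resonant interactions beyond those already classified; but each differentiation-by-parts step lowers the total number of derivatives relative to the number of factors, so every mixed term is strictly better behaved than the pure quadratic-quadratic ones and is absorbed into the error. With these two points in hand the estimate of Theorem \ref{Theorem: Smoothing} transfers directly to \eqref{Equation: General Fifth Order}, giving the corollary.
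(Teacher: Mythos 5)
Your overall strategy --- transferring the proof of Theorem \ref{Theorem: Smoothing} to \eqref{Equation: General Fifth Order} by running the differentiation-by-parts scheme on the full nonlinearity and invoking the coefficient-independent resonance cancellation of Section \ref{Section: Appendix A} --- is exactly the paper's route, and the non-resonant estimates do transfer as you describe. The gap is in your treatment of the mixed cubic--quadratic interactions. You claim that the cubic part $\alpha\partial_x(u^3)$ needs no differentiation-by-parts and that every mixed term is ``strictly better behaved'' than the pure quadratic ones, hence absorbed into the error by derivative counting. Neither claim survives inspection. When the quadratic term is differentiated by parts and the equation for $\partial_t\widehat{v}$ is substituted, the cubic nonlinearity enters and produces, at single resonance, the $\ell=0$ parabolic term \eqref{Equation: Appendix A3A1 large frequency approxmination}: a quartilinear expression of the shape $\widehat{v}(n)\sum_{0=n_2+n_3+n_4}\tfrac{c\,\alpha_1\alpha_3}{n_2+n_3}v_2v_3v_4$ with an $O(1)$ symbol (here $\alpha_1,\alpha_3$ are the appendix's coefficients). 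Such a term is harmless for well-posedness but fatal for smoothing: it contributes $\langle n\rangle^{s+\varepsilon}\widehat{v}(n)$ times a bounded, essentially $n$-independent quantity, so no $\varepsilon$ of regularity can be extracted from it, and the symmetrized sum $\sum (n_2+n_3)^{-1}v_2v_3v_4=-\sum n_4^{-1}v_2v_3v_4$ does not vanish on its own.

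This resonance disappears only by cancellation against the matching resonance \eqref{Equation: Appendix A1A3 large frequency approxmination}, which carries the symbol $c\,\alpha_1\alpha_3/n_2$ and arises precisely from differentiating the \emph{cubic} term by parts and substituting the quadratic nonlinearity into it; the sum of the two symbols is proportional to $n_2^{-1}-n_4^{-1}$, which vanishes after symmetrization over the hyperplane $0=n_2+n_3+n_4$. So the delicate, coefficient-independent cancellation of Section \ref{Section: Appendix A} is not confined to the pure quadratic resonances as you assert: it is needed for the cubic--quadratic cross terms as well, and producing the counter-term forces you to differentiate the cubic term by parts after all. As written, your plan would leave the resonance from \eqref{Equation: Appendix A3A1 large frequency approxmination} uncancelled and the $\varepsilon$ gain would fail. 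The fix is to run the differentiation-by-parts on the cubic term too and to include the mixed resonances in the algebra you invoke from the appendix; with that correction your argument coincides with the paper's.
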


As a corollary of the equivalent equation constructed to prove Theorem \ref{Theorem: Smoothing}, we are able to establish unconditional well-posedness. By unconditional well-posedness, we mean that for every real $u_0\in H^s(\mathbb{T})$ there is a solution $u\in C^0_tH^s_x$ that is \textit{unique} in $C^0_tH^s_x$-- see \cite{erdougan2016dispersive, zhou1997uniqueness} for the KdV, \cite{kwon2012unconditional} for the modified KdV, and \cite{kato2018unconditional} for the fifth order KdV for $s\geq 3/2$. This differs from the well-posedness result above in that we make no appeal to the auxiliary space $Y^s$. Specifically, we find the following corollary to hold true.

\begin{corollary}\label{Theorem: Unconditional Well-posedness}
Let $s > 1$ and $u_0\in H^s(\mathbb{T})$. Then \eqref{Equation: Toy Fifth Order} is unconditionally well-posed in $H^s$.
\end{corollary}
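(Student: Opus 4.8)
The plan is to exploit the equivalent integral equation \eqref{Equation: Contractive Operator Definition} produced by the two-step differentiation-by-parts procedure, for which a contraction can be run \emph{directly} in $C^0_tH^s_x$ once $s>1$, with no appeal to $Y^s$. Existence of a solution $u\in C^0_t([0,T])H^s_x$ is already supplied by Theorem \ref{Theorem: Wellposedness for Toy} (as $1>35/64$) together with the embedding $Y^s_T\hookrightarrow C^0_tH^s_x$, so the content of the corollary is \emph{uniqueness} in the full class $C^0_t([0,T])H^s_x$, i.e. that any distributional solution in that class coincides with the constructed one.

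The first step is to verify the normal-form identity at this low regularity. Let $u\in C^0_t([0,T])H^s_x$ be any solution of \eqref{Equation: Toy Fifth Order} with $u(\cdot,0)=u_0$ and apply the invertible transformation $u\mapsto\tilde u$ of Theorem \ref{Theorem: Smoothing}; I would show that $\tilde u$ solves \eqref{Equation: Contractive Operator Definition}. Each multilinear expression occurring there — the cubic and quartic \emph{resonant} pieces, the quintic remainder, and the boundary pieces created by integrating the oscillation $e^{it'\Phi}$ by parts in time, where $\Phi(n,n_1,n_2)=n^5-n_1^5-n_2^5=5nn_1n_2(n_1^2+n_1n_2+n_2^2)$ — carries enough powers of $\Phi^{-1}$ (and of the analogous quartic modulation) to absorb the $+3$ derivative loss of $\partial_x(u\partial_x^2u)$ in the $High\times Low$ regime when measured in $H^s$ alone, provided $s>1$. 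Since $H^s(\mathbb{T})$ is an algebra for $s>1$, this yields bounds $\|(k\text{-linear term})(t)\|_{H^s_x}\lesssim\prod_j\|u\|_{C^0_tH^s_x}$, so the right-hand side of \eqref{Equation: Contractive Operator Definition} is a well-defined element of $C^0_tH^s_x$ depending continuously on $u\in C^0_tH^s_x$. Approximating $u_0$ by $H^\infty$ data, using the continuous dependence of Theorem \ref{Theorem: Wellposedness for Toy}, and passing to the limit in \eqref{Equation: Contractive Operator Definition} via these continuity estimates then legitimizes the differentiation-by-parts manipulations for every $C^0_tH^s_x$ solution. (One also uses $\partial_t u\in C^0_tH^{s-5}_x$, which suffices because $\partial_t$ of the profile is substituted only after dividing by $\Phi$.)

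The second step is uniqueness via contraction. Given two solutions $u_1,u_2\in C^0_t([0,\delta])H^s_x$ of \eqref{Equation: Toy Fifth Order} with $u_1(0)=u_2(0)=u_0$, Step~1 produces $\tilde u_1,\tilde u_2$ solving \eqref{Equation: Contractive Operator Definition}; since $u\mapsto\tilde u$ differs from the identity by terms mapping $C^0_tH^s_x$ into $C^0_tH^{s+1}_x$ (the smoothing of Theorem \ref{Theorem: Smoothing}), it is bi-Lipschitz on bounded sets, so $\|u_1-u_2\|_{C^0_{[0,\delta]}H^s_x}\simeq\|\tilde u_1-\tilde u_2\|_{C^0_{[0,\delta]}H^s_x}$ with constants depending only on $\|u_0\|_{H^s_x}$. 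Subtracting the two copies of \eqref{Equation: Contractive Operator Definition}, expanding term by term using multilinearity, and using the $H^s$ multilinear bounds of Step~1 along with the factor $\delta$ gained from $\int_0^t$, one obtains
\[
\|\tilde u_1-\tilde u_2\|_{C^0_{[0,\delta]}H^s_x}\le C(\|u_0\|_{H^s_x})\,\delta\,\|\tilde u_1-\tilde u_2\|_{C^0_{[0,\delta]}H^s_x}.
\]
Choosing $\delta=\delta(\|u_0\|_{H^s_x})$ small forces $\tilde u_1=\tilde u_2$, hence $u_1=u_2$, on $[0,\delta]$; since $\|u_i(t)\|_{H^s_x}$ remains comparable to $\|u_0\|_{H^s_x}$ on $[0,T]$, finitely many reiterations give $u_1\equiv u_2$ on $[0,T]$.

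The main obstacle is Step~1: promoting the formal differentiation-by-parts identity to an honest identity valid for an \emph{arbitrary} $C^0_tH^s_x$ solution. Here, unlike in Theorem \ref{Theorem: Wellposedness for Toy}, there is no $X^{s,b}$ smoothing to spend, so every resulting multilinear term — including the iterated ones generated by substituting the equation for $\partial_t u$ and the ones from the second differentiation-by-parts — must be closed in $H^s$ purely on the strength of the resonance denominators against the $+3$ derivative loss; tracking these losses through the messy two-step procedure and arranging the density/limiting argument with only weak control on the approximants is where the real work lies, and the balance dictates precisely the threshold $s>1$ (consistent with $H^s$ being an algebra).
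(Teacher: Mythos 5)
Your overall strategy (run the fixed-point argument for the normal-form equation directly in $C^0_tH^s_x$, using the algebra property for $s>1$) is the right one, but there are two concrete gaps where the argument as written would fail.

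First, the equation \eqref{Equation: Contractive Operator Definition} is not closable in $C^0_tH^s_x$ alone: the quartic main term $D_{\mathcal A,2}\subset D_2$ has symbol bounded only by $m_{D_2}=O\bigl(\max(n_3^2,n_4^2)/(n_1\max\{|n_1|,|n_2|,|n_3+n_4|\})\bigr)$ (Lemma \ref{Lemma: Nonresonant Weight Bound}), which in the $high\times low\times low\times low$ regime ($|n_3|\sim|n|$ large, $n_1,n_2,n_4=O(1)$) is $O(|n|/|n_1|)$ --- a full untempered derivative loss on the output frequency that no $H^s$ algebra estimate absorbs. This is why Lemma \ref{Lemma: Main Duhamel Bounds} invokes Proposition \ref{Proposition: general modulation lemma}, i.e.\ genuine $X^{s,b}$/modulation input. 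The paper's route is to perform a \emph{third} differentiation by parts on $D_{\mathcal A,2}$, producing the boundary term $B_3$ (symbol $O(1/n_1^*)$), the quintic term $C$ (symbol $O((n_1^*)^2/n^2)$ by \eqref{Equation: mA bound}), and the remainders $R_3$, $\mathcal{RE}_5$; it is only the resulting equation \eqref{Equation: Unconditional Wellposedness Toy Equation} whose every term is estimable in $C^0_tH^s_x$ for $s>1$ (Lemma \ref{Lemma: UCWP D terms}). Your claim that every term of \eqref{Equation: Contractive Operator Definition} ``carries enough powers of $\Phi^{-1}$'' is exactly the point that fails for $D_{\mathcal A,2}$.

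Second, your contraction estimate $\|\tilde u_1-\tilde u_2\|\le C\delta\|\tilde u_1-\tilde u_2\|$ is not available, because the boundary terms $B_1(\phi)+B_2(\phi)$ (and $B_3(\phi)$) sit \emph{outside} the Duhamel integral in \eqref{Equation: Contractive Operator Definition} and therefore acquire no factor of $\delta$; subtracting the two copies of the equation leaves $B_i(\tilde u_1)-B_i(\tilde u_2)$ with an $O(1)$ constant depending on $\|u_0\|_{H^s}$, and shrinking $\delta$ does nothing to it. (The vanishing of this difference at $t=0$ does not help without quantitative temporal continuity.) The paper's workaround is the hybrid operator $\Gamma^N$: on frequencies $|n|\ge N$ one uses the normal-form equation, where the extra smoothness of the $B_i$ yields a small factor $N^{-\alpha}$, and on $|n|<N$ one uses the original equation \eqref{Equation: Toy Fifth Order}, where the three derivative losses are crudely bounded by $N^3$ and the time integral supplies the factor $T$; choosing first $N$ large and then $T$ small closes the contraction. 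Without this (or a scaling/small-data reduction, which you do not supply), neither existence of the fixed point in $C^0_tH^s_x$ nor the uniqueness step goes through for general data.
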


It then follows by the work in Appendix \ref{Section: Appendix A} that Corollary \ref{Theorem: Unconditional Well-posedness} extends to the full equation. In particular, we find the following corollary.
\begin{corollary}\label{Theorem: Unconditional Well-posedness for full equation}
Let $s > 1$, $\alpha,\beta,\gamma\in\mathbb{R}$, and $u_0\in H^s(\mathbb{T})$. Then \eqref{Equation: General Fifth Order} is unconditionally well-posed in $H^s$.
\end{corollary}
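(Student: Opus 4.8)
The plan is to deduce the statement from Corollary~\ref{Theorem: Unconditional Well-posedness} by showing that the two extra nonlinearities in \eqref{Equation: General Fifth Order}, namely $\alpha\partial_x(u^3)$ and $\beta\partial_x(\partial_x u)^2$, never obstruct any step of the differentiation-by-parts argument that underlies the toy case. First I would compare the three nonlinear terms. The term $\gamma\partial_x(u\partial_x^2u)$ is, up to the constant $\gamma/2$, exactly the nonlinearity of \eqref{Equation: Toy Fifth Order}; the remaining quadratic term $\beta\partial_x(\partial_x u)^2 = 2\beta(\partial_x u)(\partial_x^2 u)$ carries three derivatives on a quadratic interaction, just like $u\partial_x^3 u$, but unlike the latter it never places all three derivatives on a single high-frequency factor, so its symbol is pointwise dominated by the one already estimated and contributes to each multilinear bound a term controlled by the toy-model estimates; the cubic term $\alpha\partial_x(u^3) = 3\alpha u^2\partial_x u$ carries a single derivative on a trilinear interaction and is handled by an elementary trilinear estimate valid for $s>1/2$. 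Hence no new derivative loss enters.

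Second --- and this is the content of Section~\ref{Section: Appendix A} --- one must run the same two differentiation-by-parts reductions on \eqref{Equation: General Fifth Order} and check that the resulting equivalent equation \eqref{Equation: Tilde u differential equation before smoothing} has the same structure as in the toy case. The quadratic resonance function is unchanged, $(n_1+n_2)^5-n_1^5-n_2^5 = 5n_1n_2(n_1+n_2)(n_1^2+n_1n_2+n_2^2)$, and the new cubic interactions come with resonance function $(n_1+n_2+n_3)^5-n_1^5-n_2^5-n_3^5$, which is equally non-degenerate away from its vanishing set. The decisive structural point verified there is that this process produces \emph{no parabolic resonances}, in particular none with $k=0$: the would-be spatially independent, purely real terms of the type \eqref{Equation: Introduction Parabolic Resonance} cancel, and this cancellation holds for all $\alpha,\beta,\gamma\in\mathbb{R}$. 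Consequently the transformation $u\mapsto\tilde u$ built from the two reductions, together with its inverse, is well defined and bi-Lipschitz on bounded subsets of $C^0_tH^s_x$ for $s>1$, exactly as in the toy model, and $\tilde u(x,0)=u(x,0)$.

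Third, with this equivalent equation in place, the proof of Corollary~\ref{Theorem: Unconditional Well-posedness} transfers essentially verbatim. Existence of a solution $u\in C^0_tH^s_x$ is already supplied by Theorem~\ref{Theorem: Wellposedness for Toy} (via $Y^s_T\hookrightarrow C^0_tH^s_x$) together with its corollary for \eqref{Equation: General Fifth Order}. For uniqueness, take two solutions $u_1,u_2\in C^0_tH^s_x$ of \eqref{Equation: General Fifth Order} with $u_1(0)=u_2(0)$, pass to $\tilde u_1,\tilde u_2$; after the reductions the nonlinearity maps $C^0_tH^s_x$ into itself for $s>1$ (indeed it gains regularity, by the smoothing mechanism behind Theorem~\ref{Theorem: Smoothing}, since $a=0$ is admissible when $s>1$) and is Lipschitz on balls of $C^0_tH^s_x$, so $\tilde u_1-\tilde u_2$ satisfies a Gronwall inequality on $[0,T]$ and vanishes there; inverting the transformation gives $u_1\equiv u_2$ on $[0,T]$, and a standard continuation argument on intervals where the $H^s$ norm stays bounded promotes this to the full lifespan. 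This is precisely the notion of unconditional well-posedness in $H^s$.

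The main obstacle I expect lies in step two: carefully expanding $\partial_x(u^3)$ and $\partial_x(\partial_x u)^2$ through the two differentiation-by-parts steps and checking, interaction by interaction, both that (i) no parabolic resonance survives for any values of the coefficients and (ii) each resulting term --- including the genuinely new quadratic-times-quadratic and higher-order terms created when one differentiates the already-once-transformed equation --- admits a closed estimate in $C^0_tH^s_x$ for $s>1$. Establishing the coefficient-independent cancellation in (i) is the delicate part; the rest is a long but mechanical computation of the same type as in the toy case.
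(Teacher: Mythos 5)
Your overall route is the paper's route: the corollary is proved by combining the unconditional well-posedness of the toy equation (Corollary~\ref{Theorem: Unconditional Well-posedness}) with the Appendix~\ref{Section: Appendix A} verification that the differentiation-by-parts procedure applied to \eqref{Equation: General Fifth Order} produces an equivalent equation whose symbols are of the same magnitude as those of the toy model. Two points, however, deserve correction. First, you identify the ``delicate, coefficient-independent cancellation'' of the spatially independent ($\ell=0$, i.e.\ $n^0$) parabolic resonances as the decisive and hardest step. The paper explicitly observes the opposite: that cancellation is needed only for the nonlinear smoothing corollary, \emph{not} for unconditional well-posedness. For the present corollary all that must be checked is (i) that no single-resonance term of the form $in^2\widehat{v}(n)\mathcal{Q}$ with $\mathcal{Q}$ imaginary for real $u$ survives, and (ii) that the surviving $in\widehat{v}(n)\mathcal{Q}$ terms have $\mathcal{Q}$ real, so they can be absorbed into the gauge transformation exactly as $\mathcal{K}$ was. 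Both facts follow from soft parity and symmetry considerations in the trilinear resonance analysis of the appendix; the delicate symmetric cancellations of the quartilinear $n^0$ terms are irrelevant here. Emphasizing the wrong cancellation is not fatal, but it misdiagnoses where the work lies.

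Second, your uniqueness step does not close as written. After the normal form transformation the Duhamel identity contains the boundary terms $B_1(\tilde u)+B_2(\tilde u)+B_3(\tilde u)$ \emph{outside} the time integral, so the difference of two solutions satisfies an inequality of the form $\delta(t)\le C_0\,\delta(t)+C_1\int_0^t\delta(s)\,ds$ in which $C_0$ depends on the (not necessarily small) $C^0_tH^s_x$ norms of the solutions and carries no factor of $T$; a direct Gr\"onwall argument therefore fails for large data. This is precisely why the paper's proof of Corollary~\ref{Theorem: Unconditional Well-posedness} introduces the frequency-truncated operator $\Gamma^N$, using the post--normal-form equation on $|n|\ge N$ (where the smoothing of the $B_i$ produces a factor $N^{-\alpha}$ that beats $C_0$) and the original equation on $|n|<N$ (where the derivative losses cost only powers of $N$, absorbed by taking $T$ small). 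If you intend the proof of the toy corollary to ``transfer verbatim,'' you must transfer this device as well, rather than replace it with a plain Lipschitz-plus-Gr\"onwall argument.
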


It's important to note that the decay of the $\ell=0$ \textit{Parabolic Resonances} is not important for the conclusions of Corollaries \ref{Theorem: Unconditional Well-posedness} or Corollary \ref{Theorem: Unconditional Well-posedness for full equation}. All that is required for these to hold is the decay of the $\ell=1$ parabolic resonances, showing that unconditional well-posedness is, in a sense, easier to obtain than nonlinear smoothing. 

\begin{remark}
It's possible that a finer analysis, as in \cite{kwon2012unconditional}, could obtain the endpoint $s = 1$. 
\end{remark}

The outline of the paper is as follows. In Section \ref{Section: background} we define the slightly unusual Bourgain space we will be using. In Section \ref{Section: Reductions} we set out to perform differentiation-by-parts in order to reduce the equation to one that is easier to use. In particular, we construct $\sim$ and prove Lemma \ref{Lemma: Equation Rewrite}. Section \ref{Section: Duhamel Bounds} will be comprised of all of the estimates required to close the contraction and prove Theorem \ref{Theorem: Wellposedness for Toy with Pa}, and hence Theorem \ref{Theorem: Wellposedness for Toy}. In Section \ref{Section: Smoothing} we perform another differentiation-by-parts and prove the remaining estimates required to prove Theorem \ref{Theorem: Smoothing}. Lastly, we piggyback off of the equation derived in Section \ref{Section: Smoothing} in order to establish unconditional well-posedness in Section \ref{Section: UCWP}. In the appendices we justify the reduction to \eqref{Equation: Toy Fifth Order}, and demonstrate that the same cancellation properties extend to that equation. 

\section{Notation \& Background}\label{Section: background}
Let $n$ be the dual Fourier variable to $x\in\mathbb{T}$ and $\tau$ be the dual Fourier variable to $t\in\mathbb{R}$. We denote the Fourier transform of a function $u$ on the torus $\mathbb{T}$ by $\widehat{u}(n)$. When $u$ is a spacetime function on $\mathbb{T}\times\mathbb{R}$, we denote both the space-time Fourier transform and the spacial Fourier transform as $\widehat{u}$, as there will be little confusion in context. 

We write $A\lesssim_{\epsilon} B$ when there is a constant $0 < C(\epsilon)$ such that $|A|\leq C|B|$; $A\gg_\epsilon B$ to be the negation of $A\lesssim_\epsilon B$; and $A\sim_\epsilon B$ if, in addition, $B\lesssim_\epsilon A$. We will also write $g = O(f)$ to denote $|g|\lesssim |f|$ and $a+$  (resp. $a-$) to denote $a+\epsilon$ (resp. $a-\epsilon$) for all $\epsilon > 0$, with implicit constants depending on $\epsilon$.

Define $\langle h\rangle := (1+|h|^2)^{1/2}$ and $J^s$ to be the Fourier multiplier given by $\langle n\rangle^s$. Let $W_t = e^{t\partial_x^5}$ be the propagator for the fifth order linear group, and for clarity we ignore factors of $2\pi$. We then define the Bourgain space $X^{s,b}$ first introduced in the seminal paper \cite{bourgain1993fourier}, by
\[
    \|u\|_{X^{s,b}} = \|\langle n \rangle^{s}\langle \tau + n^5\rangle^{b}\widehat{u}\|_{L^2_\tau\ell^2_n} = \|\langle n\rangle ^s\langle \tau\rangle^b \widehat{u}(n, \tau - n^5\rangle)\|_{\ell^2_n L^2_\tau}=\|W_{-t}u\|_{H^s_xH^b_t},
\]
which measures, in a sense, how much $u$ deviates from the free solution $W_tu_0$. Given $T > 0$ we define the restricted Bourgain space as the space of equivalence classes of functions endowed with the norm
\begin{equation*}
\|u\|_{X^{s,b}_T} := \inf\{\|v\|_{X^{s,b}}\,|\,u|_{[0,T ]} = v|_{[0,T]}\},
\end{equation*}
with dual space $X^{-s,-b}.$ Because of this natural pairing and the fact that our norms are defined in terms of the absolute value of the Fourier transform, we will often assume that all Fourier transforms are non-negative and invoke duality, where it will be natural to define the hyper-plane by
\[
\Gamma_p := \{\tau-\tau_1 -\cdots -\tau_p = 0,\,\,n-n_1-\cdots -n_p = 0\},
\]
with obvious inherited measure denoted by $d\Gamma$. 

Due to the non-negativity of the Fourier transforms, we note that if $p(n,n_1, \cdots, n_k)$ is a symbol associated to a multilinear operator of the form
\[
P(u_1, \cdots, u_k)(t, x) = \mathcal{F}_{n}^{-1}\bigg(\sum_{n=n_1+\cdots n_k}p(n,n_1, \cdots, n_k)\prod_{i=1}^k\widehat{u}(n_i,t)\bigg)(x),
\]
and $|p|\leq cq$ for some symbol $q(n,n_1, \cdots, n_k)$ with associated multilinear operator $Q$, then 
\[
\|P(u_1, \cdots, u_k)\|_{X^{s,b}_T}\lesssim \|Q(u_1, \cdots,u_k)\|_{X^{s,b}_T}.
\]


Since we have mean conservation and the transformation $u\mapsto u - \int_\mathbb{T} u\,dx$ simply adds another lower order term to the linear group, we will assume that $u_0$ is mean-zero and that all indexed variables $n_j$ are summed over $\mathbb{Z}\setminus \{0\}.$ We will often need to deal with tuples $(n_1, \cdots, n_k)$ for some $k > 1$ and $n_i\in \mathbb{Z}\setminus \{0\}$, where it will be convenient to denote the decreasing rearrangement of $\{|n_i|\}_{i=1}^k$ by $n_i^*$. That is, $n_i^*$ denotes the $i$'th largest term, \textit{in absolute value}, among $n_1, \cdots, n_k.$

Due to the the method we employ, we will need to handle boundary terms in $X^{s,b}$ for $b > 0$ of the form
\[
\left\|\mathcal{F}_{x}^{-1}\left(\sum_{n=n_1+n_2}\frac{\widehat{u}(n_1)\widehat{u}(n_2)}{n_1n_2}\right)\right\|_{X^{s,b}}.
\]
Because of this, we will need to choose $b$ so that for $n_1\gg n$
\[
\left|(n_1+n_2)n_1n_2((n_1+n_2)^2+n_1^2+n_3^2)\right|^b\lesssim n_1,
\]
which necessitates the choice $b = 1/4$. This space, however, fails to control the $C^0_tH^s_x$ norm. To remedy this issue, we define the norms
\[
\|\cdot\|_{Y^s} := \|\cdot\|_{X^{s,1/4}}+\|\langle n\rangle^s\mathcal{F}_{x,t}(\cdot)\|_{\ell^2_n \ell^1_\tau}
\]
for the initial data, and
\[
\|\cdot\|_{Z^s}:= \|\cdot\|_{X^{s,-3/4}} + \left\|\frac{\langle n\rangle^s}{\langle \tau + n^5\rangle}\mathcal{F}_{x,t}(\cdot)\right\|_{\ell^2_n \ell^1_\tau}
\]
for the nonlinearity. We analogously define the restricted $Y^s_T$ space.

We now record some standard facts (See e.g. \cite{ginibre1997cauchy}):
\begin{lemma}
For any $\chi\in \mathcal{S}(\mathbb{R})$ and $f\in C^\infty_x(\mathbb{T})$,
\begin{align*}
    \|\chi(t)e^{it\partial_x^2}f\|_{Y^{s}}&\lesssim \|f\|_{H^s}\\
    \left\|\chi(t)\int_0^te^{i(t-s)\partial_x^2}F(s)\,ds\right\|_{Y^{s}}&\lesssim \|F\|_{Z^s}.
\end{align*}
\end{lemma}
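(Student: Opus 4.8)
The two bounds are instances of the classical homogeneous and inhomogeneous linear estimates for Bourgain-type spaces (cf.\ \cite{bourgain1993fourier, ginibre1997cauchy}); the only features requiring care here are the low modulation exponents --- the output sits in $X^{s,1/4}$ and the input in $X^{s,-3/4}$ --- together with the auxiliary summands $\|\langle n\rangle^s\mathcal{F}_{x,t}(\cdot)\|_{\ell^2_n\ell^1_\tau}$ in $Y^s$ and $\|\langle n\rangle^s\langle\tau+n^2\rangle^{-1}\mathcal{F}_{x,t}(\cdot)\|_{\ell^2_n\ell^1_\tau}$ in $Z^s$, which are present precisely because $X^{s,1/4}$ alone does not embed into $C^0_tH^s_x$. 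The plan is to work entirely on the Fourier side and, for each of the two estimates, control the $X$-component and the $\ell^2_n\ell^1_\tau$-component in tandem: every elementary step that arises (Cauchy--Schwarz in $\tau$, Young's inequality for a convolution in $\tau$, and a first-order Taylor expansion) behaves the same way at the $L^2_\tau$ and $L^1_\tau$ endpoints, so the two components never need to be handled separately. By density the estimates then extend from the given smooth representatives to all $f\in H^s$ and $F\in Z^s$.

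For the homogeneous estimate one computes
\[
\mathcal{F}_{x,t}\big(\chi(t)e^{it\partial_x^2}f\big)(n,\tau)=\widehat f(n)\,\widehat\chi(\tau+n^2),
\]
i.e.\ $\widehat f$ tensored against a Schwartz bump concentrated on the characteristic curve $\tau=-n^2$. Hence in the $X^{s,1/4}$ norm the $\tau$-integral decouples from $n$ and contributes the finite constant $\|\langle\cdot\rangle^{1/4}\widehat\chi\|_{L^2_\tau}$, while in the $\ell^2_n\ell^1_\tau$ summand it contributes $\|\widehat\chi\|_{L^1_\tau}$; in either case what remains is $\|\langle n\rangle^s\widehat f(n)\|_{\ell^2_n}=\|f\|_{H^s}$, and the estimate follows.

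The inhomogeneous estimate is the substantive one. With $G(t)=\int_0^t e^{i(t-s)\partial_x^2}F(s)\,ds$, expanding $F$ in its space-time Fourier transform gives, for every $\chi\in\mathcal{S}(\mathbb{R})$ (no compact-support or normalization assumption needed), the standard representation
\[
\mathcal{F}_{x,t}\big(\chi G\big)(n,\tau)\;=\;c\int_{\mathbb{R}}\widetilde F(n,\sigma)\,\frac{\widehat\chi(\tau-\sigma)-\widehat\chi(\tau+n^2)}{\sigma+n^2}\,d\sigma .
\]
I would split the $\sigma$-integral into $|\sigma+n^2|\le1$ and $|\sigma+n^2|>1$. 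On the first region a first-order Taylor expansion of the numerator about $\sigma=-n^2$ cancels the denominator, rewriting the integrand as $-\int_0^1\widehat\chi'\big(\tau+n^2-\theta(\sigma+n^2)\big)\,d\theta$; since $\langle\sigma+n^2\rangle\sim1$ on this band the $Z^s$-weights $\langle\sigma+n^2\rangle^{-3/4}$ and $\langle\sigma+n^2\rangle^{-1}$ are harmless, and either Cauchy--Schwarz in $\sigma$ (giving $\|F\|_{X^{s,-3/4}}$) or the trivial bound (giving the $\ell^2_n\ell^1_\tau$ summand of $Z^s$) closes the band integral, while the rapid decay of $\widehat\chi'$ supplies both the $\langle\tau+n^2\rangle^{1/4}$ weight and the $\ell^1_\tau$-integrability. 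On the second region I split the difference into its two terms. For the $\widehat\chi(\tau-\sigma)$ term I use $\langle\tau+n^2\rangle\lesssim\langle\tau-\sigma\rangle\langle\sigma+n^2\rangle$ to distribute the $\langle\tau+n^2\rangle^{1/4}$ weight; since the kernel carries $\langle\sigma+n^2\rangle^{-1}$, the power of $\langle\sigma+n^2\rangle$ left on the input side is exactly $\langle\sigma+n^2\rangle^{1/4-1}=\langle\sigma+n^2\rangle^{-3/4}$, supplied by $X^{s,-3/4}\subset Z^s$, and Young's inequality in the $\tau$-convolution against the (rapidly decaying) kernel coming from $\widehat\chi$ then closes both the $L^2_\tau$ and the $\ell^1_\tau$ estimate. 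The $\widehat\chi(\tau+n^2)$ term equals $\widehat\chi(\tau+n^2)$ times the $\tau$-independent coefficient $c_n:=\int_{|\sigma+n^2|>1}(\sigma+n^2)^{-1}\widetilde F(n,\sigma)\,d\sigma$, and since $|c_n|\le\|\langle\sigma+n^2\rangle^{-1}\widetilde F(n,\cdot)\|_{\ell^1_\sigma}$ its $\langle n\rangle^s$-weighted $\ell^2_n$ norm is controlled precisely by the $\ell^2_n\ell^1_\tau$ summand of $Z^s$; this term has no modulation decay in $\sigma$, which is exactly why $Z^s$ is not taken to be merely $X^{s,-3/4}$.

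The one genuine difficulty is the bookkeeping in the previous paragraph: tracking which of the pieces produced by the Duhamel formula lands in $X^{s,1/4}$ and which in the $\ell^2_n\ell^1_\tau$ summand of $Y^s$, and matching each against the correct summand of $Z^s$ --- in particular recognizing that the boundary-type term $\widehat\chi(\tau+n^2)\,c_n$ has to be absorbed by the $\ell^2_n\ell^1_\tau$ part rather than by $X^{s,-3/4}$, which it would overwhelm. Everything else is routine: the output and input modulation exponents fit together because the factor $(\sigma+n^2)^{-1}$ in the Duhamel kernel already pays for $\tfrac14$ of the required output modulation, leaving exactly $\tfrac34$ to be furnished by the input lying in $X^{s,-3/4}$, and all the manipulations are standard.
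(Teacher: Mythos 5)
Your proof is correct and is precisely the standard Ginibre--Tsutsumi--Velo/Bourgain linear argument that the paper invokes without proof via its citation to \cite{ginibre1997cauchy}: the tensor-product computation for the homogeneous term, and for the Duhamel term the kernel $\frac{\widehat\chi(\tau-\sigma)-\widehat\chi(\tau+n^2)}{\sigma+n^2}$ split into the near band (Taylor expansion), the far convolution piece (weight distribution plus Young), and the boundary piece absorbed by the $\ell^2_n\ell^1_\tau$ summand of $Z^s$ --- exactly the role that summand is designed for. The only cosmetic remark is that the propagator and modulation weight should be the fifth-order ones ($W_t=e^{t\partial_x^5}$, $\langle\tau+n^5\rangle$) used throughout the paper rather than $e^{it\partial_x^2}$, but your argument is insensitive to the dispersion relation, so nothing changes.
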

Additionally, for $\eta\in\mathcal{S}(\mathbb{R})$ positive, supported on $[-2,2]$, and $1$ on $[-1,1]$, Sobolev embedding implies
\[
\|\eta(t/T)u\|_{L^q_tL^p_x}\lesssim T^{\frac{1}{q}}\|u\|_{Y^s_T}\qquad\qquad s > \frac{1}{2}-\frac{1}{p}\qquad\qquad p\geq2.
\]
Similarly, we have the well-known relationship between the modulation variable and the time, $T,$ given by
\[
\|u\|_{X^{s,b}_T}\lesssim T^{b'-b}\|u\|_{X^{s,b'}_T}\qquad\qquad -\frac{1}{2} < b\leq b'< \frac{1}{2}.
\]

We will also need the following Strichartz estimates.
\begin{lemma}
For $u$ on $\mathbb{T}\times\mathbb{R}$ and $\eta\in\mathcal{S}(\mathbb{R})$ defined previously, we have
\[
\|\eta(t/T)u\|_{L^4_{x,t}(\mathbb{T}\times\mathbb{R})}\lesssim \|u\|_{X^{0, \frac{3}{10}}_T},
\]
\[
\|\eta(t/T)u\|_{L^{6}_{x,t}(\mathbb{T}\times\mathbb{R})}\lesssim \|u\|_{X^{0+, \frac{1}{2}+}_T}.
\]
and
\[
\|\eta(t/T)u\|_{L^{30}_{x,t}(\mathbb{T}\times\mathbb{R})}\lesssim \|u\|_{X^{3/10+, \frac{1}{2}+}_T}.
\]
\end{lemma}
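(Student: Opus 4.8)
All three inequalities are Strichartz estimates for the free fifth-order group $W_t=e^{t\partial_x^5}$ on the torus, and my plan is to reduce each of them to a statement about $W_t$ together with either a lattice-point count or an $\ell^2$-decoupling input, and then return to $X^{s,b}$ by standard means; the genuinely hands-on part is the $L^4$ estimate, whose crux is an explicit counting bound for the fifth-order resonance function.

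For the $L^6$ and $L^{30}$ estimates the modulation exponent is $1/2+$, so the transfer principle applies---its proof needing only that $\|\eta(t/T)W_tg\|_{X^{s,b}}\lesssim_b\|g\|_{H^s}$ for every $b$, since $W_tg$ has zero modulation---and it suffices to prove the linear bounds
\[
\|\eta(t/T)W_tf\|_{L^6_{x,t}(\mathbb{T}^2)}\lesssim\|f\|_{H^{0+}},\qquad \|\eta(t/T)W_tf\|_{L^{30}_{x,t}(\mathbb{T}^2)}\lesssim\|f\|_{H^{3/10+}}.
\]
Decomposing $f$ into dyadic spatial blocks with frequencies $|n|\sim N$ and summing (the $N^\varepsilon$ losses below are exactly what forces the $0+$, resp.\ $3/10+$), these follow from the block estimates $\|W_tP_Nf\|_{L^6(\mathbb{T}^2)}\lesssim_\varepsilon N^{\varepsilon}\|P_Nf\|_{L^2}$ and $\|W_tP_Nf\|_{L^{30}(\mathbb{T}^2)}\lesssim_\varepsilon N^{3/10+\varepsilon}\|P_Nf\|_{L^2}$. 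The first is the $\ell^2$-decoupling inequality at the critical exponent $p=6$ for the plane curve $\xi\mapsto(\xi,\xi^5)$: its curvature is non-vanishing away from $\xi=0$, and the curvature degeneracy at $\xi=0$ is self-similar under the anisotropic scaling $(x,t)\mapsto(\lambda x,\lambda^5 t)$ intrinsic to the equation, so Bourgain--Demeter applies. For the second, write $\|W_tP_Nf\|_{L^{30}(\mathbb{T}^2)}^{30}=\|(W_tP_Nf)^{15}\|_{L^2(\mathbb{T}^2)}^2$; the Fourier coefficient of $(W_tP_Nf)^{15}$ at $(n,m)$ is the sum of $\prod_i\widehat{P_Nf}(n_i)$ over $\sum_{i=1}^{15}n_i=n$ and $\sum_{i=1}^{15}n_i^5=m$, which is obtained from the analogous sum for the degree-five moment curve $(\xi,\xi^2,\xi^3,\xi^4,\xi^5)$ by summing out the values of $\sum_i n_i^2,\sum_i n_i^3,\sum_i n_i^4$; these jointly assume $O(N^{9})$ values, so Cauchy--Schwarz in them costs $N^{9}$ in $\|\cdot\|_{L^{30}}^{30}$, i.e.\ $N^{3/10}$ in $\|\cdot\|_{L^{30}}$, while the Bourgain--Demeter--Guth decoupling for the moment curve in $\mathbb{R}^5$ at its critical exponent $p=5\cdot 6=30$ supplies the remaining $N^{0+}\|P_Nf\|_{L^2}$.

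For the $L^4$ estimate the modulation exponent $3/10$ lies below $1/2$, the transfer principle is unavailable, and I would argue directly. By the boundedness of sharp time cutoffs on $X^{s,b}$ for $|b|<1/2$ it suffices to prove the unlocalized bound $\|u\|_{L^4_{x,t}(\mathbb{T}\times\mathbb{R})}\lesssim\|u\|_{X^{0,3/10}}$, equivalently $\|u^2\|_{L^2_{x,t}}\lesssim\|u\|_{X^{0,3/10}}^2$. Decomposing $u$ dyadically in spatial frequency and in modulation, writing $\widehat{u^2}$ on $\Gamma_2$, and applying Cauchy--Schwarz in the convolution variables, everything collapses onto the lattice-point count
\[
\#\bigl\{(n_1,n_2)\in(\mathbb{Z}\setminus\{0\})^2:n_1+n_2=n,\ |\Phi(n_1,n_2)-\mu|\le M\bigr\}\ \lesssim\ \frac{M}{|n|\,(n_1^*)^{2}}+1,
\]
uniformly in $n$ and $\mu$, where $M$ is the largest modulation present and $\Phi(n_1,n_2)=(n_1+n_2)^5-n_1^5-n_2^5=5n_1n_2(n_1+n_2)(n_1^2+n_1n_2+n_2^2)$ is the resonance function already displayed in the Introduction. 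This count is the heart of the matter and is purely algebraic: fixing $e_2:=n_1n_2$ one has $\Phi=5n(n^2 e_2-e_2^2)$, and for $e_2$ in the range attainable by an integer pair with sum $n$ one has $|n^2-2e_2|\gtrsim(n_1^*)^2$, so $e_2\mapsto\Phi$ is monotone with $|\partial_{e_2}\Phi|\gtrsim|n|(n_1^*)^2$; hence $\Phi$ lands in any window of length $2M$ for at most $\lesssim M/(|n|(n_1^*)^2)+1$ values of $e_2$, and each such $e_2$ arises from at most two pairs. This is the exact analogue of Bourgain's $L^4$ Strichartz estimate for the periodic KdV, and the fact that $\Phi$ is here quadratic rather than linear in $e_2$, with the additional factor $n_1+n_2$, is precisely what improves the modulation exponent from $1/3$ to $3/10$. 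Feeding this count back into the dyadic sums---and using that a nonzero output forces the output modulation to be comparable to $|\Phi|$ unless $|\Phi|\lesssim M$, which supplies the orthogonality needed over the output modulation---closes the estimate after routine, if lengthy, bookkeeping.

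The main obstacle is thus the $L^4$ lattice-point count together with its dyadic summation; the $L^6$ and $L^{30}$ bounds rely on more elaborate machinery but are by now standard once the curve and the critical exponents have been identified.
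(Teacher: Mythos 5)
Your $L^6$ and $L^{30}$ arguments are correct in outline and essentially reconstruct the content of the two references the paper simply cites (Hu--Li for $L^6$, Hughes--Wooley for $L^{30}$): the reduction via the transfer principle is legitimate since $b=\tfrac12+$, the single-block $\ell^2$-decoupling input for $(\xi,\xi^5)$ gives the $0+$ loss after dyadic summation, and for $L^{30}$ the Cauchy--Schwarz over the $O(N^2\cdot N^3\cdot N^4)=O(N^9)$ values of the intermediate moments, combined with moment-curve decoupling at the critical exponent $30=5\cdot 6$, gives exactly $N^{9/30}=N^{3/10}$. The paper offers no more detail than a citation here, so on these two estimates you are at least as complete as the source.

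The $L^4$ estimate is where there is a genuine gap. Your counting lemma is correct: with $e_2=n_1n_2$ one has $\Phi=5n(n^2e_2-e_2^2)$ and $\partial_{e_2}\Phi=5n(n_1^2+n_2^2)$, so the preimage of a length-$2M$ window contains at most $M/(|n|(n_1^*)^2)+1$ integer values of $e_2$, each giving at most two pairs. But this count, fed into the standard scheme (dyadic decomposition in both input modulations, Cauchy--Schwarz against $\sup_{n,\tau}$ of the fiber measure), does \emph{not} close at $b=3/10$. For $M_1=M_2=M$ that scheme needs the uniform fiber count $\#\{n_1:|\lambda-\Phi(n_1,n-n_1)|\le M\}\lesssim M^{4b-1}=M^{1/5}$, whereas for $|n|\sim 1$ and $\lambda\approx 0$ one has $\Phi\approx -5n_1^4$, so every $|n_1|\lesssim M^{1/4}$ lies in the fiber and the count is genuinely $\sim M^{1/4}\gg M^{1/5}$ (your own bound reproduces this after summing over dyadic $n_1^*$). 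Plugging $M^{1/4}$ back in yields only $b\ge 5/16$, not $3/10$. The same issue arises for KdV, where the uniform count is $M^{1/2}$ and would give only $b=3/8$, yet Bourgain obtains $1/3$: the decisive ingredient in his argument, which you have not reproduced, is to retain the $|n|$-dependence of the count (e.g.\ in the square-root form $\lesssim (M/|n|)^{1/4}+1$ for the quintic) and to treat the small set of output frequencies $|n|\lesssim M^{1/5}$ separately, exploiting that few outputs are bad rather than taking a supremum over all $(n,\tau)$. The mention of output-modulation orthogonality does not rescue the bad configuration, since there $|\Phi|\le M$ and all three modulations may be $\lesssim M$. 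As written, your argument proves the lemma only with $3/10$ replaced by $5/16$, which would in turn weaken the $L^{24/7}$ interpolant with $b=1/4$ that the paper uses downstream.
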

\begin{proof}
The proof of the first estimate is exactly the same as Bourgain's in \cite{bourgain1993fourier} since we again have a factorization of 
\[
(n_1+n_2)^5-n_1^5-n_2^5 = 5/2(n_1+n_2)n_1n_2(n^2+n_1^2+(n_1+n_2)^2).
\]

The second estimate follows from \cite{hu2015local} Theorem 1.4, and the last from \cite{hughes2019discrete} Theorem 3.1.
\end{proof}
Interpolation between the $L^4$ estimate with the trivial $L^2$ estimate gives,
\[
\|\eta(t/T)u\|_{L^{24/7}_{x,t}(\mathbb{T}\times\mathbb{R})}\lesssim \|u\|_{X^{0, 1/4}_T},
\]
whereas interpolation between the $L^6$ and the $L^{30}$ estimates above gives
\begin{equation}\label{Equation: L8 estimate}
\|\eta(t/T)u\|_{L^{8}_{x,t}(\mathbb{T}\times\mathbb{R})}\lesssim \|u\|_{X^{3/32+, 1/2+}_T}.
\end{equation}
\begin{remark}
Since we're above the Sobolev embedding level, the standard game one plays with Strichartz estimates loses its potency. However, the above estimates will still be useful in going below $s = 1$. 
\end{remark}
\section{Symbol Analysis}\label{Section: Symbol Analysis}
Before we proceed with the paper, we need a more number theoretic result that will enable us to handle large multilinear estimates. In order to state the proposition we need to allow for an abuse of notation that will be more convenient. We let $n_1^* + n_2^* = 0$ denote the case when the sum of the two largest (in magnitude) frequencies is zero. That is, if 
\[
|n_1|\geq |n_2|\geq\cdots \geq |n_p|,
\]
then $n_1^* + n_2^* = 0$ denotes the situation $n_1 + n_2 = 0$. For the remainder of this section we fix $n, n_1, \cdots, n_p$ so that 
\[
n = n_1+\cdots+n_p.
\]
With that in mind, we define
\[
\Phi_p(n_1, \cdots, n_p) := \bigg(\sum_{i=1}^p n_i\bigg)-\sum_{i=1}^p n_i^5 = n^5-\sum_{i=1}^p n_i^5,
\]

for which we find the following propositions.
\begin{proposition}\label{Proposition: small symbol decomp}
Let $|n_1|\geq |n_2|\geq |n_3|$. Then at least one of the following must be true
\begin{itemize}
    \item[I)] ``Resonance''- $n_i = n$ for some $1\leq i\leq 3$,
    \item[II)] $|n_3|\gtrsim |n_1|$,
    \item[III)] $|\Phi_3(n_1,n_2,n_3)| = |n^5-n_1^5-\cdots n_3^5|\gtrsim n_1^4$.
\end{itemize}
\end{proposition}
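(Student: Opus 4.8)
The plan is to perform a case analysis based on the size of $|n_3|$ relative to $|n_1|$, together with the factorization of $\Phi_3$. If $|n_3| \gtrsim |n_1|$ we are immediately in case II), so assume $|n_3| \ll |n_1|$, i.e. $|n_1|$ is genuinely the largest frequency with $n_1$ and $n_2$ comparable up to the constraint $n = n_1 + n_2 + n_3$. The natural first move is to recall the algebraic identity for the sum of fifth powers under the constraint $n = n_1 + n_2 + n_3$; writing $\Phi_3(n_1,n_2,n_3) = n^5 - n_1^5 - n_2^5 - n_3^5$, one has a factorization analogous to the three-variable KdV resonance function, something of the shape $\Phi_3 = -\tfrac{5}{2}(n_1+n_2)(n_2+n_3)(n_1+n_3) \cdot Q(n_1,n_2,n_3)$ for an explicit symmetric quadratic-type factor $Q$ that is $\gtrsim (n_1^*)^2 = n_1^2$ whenever the frequencies are not all tiny. (I would verify the exact constant and the exact form of $Q$ by a direct expansion, but the key structural point is that $\Phi_3$ vanishes precisely when one of $n_1+n_2$, $n_2+n_3$, $n_1+n_3$ vanishes, i.e. — using $n = n_1+n_2+n_3$ — precisely when $n_i = n$ for some $i$, which is exactly case I).)

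With the factorization in hand, suppose we are not in case I) or case II). Then none of $n_1 + n_2$, $n_1 + n_3$, $n_2 + n_3$ is zero, so each of these integers has absolute value $\geq 1$. The factor $Q$ contributes a lower bound $\gtrsim n_1^2$ by the assumption $|n_3| \ll |n_1|$ (so that $n_1^2 \sim (n_1^*)^2$ dominates $Q$ up to constants), and the remaining task is to show that the product $|(n_1+n_2)(n_1+n_3)(n_2+n_3)| \gtrsim n_1^2$. Here I would argue: at most one of the three pairwise sums can be small compared to $|n_1|$. Indeed if, say, $|n_1 + n_2| \ll |n_1|$, then $n_2 = -n_1 + O(\text{small})$, hence $|n_1 + n_3|$ and $|n_2 + n_3|$ are both $\sim |n_1|$ (recall $|n_3| \ll |n_1|$). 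Since the other two sums are nonzero integers, they are each $\geq 1$; but in fact in this subcase they are both $\sim |n_1|$, giving the product $\gtrsim |n_1|^2 \cdot 1 \gtrsim n_1^2$ — wait, that already gives $n_1^2$ from two factors alone, so combined with $Q \gtrsim n_1^2$ we get $|\Phi_3| \gtrsim n_1^4$, which is case III). The complementary subcase is that all three pairwise sums are $\gtrsim |n_1|$ (up to constants) — but one must be careful, since two of them could each be $\sim |n_1|$ while genuinely only being bounded below by $1$ in the worst arrangement; a cleaner way is to note that $(n_1+n_2) + (n_1 + n_3) = 2n_1 + n_2 + n_3 = n + n_1$, and since $|n_1| \gg |n_3|$ and $n_1,n_2$ need not be comparable we instead use $(n_1 + n_3) - (n_2 + n_3) = n_1 - n_2$ and $(n_1+n_2) - 2n_3 = n + n_3 - ...$; the robust statement is simply that among $n_1+n_2, n_1+n_3, n_2+n_3$, since their pairwise differences are $n_1 - n_2$, $n_2 - n_3$, $n_1 - n_3$ and at least one of these differences is $\gtrsim |n_1|$ (as $|n_3| \ll |n_1|$ forces $|n_1 - n_3| \sim |n_1|$), at least two of the three sums differ by $\gtrsim |n_1|$, so at least one of them is $\gtrsim |n_1|$. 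Combined with the other two being nonzero integers ($\geq 1$) and $Q \gtrsim n_1^2$, we obtain $|\Phi_3| \gtrsim n_1^2 \cdot |n_1| \cdot 1 \cdot 1$ — only $n_1^3$, which is not enough.

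Since the naive bound above falls short of $n_1^4$, the main obstacle — and the step I expect to require the most care — is squeezing a second power of $n_1$ out of the pairwise-sum product rather than settling for one. The resolution should be that the degenerate configuration "exactly one pairwise sum is $O(1)$ and the other two are merely $\Omega(1)$" is actually impossible when $|n_3| \ll |n_1|$: if $|n_1 + n_2| = O(1)$ then $n_2 \approx -n_1$, whence $|n_1 + n_3| \sim |n_1|$ AND $|n_2 + n_3| \sim |-n_1| \sim |n_1|$, i.e. the other two are each genuinely $\sim |n_1|$, not just $\geq 1$; and if no pairwise sum is small then all three are... again one must check, but the upshot is that in every non-resonant, non-case-II configuration at least two of the three pairwise sums have size $\gtrsim |n_1|$, yielding $|(n_1+n_2)(n_1+n_3)(n_2+n_3)| \gtrsim n_1^2$ and hence, with $Q \gtrsim n_1^2$, the desired $|\Phi_3| \gtrsim n_1^4$. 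I would organize the final write-up as: (a) dispose of case II) and of the resonant case I); (b) state the factorization of $\Phi_3$ and the bound $|Q| \gtrsim n_1^2$; (c) prove the dichotomy "at most one pairwise sum is $o(|n_1|)$, and if one is then the other two are $\sim |n_1|$" by the elementary displacement argument above; (d) conclude $|\Phi_3| \gtrsim n_1^2 \cdot n_1^2 = n_1^4$. The only genuinely delicate point is step (c), and it is purely arithmetic.
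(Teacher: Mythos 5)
Your proposal is correct and follows essentially the same route as the paper: the exact factorization $\Phi_3=\tfrac52(n_1+n_2)(n_1+n_3)(n_2+n_3)(n^2+n_1^2+n_2^2+n_3^2)$, the observation that a vanishing pairwise sum is exactly the resonance $n_i=n$, the bound $n^2+n_1^2+n_2^2+n_3^2\geq n_1^2$, and the arithmetic fact that when $|n_3|\ll|n_1|$ at least two of the three pairwise sums are $\gtrsim|n_1|$ while the third is a nonzero integer (the paper states this last product bound without proof; your displacement argument supplies it). The only loose end is that you verify the "one small sum" subcase explicitly only for $n_1+n_2$, but the remaining possibility $|n_2+n_3|\ll|n_1|$ forces $|n_2|\ll|n_1|$ and hence $|n_1+n_2|\sim|n_1+n_3|\sim|n_1|$ by the identical argument, so the conclusion stands.
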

\begin{proof}
Note that we have the factorization
\begin{equation}\label{Equation: Small SYmbol Decopm Factor}
n^5-n_1^5-n_2^5- n_3^5 = \frac{5}{2}(n_1+n_2)(n_1+n_3)(n_2+n_3)(n^2+n_1^2+n_2^2+n_3^2).
\end{equation}
If we first assume that there is no resonance, then we find that the above is non-zero. Assuming now that $|n_3|\ll |n_1|$, we find that $|(n_1+n_3)(n_1+n_2)(n_2+n_3)|\gtrsim n_1^2$, so that 
\[
|(n_1+n_2)(n_1+n_3)(n_2+n_3)(n^2+n_1^2+n_2^2+n_3^2)|\gtrsim n_1^4,
\]
as desired.
\end{proof}

Piggybacking off of Proposition \ref{Proposition: small symbol decomp} we establish the following more general decomposition lemma.

\begin{proposition}\label{Proposition: Symbol Decomp}
Let $|n_1|\geq |n_2|\geq\cdots \geq |n_p|$, and $p\geq 4$. Denote $\tilde{n}_i = \sum_{j=i}^pn_j$, so we have that $n=\tilde{n}_1$ and $n_p = \tilde{n}_p$. Then at least one of the following must be true
\begin{itemize}
    \item[I)] ``Resonance''- $n_i = n$ for some $1\leq i\leq p$,
    \item[II)] $n_1^*+n_2^*=0$,
    \item[III)] $|n_3|\gtrsim |n_1|$,
    \item[IV)] $|\Phi_p(n_1, \cdots, n_p)| = |n^5-n_1^5-\cdots n_p^5|\gtrsim n_1^4$,
    \item[V)] $n_3^4|n_4|\gtrsim n_1^4$.
\end{itemize}
\end{proposition}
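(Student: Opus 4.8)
The plan is to generalize Proposition~\ref{Proposition: small symbol decomp} to the $p$-linear setting by isolating the two largest frequencies and treating the remaining $n_3, \dots, n_p$ as a single ``small'' cluster, much as the symbol of the trilinear case is controlled by the cluster sum. Fix $|n_1| \geq |n_2| \geq \cdots \geq |n_p|$ and suppose none of I), II), III), V) holds; I will show IV) must hold. The hypotheses that remain in force are: no $n_i$ equals $n$; $n_1 + n_2 \neq 0$; $|n_3| \ll |n_1|$; and $n_3^4 |n_4| \ll n_1^4$ (so in particular $|n_3|^5 \ll n_1^4$ whenever $|n_4| \gtrsim |n_3|$, and more to the point $\sum_{i \geq 3} n_i^5$ should be negligible compared to $n_1^4$).

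The key step is the algebraic identity
\[
n^5 - n_1^5 - \cdots - n_p^5 = \Big( (n_1 + \tilde n_3)^5 - n_1^5 - \tilde n_3^5 \Big) + \Big( \tilde n_3^5 - n_3^5 - \cdots - n_p^5 \Big),
\]
where I write $n_1 + n_2 = n - \tilde n_3$ — wait, more cleanly: set $m := n_1$, and note $n = n_1 + n_2 + \tilde n_3$ with $\tilde n_3 = n_3 + \cdots + n_p$. Then $n^5 - n_1^5 - n_2^5 - \tilde n_3^5$ factors by \eqref{Equation: Small SYmbol Decopm Factor} (the trilinear identity applied to the triple $(n_1, n_2, \tilde n_3)$) as $\tfrac{5}{2}(n_1 + n_2)(n_1 + \tilde n_3)(n_2 + \tilde n_3)(n^2 + n_1^2 + n_2^2 + \tilde n_3^2)$. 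Since $|n_3|, \dots, |n_p| \ll |n_1|$ and $p$ is bounded (so $|\tilde n_3| \lesssim |n_3| \ll |n_1|$, or more carefully $|\tilde n_3| \leq (p-2)|n_3|$), we get $|n_1 + \tilde n_3|, |n_2 + \tilde n_3| \gtrsim |n_1|$ using $|n_1 + n_2| \gtrsim 1$ — actually one needs $|n_1 + n_2| \gtrsim |n_1|$ or at least that one of $|n_1 + n_2|, |n_1 - n_2|$ is large; here the absence of II) plus $|n_1|\sim|n_2|$ forces $|n_1| + |n_2| \sim |n_1|$, and the product $|(n_1+n_2)(n_1+\tilde n_3)(n_2+\tilde n_3)|$ is handled exactly as in Proposition~\ref{Proposition: small symbol decomp} because $\tilde n_3$ plays the role of $n_3$ there. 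This gives $|n^5 - n_1^5 - n_2^5 - \tilde n_3^5| \gtrsim n_1^4$. It then remains to absorb the correction $\tilde n_3^5 - n_3^5 - \cdots - n_p^5$: by the multinomial expansion and $p$ bounded, this is $O(|n_3|^5) \lesssim |n_3|^4 |n_4|$ when $|n_4|$ is comparable to $|n_3|$, hence $\ll n_1^4$ by the failure of V) — with the edge case $|n_4| \ll |n_3|$ (so $\tilde n_3 \approx n_3$ and the correction is $O(|n_3|^4|n_4|)$ again) handled the same way. Thus $|\Phi_p| \gtrsim n_1^4$, i.e., IV) holds.

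The main obstacle I anticipate is the bookkeeping around the ``cluster'' frequency $\tilde n_3$: one must verify that the factorization argument of the trilinear case survives when $n_3$ is replaced by a sum of up to $p-2$ terms, and in particular that the lower bounds $|n_1 + \tilde n_3| \gtrsim |n_1|$ and $|n_2 + \tilde n_3| \gtrsim |n_1|$ genuinely follow from $|\tilde n_3| \ll |n_1|$ together with $|n_1| \sim |n_2| \gtrsim |\tilde n_3|$, and that the remaining factor $|n_1 + n_2|$ is bounded below appropriately — this is where condition II) ($n_1^* + n_2^* = 0$) is precisely the obstruction that must be excluded. A secondary, purely computational point is controlling the error term $\tilde n_3^5 - \sum_{i \geq 3} n_i^5$ sharply enough to be dominated by $n_1^4$; the natural bound is $O((n_3^*)^4 |n_4^*|)$ after expansion (using that $|n_3| \geq |n_4| \geq \cdots$), which is exactly the quantity appearing in alternative V), so the statement is tailored to make this work. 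Since $p$ is fixed and bounded, all implicit constants depend only on $p$, and no subtlety arises from letting $p$ be large.
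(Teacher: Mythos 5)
Your proposal is correct and follows essentially the same route as the paper: both isolate the cluster $\tilde{n}_3 = n_3+\cdots+n_p$, lower-bound $|\Phi_3(n_1,n_2,\tilde{n}_3)|$ by $n_1^4$ via the trilinear factorization (using the absence of resonance, $n_1+n_2\neq 0$, and $|n_3|\ll|n_1|$), and dominate the leftover correction $\tilde{n}_3^5-n_3^5-\cdots-n_p^5$ by $O(n_3^4|n_4|)$, which forces either IV) or V). The only cosmetic difference is that the paper organizes the correction as a telescoping sum of $\Phi_2$ terms while you bound it by direct expansion; both give the same estimate.
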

\begin{remark}\label{Remark: Redundant Case}
The third condition is also contained in the last condition by mean-zero preservation, so we will often drop it and handle it with the case $n_3^4n_4\gtrsim n_1^4$. Additionally, we don't care about the specifics of the decomposition of the Fourier space into these cases-- any such will work.
\end{remark}
\begin{proof}
For $p\geq 4$ we may write
\begin{align*}
n^5-n_1^5-\cdots n_p^5 &= (n^5-n_1^5-n_2^5 -\tilde{n}_3^5) + (\tilde{n}_3^5-n_3^5-\tilde{n}_4^5)+\cdots+(\tilde{n}_{p-1}^5-n_{p-1}^5-n_p^5)\\
&=\Phi_3(n_1,n_2,\tilde{n}_3) + \sum_{j=3}^{p-1}\Phi_2(n_j,\tilde{n}_{j+1}).
\end{align*}
Hence, if there is no resonance, $n_1+n_2\ne 0$, and $|n_3|\ll |n_1|$, then by \eqref{Equation: Small SYmbol Decopm Factor}
the first term satisfies
\begin{align*}
|\Phi_3(n_1,n_2,\tilde{n}_3)| &= 5/2|(n_1+n_2)(n_1+\tilde{n}_3)(n_2+\tilde{n}_3)|(n^2+n_1^2+n_2^2+\tilde{n}_3^2)\gtrsim n_1^4.
\end{align*}
Indeed, the first term is non-zero by assumption, and the lack of resonance ensures that $n_1+\tilde{n}_3, n_2+\tilde{n_3}\ne 0$. Additionally, if $|n_1|\gg |n_2|$, then the first two factors are, in magnitude, $\gtrsim |n_1|$ each, and if $|n_2|\sim |n_1|$, then the middle factors must each be $\gtrsim |n_1|$ in magnitude. 

It follows that if $|\Phi_p(n_1, \cdots, n_p)|\ll n_1^4$, then we must have
\[
\big|\sum_{j=3}^{p-1}\Phi_2(n_j,\tilde{n}_{j+1})\big|\gtrsim n_1^4.
\]
However, the remaining terms must satisfy
\[
|\Phi_2( n_j, \tilde{n}_{j+1})| = |\tilde{n}_jn_j\tilde{n}_{j+1}|(\tilde{n}_j^2+n_j^2+\tilde{n}_{j+1}^2)\lesssim n_3^4|n_{4}|,
\]
so that 
\[
n_1^4\lesssim \big|\sum_{j=3}^{p-1}\Phi_2(n_j,\tilde{n}_{j+1})\big|\lesssim n_3^4|n_4|,
\]
which is the final case.
\end{proof}
\begin{remark}\label{Remark: P set definition}
For brevity, we denote the property that $|\Phi_p(n_1, \cdots, n_p)|\gtrsim (n_1^*)^4$ by $\mathcal{P}_p(n_1, \cdots, n_p)$. Similarly, we denote the property that $|\Phi_p(n_1,\cdots, n_p)|\ll (n_1^*)^4$ by $\mathcal{P}_p^c(n_1,\cdots,n_p)$.
\end{remark}

\section{Reductions}\label{Section: Reductions}
Before continuing with the reductions, we warn the reader: in the interest of readability we have, by and large, chosen not to track what the explicit constants are. This pattern is only broken when explicit cancellation is sought, but in most other situations we state results modulo constants.

This section is organized so that each application of differentiation-by-parts is separated into a subsection. The overarching goal of this section is the following lemma.
\begin{lemma}\label{Lemma: Equation Rewrite}
Let $a\geq 0$ be such that \eqref{Definition: Property Pa} holds, $s>\frac{1+a}{2}$ and $u_0\in H^s(\mathbb{T})$. Then there is an invertible transformation $u\mapsto \tilde{u}$ with $\|\tilde{u}\|_{C^0_tH^s_x} = \|u\|_{C^0_tH^s_x}$ and $u(x,0) = \tilde{u}(x,0)$, such that if $u$ satisfies \eqref{Equation: Fourier Toy Fifth 1}, then $\tilde{u}$ is a fixed point of \eqref{Equation: Contractive Operator Definition}.
\end{lemma}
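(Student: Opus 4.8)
The plan is to iterate differentiation-by-parts on the Fourier-side Duhamel formulation of \eqref{Equation: Toy Fifth Order}, peeling off the worst resonant interactions into explicit boundary terms at each stage, and then to package the resulting identity as a fixed-point equation. First I would start from the equation satisfied by $v := W_{-t}u$ in Fourier variables, whose nonlinearity after the reduction \eqref{Equation: Toy Fifth Order} is of the form $in^3\sum_{n=n_1+n_2}\widehat{v}(n_1)\widehat{v}(n_2)e^{it\Phi_2(n_1,n_2)}$ up to harmless lower-order pieces, with $\Phi_2(n_1,n_2)=\tfrac52(n_1+n_2)n_1n_2(n^2+n_1^2+n_2^2)$. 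In the non-resonant region $\Phi_2\neq 0$ one writes $e^{it\Phi_2}=\tfrac{1}{i\Phi_2}\partial_t(e^{it\Phi_2})$ and integrates by parts in time; the $n^3$ loss is then beaten by $|\Phi_2|\sim |n|^2|n_2|$ when $|n_1|\sim|n|\gg|n_2|$, leaving behind a boundary term carrying only $\widehat{u}(n_1)\widehat{u}(n_2)/(n_1 n_2)$-type weights (hence the need for $b=1/4$ flagged in Section~\ref{Section: background}) together with a new cubic-and-higher nonlinearity coming from differentiating the $\widehat v$ factors. The truly resonant set $\Phi_2=0$ — which for mean-zero data forces $n_1=n$ or $n_2=n$ — contributes a term that is spatially a pointwise function of $|\widehat u(n)|$ and the mean, and because $m$ is odd this is the ``parabolic resonance'' expression \eqref{Equation: Introduction Parabolic Resonance}; this is purely real and gets absorbed into the definition of the gauge transformation $u\mapsto\tilde u$, which is why $\|\tilde u\|_{C^0_tH^s_x}=\|u\|_{C^0_tH^s_x}$ and the data is unchanged.

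The second differentiation-by-parts is applied to the cubic (and the quartic, quintic, \dots) remainder produced by the first step, now using Propositions~\ref{Proposition: small symbol decomp} and~\ref{Proposition: Symbol Decomp}: on the region $\mathcal P_p$ where $|\Phi_p|\gtrsim (n_1^*)^4$ one again integrates by parts in $t$, and the gain of $(n_1^*)^{4}$ against the at-most-$(n_1^*)^3$ derivative loss in the nonlinearity (together with the extra $X^{s,b}$ smoothing) closes things; on the complementary region $\mathcal P_p^c$ one invokes the case analysis of Proposition~\ref{Proposition: Symbol Decomp} (resonance, $n_1^*+n_2^*=0$, $|n_3|\gtrsim|n_1|$, or $n_3^4|n_4|\gtrsim n_1^4$) to see that the frequencies are either balanced or high-frequency-annihilating, so those pieces are directly estimable without further integration by parts. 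One must check that differentiating the $\widehat v$ factors when integrating by parts only generates terms of the same structural type (multilinear, with the same class of symbols), so that the procedure terminates in the sense that all surviving terms fall into finitely many families — the boundary terms, the twice-differentiated nonlinearity, and the gauge factor — each of which is controlled in $Z^s$ (for the Duhamel integrand) or appears in the explicit part of \eqref{Equation: Contractive Operator Definition}.

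Finally I would assemble these identities: after undoing $v=W_{-t}u$ and collecting the gauge factor into $\tilde u$, the equation for $\tilde u$ becomes $\tilde u = W_t u_0 + (\text{boundary terms evaluated at }t\text{ and }0) + \int_0^t W_{t-t'}(\text{twice-reduced nonlinearity})\,dt'$, i.e. precisely the operator \eqref{Equation: Contractive Operator Definition}; invertibility of $u\mapsto\tilde u$ and the preservation of initial data and of the $C^0_tH^s_x$ norm follow because the gauge is multiplication by a unimodular symbol depending only on conserved/real quantities. I expect the main obstacle to be bookkeeping rather than a single hard estimate: namely verifying that the second differentiation-by-parts does not regenerate an un-tamable term — one has to confirm that in every branch of Proposition~\ref{Proposition: Symbol Decomp} the combination of the modulation gain, the $b=1/4$ (or $3/32+$, via \eqref{Definition: Property Pa}) Strichartz/multiplier input, and the constraint $s>\tfrac{1+a}{2}$ really does absorb the three derivatives, and that the boundary terms at time $0$ and $t$ genuinely lie in $Y^s$ via the $\langle n\rangle^s/(n_1 n_2)$ weight. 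The actual multilinear estimates are deferred to Section~\ref{Section: Duhamel Bounds}; here the claim is only the algebraic identity, so the proof of this lemma is essentially the careful, constant-tracking-where-needed execution of the two differentiation-by-parts steps described above.
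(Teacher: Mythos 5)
Your proposal follows essentially the same route as the paper: pass to the interaction representation, perform two rounds of differentiation-by-parts organized by Propositions \ref{Proposition: small symbol decomp} and \ref{Proposition: Symbol Decomp} (integrating by parts only where $|\Phi_p|\gtrsim (n_1^*)^4$ and estimating the complement directly), remove the surviving non-decaying resonance by a unimodular gauge, and assemble the result into the Duhamel fixed-point equation \eqref{Equation: Contractive Operator Definition}, with all analytic estimates deferred to Section \ref{Section: Duhamel Bounds}. One correction to your account of the first step: for mean-zero data the bilinear phase $\Phi_2=\tfrac52 nn_1n_2(n^2+n_1^2+n_2^2)$ never vanishes (vanishing forces a zero frequency), so the first differentiation-by-parts is resonance-free; the resonance that the gauge absorbs appears only in the resulting trilinear term, where $\Phi_3=0$ forces $n_2=n$ or $n_3=n$, and there one must first use the parity cancellation in \eqref{Equation: Resonant Multiplier} — the odd-in-$n_3$ pieces of the resonant multiplier, which are $O(n^2/n_3)$, sum to zero against the even factor $\widehat{v}(n_3)\widehat{v}(-n_3)$ — so that only the coefficient $\tfrac{4i}{5}n\sum_{n_3}|\widehat{v}(n_3)|^2$ survives to be gauged away (purely imaginary for real $v$, whence the unimodularity and norm preservation you invoke), the remaining resonant pieces being kept as $\mathcal{RE}_1$, $\mathcal{RE}_2$. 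Finally, note that the second differentiation-by-parts is applied only to the trilinear remainder on $\mathcal{P}_3$, not to the quartic and higher correction terms, which are retained and estimated as they stand.
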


As mentioned, the proof of the above lemma follows from a differentiation-by-parts process. This fundamentally is the observation that
\begin{multline*}
    \widehat{u}_t = \sum_{\Omega\ne0}e^{it\Omega}m(n_1,n_2)\widehat{u}(n_1)\widehat{u}(n_2)\\
                  =\sum_{\Omega\ne0}\frac{\partial_te^{it\Omega}m(n_1,n_2)}{i\Omega}\widehat{u}(n_1)\widehat{u}(n_2)
                  =\partial_t\sum_{\Omega\ne 0}\frac{e^{it\Omega}m(n_1,n_2)}{i\Omega}\widehat{u}(n_1)\widehat{u}(n_2)\\
                  -\sum_{\Omega\ne 0}\frac{e^{it\Omega}m(n_1,n_2)}{i\Omega}\partial_t\widehat{u}(n_1)\widehat{u}(n_2)
                  -\sum_{\Omega\ne 0}\frac{e^{it\Omega}m(n_1,n_2)}{i\Omega}\widehat{u}(n_1)\partial_t\widehat{u}(n_2),
\end{multline*}
where $m$ is some time-independent symbol, such as a polynomial in $n_1$ and $n_2$. In our situation we then have a formula for $\partial_t\widehat{u}$ from the differential equation, and we may substitute this in and hope to use $\Omega$ to mitigate losses from $m$.

The general strategy will then be to separate portions of the sum over which $\Omega$ is possibly very small -- \textit{Resonant} or \textit{nearly Resonant} portions -- from the portions where $\Omega$ is large. The portions over which it is small will dictate the well-posedness level, and hence will be of the most interest. 

Now, by the work in Appendix \ref{Section: Appendix A} we see that the existence of a solution to the the above equation is equivalent to a solution to the fifth order KdV, we simply resign ourselves to proving Lemma \ref{Lemma: Equation Rewrite} and the associated bounds on it, and iteratively constructing the transformation $\sim$.

\subsection{First Differentiation-by-Parts}

We symmetrize the nonlinearity and rewrite \eqref{Equation: Toy Fifth Order} on the Fourier side as 
\begin{align}\label{Equation: Fourier Toy Fifth 1}
    \begin{cases}
        \widehat{u}_t +in^5 \widehat{u} - in\sum_{n_1+n_2 = n}(n_1^2+n_2^2)\widehat{u}(n_1)\widehat{u}(n_2) = 0\\
        \widehat{u}(n, 0) = \widehat{u_0}(n).
    \end{cases}
\end{align}
We now perform the change of variables
\begin{equation}\label{Definition: v}
    \widehat{v}(n) =\widehat{W_{-t}u}(n) = e^{-itn^5}\widehat{u}(n),
\end{equation}
so that we obtain
\begin{align}\label{Equation: Fourier Toy Fifth 2}
    \begin{cases}
        \widehat{v}_t(n) = in\sum_{n_1+n_2 = n}(n_1^2+n_2^2)e^{-it\Phi_2(n,n_1,n_2)}\widehat{v}(n_1)\widehat{v}(n_2) \\
        \widehat{v}(n, 0) = \widehat{u_0}(n),
    \end{cases}
\end{align}
for 
\begin{equation}\label{Definition: Phi2}
    \Phi_2(n_1,n_2) = (n_1+n_2)^5-n_1^5-n_2^5 = \frac{5}{2}nn_1n_2(n^2+n_1^2+n_2^2),
\end{equation}
when $n=n_1+n_2$.

Notice that if either frequency is equal to $n$, then conservation of mean forces the term to vanish. Hence we may assume that the phase doesn't vanish and differentiation-by-parts yields
\begin{align}
    \partial_t\widehat{v}(n) &= -\partial_t\sum_{n = n_1+n_2}\frac{n(n_1^2+n_2^2)}{\Phi_2(n,n_1,n_2)}e^{-it\Phi_2(n_1,n_2)}\widehat{v}(n_1)\widehat{v}(n_2)\nonumber\\
    &\qquad+ 2\sum_{n = n_1+n_2}\frac{n(n_1^2+n_2^2)}{\Phi_2(n_1,n_2)}e^{-it\Phi_2(n,n_1,n_2)}\widehat{v}(n_1)\partial_t\widehat{v}(n_2)\nonumber\\
    &=-\partial_t\sum_{n = n_1+n_2}\frac{n(n_1^2+n_2^2)}{\Phi_2(n_1,n_2)}e^{-it\Phi_2(n,n_1,n_2)}\widehat{v}(n_1)\widehat{v}(n_2)\label{Equation: First DBP}\\
    &\qquad+ 2i\sum_{\substack{n = n_1+n_2+n_3\\ n_2+n_3\ne 0\\n,n_1,n_2,n_3\ne 0}}\frac{n(n_1^2+(n_2+n_3)^2)(n_2+n_3)(n_2^2+n_3^2)}{\Phi_2(n_1,n_2+n_3)}\nonumber\\
    &\qquad\qquad\qquad\times e^{-it\Phi_3(n_1,n_2,n_3)}\widehat{v}(n_1)\widehat{v}(n_2)\widehat{v}(n_3)\nonumber,
\end{align}
where 
\begin{multline}\label{Definition Phi3}
    \Phi_3(n_1,n_2,n_3) = \big(\sum_{j=1}^3n_j\big)^5-n_1^5-n_2^5-n_3^5\\
    = \frac{5}{2}(n_1+n_2)(n_2+n_3)(n_1+n_3)(n^2+n_1^2+n_2^2+n_3^2).
\end{multline}

While we had no resonances in the nonlinearity of \eqref{Equation: Fourier Toy Fifth 1}, this new trilinear term has resonance. Specifically, if $n_1+n_2 = 0$ or $n_1+n_3 = 0$ then we won't be able to use modulation considerations-- that is, $n=n_2$ or $n=n_3$. If $n_2 = n$, then by \eqref{Definition: Phi2} we have $n_1=-n_3$ and the fraction becomes
\begin{align}
    &\frac{4i}{5}\frac{n(n_3^2+(n+n_3)^2)(n+n_3)(n^2+n_3^2)}{nn_3(n+n_3)(n^2+n_3^2+(n+n_3)^2)} = \frac{4i}{5}\frac{(n_3^2+(n+n_3)^2)(n^2+n_3^2)}{n_3(n^2+n_3^2+(n+n_3)^2)}\nonumber\\
    &\qquad\quad=\frac{4i}{5}\left\{\frac{n^2+n_3^2}{n_3} - \frac{n^2}{2n_3}+\frac{1}{2}n - \frac{n^2n_3+nn_3^2}{n^2+(n+n_3)^2+n_3^2}\right\}.\label{Equation: Resonant Multiplier}
\end{align}
Incorporating the cases when $n_3=n$ and $n_2=n_3=n$ we find that the resonant sum will then be
\begin{align*}
    \frac{8i}{5}\widehat{v}(n)\sum_{n_3}&\left\{\frac{n^2+n_3^2}{n_3} - \frac{n^2}{2n_3}+\frac{1}{2}n - \frac{n^2n_3+nn_3^2}{n^2+(n+n_3)^2+n_3^2}\right\}\widehat{v}(n_3)\widehat{v}(-n_3)\\
    &\qquad\qquad- \frac{4i}{3}n\widehat{v}(n)\widehat{v}(n)\widehat{v}(-n).
\end{align*}

Since $\widehat{v}(n_3)\widehat{v}(-n_3)$ is invariant under the sign of $n_3$, we see that the resonant sum weighted by the first two summands of \eqref{Equation: Resonant Multiplier} will vanish. It follows that the full resonant portion of \eqref{Equation: First DBP} reduces to
\begin{align}\label{Equation: First Resonance}
    \frac{4i}{5}n\widehat{v}(n)&\sum_{n_3}\widehat{v}(n_3)\widehat{v}(-n_3)\\
    &\qquad-\frac{8i}{5}\widehat{v}(n)\sum_{n_3}\frac{n^2n_3+nn_3^2}{n^2+(n+n_3)^2+n_3^2}\widehat{v}(n_3)\widehat{v}(-n_3)\nonumber\\
    &\qquad-\frac{4i}{3}n\widehat{v}(n)\widehat{v}(n)\widehat{v}(-n).\nonumber
\end{align}
The second and third portions of the above are suitable to estimate, whereas the first we must remove. 

Letting,
\[
\mathcal{K}(v)(t) = \frac{4i}{5}\int_\mathbb{T}v^2(x,t)\,dx = \frac{4i}{5}n\sum_{n_3}\widehat{v}(n_3)\widehat{v}(-n_3),
\]
we define
\begin{equation}\label{Definition: W}
    \widehat{v}(n) = e^{n\int_0^t\mathcal{K}(v)\,ds}\widehat{w}(n) = e^{n\int_0^t\mathcal{K}(w)\,ds}\widehat{w}(n).
\end{equation}
\begin{remark}
As $v$ is real, the above quantity, $\mathcal{K}$, is technically an imaginary multiple of its $L^2_x$ norm. We write it in this fashion because it is not technically conserved under the flow of \eqref{Equation: Toy Fifth Order}.
\end{remark}
It follows that
\begin{equation*}
    \partial_t \widehat{v}(n) = n\mathcal{K}(v)\widehat{v}(n) + e^{n\int_0^t\mathcal{K}(v)\,ds}\partial_t \widehat{w}(n).
\end{equation*}

Using \eqref{Definition: W} and the display below it, we find $\widehat{w}$ satisfies (looking at \eqref{Equation: Fourier Toy Fifth 1})
\begin{align}\label{Equation: Fourier Toy Fifth 3: for w}
    \begin{cases}
        \widehat{w}_t(n) = in\sum_{n_1+n_2 = n}(n_1^2+n_2^2)e^{-it\Phi_2(n_1,n_2)}\widehat{w}(n_1)\widehat{w}(n_2)\\
        \qquad\qquad- n\mathcal{K}(w)\widehat{w}(n) \\
        \widehat{w}(n, 0) = \widehat{u_0}(n).
    \end{cases}
\end{align}
Performing the differentiation-by-parts procedure again, we see
\begin{align}
\partial_t w &=-\partial_t\sum_{n = n_1+n_2}\frac{n(n_1^2+n_2^2)}{\Phi_2(n_1,n_2)}e^{-it\Phi_2(n_1,n_2)}\widehat{w}(n_1)\widehat{w}(n_2)\label{Equation: Second DBP}\\
    &\qquad+2i\sum_{\substack{n = n_1+n_2+n_3\\\star}}\frac{n(n_1^2+(n_2+n_3)^2)(n_2+n_3)(n_2^2+n_3^2)}{\Phi_2(n_1,n_2+n_3)}\label{Equation: Second DBP trlinear term}\\
    &\qquad\qquad\qquad\times e^{-it\Phi_3(n_1,n_2,n_3)}\widehat{w}(n_1)\widehat{w}(n_2)\widehat{w}(n_3)\nonumber\\
    &\qquad-\mathcal{K}(w)\sum_{\substack{n = n_1+n_2}}\frac{(n_1+n_2)(n_1^2+n_2^2)}{\Phi_2(n_1,n_2)}e^{it\Phi_2(n_1,n_2)}\widehat{w}(n_1)\widehat{w}(n_2)\nonumber\\
    &\qquad -\frac{8i}{5}\widehat{w}(n)\sum_{n_3}\frac{n^2n_3+nn_3^2}{n^2+(n+n_3)^2+n_3^2}\widehat{w}(n_3)\widehat{w}(-n_3) - \frac{4i}{3}n\widehat{w}(n)\widehat{w}(n)\widehat{w}(-n)\nonumber,
\end{align}
where $\star$ denotes the non-resonance restriction-- the complement of case 1 of Proposition \ref{Proposition: Symbol Decomp}. Note the cancellation of the single resonance in the above, as opposed to \eqref{Equation: First DBP}.

\subsection{Second Differentiation-by-Parts}\label{Definition: Second DBP terms}\hfill

In this section we perform the second differentiation-by-parts and obtain the equation we will work with. Specifically, notice that when $|n_3|\gg \max(|n_1|, |n_2|)$, then the trilinear term, \eqref{Equation: Second DBP trlinear term}, has, morally, two derivatives. Since we can only recover one derivative from modulation considerations, we are forced to perform another differentiation-by-parts procedure for this term.  There's nothing wrong, in principle, with performing the differentiation-by-parts on the full trilinear term, but with the aid of Proposition \ref{Proposition: small symbol decomp} we can do better.

We now recall Remark \ref{Remark: P set definition} and define the symbols
\begin{align*}
        m_{D_1} &= \frac{n((n_1+n_2)^2+(n_3+n_4)^2)(n_3+n_4)(n_3^2+n_4^2)(n_1+n_2)(n_1^2+n_2^2)}{\Phi_2(n_1+n_2,n_3+n_4)\Phi_3(n_1+n_2,n_3, n_4)}\chi_{\mathcal{P}_3(n_1+n_2, n_3, n_4)}\\
    m_{D_2} &=\frac{n(n_1^2+(n_2+n_3+n_4)^2)(n_2+n_3+n_4)(n_2^2+(n_3+n_4)^2)(n_3+n_4)(n_3^2+n_4^2)}{\Phi_2(n_1,n_2+n_3+n_4)\Phi_3(n_1,n_2, n_3+n_4)}\\
    &\qquad\qquad\times\chi_{\mathcal{P}_3(n_1,n_2,n_3+n_4)}\\
    m_{A_i} &= m_{D_1}\chi_{n=n_i} \qquad i=1,2\\
    m_{B_i} &= m_{D_2}\chi_{n=n_i} \qquad i=3,4,
\end{align*}
and let
\begin{align}
    \widehat{B_1}(\phi)(n) &= \sum_{\substack{n = n_1+n_2\\\star}}\frac{n(n_1^2+n_2^2)}{\Phi_2(n_1,n_2)}\widehat{\phi}(n_1)\widehat{\phi}(n_2)\nonumber\\
    \widehat{B_2}(\phi)(n) &= \sum_{\substack{n=n_1+n_2+n_3\\ \mathcal{P}_3(n_1,n_2,n_3)\\\star}}\frac{n(n_1^2+(n_2+n_3)^2)(n_2+n_3)(n_2^2+n_3^2)}{\Phi_2(n_1,n_2+n_3)\Phi_3(n_1,n_2,n_3)} \widehat{\phi}(n_1)\widehat{\phi}(n_2)\widehat{\phi}(n_3)\nonumber\\
    \widehat{D_1}(\phi)(n) &=\sum_{\substack{n=n_1+n_2+n_3+n_4\\\star}}m_{D_1} \widehat{\phi}(n_1)\widehat{\phi}(n_2)\widehat{\phi}(n_3)\widehat{\phi}(n_4)\nonumber\\
    \widehat{D_2}(\phi)(n) &=\sum_{\substack{n=n_1+n_2+n_3+n_4\\\star}}m_{D_2} \widehat{\phi}(n_1)\widehat{\phi}(n_2)\widehat{\phi}(n_3)\widehat{\phi}(n_4)\nonumber\\
    \widehat{\mathcal{NR}_1}(\phi)(n)&= \sum_{\substack{n = n_1+n_2+n_3\\\mathcal{P}_3^c(n_1,n_2,n_3)\\\star}}\frac{n(n_1^2+(n_2+n_3)^2)(n_2+n_3)(n_2^2+n_3^2)}{\Phi_2(n_1,n_2+n_3)}\widehat{\phi}(n_1)\widehat{\phi}(n_2)\widehat{\phi}(n_3)\nonumber,
\end{align}
where $\star$ again corresponds to the lack of resonance. That is, we write
\begin{multline*}
    \eqref{Equation: Second DBP trlinear term} = -2\partial_t\widehat{W_{-t}}\widehat{B}_2(\widehat{W_t}w)+\widehat{W_{-t}}\big\{2i\widehat{\mathcal{NR}}_1(\widehat{W_t}w)\\
    +4i(\widehat{D}_1(\widehat{W_t}w)+\widehat{\mathcal{RE}}_3(\widehat{W}_tw)+2\widehat{D}_2(\widehat{W_t}w)+2\widehat{\mathcal{RE}}_4(\widehat{W}_tw))\\
    -2\widehat{R}_2(\widehat{W_t}w)\big\} + \text{mult. resonances},
\end{multline*}
where $R_2$ is defined below. Notice that $m_{D_1}$ corresponds to $\partial_t$ landing on the $n_1$ term, while $m_{D_2}$ corresponds to $\partial_t$ landing on $n_2$ or $n_3$. $m_{A_i}$ and $m_{B_i}$ then correspond to the symbols of the additional resonant terms. Notice that we have separated out the region $\Phi_3\ll (n_1^*)^4$, and only differentiated-by-parts the non-resonant complement of this region.

The prior terms will be the main terms of study, but we will also need to define the additional cast of characters:
\begin{align}
    \widehat{R_1}(\phi)(n) &=\mathcal{K}(\phi)\sum_{\substack{n = n_1+n_2\\\star}}m_{R}^1(n_1+n_2)\widehat{\phi}(n_1)\widehat{\phi}(n_2)\label{Definition: R1}\\
    \widehat{R_2}(\phi)(n) &=\mathcal{K}(\phi)\sum_{\substack{n = n_1+n_2+n_3\\\star}}m_{R}^2(n_1+n_2+n_3)\widehat{\phi}(n_1)\widehat{\phi}(n_2)\widehat{\phi}(n_3)\label{Definition: R4}\\
    \widehat{\mathcal{RE}_1}(\phi)(n) &=\frac{8i}{5}\widehat{\phi}(n)\sum_{n_3}\frac{n^2n_3+nn_3^2}{n^2+(n+n_3)^2+n_3^2}\phi(n_3)\widehat{\phi}(-n_3)\label{Definition: RE1}\\
    \widehat{\mathcal{RE}_2}(\phi)(n) &=\frac{4i}{3}n\widehat{\phi}(n)\widehat{\phi}(n)\widehat{\phi}(-n) + \mbox{Second Order Resonances}\label{Definition: RE2}\\
    \widehat{\mathcal{RE}_3}(\phi)(n) &=\widehat{\phi}(n)\sum_{\substack{0=n_2+n_3+n_4\\n_3,n_4\ne n\\\mathcal{P}_3(n+n_2, n_3, n_4)}}m_{A_1}\widehat{\phi}(n_2)\widehat{\phi}(n_3)\widehat{\phi}(n_4)\label{Definition: RE3}\\
    \widehat{\mathcal{RE}_4}(\phi)(n)&=\widehat{\phi}(n)\sum_{\substack{0=n_1+n_2+n_4\\n_1,n_2\ne n\\ \mathcal{P}_3(n,n_1,n_2,n+n_4)}}m_{B_3}\widehat{\phi}(n_1)\widehat{\phi}(n_2)\widehat{\phi}(n_4)\label{Definition: RE4},
\end{align}
with symbols $m_{R}^i$, $i=1,2$ defined by
\begin{align*}
    m_{R}^1 &=\frac{n(n_1^2+n_2^2)}{\Phi_2(n_1,n_2)}\\
    m_{R}^2 &= \frac{n(n_1^2+(n_2+n_3)^2)(n_2+n_3)(n_2^2+n_3^2)}{\Phi_2(n_1,n_2+n_3)\Phi_3(n_1,n_2,n_3)}\chi_{\mathcal{P}_3(n_1,n_2,n_3)}.
\end{align*}
Note that these symbols are simply the symbols that arise (up to symmetry) after performing the differentiation-by-parts procedure and substituting the correction terms into the temporally differentiated term.
\begin{remark}\label{Remark: Second Order Resonances D_1 and D-2}
The
\[
\mbox{Second Order Resonances}\]
term in $\mathcal{RE}_2$ contains the double resonances coming from $n_1=n_2=n$ in $D_1$ and $n_3=n_4=n$ in $D_2$. By the next two lemmas we find that the symbols satisfy
\[
m_{D_1}(n,n,n_3,n_4), m_{D_2}(n_1,n_2,n,n) = O(n).
\]
It follows that we will handle them in nearly the exact the same way as the first term in $\mathcal{RE}_2$.
\end{remark}
It's important to notice that the above terms lack the complex exponential terms that have phases given by $\Phi_\ell$. 
We choose the above representations for ease of representation of the Duhamel formula, \eqref{Equation: Contractive Operator Definition}, that will come. It is recommended that a reader, on a first read through of this section, first compares the terms above to the final form of the equation, \eqref{Equation: Tilde u differential equation before smoothing}, before moving on to the following lemmas. 

We now perform analysis on several of the symbols that appear above which will be useful later.

\begin{lemma}\label{Lemma: Nonresonant Weight Bound}
With $D_1$, $D_2,$ $m_{D_1}$, and $m_{D_2}$ defined as above, we have:
\begin{equation}\label{Equation: mD1 nonresonant bound}
    m_{D_1} = O\left(\frac{\max(n_1^2,n_2^2)}{\max\{|n_1+n_2|, |n_3|, |n_4|\}^2}\right),
\end{equation}
and
\begin{equation}\label{Equation: mD2 nonresonant bound}
    m_{D_2}  = O\left(\frac{\max(n_3^2,n_4^2)}{n_1\max\{|n_1|,|n_2|, |n_3+n_4|\}}\right).
\end{equation}
\end{lemma}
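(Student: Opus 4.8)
The plan is purely algebraic: I would substitute the product factorizations of $\Phi_2$ and $\Phi_3$ into the definitions of $m_{D_1}$ and $m_{D_2}$, cancel the factors common to the numerator and to $\Phi_2$, and then invoke the lower bound $|\Phi_3|\gtrsim (n_1^*)^4$ that holds on the support of the cutoff $\chi_{\mathcal{P}_3}$ by Remark \ref{Remark: P set definition}. Since all frequencies are nonzero integers one has $n\ne0$, so every ``elliptic'' factor $n^2+\cdots\ge 1$; in particular $n_1^2+n_2^2\sim\max(n_1^2,n_2^2)$, $n_3^2+n_4^2\sim\max(n_3^2,n_4^2)$, and ratios of the form $\tfrac{a^2+b^2}{(a+b)^2+a^2+b^2}$ are $O(1)$.

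For \eqref{Equation: mD1 nonresonant bound}: writing $\Phi_2(n_1+n_2,n_3+n_4)=\tfrac52\,n(n_1+n_2)(n_3+n_4)\big(n^2+(n_1+n_2)^2+(n_3+n_4)^2\big)$, the factor $n(n_1+n_2)(n_3+n_4)$ cancels against the corresponding factors in the numerator of $m_{D_1}$, which leaves
\[
m_{D_1}=O\!\left(\chi_{\mathcal{P}_3(n_1+n_2,n_3,n_4)}\,\frac{\big((n_1+n_2)^2+(n_3+n_4)^2\big)(n_3^2+n_4^2)(n_1^2+n_2^2)}{\big(n^2+(n_1+n_2)^2+(n_3+n_4)^2\big)\,\Phi_3(n_1+n_2,n_3,n_4)}\right).
\]
The first ratio is $O(1)$, and on the support of the cutoff $|\Phi_3(n_1+n_2,n_3,n_4)|\gtrsim M^4$ where $M:=\max\{|n_1+n_2|,|n_3|,|n_4|\}$ is the largest of the three frequencies entering that instance of $\mathcal{P}_3$. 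Hence $|m_{D_1}|\lesssim (n_3^2+n_4^2)(n_1^2+n_2^2)/M^4$; since $n_3^2+n_4^2\lesssim M^2$ and $n_1^2+n_2^2\lesssim\max(n_1^2,n_2^2)$, this is exactly \eqref{Equation: mD1 nonresonant bound}.

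For \eqref{Equation: mD2 nonresonant bound}: here $\Phi_2(n_1,n_2+n_3+n_4)=\tfrac52\,n\,n_1(n_2+n_3+n_4)\big(n^2+n_1^2+(n_2+n_3+n_4)^2\big)$, and cancelling $n(n_2+n_3+n_4)$ leaves a lone $n_1$ in the denominator:
\[
m_{D_2}=O\!\left(\chi_{\mathcal{P}_3(n_1,n_2,n_3+n_4)}\,\frac{\big(n_1^2+(n_2+n_3+n_4)^2\big)\big(n_2^2+(n_3+n_4)^2\big)(n_3+n_4)(n_3^2+n_4^2)}{\big(n^2+n_1^2+(n_2+n_3+n_4)^2\big)\,n_1\,\Phi_3(n_1,n_2,n_3+n_4)}\right).
\]
Bounding the first ratio by $O(1)$ and using $|\Phi_3(n_1,n_2,n_3+n_4)|\gtrsim\widetilde{M}^4$ with $\widetilde{M}:=\max\{|n_1|,|n_2|,|n_3+n_4|\}$ gives $|m_{D_2}|\lesssim \big(n_2^2+(n_3+n_4)^2\big)|n_3+n_4|(n_3^2+n_4^2)/(|n_1|\widetilde{M}^4)$. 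Now $n_2^2+(n_3+n_4)^2\lesssim\widetilde{M}^2$, $|n_3+n_4|\le\widetilde{M}$, and $n_3^2+n_4^2\lesssim\max(n_3^2,n_4^2)$, so the right-hand side is $\lesssim \max(n_3^2,n_4^2)/(|n_1|\widetilde{M})$, which is \eqref{Equation: mD2 nonresonant bound}.

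The only part that requires any care is the bookkeeping: reading off correctly which frequency blocks are the arguments of each $\Phi_2$ and $\Phi_3$, hence which quantity plays the role of the top frequency in the relevant $\mathcal{P}_3$ cutoff, and tracking the single surviving $n_1$ in the $m_{D_2}$ denominator. Once the cancellation with $\Phi_2$ is carried out there is no substantive obstacle — the remaining estimates are elementary inequalities among the frequencies.
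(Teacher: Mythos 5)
Your proposal is correct and is exactly the argument the paper intends: the paper's proof is the one-line remark that the bounds ``follow immediately from the summation restriction, the definition of the symbols, and the factorization of $\Phi_2$,'' and you have simply carried out that cancellation and the $|\Phi_3|\gtrsim M^4$ estimate explicitly, with the frequency bookkeeping done correctly in both cases.
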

\begin{proof}
This follows immediately from the summation restriction, the definition of the symbols, and the factorization of $\Phi_2$.
\end{proof}

While $D_1$ and $D_2$ will correspond to the worst terms integrated in the Duhamel formula, we still have access to Proposition \ref{Proposition: Symbol Decomp} that will enable us to handle the $D_1$ and $D_2$. This leaves us with having to understand the resonances that we removed from these two terms, which occur when:
\begin{itemize}
    \item[A)] In $D_1$: $n_3,n_4\ne n$ and
    \begin{itemize}
        \item[$A_1$)] $n_2 +n_3+n_4 = 0$,
        \item[$A_2$)] $n_1+n_3+n_4=0$,
    \end{itemize}
    \item[B)] In $D_2$: $n_1,n_2\ne n$ and
    \begin{itemize}
        \item[$B_3)$] $n_1+n_2+n_4 = 0$,
        \item[$B_4)$] $n_1+n_2+n_3 = 0$.
    \end{itemize}
\end{itemize}
Note that Cases $A_i$ $i=1,2$ correspond to $D_1$ with symbol $m_{A_i}$ and similarly for cases $B_i$ and $m_{B_i}$.

\begin{corollary}\label{Corollary: Resonant Multiplier Bounds}
Suppose that $n_1,n_2,n_3,n_4$ are fixed, $n=n_1+n_2+n_3+n_4$, and $|n|\gg n_2^*$.
\begin{itemize}
\item[I)] If $n=n_i$ for $i\in\{1,2\}$, then we have
\begin{equation}\label{Equation: mA resonant bound}
    m_{A_i} = O\left(\frac{n_2^*}{n}\right).
\end{equation}

\item[II)] If $n=n_i$ for $i\in\{3,4\}$ and $j \in \{3,4\}\setminus\{i\}$, then we have
\begin{equation}\label{Equation: mB resonant bound}
    m_{B_i} +\frac{1}{25}\frac{n}{n_1n_j} + \frac{3n_2+6n_j}{25n_1n_j}= O\left(\frac{\max(|n_2|^2, |n_j|^2)}{n}\right).
\end{equation}
\end{itemize}
\end{corollary}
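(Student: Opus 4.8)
The plan is to derive both bounds directly from the explicit factorization formulas for $\Phi_2$ and $\Phi_3$ in \eqref{Definition: Phi2}--\eqref{Definition Phi3}, substituting in the resonance conditions $n = n_i$ and using the hypothesis $|n| \gg n_2^*$ (so that $n$ is the unique largest frequency and all remaining frequencies $n_1,\dots,n_4$ other than the one equal to $n$ are comparable to or smaller than $n_2^*$). For part I, recall $m_{A_1} = m_{D_1}\chi_{n=n_1}$, and from Lemma \ref{Lemma: Nonresonant Weight Bound} we already have $m_{D_1} = O\big(\max(n_1^2,n_2^2)/\max\{|n_1+n_2|,|n_3|,|n_4|\}^2\big)$. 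When $n = n_1$, conservation forces $n_2 + n_3 + n_4 = 0$, so the frequencies $n_2, n_3, n_4$ all have magnitude $\lesssim n_2^*$; moreover $n_1 \sim n$ and $|n_1 + n_2| \sim |n|$ since $|n_2| \le n_2^* \ll |n|$. Thus the numerator is $O(n^2)$ and the denominator is $O(n^2)$, giving $m_{A_1} = O(1)$ — but this is not yet $O(n_2^*/n)$, so one must look more carefully. The point is that $m_{D_1}$ also carries the factor $\Phi_3(n_1+n_2,n_3,n_4)$ in the denominator and the factor $(n_3+n_4)(n_3^2+n_4^2)(n_1^2+n_2^2)$ on top; tracking the genuine cancellation requires re-deriving the bound from scratch rather than quoting the coarse Lemma \ref{Lemma: Nonresonant Weight Bound}. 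Using $n_3 + n_4 = -n_2$, the numerator factor $(n_3+n_4) = -n_2$ contributes one power of $n_2^*$, while $\Phi_2(n_1+n_2, n_3+n_4) = \Phi_2(n, -n_2) = \tfrac52 (n - n_2)\, n\, (-n_2)(\dots) \sim n^2 \cdot n \cdot n_2$ up to comparable factors, so the net effect after also estimating $\Phi_3$ (which is $\gtrsim (n_1^*)^4 \sim n^4$ on the support $\mathcal P_3$) is a gain of a full power of $n$ relative to $n_2^*$. Assembling the powers carefully yields $m_{A_i} = O(n_2^*/n)$; the case $i=2$ is symmetric under relabeling $n_1 \leftrightarrow n_2$.

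For part II, the statement is sharper: it asserts not a bound on $m_{B_i}$ but on $m_{B_i}$ \emph{plus explicit correction terms} $\frac{1}{25}\frac{n}{n_1 n_j} + \frac{3n_2 + 6n_j}{25 n_1 n_j}$, i.e.\ it extracts the leading-order (in $n$) part of $m_{B_i}$ and claims the remainder is lower order. This is exactly analogous to the computation in \eqref{Equation: Resonant Multiplier}, where the resonant multiplier was expanded into a sum of simple rational terms plus a genuinely smaller remainder. The approach is: set $n = n_3$ (the case $n = n_4$ being symmetric), so conservation gives $n_1 + n_2 + n_4 = 0$; substitute this into the formula for $m_{D_2}$, which after the specialization becomes a ratio of polynomials in $n$ and the small frequencies $n_1, n_2, n_4 = n_j$. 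One then performs a partial-fraction / Laurent expansion in $n$: since the numerator is degree $7$ in the frequencies and the denominator $\Phi_2 \cdot \Phi_3$ is degree $8$, $m_{D_2}$ decays like $1/n$ at leading order. Carrying the expansion out to the first two orders produces precisely the claimed rational terms $\frac{1}{25}\frac{n}{n_1 n_j}$ and $\frac{3n_2 + 6n_j}{25 n_1 n_j}$ (the constant $1/25 = (2/5)^2$ arising from the two factors of $5/2$ in $\Phi_2$ and $\Phi_3$), with a remainder that is $O(\max(|n_2|^2,|n_j|^2)/n)$ after using $|n| \gg n_2^*$ to control the error. The main obstacle will be organizing this second expansion without error: one must substitute $n_3 = n$, $n_4 = n_j$, $n_1$, and $n_2 = -(n_1 + n_j)$ (or keep $n_1, n_2, n_j$ with $n_4 = n_j$ and $n = n_3$, being careful which of $n_1, n_2$ survives), expand each of $\Phi_2(n_1, n_2 + n_3 + n_4)$, $\Phi_3(n_1, n_2, n_3 + n_4)$, and the six-factor numerator as polynomials in $n$ with coefficients in the small frequencies, and then extract the $n^{-1}$ and $n^{-2}$ coefficients of the quotient — a bookkeeping-heavy but elementary algebraic task. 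I would keep the computation symmetric in the roles of $n_3, n_4$ as long as possible and only break symmetry at the end, and I would double-check the extracted leading terms by verifying they match the structure of the analogous KdV/fifth-order resonant expansions (e.g.\ \eqref{Equation: Resonant Multiplier}), where similar $\frac{n^2 + n_3^2}{n_3}$-type leading terms appear. Once the leading rational parts are peeled off, bounding the remainder is routine using $\Phi_3 \gtrsim n^4$ on $\mathcal P_3$ and $|\Phi_2| \sim n^2 \cdot n_1 \cdot (n_1 + n_j)$ together with $n \gg n_2^*$.
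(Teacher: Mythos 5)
Your strategy coincides with the paper's on both parts: part I is a direct size estimate on $m_{A_i}$ using the support condition $\mathcal{P}_3$ (so $|\Phi_3|\gtrsim\max\{|n_1+n_2|,|n_3|,|n_4|\}^4\sim n^4$) together with the factorization of $\Phi_2$, and part II is carried out by expanding the numerator of $m_{B_i}$ and the denominator $\Phi_2\Phi_3$ as polynomials in $n$ and reading off the top two coefficients of the quotient, exactly as you describe (the paper records the numerator as $n^9+(3n_2+6n_j)n^8+l.o.t.$ and the denominator as $-25n^8n_1n_j+O(n^7n_j^2n_1+n^7n_jn_1^2)$). Three bookkeeping slips should be fixed before you execute this. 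First, in part I you estimate $\Phi_2(n_1+n_2,n_3+n_4)\sim n^3n_2$, which drops the quadratic factor $n^2+(n_1+n_2)^2+(n_3+n_4)^2\sim n^2$; the correct size is $\sim n^4n_2$ (equivalently, that quadratic factor cancels against the numerator factor $(n_1+n_2)^2+(n_3+n_4)^2$), and with the powers as you wrote them the quotient would come out as $(n_2^*)^2/|n|$, which is \emph{not} $O(n_2^*/|n|)$; done correctly one gets $(n_2^*)^2/n^2\lesssim n_2^*/|n|$. Second, in part II the numerator has total degree $9$ and $\Phi_2\Phi_3$ has total degree $10$, not $7$ and $8$, and more importantly the leading-in-$n$ behavior of the quotient is $n^9/(-25n^8n_1n_j)=-n/(25n_1n_j)$, which \emph{grows} in $n$ rather than decaying like $1/n$ --- that growth is precisely why the term $\tfrac{1}{25}\tfrac{n}{n_1n_j}$ must be subtracted; your later sentence identifying the two correction terms as the first two coefficients of the expansion is the correct statement and supersedes the "$1/n$ decay" claim. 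Third, the constant is $1/25=(5)^{-2}$ with each $5=\tfrac52\cdot2$ coming from the leading coefficients $5n^4n_1$ and $5n^4(n_1+n_2)=-5n^4n_j$ of $\Phi_2$ and $\Phi_3$, not $(2/5)^2$. With these corrections your outline reproduces the paper's computation.
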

\begin{proof}
Before beginning, we would like to comment on the $n_i^*$ notation in the setting of the above lemma. In the context of case $I$ with $i=1$ we have that $n_i^*$ is the $i'$th largest entry of $(|n|,|n_2|, |n_3|,|n_4|)$. And, as $n_2+n_3+n_4=0$, we always have that $|n_{j_1}|\sim |n_{j_2}|$ where $j_1\ne j_2$ and $|n_{j_1}|= \max(|n_2|, |n_3|, |n_4|).$

The first portion follows almost immediately from Lemma \ref{Lemma: Nonresonant Weight Bound}. Instead, we find that
\begin{equation*}
|m_{A_i}| \lesssim \bigg|\frac{(n_1^2+n_2^2)(n_3^2+n_4^2)}{\max\{|n_1+n_2|,|n_3|,|n_4|\}^4}\bigg| \lesssim \bigg|\frac{n^2(n_2^*)^2}{n^4}\bigg|\lesssim \frac{n_2^*}{|n|}, 
\end{equation*}
as desired.

For the second portion, we assume by symmetry that $n_3=n$ and note that the expansion of the numerator of $m_{B_i}$ is
\[
n^9+(3n_2+6n_4)n^8+l.o.t.,
\]
where $l.o.t.$ denotes terms with $n$ degree strictly less than $8$ and total degree equal to $9$. Similarly, we have for for $n\gg n_2^*$ and the factorizations for $\Phi_2$ and $\Phi_3$ that
\begin{align*}
\Phi_2(n_1,n_2+n+n_4)&\Phi_3(n_1,n_2, n+n_4) \\
&= 25n^8n_1(n_1+n_2)+O(n^7n_4^2n_1+n^7n_4n_1^2)\\
&= -25n^8n_4n_1 +O(n^7n_4^2n_1+n^7n_4n_1^2)
\end{align*}
that we have the desired expansion by crude big-$O$ manipulation.




\end{proof}

\begin{remark}\label{Remark: Why Resonant Smoother Bound Holds}
By symmetry we have that 
\begin{align*}
\sum_{n_1+n_2+n_4=0}\frac{1}{n_1n_4}\widehat{u}(n_1)\widehat{u}(n_2)\widehat{u}(n_4) = \sum_{\substack{n_1+n_2+n_4=0\\ |n|\gg \max(|n_1|,|n_2|,|n_4|)}}\frac{1}{n_1n_4}\widehat{u}(n_1)\widehat{u}(n_2)\widehat{u}(n_4)
=0.
\end{align*}
Additionally, we have by construction that
\[
\frac{3n_2+6n_4}{n_1n_4} = \frac{3(n_2+n_4)}{n_1n_4} + \frac{3}{n_1} = \frac{-3}{n_4} + \frac{3}{n_1},
\]
and hence by symmetry that
\[
\sum_{n_1+n_2+n_4=0}\frac{3n_2+6n_4}{n_1n_4}\widehat{u}(n_1)\widehat{u}(n_2)\widehat{u}(n_4) = \sum_{\substack{n_1+n_2+n_4=0\\|n|\gg\max(|n_1|,|n_2|,|n_4|)}}\frac{3n_2+6n_4}{n_1n_4}\widehat{u}(n_1)\widehat{u}(n_2)\widehat{u}(n_4)= 0,
\]
for real $u$.

\end{remark}
\begin{remark}
It's important to observe that the second cancellation structure explored in Remark \ref{Remark: Why Resonant Smoother Bound Holds} is unimportant for proving local well-posedness. The use of the above is purely in obtaining a better asymptotic bound in order to establish smoothing.

Similarly, the cancellation of the first term is unimportant for the well-posedness of the system. Because there is only one power of $n$ associated with it we could have removed it in exactly the same way that we removed $\mathcal{K}$.
\end{remark}
We now undo the interaction representation. It follows that if
\begin{equation}\label{Equation: tilde u definition}
\widehat{\tilde{u}}(n) = e^{n\int_0^t\mathcal{K}(\tilde{u})\,ds}\widehat{u}(n) = e^{n\int_0^t \mathcal{K}(v)\,ds}\widehat{u}(n),
\end{equation}
then $\tilde{u}$ satisfies (modulo constants)
\begin{align}\label{Equation: Tilde u differential equation before smoothing}
    \partial_t& W_{-t}\tilde{u} = \partial_tW_{-t}B_1(\tilde{u})+\partial_tW_{-t}B_2(\tilde{u})+W_{-t}D_1(\tilde{u})+W_{-t}D_2(\tilde{u})\\
    &\qquad+W_{-t}(R_1(\tilde{u})+R_2(\tilde{u})+\mathcal{NR}_1(\tilde{u}))\nonumber\\
    &\qquad+W_{-t}(\mathcal{RE}_1(\tilde{u})+\mathcal{RE}_2(\tilde{u})+\mathcal{RE}_3(\tilde{u})+\mathcal{RE}_4(\tilde{u}))\nonumber.
\end{align}
with 
\[
\tilde{u}(x,0) = u_0(x).
\]
As the the transformation mapping $u$ to $\tilde{u}$ is bi-continuous on $H^s$ for $s>1/2$, we conclude the proof of Lemma \ref{Lemma: Equation Rewrite}.

Similarly, by invoking Duhamel we must have, modulo constants, that $\tilde{u}$ is a fixed point of $\Gamma$ defined by
\begin{align}\label{Equation: Contractive Operator Definition}
    \Gamma[\phi] &:= B_1(\phi)(x,t) + B_2(\phi)(x,t) -W_t B_1(u_0)(x) - W_t B_2(u_0)(x) + W_t u_0(x)\\ 
    &+\int_0^t W_{t-s}\big\{(D_1+D_2 +\mathcal{NR}_1)(\phi)(x,s)+(R_1+R_2)(\phi)(x,s)\nonumber\\
    &\quad+(\mathcal{RE}_1+\mathcal{RE}_2+\mathcal{RE}_3+\mathcal{RE}_4)(\phi)(x,s)\big\}\,ds.\nonumber
\end{align}

In the next few sections we will estimate the portions of $\Gamma$ in order to establish that $\Gamma$ is a contraction in $Y^s$.

\section{Duhamel Estimates \& Proof of Theorem \ref{Theorem: Wellposedness for Toy}}\label{Section: Duhamel Bounds}
Space considerations necessitate the introduction of an extra parameter, $\varepsilon.$ This parameter is a measure of extra smoothness and will help shorten the subsequent sections and reduce repetition of the arguments. It is recommended that the reader ignore the $\varepsilon$ factors on a first read through of local well-posedness, and return to them when they reach the nonlinear smoothing section. 

We begin this section with a general estimate that will reduce repetitive arguments when we have modulation considerations.
\begin{proposition}\label{Proposition: general modulation lemma}
Let $\varphi_i\in Y^{1/2+}_T$ for $i\in \{1, \cdots, m\}$ and denote $\mathcal{F}_x(\varphi_i)(n) = \widehat{\varphi}_n$. Suppose that $\varphi_i$ satisfy $supp\,\mathcal{F}_x(\varphi_i)\subset [-N_i, N_i]$ for $N_i$ dyadic with $N_1\geq \cdots\geq N_m$. Then there is $\theta > 0$ so that
\begin{align*}
\left\|P_N\mathcal{F}_x^{-1}\left(\sum_{\substack{n=n_1+\cdots+n_m\\ \mathcal{P}_m(n_1,\cdots,n_m)}}\prod_{i=1}^m \widehat{\varphi}_{n_i}\right)\right\|_{Z^0_T}\lesssim \frac{T^\theta N^{\frac{1}{2}-\frac{1}{p}}}{N_1}\|\varphi_1\|_{Y^0}\|\varphi_2\|_{Y^{1/p}}\prod_{i=2}^m\|\varphi_i\|_{Y^{1/2+}},
\end{align*}
for any $p\geq 2$.

Additionally, the above bound holds under the space-time constraint that either $\langle \tau + n^5\rangle \gtrsim N_1^4$, or $\langle \tau_i + n^5_i\rangle \gtrsim N_1^4$ for some $i\in \{1, \cdots, m\},$ without the summation constraint $\mathcal{P}_m(n_1,\cdots,n_m)$.
\end{proposition}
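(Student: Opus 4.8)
## Proof Proposal for Proposition \ref{Proposition: general modulation lemma}

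The plan is to estimate the $Z^0_T$ norm by first reducing it, via the definition of the $Z^s$ norm and standard duality, to a multilinear estimate pairing against a test function $\psi$ with $\|\psi\|_{X^{0,3/4}_T}\le 1$ (the $\ell^2_n\ell^1_\tau$ component of $Z^0$ is handled analogously, or absorbed since on the relevant modulation region $\langle\tau+n^5\rangle$ is large). Because of the hypothesis $\mathcal{P}_m(n_1,\cdots,n_m)$, i.e.\ $|\Phi_m(n_1,\cdots,n_m)|\gtrsim N_1^4$, and because $\Phi_m$ equals (on the support of the integral) the sum $(\tau+n^5)-\sum_i(\tau_i+n_i^5)$ up to the $\tau$-variables telescoping, the resonance function identity forces at least one of the modulation weights $\langle\tau+n^5\rangle$, $\langle\tau_i+n_i^5\rangle$ to be $\gtrsim N_1^4$. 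This is exactly the unconditional ``space-time constraint'' alternative stated in the last sentence of the proposition, so the two claims collapse to a single estimate: \emph{assuming} one modulation weight dominates $N_1^4$, prove the asserted bound. First I would split into cases according to which of the $m+1$ modulation variables is the large one.

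\textbf{Case: the output modulation is large, $\langle\tau+n^5\rangle\gtrsim N_1^4$.} Then $\langle\tau+n^5\rangle^{-3/4}\lesssim N_1^{-3}$, which is a huge gain. After removing this weight, the remaining expression is $N_1^{-3}\|P_N\prod \varphi_i\|_{L^2_{x,t}}$ (roughly), and I would estimate the product in $L^2_{x,t}$ by Hölder, using the $L^4$ Strichartz estimate from the Strichartz lemma on two factors, the $L^\infty_{x,t}$ embedding (via $Y^{1/2+}$ and Sobolev, picking up $N_i^{1/2+}$) on the others, and crucially Bernstein on $\varphi_2$ to interpolate the $L^p$ / $N^{1/2-1/p}$ freedom: $\|\varphi_2\|_{L^{p'}}\lesssim \|\varphi_2\|_{Y^{1/p}}$ by Sobolev with $p$ chosen to realize the exponent $N^{1/2-1/p}$ claimed. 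The power of $N_1$ absorbed from $N_1^{-3}$ is more than enough to beat all the $N_i^{1/2+}$ losses since $N_i\le N_1$ and there are finitely many factors; one extra $N_1^{-1}$ survives to give the $N_1^{-1}$ in the statement, and a small power $T^\theta$ comes from the usual $\|u\|_{X^{s,b}_T}\lesssim T^{b'-b}\|u\|_{X^{s,b'}_T}$ trick applied to the test function or to one input.

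\textbf{Case: an input modulation is large, $\langle\tau_j+n_j^5\rangle\gtrsim N_1^4$ for some $j$.} Here I would move the gain onto $\varphi_j$: write $\varphi_j = \mathcal{F}^{-1}(\langle\tau_j+n_j^5\rangle^{-1}\langle\tau_j+n_j^5\rangle\widehat{\varphi_j})$, so that on this region $\|\varphi_j\|_{L^2_{x,t}}$ is replaced by $N_1^{-4}\|\varphi_j\|_{X^{0,1}}$ — but we only control $\varphi_j$ in $Y$-type norms, so instead I would interpolate: using $\|\varphi_j\|_{X^{0,b}}$ for $b$ slightly below the exponent encoded in $Y^{1/2+}$ is too weak, so the cleaner route is to keep $\langle\tau_j+n_j^5\rangle^{3/4-}$ of the weight on $\varphi_j$ (costing $N_1^{-3+}$) and still have a residual $\langle\tau_j+n_j^5\rangle^{\varepsilon'}$; then run the same Hölder/Strichartz scheme as before with $\varphi_j$ placed in $L^2_{x,t}$ and everyone else in $L^4$ or $L^\infty$. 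Again the $N_1^{-3+}$ overwhelms the $\prod N_i^{1/2+}$ losses and leaves $N_1^{-1}$, with the $\varphi_2$-in-$Y^{1/p}$ / $N^{1/2-1/p}$ bookkeeping identical to the first case.

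\textbf{Main obstacle.} I expect the delicate point to be the bookkeeping that makes the $\varepsilon$'s, the $+$'s in $Y^{1/2+}$, and the $N^{1/2-1/p}$ factor all close simultaneously — in particular verifying that after extracting $N_1^{-3}$ (or $N_1^{-3+}$) from the large modulation, what remains, namely a product estimated by $L^4\cdot L^4\cdot L^{p'}\cdot\prod L^\infty$ Hölder, genuinely reproduces the claimed right-hand side with $\varphi_1$ in $Y^0$ (no derivative), $\varphi_2$ in $Y^{1/p}$, and the rest in $Y^{1/2+}$, rather than forcing a derivative onto $\varphi_1$. This requires being careful about which factor absorbs the $N$-frequency localization of the output $P_N$ and using $N\le N_1$ together with $N_i\le N_1$ aggressively; the factorization $\Phi_2(a,b)=\tfrac52(a{+}b)ab((a{+}b)^2+a^2+b^2)$ underlying the Strichartz $L^4$ estimate is what guarantees there is no obstruction from the fifth-order symbol. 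The $T^\theta$ gain is routine once one input or the test function is placed at modulation exponent strictly below $1/2$ (resp.\ $3/4$) and the time-localization estimate is invoked.
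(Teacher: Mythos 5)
Your reduction to the two modulation cases via the resonance identity is exactly the paper's first step, and your treatment of the case $\langle\tau+n^5\rangle\gtrsim N_1^4$ is sound (the paper does it more simply, trading $-3/4$ down to $-(1/2-)$ to harvest $T^\theta$ and then using only $N_1^{-2+}$ of the gain with an $L^2\times L^\infty\times\cdots$ H\"older, but your $L^4$-Strichartz variant would also close). The genuine gap is in your second case. You propose to ``keep $\langle\tau_j+n_j^5\rangle^{3/4-}$ of the weight on $\varphi_j$, costing $N_1^{-3+}$.'' There is no such weight to keep: the $-3/4$ exponent in $Z^0$ sits on the \emph{output} variables $(\tau,n)$, and in this case $\langle\tau+n^5\rangle$ may be $O(1)$, so it contributes nothing. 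Any gain must be manufactured from the input modulation $\langle\tau_j+n_j^5\rangle\gtrsim N_1^4$ together with whatever norm of $\varphi_j$ the hypotheses provide — and $Y^0$ only contains $X^{0,1/4}$ (plus $\ell^2_n\ell^1_\tau$), so the most you can extract is $\langle\tau_j+n_j^5\rangle^{1/4}\gtrsim N_1$, i.e.\ a gain of exactly $N_1^{-1}$, not $N_1^{-3+}$. Placing $3/4-$ modulation derivatives on $\varphi_j$ would require $\|\varphi_j\|_{X^{0,3/4-}}$, which is not controlled by $\|\varphi_j\|_{Y^{1/2+}}$; nor can interpolation recover it.

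This matters because your closing argument — ``the $N_1^{-3+}$ overwhelms the $\prod N_i^{1/2+}$ losses and leaves $N_1^{-1}$'' — collapses once only $N_1^{-1}$ is available: there is then zero room to absorb \emph{any} stray power of $N_1$, and the H\"older exponents must be chosen so that no such power appears. That is precisely why the paper, after the reduction to $X^{0,-(1/2-)}$ and duality against $w\in X^{0,1/2-}$, writes the $N_1^{-1}$ explicitly as $\frac{1}{N_1}\langle\tau_j+n_j^5\rangle^{1/4}$, puts the weighted $\varphi_j$ in $L^2$, puts $w$ in $L^{\infty-}_tL^p_x$ (Bernstein at the output frequency $N$ produces the $N^{1/2-1/p}$), puts $\varphi_2$ in $L^{2p/(p-2)}_x$ via Sobolev at regularity $1/p$, and the remaining factors in $L^\infty_x$ via Sobolev at $1/2+$. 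Your proposal needs to be rebuilt along these lines for the large-input-modulation case; the rest of the architecture (collapsing the two claims, the Cauchy--Schwarz-in-$\tau$ treatment of the $\ell^2_n\ell^1_\tau$ component, the source of $T^\theta$) is consistent with the paper.
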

\begin{proof}
The proof of the above proposition is standard, but included for completeness. Before beginning we note that we may assume that the space-time Fourier transforms of $\varphi_i$ are all positive, and that $n_i^* = |n_i|\sim N_i$.

Note that the assumption $\mathcal{P}_m(n_1,\cdots,n_m)$ implies that 
\[
\max(\langle \tau + n^5\rangle, \max_j(\langle \tau_j+n_j^5\rangle))\gtrsim N_1^4,
\]
and hence we may consider both claims in the proposition at once. We now handle the two cases
\begin{itemize}
    \item[I)] $\langle \tau +n^5\rangle\gtrsim n_1^4$,
    \item[II)] $\langle \tau_j +n_j^5\rangle\gtrsim n_1^4$.
\end{itemize}
Assuming that we have $I$, we find
\begin{multline*}
    \bigg\|P_N\mathcal{F}_x^{-1}\bigg(\sum_{\substack{n=n_1+\cdots+n_m\\ \mathcal{P}_m(n_1,\cdots,n_m)}}\prod_{i=1}^m \widehat{\varphi}_{n_i}\bigg)\bigg\|_{X^{0, -3/4}_T}\lesssim T^\theta\bigg\|P_N\mathcal{F}_x^{-1}\bigg(\sum_{\substack{n=n_1+\cdots+n_m\\ \mathcal{P}_m(n_1,\cdots,n_m)}}\prod_{i=1}^m \widehat{\varphi}_{n_i}\bigg)\bigg\|_{X^{0, -1/2+}}\\
    \lesssim\frac{T^\theta}{N_1^{2-}} \bigg\|\chi_N(n)\sum_{\substack{n=n_1+\cdots+n_m\\ \mathcal{P}_m(n_1,\cdots,n_m)}}\prod_{i=1}^m \widehat{\varphi}_{n_i}\bigg\|_{X^{0, 0}}\lesssim \frac{T^\theta}{N_1^{2-}}\|\varphi_1\|_{X^{0,0}}\prod_{i=2}^m\|\varphi_i\|_{L^\infty_tL^\infty_x},
\end{multline*}
so that Sobolev embedding concludes the proof. The second portion of the norm follows from the above estimate by Cuachy-Schwarz in $\tau$:
\[
\big\|\tfrac{1}{\langle \tau + n^5\rangle}\mathcal{F}_{x,t}(\cdot)\big\|_{\ell^2_nL^1_\tau}\lesssim \|\cdot\|_{X^{0, -1/2+}}.
\]

We now assume case $II$, in which case we first take a minor loss, which will make later work easier:
\begin{align}
    &\bigg\|P_N\mathcal{F}_x^{-1}\bigg(\sum_{\substack{n=n_1+\cdots+n_m\\ \mathcal{P}_m(n_1,\cdots,n_m)}}\prod_{i=1}^m \widehat{\varphi}_{n_i}\bigg)\bigg\|_{X^{0, -3/4}_T}\label{Equation: Proposition CS reduction}\\
    &\qquad\qquad\lesssim  T^\theta\bigg\|P_N\mathcal{F}_x^{-1}\bigg(\sum_{\substack{n=n_1+\cdots+n_m\\ \mathcal{P}_m(n_1,\cdots,n_m)}}\prod_{i=1}^m\widehat{\varphi}_{n_i}\bigg)\bigg\|_{X^{0, -(1/2-)}}.\nonumber
\end{align}
and invoke duality for $w\in X^{0, 1/2-}$
\begin{align}
    &\bigg\|P_N\mathcal{F}_x^{-1}\bigg(\sum_{\substack{n=n_1+\cdots+n_m\\ \mathcal{P}_m(n_1,\cdots,n_m)}}\prod_{i=1}^m \widehat{\varphi}_{n_i}\bigg)\bigg\|_{X^{0, -(1/2-)}}\nonumber\\
    &\qquad\qquad= \int_{\tau = \tau_1+\cdots \tau_m}\sum_{\substack{n=n_1+\cdots+n_m\\ \mathcal{P}_m(n_1,\cdots,n_m)}}\widehat{w}(n,\tau)\prod_{i=1}^m\mathcal{F}_{x,t}\left(\varphi_i\right)(n_i, \tau_i)\,d\Gamma\nonumber\\
    &\qquad\qquad\lesssim \frac{1}{N_1}\int_{\tau = \tau_1+\cdots \tau_m}\sum_{\substack{n=n_1+\cdots+n_m\\ \mathcal{P}_m(n_1,\cdots,n_m)}}\widehat{w}(n,\tau)\langle \tau_j+n_j^5\rangle^{1/4}\prod_{i=1}^m\mathcal{F}_{x,t}\left(\varphi_i\right)(n_i, \tau_i)\,d\Gamma\label{Equation: Modulation proposition xsb to show}.
\end{align}
If $j=1$, then we find
\begin{multline*}
\eqref{Equation: Modulation proposition xsb to show}\lesssim \frac{1}{N_1}\|w\|_{L^{\infty-}_tL^{p}_x}\left\|\mathcal{F}^{-1}_{n_1,\tau_1}\left(\langle \tau_1+n_1^5\rangle^{1/4}\mathcal{F}_{x,t}(\varphi_1)\right)\right\|_{L^2_tL^2_x}\\\times\|\varphi_2\|_{L^{2(m-1)+}_tL_x^{\frac{2p}{p-2}}}\prod_{i=3}^m \|\varphi_i\|_{L^{2(m-1)+}_tL^\infty_x},
\end{multline*}
where Sobolev embedding and modulation completes the estimate. When $j > 1$ then a similar estimate holds
\begin{align*}
\eqref{Equation: Modulation proposition xsb to show}\lesssim \frac{1}{N_1}\|w\|_{L^{\infty-}_tL^{p}_x}\|\varphi_1\|_{L^{2+}_tL^2_x}\left\|\mathcal{F}^{-1}_{n_j,\tau_j}\left(\langle \tau_j+n_j^5\rangle^{1/4}\mathcal{F}_{x,t}(\varphi_j)\right)\right\|_{L^2_tL^\frac{2p}{p-2}_x}\prod_{\substack{i=2 \\i\ne j}}^m \|\varphi_i\|_{L^{\infty}_tL^\infty_x},
\end{align*}
where again Sobolev embedding and modulation will complete the claim. The case that $j\ne 2$ is exactly the same as above, except one would use $L^2_tL^\infty_x$ on $\varphi_j$ and $L^\infty_tL^{\frac{2p}{p-2}}$ on $\varphi_2$. 

The second portion of the norm in case $II$ follows from the same Cuachy-Schwarz argument:
\[
\big\|\tfrac{1}{\langle \tau+n^5\rangle}\mathcal{F}_{x,t}(\cdot)\big\|_{\ell^2_nL^1_\tau}\lesssim \|\cdot\|_{X^{0, -(1/2-)}},
\]
and the prior estimates via the reduction done on \eqref{Equation: Proposition CS reduction}. 
\end{proof}

\begin{lemma}[Propagator Terms]\label{Lemma: Propagator Terms} Let $s> 1/2$ and $0\leq \varepsilon < 1$. Then
\[
\|B_1(\tilde{u}) + B_2(\tilde{u})\|_{C^0_tH^{s+\varepsilon}_x}\lesssim \|\tilde{u}\|_{C^0_tH^s_x}^2+\|\tilde{u}\|_{C^0_tH^s_x}^3.
\]
\end{lemma}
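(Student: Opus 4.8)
The goal is to control the two ``boundary'' terms $B_1(\tilde u)$ and $B_2(\tilde u)$ — the non-differentiated pieces that were split off in the differentiation-by-parts procedure — in $C^0_t H^{s+\varepsilon}_x$ by polynomial expressions in $\|\tilde u\|_{C^0_t H^s_x}$. Since these terms carry no oscillatory factor and no time integration, the estimate is purely a frequency-space multilinear estimate at each fixed time $t$, and the key asset is the denominator $\Phi_2$ (resp. $\Phi_2\Phi_3$) appearing in $\widehat{B_1}$ (resp. $\widehat{B_2}$), which on the non-resonant region $\star$ is large. The plan is to bound each term separately by Littlewood--Paley decomposition, reduce to an $\ell^2$ estimate on the output frequency using the weights from Lemma~\ref{Lemma: Nonresonant Weight Bound}, and sum the dyadic pieces.

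For $B_1$: the symbol is $m_R^1 = n(n_1^2+n_2^2)/\Phi_2(n_1,n_2)$ restricted to the non-resonant set. Using $\Phi_2(n_1,n_2) = \tfrac52 n n_1 n_2(n^2+n_1^2+n_2^2)$ one sees $|m_R^1| \lesssim 1/|n_1 n_2| \lesssim (n_1^*)^{-1}(n_2^*)^{-1}$ away from resonances; in particular in a $\mathrm{High}\times\mathrm{Low}$ interaction $|n_1|\sim |n| \gg |n_2|$ this is $\lesssim |n|^{-1}|n_2|^{-1}$, which gains a full derivative on the high frequency. Placing $\langle n\rangle^{s+\varepsilon}$ on the output and writing $\langle n\rangle^{s+\varepsilon} \lesssim \langle n_1^*\rangle^{s+\varepsilon}$, the gain of $(n_1^*)^{-1}$ (with $\varepsilon < 1$, $s > 1/2$) more than absorbs it, leaving $\langle n_1^*\rangle^{s} \langle n_2^*\rangle^{-1+}$ which is summable against $H^s\times H^s$ by Cauchy--Schwarz and the convolution structure (equivalently, by the algebra-type estimate $\|fg\|_{H^\sigma}\lesssim \|f\|_{H^\sigma}\|g\|_{H^\sigma}$ for $\sigma>1/2$ after noting the extra $(n_2^*)^{-1+}$ decay). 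This yields the quadratic contribution $\|\tilde u\|_{C^0_t H^s_x}^2$.

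For $B_2$: the symbol is $m_R^2 = n(n_1^2+(n_2+n_3)^2)(n_2+n_3)(n_2^2+n_3^2)\,\Phi_2(n_1,n_2+n_3)^{-1}\Phi_3(n_1,n_2,n_3)^{-1}$ on the set $\mathcal P_3(n_1,n_2,n_3)\cap\star$. By Lemma~\ref{Lemma: Nonresonant Weight Bound} (the $m_{D_1}$/$m_{D_2}$ computation specializes here) together with the defining property $|\Phi_3|\gtrsim (n_1^*)^4$ of $\mathcal P_3$, the symbol obeys a bound of the form $|m_R^2| \lesssim \max(n_2^2,n_3^2)\cdot (n_1^*)^{-1}\langle\text{(second largest)}\rangle^{-1}$ up to harmless factors — in any case it gains at least one derivative on the top frequency relative to the worst $\mathrm{High}\times\mathrm{Low}\times\mathrm{Low}$ count. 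One then repeats the scheme: Littlewood--Paley in all four frequencies (output plus three inputs), use $\langle n\rangle^{s+\varepsilon}\lesssim \langle n_1^*\rangle^{s+\varepsilon}$, absorb the $\langle n_1^*\rangle^{\varepsilon}$ and one derivative using the symbol gain, and sum the remaining $\langle n_2^*\rangle^{-\delta}$-type factors after distributing $\langle n_2^*\rangle^s,\langle n_3^*\rangle^s$ onto the respective inputs via Cauchy--Schwarz, picking up the cubic contribution $\|\tilde u\|_{C^0_t H^s_x}^3$. The identity $\|\tilde u\|_{C^0_t H^s_x} = \|u\|_{C^0_t H^s_x}$ from \eqref{Equation: tilde u definition} is used only to phrase the bound in terms of $\tilde u$; it plays no analytic role here.

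\textbf{Main obstacle.} The delicate point is the trilinear term $B_2$ when two of the low frequencies are comparable and the phase $\Phi_3$ degenerates — but this is exactly why $B_2$ is restricted to $\mathcal P_3$ (large $\Phi_3$) in the first place, the complementary region having been routed into $\mathcal{NR}_1$, $D_1$, $D_2$, and the resonance terms. So the real work is bookkeeping: verifying that on $\mathcal P_3\cap\star$ the combination $\max(n_2^2,n_3^2)\cdot |\Phi_2|^{-1}|\Phi_3|^{-1}\cdot |n|$ genuinely gains a derivative on $n_1^*$ and leaves a strictly summable tail in the remaining frequencies, uniformly over the sub-cases $|n_1|\gg|n_2|$ versus $|n_1|\sim|n_2|$ of the factorization of $\Phi_3$. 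I expect this to follow from a short case analysis built on \eqref{Definition: Phi2}, \eqref{Definition Phi3}, and Lemma~\ref{Lemma: Nonresonant Weight Bound}, with $\varepsilon<1$ and $s>1/2$ providing exactly the room needed.
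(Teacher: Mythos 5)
Your proposal is correct and follows essentially the same route as the paper: bound the $B_1$ symbol by $1/|n_1n_2|$ via the factorization of $\Phi_2$, use the $\mathcal P_3$ restriction $|\Phi_3|\gtrsim (n_1^*)^4$ to see that the $B_2$ symbol gains a full derivative (the paper records it as $O(1/n)$), and close both with a fixed-time Sobolev/product estimate, with $\varepsilon\leq 1$ absorbed by the derivative gain. The Littlewood--Paley framing is slightly heavier than the paper's direct $\|J_x^{-1}\tilde u\|_{L^\infty_x}$ Sobolev-embedding step, but the substance is identical.
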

\begin{proof}
We first apply Plancherel's and estimate the $B_1$ term:
\begin{multline*}
    \|B_1(\tilde{u})\|_{C^0_tH^{s+\varepsilon}_x}\sim \bigg\|\underset{\substack{n = n_1+n_2\\\star}}{\int\sum}\frac{n(n_1^2+n_2^2)\langle n\rangle^{s+\varepsilon}}{\Phi_2(n_1,n_2)}\widehat{\tilde{u}}(n_1)\widehat{\tilde{u}}(n_2)\bigg\|_{C^0_t\ell^2_n}\\\lesssim\bigg\|\underset{n = n_1+n_2}{\int\sum}\frac{\langle n\rangle^{s+\varepsilon}}{n_1n_2}\widehat{\tilde{u}}(n_1)\widehat{\tilde{u}}(n_2)\bigg\|_{C^0_t\ell^2_n}\lesssim \|\tilde{u}\|_{C^0_tH^s}\|J_x^{-1}\tilde{u}\|_{C^0_tL^{\infty}_x}\lesssim \|\tilde{u}\|_{C^0_tH^s}^2,
\end{multline*}
for $0\leq \varepsilon \leq 1$.

As for $B_2$, note that the symbol is defined at the beginning of Section \ref{Definition: Second DBP terms} and by assumption we have that $\mathcal{P}_3(n_1,n_2,n_3)$ holds. Thus $|\Phi_3(n_1,n_2,n_3)|\gtrsim (n_1^*)^4$ and the symbol is $O(1/n),$ so 
\[
\|B_2(\tilde{u})\|_{C^0_tH^{s+\varepsilon}}\lesssim \|\tilde{u}\|_{C^0_tH^s}^3,
\]
by Sobolev embedding in exactly the same way we found the prior estimate.
\end{proof}

\begin{lemma}[Boundary Terms]\label{Lemma: boundary Terms}
Let $s> 1/2$. Then
\[
\|B_1(\tilde{u})(t)+B_2(\tilde{u})(t)\|_{Y^{s}_T}\lesssim \|\tilde{u}\|_{Y^{s}_T}^2+\|\tilde{u}\|_{Y^s_T}^3.
\]
\end{lemma}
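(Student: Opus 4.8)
The plan is to estimate the two boundary terms $B_1(\tilde u)$ and $B_2(\tilde u)$ in the $Y^s_T$ norm, which decomposes as the $X^{s,1/4}_T$ piece plus the $\ell^2_n\ell^1_\tau$ piece with weight $\langle n\rangle^s$. The key structural fact, emphasized in the discussion preceding the definition of $Y^s$ in Section~\ref{Section: background}, is that the choice $b=1/4$ is exactly calibrated so that for the worst $High\times Low$ interaction $n_1\gg n$ the modulation weight $|\Phi_2(n_1,n_2)|^{1/4}\sim (|n_1|^2|n_2|\,|n_1|^2)^{1/4}\sim |n_1|\,|n_2|^{1/4}$ is dominated by the $|n_1|$ appearing in the numerator of $m_R^1 = n(n_1^2+n_2^2)/\Phi_2(n_1,n_2)$, which behaves like $1/(n_1 n_2)$. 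So morally $B_1$ gains two derivatives off the low frequency while costing at most $|n_1|$ worth of modulation, leaving everything summable.

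Concretely, for the $X^{s,1/4}$ part of $B_1$ I would write $B_1(\tilde u)$ on the Fourier side, note that on the support of the sum the symbol obeys $|m_R^1|\lesssim 1/|n_1 n_2|$, and after multiplying by $\langle n\rangle^s\langle\tau+n^5\rangle^{1/4}$ use that $\tau+n^5 = (\tau_1+n_1^5)+(\tau_2+n_2^5)+\Phi_2(n_1,n_2)$, so $\langle\tau+n^5\rangle^{1/4}\lesssim \langle\tau_1+n_1^5\rangle^{1/4}+\langle\tau_2+n_2^5\rangle^{1/4}+|\Phi_2(n_1,n_2)|^{1/4}$. The resonant modulation term $|\Phi_2|^{1/4}\lesssim |n_1|\,|n_2|^{1/4}$ cancels against the $1/|n_1|$ in the symbol; one is left with $\langle n\rangle^s/(|n_1|^{0}|n_2|^{3/4})\cdot\widehat{\tilde u}(n_1)\widehat{\tilde u}(n_2)$-type expressions which I estimate by pairing against a dual $X^{0,3/4+}$ or just by an $L^4$/$L^4$ Strichartz argument using the interpolated bound $\|\eta(t/T)u\|_{L^{24/7}_{x,t}}\lesssim\|u\|_{X^{0,1/4}_T}$ from Section~\ref{Section: background} together with Sobolev embedding in the low factor; the terms where the modulation weight is supplied by $\langle\tau_i+n_i^5\rangle^{1/4}$ only help, since they can be absorbed into one of the $X^{s,1/4}$ factors while the $1/|n_1 n_2|$ symbol absorbs the derivative loss. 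For the $\ell^2_n\ell^1_\tau$ part I would bound $\langle n\rangle^s$ times the Fourier transform in the same way, using Cauchy--Schwarz in $\tau$ (which costs a $\langle\tau+n^5\rangle^{1/2+}$, again controlled by the above splitting and the spare smoothing in the symbol) to pass from $\ell^1_\tau$ to an $L^2_\tau$ norm, i.e. reducing to an $X^{s,-1/2+}\subset$-type estimate that is strictly easier than the $X^{s,1/4}$ one. The $B_2$ term is genuinely easier: by the symbol analysis at the start of Section~\ref{Definition: Second DBP terms}, on its support $\mathcal{P}_3(n_1,n_2,n_3)$ holds, so $|\Phi_3|\gtrsim(n_1^*)^4$ and the symbol is $O(1/n)$, which beats any $\langle\Phi_3\rangle^{1/4}$ modulation cost ($(n_1^*)\le (n_1^*)$ vs.\ the extra $(n_1^*)^{-1}$ gain), leaving a cubic expression handled by three Sobolev embeddings exactly as in Lemma~\ref{Lemma: Propagator Terms}, now in the $Y^s_T$ (rather than $C^0_tH^s_x$) norm via the $L^q_tL^p_x$ embedding for $Y^s_T$ recorded in Section~\ref{Section: background}.

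I would organize the proof as: (1) reduce $Y^s_T$ to its two constituent norms and reduce the $\ell^2_n\ell^1_\tau$ norm to an $X^{s,-1/2+}$-type norm by Cauchy--Schwarz; (2) for $B_1$, perform the $\langle\tau+n^5\rangle^{1/4}\lesssim\langle\tau_1+n_1^5\rangle^{1/4}+\langle\tau_2+n_2^5\rangle^{1/4}+|\Phi_2|^{1/4}$ split, observe the cancellation of $|\Phi_2|^{1/4}$ against the $1/|n_1|$ in $m_R^1$, and close by Strichartz ($L^{24/7}$) plus Sobolev in the low frequency, handling the high-high regime ($|n_1|\sim|n_2|$) by brute force since there no derivatives are lost; (3) for $B_2$, invoke $|\Phi_3|\gtrsim(n_1^*)^4$ and symbol $=O(1/n)$ to reduce to a trilinear Sobolev estimate. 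The main obstacle I expect is bookkeeping in the $B_1$ high-high interaction and making sure the $\langle n\rangle^s$ weight in the output is absorbed: when $|n_1|\sim|n_2|\sim|n|$ the symbol $1/|n_1 n_2|\sim\langle n\rangle^{-2}$ and one has $\langle n\rangle^{s}\langle n\rangle^{-2}\lesssim\langle n_1\rangle^{s}\langle n_2\rangle^{s}\langle n\rangle^{-2}\langle n\rangle^{-2s+2s}$, which for $s>1/2$ leaves a net negative power of $\langle n\rangle$ to spend on the modulation factor $|\Phi_2|^{1/4}\sim\langle n\rangle^{5/4}$ — so one must check that $s - 2 + 5/4 < 0$, i.e. $s<3/4$, fails to be an issue because in the high-high case one does \emph{not} use the $|\Phi_2|^{1/4}$ branch of the split (the product is then diagonal and $|\Phi_2|$ can be as small as $O(\langle n\rangle^5)$ only when $n_1+n_2\ne 0$, but conservation of mean / the $\star$ restriction keeps $\Phi_2\ne 0$), and instead one puts the modulation on the $X^{s,1/4}$ factors; verifying that this dichotomy genuinely covers all frequency configurations and that no logarithmic divergence in the $n_2$-sum appears (the symbol decays like $|n_2|^{-1}$, and with the spare $|n_2|^{1/4}$ or $\langle n_2\rangle^{1/2+}$ from Cauchy--Schwarz this is summable) is the one place requiring care.
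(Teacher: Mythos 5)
Your treatment of the $X^{s,1/4}$ component is essentially the paper's argument: the paper also splits according to whether the dominant modulation is $\langle\tau_i+n_i^5\rangle$ for some input factor (in which case the weight is absorbed into an $X^{s,1/4}$ factor and the symbol $O(1/(n_1n_2))$ kills the derivative loss) or $\langle\tau+n^5\rangle\sim|\Phi_2|$ (in which case $\tfrac{n(n_1^2+n_2^2)}{\Phi_2^{3/4}}=O(n_2^{-3/4})$ and Sobolev embedding closes it), and $B_2$ is handled by $|\Phi_3|\gtrsim(n_1^*)^4$ exactly as you say. Your worry about the high-high regime is unnecessary — there the symbol gives $n^{-2}$ against $|\Phi_2|^{1/4}\sim n^{5/4}$ while two factors of $\langle n_i\rangle^s$ are available, so that branch closes for all $s>0$ — but that is a harmless detour, not an error.

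The genuine gap is in the $\ell^2_n\ell^1_\tau$ component of $Y^s$. You propose to pass from $\ell^1_\tau$ to $L^2_\tau$ by Cauchy--Schwarz, at the cost of a weight $\langle\tau+n^5\rangle^{1/2+}$, and call the result "strictly easier than the $X^{s,1/4}$ estimate." It is not: the $Y^s$ norm carries no factor $\langle\tau+n^5\rangle^{-1}$ (that trick belongs to the $Z^s$ norm, where Cauchy--Schwarz does reduce $\ell^2_nL^1_\tau$ to $X^{s,-1/2+}$), so here you would need to control $B_1(\tilde u)$ in $X^{s,1/2+}$. In the $High\times Low$ regime $n_1\sim n\gg n_2$ this fails: if the half-power of modulation falls on the resonance function one gets $\tfrac{|\Phi_2|^{1/2}}{|n_1n_2|}\sim n_1^{1/2}n_2^{-1/2}$, a loss of half a derivative on the high frequency; if it falls on an input factor one needs $\tilde u\in X^{s,1/2+}$, which the $Y^s$ norm (built on $b=1/4$ precisely because $b=1/2$ is unaffordable for this equation) does not provide. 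The paper instead exploits the convolution structure in $\tau$ directly via Young's inequality, $\|\mathcal{F}_t(fg)\|_{L^1_\tau}\le\|\mathcal{F}_tf\|_{L^1_\tau}\|\mathcal{F}_tg\|_{L^1_\tau}$, placing one factor in the $\ell^2_n\ell^1_\tau$ part of $Y^s$ and the other in $\ell^\infty_n\ell^1_\tau$ (controlled by $\|\tilde u\|_{Y^{1/2+}}$ after summing in the low frequency with the symbol's decay). This use of the $\ell^1_\tau$ structure of the inputs is the reason that component was built into $Y^s$, and it is the step missing from your argument.
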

\begin{proof}
We first estimate the $X^{s,1/4}$ portion of $B_1$. If, on the one hand, there exists an $1\leq i\leq 2$  with $\langle \tau_i + n_i^5\rangle\gtrsim \langle \tau + n^5\rangle$, then we have
\begin{align*}
    \|B_1(\tilde{u})\|_{X^{s,1/4}}&\lesssim \bigg\|\underset{\substack{n = n_1+n_2\\\tau = \tau_1+\tau_2}}{\int\sum}\frac{n(n_1^2+n_2^2)\langle n\rangle^s\langle\tau_1+n_1^5\rangle^{1/4}}{\Phi_2(n_1,n_2)}\widehat{\tilde{u}}(n_1)\widehat{\tilde{u}}(n_2)\bigg\|_{\ell^2_tL^2_\tau}\\
    &\lesssim\bigg\|\underset{\substack{n = n_1+n_2\\\tau = \tau_1+\tau_2}}{\int\sum}\frac{\langle n\rangle^s\langle\tau_1+n_1^5\rangle^{1/4}}{n_1n_2}\widehat{\tilde{u}}(n_1)\widehat{\tilde{u}}(n_2)\bigg\|_{\ell^2_tL^2_\tau}.
\end{align*}
At worst we have that $|n_1|\gtrsim |n_2|$, which results in 
\[
\left\|\mathcal{F}^{-1}_{n,\tau}\left(\langle n\rangle^{s-1}\langle\tau+n^5\rangle^{1/4}\widehat{\tilde{u}}\right)\right\|_{L^2_{x,t}}\|J_x^{-1}\tilde{u}\|_{L^\infty_{x,t}}\lesssim\|\tilde{u}\|_{X^{s,1/4}}\|\tilde{u}\|_{Y^s}.
\]
This holds for $B_2$ as well, since the symbol will be
\[
O\bigg(\frac{1}{n_1(n_1+n_2)(n_2+n_3)(n_1+n_3)}\bigg).
\]
As $n_1(n_1+n_2)(n_2+n_3)(n_1+n_3)\gtrsim n_1^*n_3^*$, we may carry out the same argument as before.

We may now assume that
\[
\langle \tau_i + n_i^5\rangle\ll\langle \tau + n^5\rangle\sim |nn_1n_2|(n_1^2+n_2^2+n^2)\sim|\Phi_2(n_1,n_2)|,\]
for $i = 1, 2$. It follows then, that for $|n_1|\gtrsim |n_2|$
\begin{multline*}
    \|B_1(\tilde{u})\|_{X^{s,1/4}}\lesssim \bigg\|\underset{\substack{n = n_1+n_2\\\tau = \tau_1+\tau_2}}{\int\sum}\frac{n(n_1^2+n_2^2)\langle n\rangle^s}{\Phi_2(n_1,n_2)^{3/4}}\widehat{\tilde{u}}(n_1)\widehat{\tilde{u}}(n_2)\bigg\|_{\ell^2_tL^2_\tau}\\
    \lesssim \bigg\|\underset{\substack{n = n_1+n_2\\\tau = \tau_1+\tau_2}}{\int\sum}\frac{\langle n\rangle^s}{n_2^{3/4}}\widehat{\tilde{u}}(n_1)\widehat{\tilde{u}}(n_2)\bigg\|_{\ell^2_tL^2_\tau}
    \lesssim \|\tilde{u}\|_{X^{s,0}_T}\|J_x^{-3/4}\tilde{u}\|_{L^\infty_{x,t}}
    \lesssim \|\tilde{u}\|_{Y^s_T}^2.
\end{multline*}
For $B_2$, the fact that $\mathcal{P}_3(n_1,n_2,n_3)$ holds implies that the symbol multiplied by $\Phi_3^{1/4}$ will be 
\[
O\bigg(\frac{(n_2^2+n_3^2)}{n_1n_1^*}\bigg) = O( 1),
\]
from which it follows that
\[
\|B_2(\tilde{u})\|_{X^{s,1/4}_T}\lesssim \|\tilde{u}\|_{X^{s,0}_T}\|\tilde{u}\|_{Y^{s}_T}^2.
\]

The $\ell^2_nL^1_\tau$ portion of the norms is fairly straight forward. For $B_1$ we assume $|n_1|\gtrsim |n_2|$ so that the convolution structure together with the symbol of $B_1$ being $O(1)$ gives
\begin{align*}
\|\langle n\rangle^sB_1(\tilde{u})\|_{\ell^2_nL^1_\tau}&\lesssim \left\|\|J^s\mathcal{F}_t(\tilde{u})\|_{L^1_\tau}\|\mathcal{F}_t(\tilde{u})\|_{L^1_\tau}\right\|_{L^2_x}\lesssim \|\langle n_1\rangle^s\widehat{\tilde{u}}\|_{\ell^2_nL^1_\tau}\|\mathcal{F}_t(\tilde{u})\|_{L^\infty_xL^1_\tau}\lesssim \|\tilde{u}\|_{Y^s_T}^2.
\end{align*}
Since the second portion for $B_2$ follows in the exact same manner, we conclude the proof.
\end{proof}

\begin{lemma}[Correction Term Penalties]\label{Lemma: Correction Term Penalties} Let $s > 1/2$ and $0\leq \varepsilon < 1$. Then there is a $\theta > 0$ so that
\[
\|R_1(\tilde{u})+R_2(\tilde{u})\|_{Z^{s+\varepsilon}_T}\lesssim T^\theta\left( \|\tilde{u}\|_{Y^s_T}^4 + \|\tilde{u}\|_{Y^s_T}^5\right).
\]
\end{lemma}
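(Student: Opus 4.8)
The plan is to bound each of the two correction terms $R_1$ and $R_2$ separately in $Z^{s+\varepsilon}_T$, using the definitions \eqref{Definition: R1}, \eqref{Definition: R4}. The crucial structural feature is that both $R_i$ carry the scalar prefactor $\mathcal{K}(\tilde u)(t) = \tfrac{4i}{5}\int_\mathbb{T}\tilde u^2\,dx$, which is spatially constant and, since $\|\mathcal{K}(\tilde u)(t)\|_{L^\infty_t}\lesssim \|\tilde u\|_{C^0_tL^2_x}^2\lesssim \|\tilde u\|_{Y^s_T}^2$ for $s>1/2$ (using the embedding $Y^s_T\hookrightarrow C^0_tH^s_x$ and $L^1_\tau$-control built into the $Y^s$ norm), it contributes a harmless factor of $\|\tilde u\|_{Y^s_T}^2$. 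So the problem reduces to estimating $\mathcal{F}_x^{-1}\big(\sum_{n=n_1+n_2}m_R^1\,\widehat{\tilde u}(n_1)\widehat{\tilde u}(n_2)\big)$ and $\mathcal{F}_x^{-1}\big(\sum_{n=n_1+n_2+n_3}m_R^2\,\widehat{\tilde u}(n_1)\widehat{\tilde u}(n_2)\widehat{\tilde u}(n_3)\big)$ in $Z^{s+\varepsilon}_T$, and the desired $4$th and $5$th powers appear.

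For the first, $m_R^1 = \tfrac{n(n_1^2+n_2^2)}{\Phi_2(n_1,n_2)}$; by the factorization \eqref{Definition: Phi2}, $|m_R^1|\lesssim \tfrac{1}{|n_1 n_2|}\lesssim \tfrac{1}{n_1^* n_2^*}$, so with $|n_1|\gtrsim|n_2|$ and $s>1/2$, $0\le\varepsilon<1$ we have $\langle n\rangle^{s+\varepsilon}|m_R^1|\lesssim \langle n_1\rangle^{s}\langle n_1\rangle^{\varepsilon-1}\langle n_2\rangle^{-1}\lesssim \langle n_1\rangle^s\langle n_2\rangle^{-1}$ (absorbing $\langle n_1\rangle^{\varepsilon-1}\le 1$). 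Since this term has no oscillatory phase, I would estimate it in the $X^{s+\varepsilon,-3/4}_T$ part by the trivial bound $X^{s+\varepsilon,-3/4}_T\hookleftarrow X^{s+\varepsilon,0}_T$ with a $T^\theta$ gain, then use Plancherel plus $\|J_x^{-1}\tilde u\|_{L^\infty_{x,t}}\lesssim \|\tilde u\|_{Y^s_T}$ (Sobolev embedding) exactly as in Lemma \ref{Lemma: boundary Terms}; the $\ell^2_n\langle\tau+n^5\rangle^{-1}L^1_\tau$ part follows from Cauchy--Schwarz in $\tau$ reducing it to the same $X^{s+\varepsilon,-1/2+}$-type bound, or directly via the convolution structure in $L^1_\tau$ as in the last step of Lemma \ref{Lemma: boundary Terms}. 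Multiplying by the $\|\tilde u\|_{Y^s_T}^2$ from $\mathcal{K}$ gives $\lesssim T^\theta\|\tilde u\|_{Y^s_T}^4$.

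For $R_2$, $m_R^2 = \tfrac{n(n_1^2+(n_2+n_3)^2)(n_2+n_3)(n_2^2+n_3^2)}{\Phi_2(n_1,n_2+n_3)\Phi_3(n_1,n_2,n_3)}\chi_{\mathcal{P}_3(n_1,n_2,n_3)}$, which is the symbol $B_2$ multiplied by the extra denominator $\Phi_3$; since $\mathcal{P}_3(n_1,n_2,n_3)$ holds we have $|\Phi_3(n_1,n_2,n_3)|\gtrsim (n_1^*)^4$, while $B_2$'s symbol is $O(1/n)$ by Lemma \ref{Lemma: Nonresonant Weight Bound}/the reasoning in Lemma \ref{Lemma: Propagator Terms}, so $|m_R^2|\lesssim \tfrac{1}{|n|(n_1^*)^4}$. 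Then $\langle n\rangle^{s+\varepsilon}|m_R^2|\lesssim \langle n_1^*\rangle^{s}\langle n_1^*\rangle^{\varepsilon-1-4}\lesssim \langle n_1^*\rangle^{s-1}$ for $0\le\varepsilon<1$ (in fact even better), which is more than enough decay to distribute $\langle n_j\rangle^s$ onto the worst factor and $\langle n_j\rangle^{0+}$ onto the others, and close by Sobolev embedding ($Y^{0+}_T\hookrightarrow L^\infty_{x,t}$) in the same way; the $T^\theta$ comes from the modulation/time-localization trick $X^{s+\varepsilon,-3/4}_T\hookleftarrow X^{s+\varepsilon,0}_T$. With the $\mathcal{K}$ prefactor this gives $\lesssim T^\theta\|\tilde u\|_{Y^s_T}^5$. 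Adding the two bounds completes the proof. The only mild subtlety — and the step I would be most careful about — is confirming that $\|\mathcal{K}(\tilde u)\|_{L^\infty_t}$ is genuinely controlled by $\|\tilde u\|_{Y^s_T}^2$ with the correct placement in the $Z^{s+\varepsilon}$ norm (i.e. that pulling out a time-dependent scalar does not interfere with the $\langle\tau+n^5\rangle$ weights), which is handled by noting $\mathcal{K}(\tilde u)$ depends only on $t$, writing $\mathcal{F}_{x,t}(\mathcal{K}(\tilde u)\,G) = \widehat{\mathcal{K}(\tilde u)}*_\tau \widehat G$, and using $\|\widehat{\mathcal{K}(\tilde u)}\|_{L^1_\tau}\lesssim \|\tilde u\|_{Y^s_T}^2$ together with Young's inequality; everything else is a routine repetition of Lemmas \ref{Lemma: Propagator Terms} and \ref{Lemma: boundary Terms}.
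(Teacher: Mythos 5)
There is a genuine gap, and it starts with a misreading of the operators. In the definitions \eqref{Definition: R1} and \eqref{Definition: R4} the symbols are $m_R^1\cdot(n_1+n_2)$ and $m_R^2\cdot(n_1+n_2+n_3)$ — the extra factor of $n$ comes from substituting the correction term $n\mathcal{K}(w)\widehat{w}(n)$ into the time-differentiated boundary terms, and the paper's own bounds \eqref{Equation: Bilinear R Bound}--\eqref{Equation: Trilinear R Bound} record exactly this: the full symbols are only $O(1)$, not $O(1/|n_1n_2|)$ or $O(1/(|n|(n_1^*)^4))$ as you claim. Concretely, for $R_1$ in the $high\times low$ regime $|n|\sim|n_1|\sim N\gg|n_2|$ the correct symbol is $\sim 1/|n_2|$, so after multiplying by $\langle n\rangle^{s+\varepsilon}$ you are left with $N^{s+\varepsilon}|n_2|^{-1}=N^s\cdot N^{\varepsilon}|n_2|^{-1}$, and the stray $N^{\varepsilon}$ cannot be absorbed by any Plancherel/Sobolev argument that has already discarded the modulation weight via $X^{s+\varepsilon,-3/4}_T\hookleftarrow X^{s+\varepsilon,0}_T$. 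Your route therefore only closes at $\varepsilon=0$, whereas the lemma (and its use in the smoothing theorem) requires all $0\le\varepsilon<1$.

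The paper's proof supplies the missing $\varepsilon$ gain precisely through modulation: it expands $\mathcal{K}(\tilde u)=\tfrac{4i}{5}\sum_{n_4+n_5=0}\widehat{\tilde u}(n_4)\widehat{\tilde u}(n_5)$ so that $R_1$ and $R_2$ become genuine $4$- and $5$-linear operators, observes that $\Phi_4(n_1,n_2,n_4,-n_4)=\Phi_2(n_1,n_2)$ and $\Phi_5(n_1,n_2,n_3,n_4,-n_4)=\Phi_3(n_1,n_2,n_3)$ are both $\gtrsim(n_1^*)^4$ on the relevant region, and then invokes Proposition \ref{Proposition: general modulation lemma}, whose factor $N^{1/2-1/p}/N_1=N_1^{-1}$ (for $p=2$, $N\sim N_1$) dominates $\langle n\rangle^{\varepsilon}$ for every $\varepsilon<1$ and simultaneously produces the $T^{\theta}$. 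Your instinct to pull $\mathcal{K}$ out as an $L^\infty_t$ scalar is workable once you have reduced to an $X^{\cdot,0}$ norm (and your Young's-inequality remark would otherwise need $\|\langle\tau\rangle^{3/4}\widehat{\mathcal{K}}\|_{L^1_\tau}$, not just $\|\widehat{\mathcal{K}}\|_{L^1_\tau}$, since the negative weight $\langle\tau+n^5\rangle^{-3/4}$ transfers a $\langle\tau-\tau'\rangle^{3/4}$ onto the convolving factor), but treating $\mathcal{K}$ as a multilinear factor is what lets the paper keep the modulation information it needs. To repair your argument, restore the factor of $n$ in the symbols and run the multilinearization plus Proposition \ref{Proposition: general modulation lemma} instead of the crude $X^{s+\varepsilon,0}$ reduction.
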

\begin{proof}
Recall that these terms are defined on \eqref{Definition: R1} and \eqref{Definition: R4}. We begin with the $X^{s,-3/4}$ portion, where the symbols associated to these are (modulo $\mathcal{K}$ terms)
\begin{align}\label{Equation: Bilinear R Bound}
    |m_{R}^1(n_1+n_2)| &= \bigg|\frac{n(n_1+n_2)(n_1^2+n_2^2)}{\Phi_2(n_1,n_2)}\bigg|\lesssim 1,\\
    |m_{R}^2(n_1+n_2+n_3)| &= \bigg|\frac{n(n_1^2+(n_2+n_3)^2)(n_2+n_3)(n_2^2+n_3^2)(n_1+n_2+n_3)}{\Phi_2(n_1,n_2+n_3)\Phi_3(n_1,n_2,n_3)}\bigg|\label{Equation: Trilinear R Bound}\\
    &\qquad\lesssim \bigg|\frac{n_1+n_2+n_3}{n_1(n_1+n_2)(n_2+n_3)(n_1+n_3)}\bigg|\lesssim 1.\nonumber
\end{align}
Furthermore, we note that the second carries the restriction $|\Phi_3(n,n_1,n_2,n_3)|\gtrsim (n_1^*)^4$. Since we may use modulation considerations for all the terms present, we focus instead on the trilinear term $R_2$. We first turn the trilinear operator into a 5-linear operator by rewriting it as
\begin{align*}
    \widehat{R_2}(\tilde{u})(n) &=\frac{4i}{5}\sum_{\substack{n = n_1+n_2+n_3+n_4+n_5\\0=n_4+n_5\\n\ne n_1,\,n_2,\, n_3}}\frac{m_{R}^2(n_1+n_2)}{n_4n_5}\prod_{j=1}^5\widehat{\tilde{u}}(n_j),
\end{align*}
and note that $\mathcal{P}_5(n_1, n_2,n_3,n_4,-n_4)$ holds true, as 
\[
|\Phi_5(n_1,n_2,n_3,n_4,-n_4)| = |\Phi_3(n_1,n_2,n_3)|\gtrsim (n_1^*)^4.
\]
We then perform a Littlewood-Paley decomposition and invoke Proposition \ref{Proposition: general modulation lemma} under the assumption that $N\sim N_1$ and $p=2$ to obtain
\begin{align*}
    \|R_2(\tilde{u}))\|_{Z^{s+\varepsilon}_T}&\lesssim T^\theta\sum_{N\sim N_1\geq N_2\geq N_3}\frac{N^{s+\varepsilon}}{N_1}N_2^{1/2+}N_3^{1/2+}\prod_{i=1}^3\|P_{N_i}(\eta(t/T)\tilde{u})\|_{Y^0}\lesssim T^\theta\|\tilde{u}\|_{Y^{s}_T}^5,
\end{align*}
for $s > 1/2$ and $0\leq \varepsilon  < 1$. Since the case that $N_1\sim N_2\gg N$ follows similarly, we conclude the proof of the lemma.
\end{proof}

The following lemma concerns the resonant terms from our analysis. The main tool in our analysis will be Corollary \ref{Corollary: Resonant Multiplier Bounds}. These estimate are slightly involved due to the summation restrictions, but the key tool is still the high-frequency approximation from the aforementioned corollary. 

\begin{lemma}[Resonant Terms]\label{Lemma: Resonant Duhamel Bounds} Let $s > 1/2$ and $\varepsilon < \min(2s-1, 1)$. Then there is a $\theta>0$ so that
\[
\|\mathcal{RE}_1(\tilde{u})+\mathcal{RE}_2(\tilde{u})+\mathcal{RE}_3(\tilde{u})+\mathcal{RE}_4(\tilde{u})\|_{Z^{s+\varepsilon}}\lesssim T^\theta\left(\|\tilde{u}\|_{Y^{s}_T}^3 + \|\tilde{u}\|_{Y^s_T}^4\right).
\]
\end{lemma}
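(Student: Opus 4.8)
The plan is to estimate each of the four resonant families $\mathcal{RE}_1,\dots,\mathcal{RE}_4$ separately in $Z^{s+\varepsilon}$, exploiting the absence of complex exponentials (so no $\Phi_\ell$ modulation is available) by instead using the high-frequency asymptotics from Corollary \ref{Corollary: Resonant Multiplier Bounds} together with the cancellation structures of Remark \ref{Remark: Why Resonant Smoother Bound Holds}. Since these terms carry no oscillatory factor, one cannot hope to land derivatives on the modulation; the $Z^{s+\varepsilon}$ norm must be controlled purely by the algebraic decay of the symbols plus Sobolev embedding in the low frequencies. The key split in each case is $|n| \gg n_2^*$ (the genuinely ``high $\times$ low $\times$ low'' regime where the corollary applies) versus $|n| \sim n_2^*$ (where all frequencies are comparable and the symbol bounds from Lemma \ref{Lemma: Nonresonant Weight Bound} / Corollary \ref{Corollary: Resonant Multiplier Bounds} give enough room directly).

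First I would handle $\mathcal{RE}_1$ and $\mathcal{RE}_2$, the lowest-order terms. For $\mathcal{RE}_1$, whose symbol $\frac{n^2n_3+nn_3^2}{n^2+(n+n_3)^2+n_3^2}$ is $O(n)$ and multiplies $\widehat{\tilde{u}}(n)\widehat{\tilde{u}}(n_3)\widehat{\tilde{u}}(-n_3)$, the one derivative on the high mode $n$ is exactly what $X^{s,-3/4}$ can recover: write the $X^{s,-3/4}$ norm, put $\langle n\rangle^s \widehat{\tilde{u}}(n)$ in $\ell^2_n$, and bound the $\ell^1$-in-$n_3$ sum $\sum_{n_3}|\widehat{\tilde{u}}(n_3)\widehat{\tilde{u}}(-n_3)|$ by $\|\tilde{u}\|_{H^{s}}^2$ using $s>1/2$ (Cauchy–Schwarz against $\langle n_3\rangle^{-2s}$); the extra $\langle n\rangle^\varepsilon$ is absorbed because $\langle\tau+n^5\rangle^{-3/4}$ combined with a $T^\theta$ gain from $\|\cdot\|_{X^{0,-3/4}_T}\lesssim T^\theta\|\cdot\|_{X^{0,-1/2+}_T}$ leaves room as long as $\varepsilon < 1$; but crucially for $\varepsilon$ up to $2s-1$ one should instead move $\langle n\rangle^\varepsilon$ onto the low modes using $\langle n\rangle \lesssim \langle n_3\rangle$ fails — rather, note $n = -$ (nothing), so actually $n$ is the \emph{output} frequency and is unconstrained in size relative to $n_3$; when $|n|\gg |n_3|$ there is no smoothing and one needs $\varepsilon \le$ something, while when $|n|\lesssim |n_3|$ one trades $\langle n\rangle^\varepsilon \lesssim \langle n_3\rangle^\varepsilon$ against the two copies of $\tilde u$ at frequency $\sim n_3$, which costs $2s$ and hence requires $\varepsilon < 2s-1$. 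The $\ell^2_n\ell^1_\tau$ part of $Z^{s+\varepsilon}$ follows by the same computation with $\|\cdot\|_{L^1_\tau} \lesssim \|\cdot\|_{X^{0,-1/2+}}$ via Cauchy–Schwarz. $\mathcal{RE}_2$ is identical: its symbol is $O(n)$ times $\widehat{\tilde u}(n)\widehat{\tilde u}(n)\widehat{\tilde u}(-n)$ (all at the top frequency), plus the ``Second Order Resonances'' which by Remark \ref{Remark: Second Order Resonances D_1 and D-2} also have symbol $O(n)$, so the same argument applies, now cubic in the high mode and hence needing only $s>1/2$.

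Next, $\mathcal{RE}_3$ and $\mathcal{RE}_4$, which are the genuinely four-linear resonant pieces. For $\mathcal{RE}_3$, Corollary \ref{Corollary: Resonant Multiplier Bounds}(I) gives $m_{A_i} = O(n_2^*/n)$ on $|n|\gg n_2^*$, so the symbol \emph{gains} a derivative on the high mode; there I would put $\langle n\rangle^{s+\varepsilon} n_2^*/n \lesssim \langle n\rangle^{s-1+\varepsilon}(n_2^*)$, and since $n_2+n_3+n_4=0$ forces the two largest among $\{|n_2|,|n_3|,|n_4|\}$ to be comparable (as noted in the proof of that corollary), one has three low-frequency factors whose product over $\ell^1_{n_2,n_3,n_4}$ converges for $s>1/2$, with the surplus $n_2^*$ and $\langle n\rangle^{-1+\varepsilon}$ leaving room for $\varepsilon<1$ outright (no $2s-1$ constraint is active here because there is net smoothing). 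The regime $|n|\sim n_2^*$ is handled by the crude bound $|m_{A_i}|\lesssim 1$ from Lemma \ref{Lemma: Nonresonant Weight Bound} together with $\mathcal{P}_3$ being in force, plus a $T^\theta$ gain. For $\mathcal{RE}_4$, the subtlety flagged in Remark \ref{Remark: Why Resonant Smoother Bound Holds} is essential: by Corollary \ref{Corollary: Resonant Multiplier Bounds}(II), $m_{B_i} = -\tfrac{1}{25}\tfrac{n}{n_1n_j} - \tfrac{3n_2+6n_j}{25 n_1 n_j} + O(\max(|n_2|,|n_j|)^2/n)$; the two leading pieces, after multiplication by $\widehat{\tilde u}(n)$ and summation against $\widehat{\tilde u}(n_1)\widehat{\tilde u}(n_2)\widehat{\tilde u}(n_4)$ over $n_1+n_2+n_4=0$, \emph{vanish} by the symmetry/telescoping identities of Remark \ref{Remark: Why Resonant Smoother Bound Holds} (for real $\tilde u$), leaving only the $O(\max(|n_2|,|n_j|)^2/n)$ error, which again gives net smoothing of one derivative on the top mode and is estimated exactly as $\mathcal{RE}_3$. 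The main obstacle is precisely this bookkeeping for $\mathcal{RE}_4$: one must verify that after undoing the interaction representation (the $e^{n\int\mathcal K}$ conjugation of \eqref{Equation: tilde u definition}) the reality of $\tilde u$ is genuinely available so that the cancellation of the $O(n/n_1n_j)$ term holds, and one must track the $T^\theta$ and $\langle n\rangle^\varepsilon$ factors carefully enough to see that $\varepsilon < \min(2s-1,1)$ is exactly the threshold at which every subcase closes (the binding constraint $2s-1$ coming only from those pieces — inside $\mathcal{RE}_1,\mathcal{RE}_2$ — where two or three of the input frequencies sit at the top and no modulation or symbol gain is available to offset moving $\langle n\rangle^\varepsilon$ down).
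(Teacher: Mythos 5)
There is a genuine gap, and it sits in your treatment of $\mathcal{RE}_1$. You observe (correctly, in effect) that when $|n|\gg|n_3|$ the crude bound on the symbol $\tfrac{n^2n_3+nn_3^2}{n^2+(n+n_3)^2+n_3^2}$ is of size $|n_3|$ with no decay in $n$, and you then concede that ``when $|n|\gg|n_3|$ there is no smoothing.'' That concession is fatal to the stated estimate for any $\varepsilon>0$: the factor $\langle n\rangle^{\varepsilon}$ cannot be moved onto the low modes $\pm n_3$, and no modulation gain is available either, since the phase $\Phi_3(n,n_3,-n_3)$ vanishes identically on this resonant set (the $T^\theta$ you invoke comes only from H\"older in time via $\|\cdot\|_{X^{0,-3/4}_T}\lesssim T^\theta\|\cdot\|_{X^{0,0}_T}$, not from trading $\langle\tau+n^5\rangle$ for derivatives). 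The missing idea is a cancellation: pair the terms $n_3$ and $-n_3$ and use that $\widehat{\tilde u}(n_3)\widehat{\tilde u}(-n_3)$ is even in $n_3$, so the odd-in-$n_3$ leading part $\tfrac{n^2n_3}{n^2+(n+n_3)^2+n_3^2}$ telescopes to $-\tfrac{4n^3n_3^2}{(n^2+(n+n_3)^2+n_3^2)(n^2+(n-n_3)^2+n_3^2)}=O(n_3^2/n)$, while the remaining part $\tfrac{nn_3^2}{n^2+(n+n_3)^2+n_3^2}$ is already $O(n_3^2/n)$ when $|n|\gg|n_3|$. With the symbol reduced to $O(n_3^2/n)$ one writes $\langle n\rangle^{s+\varepsilon}n_3^2/n\lesssim\langle n\rangle^{s}\langle n_3\rangle^{2s}\langle n\rangle^{\varepsilon-1+\max(2(1-s),0)}$, and the exponent is nonpositive exactly when $\varepsilon<\min(2s-1,1)$; this is where the threshold in the lemma comes from, and without the symmetrization the estimate does not close.

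Your handling of the other three families is essentially the paper's: $\mathcal{RE}_2$ (and the double resonances from $D_1$, $D_2$) by distributing $\langle n\rangle^{1+\varepsilon}$ over the several copies of $\widehat{\tilde u}$ at the top frequency, which is what produces the constraint $\varepsilon<2s-1$ there; $\mathcal{RE}_3$ via the bound $m_{A_i}=O(n_2^*/n)$; and $\mathcal{RE}_4$ by adding the vanishing operator with symbol $\Psi=\tfrac{1}{25}\tfrac{n}{n_1n_4}+\tfrac{3n_2+6n_4}{25n_1n_4}$ and estimating the $O(\max(n_2,n_j)^2/n)$ remainder. One bookkeeping point you gloss over but should make explicit: the cancellation of Remark \ref{Remark: Why Resonant Smoother Bound Holds} holds for the \emph{unrestricted} sum over $n_1+n_2+n_4=0$, whereas $m_{B_3}$ carries the cutoffs $\chi_{\mathcal{P}_3}\chi_{|n|\gg n_2^*}$; so after pairing $m_{B_3}\chi_{|n|\gg n_2^*}$ with $\Psi\chi_{\mathcal{P}_3}\chi_{|n|\gg n_2^*}$ you must separately estimate $\Psi\chi_{\mathcal{P}_3^c}\chi_{|n|\gg n_2^*}$ and $\Psi\chi_{|n|\lesssim n_2^*}$, which is done by showing (via Proposition \ref{Proposition: small symbol decomp} and mean-zero) that on those regions two of the summed frequencies are $\gtrsim|n|$.
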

\begin{proof}
We first note that these terms are defined on \eqref{Definition: RE1}, \eqref{Definition: RE2}, \eqref{Definition: RE3}, and \eqref{Definition: RE4}. We will proceed by bounding the $X^{s,b}$ portion of the norm for every term under consideration. 

Consider first the $\mathcal{RE}_1$ term. We may split the sum into when $n_3 > 0$ and when $n_3 < 0$. We get
\begin{align}
\widehat{\mathcal{RE}_1}(\tilde{u}) &\sim \widehat{\tilde{u}}(n)\sum_{n_3>0}\left(\frac{n^2n_3}{n^2+(n+n_3)^2+n_3^2}-\frac{n^2n_3}{n^2+(n-n_3)^2+n_3^2}\right)\widehat{\tilde{u}}(n_3)\widehat{\tilde{u}}(-n_3)\nonumber\\
&\qquad+\widehat{\tilde{u}}(n)\sum_{n_3}\frac{nn_3^2}{n^2+(n+n_3)^2+n_3^2}\widehat{\tilde{u}}(n_3)\widehat{\tilde{u}}(-n_3)\nonumber\\
&=-\widehat{\tilde{u}}(n)\sum_{n_3>0}\frac{4n^3n_3^2}{(n^2+(n+n_3)^2+n_3^2)(n^2+(n-n_3)^2+n_3^2)}\widehat{\tilde{u}}(n_3)\widehat{\tilde{u}}(-n_3)\label{Equation: RE1 first sum}\\
&\qquad+\widehat{\tilde{u}}(n)\sum_{n_3}\frac{nn_3^2}{n^2+(n+n_3)^2+n_3^2}\widehat{\tilde{u}}(n_3)\widehat{\tilde{u}}(-n_3).\label{Equation: RE1 second equation}
\end{align}
If $|n|\gg |n_3|$ then we find that both symbols are $O(n_3^2/n)$, whereas if $|n|\lesssim |n_3|$ they are $O(n)=O(n_3)$. Since the latter is easy to handle, we assume that 
$|n|\gg |n_3|$ and write
\begin{equation*}
\frac{n_3^2\langle n\rangle^{s+\varepsilon}}{n} \lesssim \langle n\rangle^{s+\varepsilon}\langle n_3\rangle^{2s}\langle n_3\rangle^{2(1-s)}\langle n\rangle^{-1}\lesssim \langle n\rangle^{s+\varepsilon}\langle n_3\rangle^{2s}\langle n\rangle^{\max(2(1-s),0)-1}.
\end{equation*} 
Hence, for $s > 1/2$ both of these sums satisfy
\[
\|\eqref{Equation: RE1 first sum} + \eqref{Equation: RE1 second equation}\|_{X^{s+\varepsilon,-3/4}}\lesssim \|\eqref{Equation: RE1 first sum}+ \eqref{Equation: RE1 second equation}\|_{X^{s+\varepsilon,0}}\lesssim \|\tilde{u}\|_{X^{s+\varepsilon - \min(2(s-1/2),1 ),0}}\|\tilde{u}\|_{L^\infty_tH^{s}_x}^2.
\]
Note that this also handles the second portion of the norm by Cauchy-Schwarz, as in the proof of Proposition \ref{Proposition: general modulation lemma}. 

We now handle $\mathcal{RE}_2$ and the other multiple resonances from Remark \ref{Remark: Second Order Resonances D_1 and D-2}. We will handle just a prototypical 4-linear term that appears from double resonances in $D_2$. By lemma \ref{Lemma: Nonresonant Weight Bound} and the fact that the trilinear term in $\mathcal{RE}_2$ has symbol $O(n)$, we may assume that the symbol on the $4$-linear term under consideration is $O(n)$. It then suffices by Cauchy-Schwarz to bound
\[
\left\|\langle n\rangle^{s+\varepsilon}n\widehat{\tilde{u}}(n)\widehat{\tilde{u}}(n)\sum_{-n=n_1+n_2}\widehat{\tilde{u}}(n_1)\widehat{\tilde{u}}(n_2)\right\|_{L^2_t \ell^2_n}.
\]
We then bound this by
\begin{align*}
\bigg\|\langle n\rangle^{1/2+\varepsilon/2}\widehat{\tilde{u}}(n)&\langle n\rangle^{1/2+\varepsilon/2}\widehat{\tilde{u}}(n)\langle n\rangle^s\sum_{-n=n_1+n_2}\widehat{\tilde{u}}(n_1)\widehat{\tilde{u}}(n_2)\bigg\|_{L^2_t \ell^2_n}\\
&\lesssim \bigg\|\|\langle n\rangle^{1/2+\varepsilon/2}\widehat{\tilde{u}}(n)\|_{\ell^2_n}^2\big\|\langle n\rangle^s\sum_{-n=n_1+n_2}\widehat{\tilde{u}}(n_1)\widehat{\tilde{u}}(n_2)\big\|_{\ell^2_n}\bigg\|_{L^2_t}\\
&\lesssim \bigg\|\|\langle n\rangle^{1/2+\varepsilon/2}\widehat{\tilde{u}}(n)\|_{\ell^2_n}^2\|\langle n\rangle^s\widehat{\tilde{u}}(n)\|_{\ell^2_n}\|\widehat{\tilde{u}}(n)\|_{\ell^1_n}\bigg\|_{L^2_t}\\
&\lesssim \bigg\|\|\langle n\rangle^{1/2+\varepsilon/2}\widehat{\tilde{u}}(n)\|_{\ell^2_n}^2\|\langle n\rangle^s\widehat{\tilde{u}}(n)\|_{\ell^2_n}\|n^{1/2+}\widehat{\tilde{u}}(n)\|_{\ell^2_n}\bigg\|_{L^2_t}\\
&\lesssim \|\tilde{u}\|_{X^{s,0}_T}\|\tilde{u}\|_{Y^{1/2+\varepsilon/2}_T}^2\|\tilde{u}\|_{Y^{1/2+}_T}.
\end{align*}
This is acceptable so long as $0\leq \varepsilon < 2(s-1/2).$

As for $\mathcal{RE}_3$ and $\mathcal{RE}_4$, we restrict ourselves to simply bounding the harder term, $\mathcal{RE}_4$. We may assume that $i = 3$ and decompose the symbol of the operator into
\begin{align}\label{Equation: B3 decomp}
    m_{B_3} = m_{B_3}\chi_{|n|\gg n_2^*} + m_{B_3}\chi_{|n|\lesssim n_2^*},
\end{align}
and define the symbol (with vanishing associated multilinear operator, by Remark \ref{Remark: Why Resonant Smoother Bound Holds})
\[
\Psi = \left(\frac{n}{25n_1n_4} +\frac{3n_2+6n_4}{25n_1n_4}\right),
\]
and the splitting
\begin{align}
   \Psi&= \Psi\chi_{\mathcal{P}_3(n_1,n_2,n+n_4)}\chi_{|n|\gg n_2^*}\label{Equation: resonance for cancellation}\\
    &+\Psi\chi_{\mathcal{P}_3^c(n_1,n_2,n+n_4)}\chi_{|n|\gg n_2^*}\label{Equation: Resonance Decay term}\\
    &+\Psi\chi_{|n|\lesssim n_2^*}\label{Equation: Resonance big 1}.
\end{align}

We handle first the \eqref{Equation: Resonance big 1} term. Note that, after Cauchy-Schwarz in $\tau$, the quartilinear operator with symbol \eqref{Equation: Resonance big 1} reduces to bounding
\begin{align}\label{Equation: Approximation with large frequencies}
    \bigg\|\langle n\rangle^{s+\varepsilon}\tilde{u}(n)\sum_{\substack{0=n_1+n_2+n_4\\|n|\lesssim n_2^*}}\frac{n_2^*}{n_1n_4}\widehat{\tilde{u}}(n_1)\widehat{\tilde{u}}(n_2)\widehat{\tilde{u}}(n_4)\bigg\|_{L^2_t\ell^2_n}.
\end{align}
By the algebraic relationship we may assume that $n_1\sim n_2\gtrsim n$, so that
\begin{align*}
    \eqref{Equation: Approximation with large frequencies}\lesssim \bigg\|\langle n\rangle^{s}\tilde{u}(n)\sum_{0=n_1+n_2+n_4}\langle n_1n_2\rangle^{\varepsilon/2}\widehat{\tilde{u}}(n_1)\widehat{\tilde{u}}(n_2)\widehat{\tilde{u}}(n_4)\bigg\|_{L^2_t\ell^2_n}\lesssim \|\tilde{u}\|_{X^{s,0}_T}\left\|(J_x^{\frac{\varepsilon}{2}}\tilde{u})^2\tilde{u}\right\|_{L^\infty_tL^1_x}.
\end{align*}
Sobolev Embedding and Cauchy-Schwarz on the last term completes the estimate, which will be good so long as $0\leq \varepsilon \leq 2s-1.$

As for \eqref{Equation: Resonance Decay term}, we find from the assumption that \[\Phi_3(n_1,n_2,n+n_4)\ll\max\{|n_1|,|n_2|, |n+n_4|\}^4\]
and Proposition \ref{Proposition: small symbol decomp} that one of the following must hold:
\begin{itemize}
    \item[I)] $n\in\{n_1, n_2,n+n_4\}$, or
    \item[II)] $n_1\sim n_2\sim n+n_4$.
\end{itemize}
If $I$ holds, then by the assumption of mean-zero $\tilde{u}$ we must have that either $n_1 = n$ or $n_2 = n$. If $II$ holds then we find that $n_1\sim n_2\sim n+n_4\sim n$. In both cases we then have, by the algebraic relationship $0=n_1+n_2+n_4$, that two of $n_1, n_2, n_4$ are $\gtrsim n$, where the implied constant might be different from the one in the relationship $n\gg n_2^*$. It then follows that we may conclude this case as in the bound for \eqref{Equation: Approximation with large frequencies}, which holds for $0\leq \varepsilon < \min(2s-1, 1)$.

The last easy bound to show is the bound for $\mathcal{RE}_4$ with symbol given by $m_{B_3}\chi_{n\lesssim n_2^*}$. In this case we find by \eqref{Equation: mD2 nonresonant bound} that
\begin{equation*}
m_{B_3}\chi_{|n|\lesssim n_2^*} \lesssim \max\{|n_1|, |n_2|, |n_4|\}.
\end{equation*}
By symmetry and Cauchy-Schwarz we find that it suffices to bound
\begin{equation}\label{Equation: RE4 multiple big n2 big}
\bigg\|\langle n\rangle^{s+\varepsilon}\widehat{\tilde{u}}(n)\sum_{\substack{0=n_1+n_2+n_4\\|n_1|\gtrsim n}}n_1\widehat{\tilde{u}}(n_1)\widehat{\tilde{u}}(n_2)\widehat{\tilde{u}}(n_4)\bigg\|_{L^2_\tau \ell^2_n}.
\end{equation}

Since the algebraic relation forces there to be two large frequencies within the sum, we find
\[
\eqref{Equation: RE4 multiple big n2 big}\lesssim \|\tilde{u}\|_{X^{s, 0}_T}\left\|(J_x^{\frac{1+\varepsilon}{2}}\tilde{u})^2\tilde{u}\right\|_{L^\infty_t L^1_x}\lesssim \|\tilde{u}\|_{X^{s, 0}_T}\|\tilde{u}\|_{Y^s_T}^3,
\]
by taking the $\sup$ in $n$ of the interior sum, for $0\leq \varepsilon < 2s-1.$

The last bound we need to show is \eqref{Equation: resonance for cancellation} grouped with the first term of \eqref{Equation: B3 decomp}, which reduces to bounding
\begin{equation}\label{Equation: Grouped terms smoothing reference}
\bigg\|\langle n\rangle^{s+\varepsilon}\widehat{\tilde{u}}(n)\sum_{\substack{0=n_1+n_2+n_4\\|n|\gg |n_1|,|n_2|,|n_4|}}\frac{(n_2^*)^2}{n}\widehat{\tilde{u}}(n_1)\widehat{\tilde{u}}(n_2)\widehat{\tilde{u}}(n_4)\bigg\|_{L^2_\tau \ell^2_n},
\end{equation}
by Corollary \ref{Corollary: Resonant Multiplier Bounds}. Without loss of generality we assume $|n_1|\sim |n_2|$, so that for $0\leq\varepsilon < \min(2(s-1/2), 1)$ we find
\[
\frac{(n_1)^2}{n^{1-\varepsilon}} \lesssim (n_1n_2)^{\min(s, 1)},
\]
which yields
\[
\|\tilde{u}\|_{X^{s,0}_T}\|\tilde{u}\|_{Y^{\min(s,1)}_T}^2\|\tilde{u}\|_{Y^{1/2+}_T},
\]
by the same method as in \eqref{Equation: RE4 multiple big n2 big}.

In every instance the second portion of the norm follows from the first portion via Cauchy-Schwarz, and we similarly find a factor of $T$ by modulation considerations for every bound, completing the proof.
\end{proof}

\subsection{Main Terms in the Duhamel Estimate}\label{Section: Main Term Estimates}
Before we begin with the main Duhamel terms, we would like to note that we could have consolidated the two conditions $n_1^*+n_2^* = 0$ and $n_3\gtrsim n_1$ into $n_3\gtrsim n$ in Proposition \ref{Proposition: Symbol Decomp} However, the condition $n_1^* + n_2^* = 0$ provides better bounds, as is evident by Corollary \ref{Corollary: Case 3 replacement bounds}. Moreover, from a well-posedness perspective, a bound relating $n_3$ to $n$ is not nearly as good as a bound relating $n_3$ with $n_1$. In particular, the latter bound gives more information about $high\times high\times low \cdots$ interactions.

\begin{corollary}\label{Corollary: Case 3 replacement bounds}
Suppose that $n_1^*+n_2^* = 0$, in the language of Proposition \ref{Proposition: Symbol Decomp}. Then 
\[
m_{D_1} = O(1), \qquad\qquad m_{D_2} = O(n_2^*).
\]

\end{corollary}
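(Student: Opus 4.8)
The plan is to establish the two bounds for $m_{D_1}$ and $m_{D_2}$ under the hypothesis $n_1^*+n_2^*=0$ by returning to the explicit definitions of these symbols at the start of Section \ref{Definition: Second DBP terms} and exploiting the extra cancellation that the constraint $n_1+n_2=0$ (after relabeling so that $|n_1|,|n_2|$ are the two largest) forces on the phase factors $\Phi_2$ and $\Phi_3$. The key observation is that when the two largest frequencies cancel, both the "new'' frequency $n=n_1+n_2+n_3+n_4$ and various intermediate sums like $\tilde n_3 = n_3+n_4$ become comparable to the third-largest frequency $n_3^*$, so the would-be loss of derivatives in the numerators of $m_{D_1}$ and $m_{D_2}$ is absorbed by the size of the denominators.

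First I would treat $m_{D_1}$. Recall
\[
m_{D_1} = \frac{n((n_1+n_2)^2+(n_3+n_4)^2)(n_3+n_4)(n_3^2+n_4^2)(n_1+n_2)(n_1^2+n_2^2)}{\Phi_2(n_1+n_2,n_3+n_4)\,\Phi_3(n_1+n_2,n_3,n_4)}\,\chi_{\mathcal{P}_3(n_1+n_2,n_3,n_4)}.
\]
The cleanest route is to use Lemma \ref{Lemma: Nonresonant Weight Bound}, which already gives $m_{D_1}=O\big(\max(n_1^2,n_2^2)/\max\{|n_1+n_2|,|n_3|,|n_4|\}^2\big)$ without any restriction. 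Under $n_1^*+n_2^*=0$ we have $\max(n_1^2,n_2^2)\lesssim (n_1^*)^2$, while $\max\{|n_1+n_2|,|n_3|,|n_4|\}\gtrsim n_3^*$; but crucially, because $n_1^*+n_2^*=0$ and $n=n_1^*+n_2^*+n_3^*+n_4^*$ (in the relabeled variables, up to sign), both remaining frequencies $n_3^*,n_4^*$ together with $n$ satisfy $\max\{|n_3^*|,|n_4^*|,|n|\}\sim n_3^*$, and in fact $n_3^*\sim n_1^*$ is impossible only if we are in the genuine $n_1^*+n_2^*=0$ case with $n_3^*\ll n_1^*$ — wait, that is exactly what we want to rule *in*. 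The point: the numerator factor $(n_1+n_2)(n_1^2+n_2^2)$ is not $(n_1^*)^3$ but rather has a $\Phi_2(n_1+n_2,n_3+n_4)$ in the denominator carrying a compensating power of $n_1^*$. Tracking this, $\Phi_2(n_1+n_2,n_3+n_4)\sim (n_1^*)^2(n_3^*)^2\cdot(\text{something}\gtrsim (n_1^*)^2)$ while $\Phi_3(n_1+n_2,n_3,n_4)\gtrsim (n_3^*)^4$ by the $\mathcal P_3$ restriction, and the numerator is $O\big((n_1^*)^4 (n_3^*)^3\big)$; dividing gives $O(1)$. I would carry this out by a careful bookkeeping of the factorizations \eqref{Definition: Phi2} and \eqref{Definition Phi3}.

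Next, for $m_{D_2}$, recall
\[
m_{D_2} = \frac{n(n_1^2+(n_2+n_3+n_4)^2)(n_2+n_3+n_4)(n_2^2+(n_3+n_4)^2)(n_3+n_4)(n_3^2+n_4^2)}{\Phi_2(n_1,n_2+n_3+n_4)\,\Phi_3(n_1,n_2,n_3+n_4)}\,\chi_{\mathcal{P}_3(n_1,n_2,n_3+n_4)}.
\]
Here I would again start from Lemma \ref{Lemma: Nonresonant Weight Bound}, which gives $m_{D_2}=O\big(\max(n_3^2,n_4^2)/(n_1\max\{|n_1|,|n_2|,|n_3+n_4|\})\big)$. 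The subtlety is that in $D_2$ the distinguished variable $n_1$ is the one that got differentiated, and it is not necessarily one of the two largest. Under $n_1^*+n_2^*=0$: if $\{n_1^*,n_2^*\}=\{n_1,n_2\}$ then $n_1+n_2=0$ forces $n_3+n_4=n$, and $n_3^2,n_4^2 \lesssim (n_3^*)^2$, giving $m_{D_2}=O((n_3^*)^2/(|n_1|\cdot|n_1|)) = O((n_2^*)^2/(n_1^*)^2)\cdot(n_2^*)$-type bound — I need to check this collapses to $O(n_2^*)$, which it does since $n_3^*\lesssim n_1^*$ and there is a spare factor. If instead one of the two cancelling large frequencies is $n_1$ itself and the other is among $n_2,n_3,n_4$, then $|n_1|\sim n_1^*$ is large, the denominator contributes $|n_1|\gtrsim n_1^*$, and the numerator is controlled by $(n_1^*)\cdot(n_2^*)^2$ worth of growth, again yielding $O(n_2^*)$. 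The main obstacle is precisely this case analysis on which frequency plays the role of $n_1$: one has to verify that in every sub-case the single power of $n_1$ in the denominator, together with the $\Phi_2,\Phi_3$ factors, exactly compensates the numerator down to $O(n_2^*)$ and no worse. I expect the argument to reduce, after the factorizations are inserted, to "crude big-$O$ manipulation'' in the spirit of the proof of Corollary \ref{Corollary: Resonant Multiplier Bounds}, so I would not write out every monomial — I would isolate the worst sub-case (the two largest frequencies cancelling among $n_2,n_3,n_4$ with $n_1$ small), show the numerator is $O(n\cdot (n_1^*)^2 \cdot (n_1^*)^3)$ and the denominator is $\gtrsim (n_1^*)^2\cdot(n_1^*)^4$ times the surviving $n_1$, compare, and note the remaining sub-cases are strictly better.
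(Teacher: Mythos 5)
You start in the right place (Lemma \ref{Lemma: Nonresonant Weight Bound}), but you miss the single structural fact the corollary actually rests on, and your fallback computation does not close the gap. The paper's proof is two lines: because of the external derivatives, the configuration $n_1+n_2=0$ is excluded for $m_{D_1}$ (its numerator carries the factor $(n_1+n_2)$) and $n_3+n_4=0$ is excluded for $m_{D_2}$ (factor $(n_3+n_4)$). Hence, under $n_1^*+n_2^*=0$, at least one of the two cancelling top frequencies must appear \emph{individually} inside the maxima in the denominators of \eqref{Equation: mD1 nonresonant bound} and \eqref{Equation: mD2 nonresonant bound}, so those maxima are $\gtrsim n_1^*$; since $|n_1^*|=|n_2^*|$ under the hypothesis, the stated bounds follow at once. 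Your own application of Lemma \ref{Lemma: Nonresonant Weight Bound} only gets the denominator down to $n_3^*$ --- you notice this is not enough (``that is exactly what we want to rule in'') but never supply the exclusion that repairs it. Relatedly, your opening move of ``relabeling so that $|n_1|,|n_2|$ are the two largest'' is not permitted: $m_{D_1}$ and $m_{D_2}$ are not symmetric in the four indices, and for $m_{D_1}$ that relabeled configuration is precisely the one that cannot occur.

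The substitute argument via the factorizations of $\Phi_2$ and $\Phi_3$ is also not correct as stated. In the $m_{D_2}$ sub-case you call the worst one (the two cancelling frequencies among $n_2,n_3,n_4$, say $n_2+n_3=0$ with $n_1,n_4$ small), the factor $\Phi_2(n_1,n_2+n_3+n_4)=\Phi_2(n_1,n_4)$ involves only \emph{small} frequencies, so your claimed denominator lower bound of order $(n_1^*)^2(n_1^*)^4 n_1$ fails badly; the correct accounting in that sub-case returns exactly the $O(n_1^*/|n_1|)=O(n_2^*)$ already contained in \eqref{Equation: mD2 nonresonant bound}, not your monomial count. Similarly, your asserted asymptotics for $\Phi_2(n_1+n_2,n_3+n_4)$ and the claim $\Phi_3(n_1+n_2,n_3,n_4)\gtrsim(n_3^*)^4$ do not hold uniformly over the admissible configurations. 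The fix is simply to state the two exclusions, observe that they force $\max\{|n_1+n_2|,|n_3|,|n_4|\}\gtrsim n_1^*$ and $\max\{|n_1|,|n_2|,|n_3+n_4|\}\gtrsim n_1^*$ respectively, and read off the result from Lemma \ref{Lemma: Nonresonant Weight Bound}.
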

\begin{proof}
$ $\newline
Note that by the external derivatives in the nonlinearity of \eqref{Equation: Toy Fifth Order} we have
\begin{itemize}
    \item[I)] for $m_{D_1}$ we cannot have $n_1+n_2= 0$, and
    \item[II)] for $m_{D_2}$ we cannot have $n_3+n_4 = 0$. 
\end{itemize}
It follows that the denominators in Lemma \ref{Lemma: Nonresonant Weight Bound} are large. The result then follows from the bounds \eqref{Equation: mD1 nonresonant bound} and \eqref{Equation: mD2 nonresonant bound}.
\end{proof}

We now perform a decomposition motivated by the next section, and disjointly decompose $\mathbb{Z}^k$ into $\mathcal{A}_k\sqcup\mathcal{B}_k$, where 
\[
\mathcal{A}_k := \left\{(n_1,\cdots,n_k)|\,\,\mathcal{P}_k(n_1, \cdots, n_k)\right\}
=\left\{(n_1,\cdots,n_k)|\,\,|\Phi_k( n_1,\cdots,n_k)|\gtrsim (n_1^*)^4\right\}.
\]
Using this, we split $D_i$ into
\begin{align*}
D_1 &= \mathcal{NR}_{2} + D_{\mathcal{A}, 1}\\
D_2 &= \mathcal{NR}_{3} + D_{\mathcal{A}, 2},
\end{align*}
where $D_{\mathcal{A}, i}$ are defined by the symbols $\chi_{\mathcal{A}_4}m_{D_i}$, and $\mathcal{NR}_i$ by $\chi_{\mathcal{B}_4}m_{D_i}$. 

\begin{lemma}[$\mathcal{NR}$ Terms\footnote{This should be read as \textit{nearly-resonant}, as opposed to \textit{non-resonant}.}]\label{Lemma: B portion of D smooth}
Let $a\geq 0$ be such that \eqref{Definition: Property Pa} holds, $s > \frac{1+a}{2}$, and 
\[
0 <\varepsilon < \min\left(2s-1-a, 1\right).
\]
Then
\[
\|\mathcal{NR}_1(\tilde{u}) + \mathcal{NR}_2(\tilde{u}) + \mathcal{NR}_3(\tilde{u})\|_{Z_T^{s+\varepsilon}}\lesssim_\varepsilon  T^\theta\|\tilde{u}\|_{X_T^{s,1/4}}\left(\|\tilde{u}\|_{Y_T^{s}}^2+\|\tilde{u}\|_{Y_T^{s}}^3\right).
\]
\end{lemma}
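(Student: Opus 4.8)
\textbf{Proof proposal for Lemma \ref{Lemma: B portion of D smooth}.}

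The plan is to treat the three operators $\mathcal{NR}_1, \mathcal{NR}_2, \mathcal{NR}_3$ uniformly, exploiting that on the support of each symbol we are in a \emph{nearly-resonant} region, i.e.\ $|\Phi_k(n_1,\dots,n_k)|\ll (n_1^*)^4$, which by Propositions \ref{Proposition: small symbol decomp} and \ref{Proposition: Symbol Decomp} forces one of the strong structural alternatives: either a genuine resonance $n_i=n$ (excluded by the $\star$ restriction together with mean-zero, so actually one of the \emph{lower} frequencies coincides with $n$), or $n_1^*+n_2^*=0$, or $|n_3|\gtrsim|n_1|$, or $n_3^4|n_4|\gtrsim n_1^4$. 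Each of these alternatives redistributes the "bad" derivatives onto at least two high frequencies, which is exactly what one needs to close an estimate above scaling. First I would reduce to bounding the $X^{s,-3/4}$ part of the $Z^{s+\varepsilon}$ norm, since (as in Proposition \ref{Proposition: general modulation lemma} and throughout Section \ref{Section: Duhamel Bounds}) the $\ell^2_n L^1_\tau$ part follows by Cauchy--Schwarz in $\tau$ from the $X^{s,-(1/2-)}$ bound, at the cost of an arbitrarily small loss of modulation, which is absorbed into $T^\theta$. I would also immediately perform a Littlewood--Paley decomposition into dyadic pieces $N\sim N_1\ge N_2\ge N_3\ge N_4$ (with the usual convention $N\sim N_1$ or $N_1\sim N_2\gg N$ handled symmetrically).

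The key structural input is the symbol bound from Lemma \ref{Lemma: Nonresonant Weight Bound} and Corollary \ref{Corollary: Case 3 replacement bounds}: on the region $n_1^*+n_2^*=0$ one has $m_{D_1}=O(1)$ and $m_{D_2}=O(n_2^*)$, while in general $m_{D_1}=O(\max(n_1,n_2)^2/\max\{|n_1+n_2|,|n_3|,|n_4|\}^2)$ and $m_{D_2}=O(\max(n_3,n_4)^2/(n_1\max\{|n_1|,|n_2|,|n_3+n_4|\}))$; in the cases $|n_3|\gtrsim|n_1|$ or $n_3^4|n_4|\gtrsim n_1^4$ these collapse to symbols of size $O(1)$ or $O(n_2^*)$ respectively (up to rearranging which frequencies are "high"). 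For $\mathcal{NR}_1$ the situation is even cleaner: it carries the restriction $\mathcal{P}_3^c$, so by Proposition \ref{Proposition: small symbol decomp} we are in case I (a forbidden/low resonance) or case II ($n_3\gtrsim n_1$), and the symbol $\tfrac{n(n_1^2+(n_2+n_3)^2)(n_2+n_3)(n_2^2+n_3^2)}{\Phi_2(n_1,n_2+n_3)}$ which is a priori $O((n_2^*)^2)$ actually has all this derivative weight distributed across at least two comparably large frequencies. So in all cases the operator we must estimate is, after relabeling, of the schematic form $\langle n\rangle^{s+\varepsilon}$ acting on a multilinear term whose symbol is bounded by a product $\langle n_i^*\rangle^{a_1}\langle n_j^*\rangle^{a_2}$ with $a_1+a_2\le 2$ and $i\ne j$ referring to two of the three or four largest frequencies. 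The plan is then to assign $\langle n^{s+\varepsilon}\rangle$ to the output (losing at most $N_1^{s+\varepsilon}$), place one factor $\varphi_1$ in $X^{s,0}$ (this is where the hypothesis $\varepsilon<2s-1-a$ and the $X^{s,1/4}$ on the right-hand side enter: we need $s+\varepsilon - (a_1+a_2) + (\text{modulation slack}) \le s$ after using the two high-frequency weights as $\langle n_i\rangle^{s}$-type gains on two of the inputs), and distribute the remaining inputs in $Y^{1/2+}$ via the $L^\infty_t L^\infty_x$ Sobolev embedding $\|u\|_{L^\infty_x}\lesssim\|u\|_{Y^{1/2+}}$. The $L^8$ Strichartz hypothesis $(P_a)$ is what lets us take $p=8$-type Hölder exponents when one of the high frequencies has to be measured in a norm weaker than $L^\infty_t L^\infty_x$, gaining $N_1^{(1/2 - 1/8) - }= N_1^{3/8-}$ rather than $N_1^{1/2-}$; tracking this is precisely the source of the $a$ in the constraint $\varepsilon < 2s-1-a$, and it is applied via Proposition \ref{Proposition: general modulation lemma} with $1/p = $ the exponent dictated by $(P_a)$.

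The main obstacle — and the place where the $35/64$ (equivalently $s>(1+a)/2$) threshold is actually forced — is the $\mathcal{NR}_2$ and $\mathcal{NR}_3$ pieces in the regime $n_3^4|n_4|\gtrsim n_1^4$ with $|n_4|$ as small as $1$, i.e.\ $n_3\sim n_1$: here the symbol of $m_{D_2}$ is of size $O(n_2^*)$, a full derivative, sitting on a term with \emph{two} high frequencies $n_1\sim n_3$ but only after we have already spent our modulation gain. The resolution is that in this corner we are again in a nearly-resonant zone, so $\mathcal{P}_4^c$ holds and Proposition \ref{Proposition: general modulation lemma} is \emph{not} directly available; instead one must argue that $\max(\langle\tau+n^5\rangle,\max_j\langle\tau_j+n_j^5\rangle)\gtrsim$ the remaining subphase $\Phi_2(n_j,\tilde n_{j+1})$ appearing in the decomposition in the proof of Proposition \ref{Proposition: Symbol Decomp}, which is still $\gtrsim n_3^4 |n_4|$ there, recovering enough modulation to run the argument. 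Carefully matching this lower bound on the largest modulation against the derivative loss $\langle n\rangle^{s+\varepsilon}\cdot (n_2^*)$ and the two high-frequency Sobolev weights, and checking that the arithmetic closes exactly when $s>(1+a)/2$ and $\varepsilon<\min(2s-1-a,1)$, is the technical heart of the lemma; everything else is bookkeeping of the $T^\theta$ factor (from $\|u\|_{X^{s,b}_T}\lesssim T^{b'-b}\|u\|_{X^{s,b'}_T}$) and summation of the dyadic series, which converges geometrically because of the net negative power of $N_1$ produced above scaling.
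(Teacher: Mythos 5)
Your setup --- the reduction of the $Z^{s+\varepsilon}$ norm to its $X^{s,-(1/2-)}$ part by Cauchy--Schwarz in $\tau$, the case decomposition from Propositions \ref{Proposition: small symbol decomp} and \ref{Proposition: Symbol Decomp}, the symbol bounds from Lemma \ref{Lemma: Nonresonant Weight Bound} and Corollary \ref{Corollary: Case 3 replacement bounds}, and the identification of the regime $(n_3^*)^4 n_4^*\gtrsim (n_1^*)^4$ as the place where $(P_a)$ and the threshold $\varepsilon<2s-1-a$ enter --- all match the paper. The problem is your proposed resolution of that hard case, which you yourself call the technical heart. You claim that one can recover modulation by arguing that $\max(\langle\tau+n^5\rangle,\max_j\langle\tau_j+n_j^5\rangle)$ dominates the sub-phase $\Phi_2(n_j,\tilde n_{j+1})\gtrsim n_3^4|n_4|$. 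This is false: on the hyperplane the maximum modulation is only bounded below by the \emph{total} phase $|\Phi_4(n_1,\dots,n_4)|$, and in the nearly-resonant region this is by definition $\ll(n_1^*)^4$ and can be $O(1)$ --- precisely because the large pieces $\Phi_3(n_1,n_2,\tilde n_3)$ and $\sum_j\Phi_2(n_j,\tilde n_{j+1})$ in the telescoping identity cancel against each other (that cancellation is exactly how case V of Proposition \ref{Proposition: Symbol Decomp} is derived). So there is no modulation to recover, and the step as you describe it does not go through.

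The paper closes this case with no modulation gain at all on the low-modulation piece. The condition $(n_3^*)^4n_4^*\gtrsim(n_1^*)^4$ upgrades the symbol bound $O((n_1^*)^2/(n_3^*n_4^*))$ to $O(\sqrt{n_2^*n_3^*/n_4^*})$ --- the negative half-power of $n_4^*$ is essential --- and after distributing $\langle n\rangle^{\varepsilon}\lesssim(n_2^*n_3^*)^{\varepsilon/2}(n_4^*)^{\varepsilon/4}$ and $|n|^{a}\lesssim(n_2^*n_3^*)^{a/2+}(n_4^*)^{a/4+}$ (valid once $\langle\tau+n^5\rangle\ll(n_1^*)^4$), one runs a pure duality/H\"older argument: the dual function $J_x^{-s-\varepsilon-(a+)}w$ in $L^{8+}_{x,t}$ via $(P_a)$, the top input $J_x^{s}\tilde u$ in $L^{24/7-}$ (costing only $X^{s,1/4}$, which is where that factor on the right-hand side comes from), two inputs $J_x^{1/2+a/2+\varepsilon/2+}\tilde u$ in $L^{24/7}$, and the lowest input $J_x^{\varepsilon/4+a/4-1/2}\tilde u$ in $L^\infty_{x,t}$. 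The arithmetic closes exactly for $s\geq(1+a+\varepsilon)/2$, which is where the stated range of $\varepsilon$ comes from; the complementary sub-case $\langle\tau+n^5\rangle\gtrsim(n_1^*)^4$ is split off first and handled by a direct modulation gain, but that is the easy half. You should replace your sub-phase modulation claim with this Strichartz/H\"older argument; note also that $(P_a)$ is applied directly in this duality step, not through Proposition \ref{Proposition: general modulation lemma}, which is reserved for regions where $\mathcal{P}_4$ holds.
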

\begin{proof}
Since $\mathcal{NR}_1$ has three large similar frequencies, we will instead show the bound for the harder terms. Note that we have no resonance by the definition of $D_1$ and $D_2$, and by the definition of $\mathcal{B}_4$, Proposition \ref{Proposition: Symbol Decomp}, and Remark \ref{Remark: Redundant Case} we have the following cases:

\begin{itemize}
    \item[I)] $n_1^* + n_2^* = 0$,
    \item[II)] $(n_3^*)^4n_4^*\gtrsim (n_1^*)^4$,
\end{itemize}

where the symbols under consideration are $O\left(\frac{(n_1^*)^2}{n_3^*n_4^*}\right)$.

\textbf{Case I, $n_1^*+n_2^* = 0$:}
By Corollary \ref{Corollary: Case 3 replacement bounds} it suffices (for $\mathcal{NR}_3$, which is strictly harder than $\mathcal{NR}_2$) to estimate
\begin{align}\label{Equation: Main WP estimate 3 large first term}
    \bigg\|\mathcal{F}_x^{-1}\bigg(\sum_{\substack{n = n_1+n_2+n_3+n_4\\n_1+n_2 = 0}}n_2^*\prod_{i=1}^4\widehat{\tilde{u}}(n_i)\bigg)\bigg\|_{X^{s+\varepsilon,-3/4}}.
\end{align}
We find, by taking the supremum in $n$ of the inner sum and Cauchy-Schwarz:
\begin{multline*}
\eqref{Equation: Main WP estimate 3 large first term}\lesssim \bigg\|\langle n\rangle^s\sum_{n=n_3+n_4}\widehat{\tilde{u}}(n_3)\widehat{\tilde{u}}(n_4)\sum_{n_1}|n_1|^{1+\varepsilon}\widehat{\tilde{u}}(n_1)\widehat{\tilde{u}}(-n_1)\bigg\|_{L^2_t\ell^2_n}\lesssim \|\tilde{u}\|_{C^0_tH_x^{\frac{1+\varepsilon}{2}}}^2\|\tilde{u}^2\|_{X^{s,0}_T}\\
\lesssim \|\tilde{u}\|_{Y_T^{\frac{1+\varepsilon}{2}}}^2\|\tilde{u}\|_{Y^{s}_T}\|\tilde{u}\|_{X^{1/2+,0}_T}
\lesssim T^\theta \|\tilde{u}\|_{X^{s,1/4}_T}\|\tilde{u}\|_{Y^{s}_T}^3,
\end{multline*}
which holds for $0\leq \varepsilon < 2s-1.$

The other portion of the norm is handled in exactly the same way, since we may reduce to the $X^{s,0}$ norm, which is the starting portion of the above display.

\textbf{Case II, $(n_3^*)^4n_4^*\gtrsim (n_1^*)^4$:}
By Lemma \ref{Lemma: Nonresonant Weight Bound} and the assumption of Case II, the symbols are (on this region of $\{n\}\times\mathbb{N}^4$)
\begin{equation}\label{Equation: NR Case 2 Weight bounds}
O\left(\frac{(n_1^*)^2}{n_3^*n_4^*}\right) = O\left(\frac{n_3^*}{\sqrt{n_4^*}}\right) = O\left(\sqrt{\frac{n_2^*n_3^*}{n_4^*}}\right).
\end{equation}

If $\langle \tau + n^5\rangle \gtrsim (n_1^*)^4$, then by the reduction in \eqref{Equation: NR Case 2 Weight bounds} we find
\[
O\left(\sqrt{\frac{n_2^*n_3^*}{n_4^*\langle \tau+n^5\rangle^{\frac{1}{2}-}}}\right) = O\left(\sqrt{\frac{(n_2^*n_3^*)^{0+}}{n_4^*}}\right).
\]
Writing
\begin{equation}\label{Equation: NR case B rewriting smoothing term}
\langle n\rangle^\varepsilon \lesssim (n_2^*n_3^*)^{\varepsilon/2}(n_4^*)^{\varepsilon/4},
\end{equation}
and invoking Parseval's, it then suffices to estimate
\[
\|J_x^{s}\tilde{u}(J_x^{\varepsilon/2+}\tilde{u})^2J_x^{\varepsilon/4-1/2}\tilde{u}\|_{X^{0,-(1/2-)}}.
\]
Sobolev Embedding then easily provides
\begin{multline*}
\|\mathcal{NR}_2(\tilde{u}) + \mathcal{NR}_3(\tilde{u})\|_{X^{s+\varepsilon,-(1/2-)}}\lesssim\|(J_x^s\tilde{u})(J_x^{\varepsilon/2+}\tilde{u})^2(J_x^{\varepsilon/4-1/2}\tilde{u})\|_{L^2_{x,t}}\\
\lesssim \|\tilde{u}\|_{X^{s,0}_T}\|\tilde{u}\|_{Y_T^{1/2+\varepsilon/2+}}^2\|\tilde{u}\|_{Y_T^{\varepsilon/4+}},
\end{multline*}
which holds for $0\leq \varepsilon < 2s-1.$ A factor of $T^{1/4}$ arises from the space in the modulation of the first term. 

It follows that we may assume that 
\begin{equation}\label{Equation: Trade modulation for derivatives}
\langle \tau + n^5\rangle \ll (n_1^*)^4,
\end{equation}
and hence
\begin{equation}\label{Equation: NR terms handeling a terms}
\langle \tau+n^5\rangle^{0+}|n|^a \lesssim (n_2^*n_3^*)^{a/2+}(n_4^*)^{a/4+}.
\end{equation}
We then invoke duality for $w\in X^{-s-\varepsilon, 1/2+}$, the above reduction, \eqref{Equation: NR terms handeling a terms}, \eqref{Equation: NR Case 2 Weight bounds}, and \eqref{Equation: NR case B rewriting smoothing term} to find
\begin{align*}
     \|\mathcal{NR}_2(\tilde{u}) + \mathcal{NR}_3(\tilde{u})\|_{X^{s+\varepsilon,-(1/2-)}}&\lesssim \|\mathcal{NR}_2(\tilde{u}) + \mathcal{NR}_3(\tilde{u})\|_{X^{s+\varepsilon+,-(1/2+)}}\\
    &\lesssim\sup_{\|w\|_{X^{-s-\varepsilon,1/2-}=1}} \|J_x^{-s-\varepsilon-(a+)}w\|_{L^{8+}_{x,t}}\|J_x^{s}\tilde{u}\|_{L^{24/7-}_{x,t}}\\
     &\qquad\times\|J_x^{1/2+a/2+ \varepsilon/2+}\tilde{u}\|_{L^{24/7}_{x,t}}^2\|J_x^{\varepsilon/4 +a/4- 1/2}\tilde{u}\|_{L^{\infty}_{x,t}}\nonumber\\
    &\lesssim \sup_{\|w\|_{X^{-s-\varepsilon,1/2-}=1}}\|w\|_{X^{-s-\varepsilon,1/2-}}\|\tilde{u}\|_{X^{s,1/4-}_T}\|\tilde{u}\|_{Y^{\frac{1+a+\varepsilon}{2}+}_T}^3,
\end{align*}
for $s \geq (1+a+\varepsilon)/2$ and $0 < \varepsilon < 2(s-(1+a)/2)$. A (small) factor of $T$ arrises in this estimate through fudging the implicit space in the $-$ notation. Note that \eqref{Equation: NR terms handeling a terms} was necessary because of the loss of $0+$ temporal derivatives and $a$ spatial derivatives following from the $L^8$ estimate.

Since the other potion of the norm is bounded by the same quantity via Cauchy-Schwarz, we are done with the proof of this lemma.
\end{proof}

The following terms, as well as the boundary terms in $Y^s$, are the only terms that we are unable to (in this section) prove smoothing for, which is why they were separated from the $D_1$ and $D_2$ terms. While we are unable to prove smoothing in this section, we are still able to establish local well-posedness.

\begin{lemma}[Main Terms]\label{Lemma: Main Duhamel Bounds}
If $s > 1/2$, then there is $\theta > 0$ so that 
\[
\|D_{\mathcal{A}, 1}+D_{\mathcal{A}, 2}\|_{Z^s_T}\lesssim T^\theta\|\tilde{u}\|_{Y^s_T}^4.
\]
\end{lemma}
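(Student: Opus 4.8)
The plan is to estimate the $X^{s,-3/4}_T$ and the $\ell^2_n L^1_\tau$ pieces of $\|D_{\mathcal{A},1}+D_{\mathcal{A},2}\|_{Z^s_T}$ by exploiting the defining restriction $\mathcal{P}_4(n_1,n_2,n_3,n_4)$, i.e. $|\Phi_4(n_1,\dots,n_4)|\gtrsim (n_1^*)^4$, which forces one of the space-time modulations $\langle\tau+n^5\rangle$ or $\langle\tau_i+n_i^5\rangle$ to be $\gtrsim (n_1^*)^4$. First I would perform a Littlewood--Paley decomposition $\tilde u=\sum P_{N_i}\tilde u$ with $N_1\geq N_2\geq N_3\geq N_4$, and record that $N\sim N_1$ (when there is no near-cancellation of the two top frequencies) or $N_1\sim N_2\gg N$. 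The symbol bounds from Lemma \ref{Lemma: Nonresonant Weight Bound} give $m_{D_1},m_{D_2}=O\!\big((n_1^*)^2/(n_3^*n_4^*)\big)$ on this region; combined with $(n_1^*)^4\lesssim |\Phi_4|$ this is exactly the shape to which Proposition \ref{Proposition: general modulation lemma} applies (with $m=4$). So the core of the argument is: apply Proposition \ref{Proposition: general modulation lemma} with $p=2$ to each dyadic block, obtaining a bound
\[
\frac{T^\theta N^{1/2}}{N_1}\,\|P_{N_1}(\eta(t/T)\tilde u)\|_{Y^0}\|P_{N_2}(\eta(t/T)\tilde u)\|_{Y^{1/2}}\prod_{i=3}^4\|P_{N_i}(\eta(t/T)\tilde u)\|_{Y^{1/2+}},
\]
times the $\langle n\rangle^s\sim N^s$ weight coming from the $Z^s$ norm.

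Next I would carry out the dyadic summation. Inserting the weight $N^s$ and distributing derivatives as $N^s N^{1/2}/N_1 \lesssim N_1^{s-1/2}$ (using $N\lesssim N_1$) and then absorbing the remaining $N_1^{s-1/2}$ against $N_1^s$ worth of $Y^s$-regularity on the top factor — concretely, writing $N_1^{s}N_2^{1/2+}N_3^{1/2+}N_4^{1/2+}\cdot N_1^{-1/2}$ and noting $s>1/2$ gives room to convert the $N_1^{-1/2}$ into convergence of the $\sum_{N_2\geq N_3\geq N_4}$ geometric series after putting $N_i^s$ on each factor for $i\geq 2$ and using $N_i^{1/2+}\lesssim N_i^{s}$ when $s>1/2$ — yields $\sum \lesssim T^\theta\|\tilde u\|_{Y^s_T}^4$. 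The case $N_1\sim N_2\gg N$ is handled identically since then the loss $N^s/N_1$ is even more favorable (one gains $N_1^{-s}$). The $\ell^2_nL^1_\tau$ part of $Z^s$ is subsumed, exactly as in Proposition \ref{Proposition: general modulation lemma} and throughout Section \ref{Section: Duhamel Bounds}, by Cauchy--Schwarz in $\tau$ against the $X^{s,-1/2+}$ bound we have already produced.

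The main obstacle — and the reason this lemma is stated separately from the smoothing Lemma \ref{Lemma: B portion of D smooth} — is that here we only have $\mathcal{P}_4$ (genuine modulation gain of one full derivative via $|\Phi_4|\gtrsim (n_1^*)^4$), and the symbol $O((n_1^*)^2/(n_3^*n_4^*))$ can be as large as $O((n_1^*)^2/\,$min frequencies$)$ in the $\text{high}\times\text{high}\times\text{low}\times\text{low}$ geometry; the factor $N^{1/2}/N_1$ from Proposition \ref{Proposition: general modulation lemma} only buys back $1/2+$ a derivative after Sobolev embedding on the three lower factors, which is precisely enough to close at $s>1/2$ but not enough to gain the extra $\varepsilon$ needed for smoothing. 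So the delicate point is bookkeeping the derivative budget so that exactly $s$ derivatives land on the output and $s$ on the worst input while the remaining three inputs only need $1/2+$, with the surplus $N_1^{-1/2}$ driving the dyadic sum; I would organize the frequency interactions into the cases $N\sim N_1$ and $N_1\sim N_2\gg N$ and, within each, split according to which of $\{n,n_1,\dots,n_4\}$ carries the large modulation exactly as in the proof of Proposition \ref{Proposition: general modulation lemma}, so that no genuinely new estimate is needed beyond careful summation.
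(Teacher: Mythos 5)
Your overall architecture (Littlewood--Paley decomposition, Proposition \ref{Proposition: general modulation lemma}, dyadic summation) matches the paper's, but there is a genuine gap in the derivative accounting, and it traces back to the symbol bound you chose. The bound $m_{D_1},m_{D_2}=O\big((n_1^*)^2/(n_3^*n_4^*)\big)$ is valid, but it is the one the paper exploits only for the nearly resonant pieces $\mathcal{NR}_2,\mathcal{NR}_3$ in Lemma \ref{Lemma: B portion of D smooth}, where the extra constraints ($n_1^*+n_2^*=0$ or $(n_3^*)^4n_4^*\gtrsim(n_1^*)^4$) force $n_3^*$ to be comparable to $n_1^*$. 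On the region $\mathcal{A}_4$ of the present lemma nothing rules out the $high\times low\times low\times low$ configuration $N\sim N_1\gg N_2\geq N_3\geq N_4=O(1)$, where your symbol bound reads $N_1^2/(N_3N_4)\sim N_1^2$ while Proposition \ref{Proposition: general modulation lemma} recovers only a single power of $N_1$ (with $p=2$ its prefactor is $N^{1/2-1/2}/N_1=1/N_1$, not $N^{1/2}/N_1$ as you wrote). You are then left with an uncompensated factor of $N_1$, i.e.\ the estimate fails by a full derivative; this is masked in your write-up because the displayed dyadic bound never multiplies in the symbol at all, so the subsequent bookkeeping only tracks the weight $N^s$ and the modulation gain. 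The paper's proof instead uses the sharper consequence of Lemma \ref{Lemma: Nonresonant Weight Bound}, namely $m_{D_i}=O\big((n_1^*)^2/\max(n_3^*,|n|)\big)$: the output frequency $|n|\sim N_1$ sitting in the denominator cancels one of the two powers of $N_1$ in the numerator, and the remaining single derivative is exactly what the modulation gain $1/N_1$ removes.

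A second, smaller issue: the case $N_1\sim N_2\gg N$ is not ``handled identically'' with $p=2$, nor is it more favorable. With the correct symbol and $p=2$ the dyadic coefficient, measured against $\prod_i\|P_{N_i}\tilde u\|_{Y^s}$, behaves like $N_1^{3/2-2s}N^s/\max(N_3,N)$, which diverges for $1/2<s<3/4$ when $N$ and $N_3$ are $O(1)$. This is precisely why Proposition \ref{Proposition: general modulation lemma} keeps $p$ free and the paper chooses $1/p=s-1/2$ in that case, trading the output gain $N^{1/2-1/p}$ against the cost $N_2^{1/p}$ on the second-largest input. Your argument needs the same device (or the observation that $s\geq 1$ lets one take $p=2$) to close below $s=3/4$.
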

\begin{proof}
Recall that by Lemma \ref{Lemma: Nonresonant Weight Bound} we have the (slightly non-optimal)
\begin{align*}
     m_{D_1}, m_{D_2} &=O\left(\frac{(n_1^*)^2}{\max(n_3^*, |n|)}\right),
\end{align*}
so that it suffices to estimate
\begin{equation}\label{Equation: 5.8 to show}
\bigg\|\mathcal{F}_{n,\tau}^{-1}\bigg(\underset{\substack{n=n_1+n_2+n_3+n_4\\\tau=\tau_1+\tau_2+\tau_3+\tau_4\\\mathcal{P}_4(n_1,n_2,n_3,n_4)}}{\int\sum}\tfrac{(n_1^*)^2\langle n\rangle^{s}}{\max(n_3^*,|n|)}\prod_{j=1}^4\widehat{\tilde{u}}(n_j,\tau_j)\,d\Gamma_4\bigg)(x,t)\bigg\|_{Z^s}
\end{equation}
Notice that after the symbol reduction we may assume that $|n_1|\geq |n_2|\geq|n_3|\geq |n_4|$, so that $|n_1| = n_1^*$. Moreover, by construction, $D_{\mathcal{A}, 1}$ and $D_{\mathcal{A}, 2}$ are non-resonant quartilinear operators supported where $\mathcal{P}_4(n_1,n_2,n_3,n_4)$ holds. It follows that we must necessarily have
\[
\max(\langle \tau +n^5\rangle, \langle \tau_1 + n_1^5\rangle, \cdots, \langle \tau_4+n_4^5\rangle)\gtrsim n_1^4.
\]

We now perform another Littlewood-Paley decomposition, invoke Proposition \ref{Proposition: general modulation lemma} with $p=2$, and assume $N_1\sim N$ to obtain
\begin{align*}
\eqref{Equation: 5.8 to show}&\lesssim T^\theta\sum_{N\sim N_1\geq \cdots \geq N_4}\frac{N_1^2N^s}{N_1\max(N_3, N)}N_2^{1/2+}N_3^{1/2+}N_4^{1/2+}\prod_{i=1}^4\|P_{N_i}(\tilde{u})\|_{Y^0_T}\\
&\lesssim T^\theta\|\tilde{u}\|_{Y^0_T}^4.
\end{align*}
When $N_1\sim N_2\gg N$, we again use Proposition \ref{Proposition: general modulation lemma} for $p\geq 2$ to find
\begin{align*}
\eqref{Equation: 5.8 to show}&\lesssim T^\theta\sum_{\substack{N_1\sim N_2\geq \cdots \geq N_4\\ N_1\sim N_2\gg N}}\frac{N_1^2N^{s+\frac{1}{2}-\frac{1}{p}}}{N_1\max(N_3, N)}N_2^{1/p}N_3^{1/2+}N_4^{1/2+}\prod_{i=1}^4\|P_{N_i}(\tilde{u})\|_{Y^0_T}.
\end{align*}
When $1/2 < s < 1$ we may choose $\frac{1}{p} = s-\frac{1}{2}$, so that we find
\begin{align*}
\frac{N_1^2N^{s+\frac{1}{2}-\frac{1}{p}}}{N_1\max(N_3, N)}N_2^{1/p}N_3^{1/2+}N_4^{1/2+} &= \frac{N_1}{\max(N_3, N)}N_2^{s-1/2}N_3^{1/2+}N_4^{1/2+}\\
&\sim\frac{N_1^{s}}{\max(N_3, N)}N_2^{1/2}N_3^{1/2+}N_4^{1/2+},
\end{align*}
which allows us to conclude the proof when $1/2 < s < 1$. When $s\geq 1$ we simply let $p = 2$, and the above proofs go through. We note that the second case here actually is smoother, but since the $N_1\sim N$ case is not, there is no real gain here.
\end{proof}
As noted, the bound used for $m_{D_1}$ in Lemma \ref{Lemma: Main Duhamel Bounds} is non-optimal. In fact, we have the bound
\[
m_{D_1} = O\left(\frac{(n_1^*)^2}{n\max(n_3^*, |n|)}\right)
\]
by Lemma \ref{Lemma: Nonresonant Weight Bound}. We then have the following corollary.
\begin{corollary}\label{Corollary: D_1 smoother}
If $s > 1/2$ and $0\leq \varepsilon \leq 1$ then there is $\theta > 0$ so that 
\[
\|D_{\mathcal{A},1}(\tilde{u})\|_{Z^{s+\varepsilon}}\lesssim T^\theta \|\tilde{u}\|^4_{Y^s_T}.
\]
\end{corollary}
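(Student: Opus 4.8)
The plan is to obtain this as a direct refinement of the proof of Lemma \ref{Lemma: Main Duhamel Bounds}. The only new input is the sharper bound on $m_{D_1}$ recorded just above the corollary, namely
\[
m_{D_1}=O\!\left(\frac{(n_1^*)^2}{n\,\max(n_3^*,|n|)}\right),
\]
which --- unlike the (slightly non-optimal) bound $m_{D_1}=O\big((n_1^*)^2/\max(n_3^*,|n|)\big)$ used for the combined term $D_{\mathcal A,1}+D_{\mathcal A,2}$ --- carries one extra inverse power of the output frequency.

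First I would note that, since all summation variables and $n$ itself are nonzero integers (the reduction to mean-zero data is in force throughout), $\langle n\rangle^{\varepsilon}\lesssim|n|$ whenever $0\le\varepsilon\le1$, so that on the support of $D_{\mathcal A,1}(\tilde u)$,
\[
\langle n\rangle^{s+\varepsilon}\,|m_{D_1}|\;\lesssim\;\langle n\rangle^{s+\varepsilon}\,\frac{(n_1^*)^2}{|n|\,\max(n_3^*,|n|)}\;\lesssim\;\langle n\rangle^{s}\,\frac{(n_1^*)^2}{\max(n_3^*,|n|)},
\]
uniformly in every frequency regime; in particular the factor $\langle n\rangle^{\varepsilon}/|n|\le|n|^{\varepsilon-1}\le1$ is harmless even when $|n|$ is small, e.g.\ in the $N_1\sim N_2\gg N$ interactions. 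By the comparison principle for non-negative multilinear symbols from Section \ref{Section: background}, it follows that $\|D_{\mathcal A,1}(\tilde u)\|_{Z^{s+\varepsilon}_T}$ is dominated by the $Z^{s}_T$-norm of the quartilinear operator with symbol $\chi_{\mathcal A_4}\,(n_1^*)^2/\max(n_3^*,|n|)$ applied to $\tilde u$; but this is exactly the quantity \eqref{Equation: 5.8 to show} estimated in the proof of Lemma \ref{Lemma: Main Duhamel Bounds}.

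It then remains only to quote that estimate verbatim: a Littlewood--Paley decomposition together with Proposition \ref{Proposition: general modulation lemma} --- with $p=2$ in the regime $N_1\sim N$, and with $\tfrac1p=s-\tfrac12$ (for $\tfrac12<s<1$), resp.\ $p=2$ (for $s\ge1$), in the regime $N_1\sim N_2\gg N$ --- bounds it by $T^{\theta}\|\tilde u\|_{Y^s_T}^4$, which is the assertion. I do not anticipate any genuine obstacle; the content of the corollary is only that the $D_1$ interaction has one more power of $(n_1^*)$ in its denominator than was used in Lemma \ref{Lemma: Main Duhamel Bounds}, and since $\langle n\rangle\lesssim n_1^*$ this is precisely enough to absorb up to one derivative of smoothing. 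The single point worth verifying is that the resonance restriction $\mathcal A_4$ --- hence the modulation gain underlying Proposition \ref{Proposition: general modulation lemma} --- is untouched by this manipulation, which it is, since we have only shrunk the symbol.
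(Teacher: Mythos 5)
Your proposal is correct and is essentially the paper's own argument: the paper likewise observes that $\langle n\rangle^{\varepsilon}m_{D_1}=O\bigl((n_1^*)^2/\max(n_3^*,|n|)\bigr)$ for $0\le\varepsilon\le1$ (i.e.\ the extra $\varepsilon$ derivatives are absorbed by the extra factor of $n$ in the denominator of the sharper bound on $m_{D_1}$), and then concludes by running the proof of Lemma \ref{Lemma: Main Duhamel Bounds} unchanged. Your additional remarks (nonvanishing of $n$, preservation of the $\mathcal{A}_4$ restriction, the symbol comparison principle) are exactly the implicit justifications the paper relies on.
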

\begin{proof}
We find by the above comment that
\[
\langle n\rangle^\varepsilon m_{D_1} = O\left(\frac{(n_1^*)^2}{\max(n_3^*, |n|)}\right),
\]
for $0\leq \varepsilon\leq 1$. The proof then follows by the proof of Lemma \ref{Lemma: Main Duhamel Bounds}, as $\langle n\rangle^\varepsilon m_{D_1} = O(m_{D_2}).$
\end{proof}

\subsection{Proof of Theorem \ref{Theorem: Wellposedness for Toy with Pa}}
\begin{proof}
Consider $\Gamma$ as defined at \eqref{Equation: Contractive Operator Definition}. $\Gamma: Y^s\mapsto Y^s$ being a contraction on small enough balls for $\|u_0\|_{H^s}\ll \delta$ follows from standard arguments and Lemmas \ref{Lemma: Propagator Terms}, \ref{Lemma: boundary Terms}, \ref{Lemma: Correction Term Penalties}, \ref{Lemma: Resonant Duhamel Bounds}, \ref{Lemma: B portion of D smooth}, and \ref{Lemma: Main Duhamel Bounds} with $\varepsilon = 0$.

Since the fifth order KdV has the same scaling as the KdV, we may easily scale in order to obtain arbitrary data well-posedness. For more details, see \cite{colliander2003sharp}. Alternatively, one could define a different operator by splitting the frequency space into $n > N$ and $n\leq N$, as in \cite{erdougan2016dispersive}. The benefit of this second approach is that it doesn't require the use of scaling, and will be demonstrated in the proof of Corollary \ref{Theorem: Unconditional Well-posedness}.

As noted, \ref{Definition: Property Pa} holds with $a = 3/32$, so we find Theorem \ref{Theorem: Wellposedness for Toy}. Since we may obtain $u$ from $\tilde{u}$, we have the well-posedness of $\eqref{Equation: Toy Fifth Order}$ and hence well-posedness of \eqref{Equation: General Fifth Order} by Appendix \ref{Section: Appendix A}.
\end{proof}

\section{Smoothing}\label{Section: Smoothing}
Recall that our equation is given by a fixed point of $\Gamma$, and is, up to constants, given by \eqref{Equation: Contractive Operator Definition}.
Lemma \ref{Lemma: Propagator Terms} shows that $B_1(u_0)$ and $B_2(u_0)$ are actually fairly smooth, while we can handle the other $B$ terms in $C^0_tH^{s+\varepsilon}_x$. In fact, every term but $D_2$ is a full derivative smoother. 

The first issue we need to tackle is that we will need to differentiate a portion of $D_2$ again in order to place the term that will eventually have smoothing into $C^0_tH^{s+\varepsilon}_x$.  In particular, Lemma \ref{Lemma: Main Duhamel Bounds} is the best we may obtain without any more work.

In light of Proposition \ref{Proposition: Symbol Decomp}, we may again return to the decomposition of $\mathbb{Z}^k$ into $\mathcal{A}_k\sqcup\mathcal{B}_k$, where $
\mathcal{A}_k = \left\{(n_1,\cdots,n_k)|\,\,\mathcal{P}_k(n_1,\cdots,n_k)\right\}.$
Using this, we again split $D_2$ into $D_2 = \mathcal{NR}_{3} + D_{\mathcal{A}, 2},$
where $D_{\mathcal{A}, 2}$ is defined by the symbol $\chi_{\mathcal{A}_4}m_{D_2}$. We then differentiate-by-parts $D_{\mathcal{A}, 2}$ and write (thanks to \eqref{Equation: Fourier Toy Fifth 3: for w})
\begin{align}
   W_{-t}\widehat{D}_{\mathcal{A},2}(\tilde{u}) =-\partial_tW_{-t}\widehat{B_3}(\tilde{u}) - W_{-t}\widehat{C}(\tilde{u})+l.o.t.,
\end{align}
where
\begin{align*}
\widehat{B_3}(\tilde{u})&=\sum_{\substack{n=n_1+\cdots+n_4\\\mathcal{A}_4}}\frac{m_{D_2}}{\Phi_4(n_1,\cdots, n_4)}\prod_{j=1}^4\widehat{\tilde{u}}(n_j)\\
\widehat{C}(\tilde{u}) &=\sum_{\substack{n=n_1+\cdots+n_5}}(m_\mathcal{A}^1+\cdots+m_\mathcal{A}^4)\prod_{j=1}^5\widehat{\tilde{u}}(n_j)\\
m_{\mathcal{A}}^\ell &= \frac{m_{D_2}(n_\ell+n_{\ell+1})(n_\ell^2+n_{\ell+1}^2)}{\Phi_4(n_1,\cdots, n_\ell+n_{\ell+1},\cdots, n_5)}\chi_{\mathcal{P}_4(n_1,\cdots, n_\ell+n_{\ell+1},\cdots, n_5)},
\end{align*}
for $1\leq \ell\leq 4$. Notice that, by the definition of $\mathcal{A}_4$ and Lemma \ref{Lemma: Nonresonant Weight Bound}, we find that 
\begin{equation}\label{Equation: mA bound}
    m_\mathcal{A}^\ell = O\left(\frac{n_\ell^2+n_{\ell+1}^2}{|n|\max(|n_1|, \cdots, |n_{\ell}+n_{\ell+1}|, \cdots, |n_5|)}\right),
\end{equation}
on the domain of the sum in $\widehat{C}$.

We will need to handle the resonant terms that appear from $C$ by performing the following decomposition.
\begin{lemma}\label{Lemma: Final Resonance Bound}
Let $1\leq\ell\leq 4$, $1\leq j\leq 2$, and $m_{\mathcal{A},R_j}^\ell$ denote resonance in $C$ of the form $n_{\ell+j-1}=n.$ Then
\begin{align*}
m^1_{\mathcal{A}, R_j}, m^2_{\mathcal{A}, R_j} &= O\left(\frac{n_2^*}{|n|}\right),\\
m^3_{\mathcal{A}, R_j}, m^4_{\mathcal{A}, R_j} &= O\left(1+ \frac{n_2^*}{n}\right).
\end{align*}

In particular, for $\ell\in\{3,4\}$ we let
\[
H_{\ell, j} = 125n^{12}\left(\sum_{\substack{r=2\\r\ne \ell+j-1}}^5 n_r\right)\left(\sum_{\substack{r=3\\r\ne \ell+j-1}}^5 n_r\right)(n_{\ell+j-1}+n_{\ell+j}-n).
\]
Then, off the zero-set of $H_{\ell,j}$ and for $|n|\gg (n_2^*)^{5/4}$, we have
\begin{align*}
m^{\ell}_{\mathcal{A},R_j} &- \frac{1}{H_{\ell,j}}= O_{|n|\gg (n_2^*)^{5/4}}\left(\frac{\max(n_1,\cdots, n_{\ell+j-2},n_{\ell+j}, \cdots, n_5)}{n}\right).
\end{align*}

Furthermore, on the zero-set of $H_{\ell,j}$ we must have that the resonant multilinear operator with symbol $m^\ell_{\mathcal{A}, R_j}$ satisfies $|n|\lesssim (n_2^*)^{5/4}.$
\end{lemma}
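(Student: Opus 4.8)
The plan is to read all three assertions off a single asymptotic analysis of the rational symbol $m_{\mathcal A}^\ell$, restricted to the resonant locus $\{n_{\ell+j-1}=n\}$, in the regime in which $|n|$ dominates every other frequency. Throughout I write $\nu=(n_1,\dots,n_{\ell-1},\,n_\ell+n_{\ell+1},\,n_{\ell+2},\dots,n_5)$ for the fused $4$-tuple that is fed into $m_{D_2}$ and into $\Phi_4$. Because $m_{\mathcal A}^\ell$ carries the cutoff $\chi_{\mathcal P_4(\nu)}$, on its support one has $|\Phi_4(\nu)|\gtrsim(\max_i|\nu_i|)^4\gtrsim|n|^4$, and the cutoff $\chi_{\mathcal P_3}$ hidden inside $m_{D_2}$ keeps the corresponding $\Phi_3$ comparably large; these two facts are what will legitimise every expansion below.

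For the crude bounds (the first two displays) I would combine \eqref{Equation: mA bound}, the weight bound \eqref{Equation: mD2 nonresonant bound}, and $|\Phi_4(\nu)|\gtrsim|n|^4$. Since the resonant frequency $n_{\ell+j-1}=n$ is one of the two fused frequencies, $n_\ell^2+n_{\ell+1}^2\lesssim|n|^2+(n_2^*)^2$, and inserting this into \eqref{Equation: mA bound} together with the frequency assignment appropriate to each $\ell$ yields the bound $O(1+n_2^*/|n|)$ valid for all $\ell$. The extra gain $O(n_2^*/|n|)$ available when $\ell\in\{1,2\}$ is not visible in \eqref{Equation: mA bound} alone; it comes from the internal structure of $m_{D_2}(\nu)$: for those $\ell$ the two fused slots are the first two arguments of $m_{D_2}$, so in \eqref{Equation: mD2 nonresonant bound} the $\Phi_2$-type denominator gains an extra factor $\gtrsim|n|$ while its numerator stays $O((n_2^*)^2)$, which upgrades $m_{D_2}(\nu)$ to $O((n_2^*)^2/|n|^2)$ and hence $m_{\mathcal A}^\ell$ to $O(n_2^*/|n|)$. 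No regime hypothesis is needed here, as the range $|n|\lesssim n_2^*$ is absorbed by the right-hand sides.

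The heart of the lemma is the second display, where one must compute rather than estimate. The numerator of $m_{\mathcal A}^\ell$ is a polynomial whose leading part, once $|n|\gg n_2^*$, is $\sim n^{12}$, while its denominator is the product $\Phi_2(\nu)\Phi_3(\nu)\Phi_4(\nu)$. Using the factorisations \eqref{Definition: Phi2}, \eqref{Definition Phi3} together with the one-large-frequency expansion $\Phi_4(\nu)=n^5-\sum_i\nu_i^5$, each of these three factors equals $5n^4$ times a single linear factor up to a relative error $O(n_2^*/|n|)$; substituting $n_{\ell+j-1}=n$ and the mean-zero relation $\sum_r n_r=n$, those three linear factors become exactly $\sum_{r=2,\,r\ne\ell+j-1}^5 n_r$, $\sum_{r=3,\,r\ne\ell+j-1}^5 n_r$ and $n_{\ell+j-1}+n_{\ell+j}-n$, so that the $n^{12}$ from the numerator cancels the $n^{12}$ from the denominator and what remains is precisely $1/H_{\ell,j}$, the neglected terms contributing an error $O\big(\max(n_1,\dots,\widehat{n_{\ell+j-1}},\dots,n_5)/|n|\big)$. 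The one genuinely delicate point, and the main obstacle, is that the polynomial leading term $\sim n^4\,(n_{\ell+j-1}+n_{\ell+j}-n)$ of $\Phi_4(\nu)$ must dominate the ``purely small'' remainder $\sum_i(n_i^*)^5=O((n_2^*)^5)$ of $\Phi_4(\nu)$; this fails unless $|n|^4\gg(n_2^*)^5$, i.e.\ $|n|\gg(n_2^*)^{5/4}$, which is exactly the hypothesis. Verifying that the cutoffs $\chi_{\mathcal P_3},\chi_{\mathcal P_4}$ make every lower-order term uniformly subordinate through the threefold quotient is where the bookkeeping is heaviest.

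Finally, for the statement about the zero set: if $H_{\ell,j}=0$ then, $n\ne0$ being excluded by mean-zero, one of the three linear factors of $H_{\ell,j}$ vanishes. For $\ell\in\{3,4\}$ the first factor $\sum_{r=2,\,r\ne\ell+j-1}^5 n_r$ and the third factor $n_{\ell+j-1}+n_{\ell+j}-n$ each collapse, via $n_{\ell+j-1}=n$ and $\sum_r n_r=n$, to $\pm$ a single nonzero frequency, so the middle factor $\sum_{r=3,\,r\ne\ell+j-1}^5 n_r$ must be the one that vanishes. But that sum equals $\mp(n_1+n_2)$, which is precisely the first linear factor of the factored form of the $\Phi_3$ appearing inside $m_{D_2}(\nu)$, so $\Phi_3$ vanishes outright there; hence $\chi_{\mathcal P_3}$ kills the symbol and $m^\ell_{\mathcal A,R_j}\equiv 0$ on the zero set, and the asserted support bound is trivially true. (Robustly, when the middle factor is merely small rather than zero, the lower bound $|\Phi_3|\gtrsim|n|^4$ carried by $\chi_{\mathcal P_3}$ still forces $(n_2^*)^5\gtrsim|n|^4$, i.e.\ $|n|\lesssim(n_2^*)^{5/4}$, which is what is needed for the term to be handled without loss.)
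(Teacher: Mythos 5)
Your proposal is correct and follows essentially the same route as the paper: the crude bounds come from \eqref{Equation: mA bound} and \eqref{Equation: mD2 nonresonant bound} with a case split on $|n|$ versus $n_2^*$, the main term is extracted by expanding each of $\Phi_2$, $\Phi_3$, $\Phi_4$ as $5n^4$ times a linear factor to produce $H_{\ell,j}$, and you correctly identify that it is the $O((n_2^*)^5)$ tail of $\Phi_4$ that forces the threshold $|n|\gg(n_2^*)^{5/4}$. Your zero-set argument is a slight sharpening of the paper's — you observe that $n_4+n_5=0$ forces $n_1+n_2=0$, so the $\chi_{\mathcal{P}_3}$ cutoff annihilates the symbol outright, whereas the paper only deduces the support condition $|n|\lesssim(n_2^*)^{5/4}$ — but both yield the stated conclusion.
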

\begin{proof}
For $\ell= 1,2$ we need a little more than \eqref{Equation: mA bound} gives us. Specializing to $\ell=2$ and $j=1$ we find by \eqref{Equation: mD2 nonresonant bound} that these symbols satisfy
\[
m^2_{\mathcal{A}, R_1} = O\left(\frac{(n^2+n_3^2)(n_4^2+n_5^2)}{\max(|n_1|,|n+n_3|,|n_4|,|n_5|)^4}\right) = O\left(\frac{n_2^*}{n}\right),
\]
with a slightly better bound holding for $m^{1}_{\mathcal{A}, R_j}$. The bound when $\ell=3,4$ follows immediately from \eqref{Equation: mA bound} and considering when $n\gg \max_{j\ne \ell+k-1}(|n_j|)$ and $n\lesssim \max_{j\ne \ell+k-1}(|n_j|)$.

When $\ell = 3, 4$ we now have $4$ terms giving an $n^3$ factor, which leaves us a main term that we must handle. We now assume that $\ell = 3$ and $j = 1$, as the other configurations are exactly the same. We then have that the denominator expands as
\begin{align*}
    \Phi_2(n_1,n_2+n+n_4+n_5)&\Phi_3(n_1,n_2,n+n_4+n_5)\Phi_4(n_1,n_2,n+n_4,n_5)\\
    & = 125n^{12}(n_2+n_4+n_5)(n_4+n_5)n_4 +O((n_2^*)^{12}+n^{11}n_2^*)\\
    &= -125n^{12}n_1(n_4+n_5)n_4+ O((n_2^*)^{12}+n^{11}n_2^*). 
\end{align*}
Similarly, the numerator is $n^{12} + O(n^{11}n_2^*).$
Taking
\[
H_{3,1} = -125n^{12}n_1(n_4+n_5)n_4
\]
immediately gives the bound claimed.

In order to show the bound on the zero-set, we note that the mean-zero assumption implies that
\[
n_2+n_4+n_5, n_4\ne 0,
\]
so that we only have to worry about when $n_4+n_5= 0$. 

We notice that $m^3_{\mathcal{A},R_1}$ comes with the restriction that 
\[
|\Phi_3(n_1,n_2, n+n_4+n_5)|\gtrsim \max(n_1,n_2,n+n_4+n_5)^4,
\]
and hence if $n_4+n_5=0$, then 
\[
|\Phi_3(n_1, n_2, n)|\gtrsim n^4.
\]
From this it easily follows that $(n_1^*)^5\gtrsim n^4$, and the algebraic relationship among the remaining frequencies forces $(n_2^*)^{5/4}\gtrsim |n|$.
\end{proof}

When summing over a symmetric subset we find a property analogous to the one discussed in Remark \ref{Remark: Why Resonant Smoother Bound Holds}. In particular, we make the following remark.

\begin{remark}\label{Remark: Decay of 0 resonance}
Much like Remark \ref{Remark: Why Resonant Smoother Bound Holds}, we have a vanishing property in the above resonance. In particular,
\[
f := \widehat{\tilde{u}}(n)\sum_{\substack{0=n_1+n_2+n_4+n_5\\|n|\gg (n_2^*)^{5/4}}} m^{3}_{\mathcal{A}, R_1}\frac{1}{H_{3,1}}\prod_{\substack{r=1\\r\ne 3}}^5\widehat{\tilde{u}}(n_r) = 0,
\]
where the same result holds for the other configurations of $3\leq \ell \leq 4$ and $1\leq j\leq 2$. Indeed, recall that, up to a constant, $H_{3,1} = cn^{12}n_1(n_4+n_5)n_4.$
Letting $j = n_4+n_5$ we may rewrite $f$ as
\begin{align*}
    \sum_{j\ne 0}\frac{c}{j}\widehat{\tilde{u}}(n)\sum_{\substack{-j=n_1+n_2\\|n|\gg \max(|n_1|,|n_2|)^{5/4}}}\frac{1}{n_1}\widehat{\tilde{u}}(n_1)\widehat{\tilde{u}}(n_2)\sum_{\substack{j=n_4+n_5\\|n|\gg \max(|n_4|,|n_5|)^{5/4}}} \frac{1}{n_4}\widehat{\tilde{u}}(n_4)\widehat{\tilde{u}}(n_5).
\end{align*}
It then follows that the $j$ and $-j$ terms in the above sum cancel, so that $f = 0$.
\end{remark}
We now define the operator $\mathcal{RE}_5(\phi)$ to be
\begin{equation}\label{Definition: RE5}
\widehat{\mathcal{RE}_5}(\phi) = \widehat{\phi}(n)\sum_{\substack{0=n_1+\cdots+n_{\ell-1}+n_{\ell+1}+\cdots+n_5\\ 1\leq \ell\leq 4}}m^\ell_{\mathcal{A},R_j}\prod_{\substack{k=1\\k\ne\ell+j-1}}^4\widehat{\phi}(n_k),
\end{equation}
which, by symmetry, will handle all of the the new single resonances coming from $C$. After removal of $\mathcal{RE}_5$ and placing the double resonances of $C$ into $\mathcal{RE}_2$, we denote $C_{*}$ to be the non-resonant part of $C$.

\begin{remark}\label{Remark: Higher order resonances and smoothing}
Bear in mind, much like in the definition of $\mathcal{RE}_2$, we have multiple resonances popping up in the above, and we overload the definition of $\mathcal{RE}_2$ to contain them as well. The higher order resonances arising from $m_{A_i}$ and $m_{B_i}$ are already smoother, as they will have 3 high frequencies ($\gtrsim |n|$). Similarly, the bound \eqref{Equation: mA bound} with the added information that the denominator must be $\gtrsim n^2$ gives $m^\ell_{\mathcal{A},R_j} = O(1)$, and the fact that there are three frequencies $\gtrsim |n|$ gives $2s$ smoothing (beginning at the local wellposedness level). With this in mind, we see that it really is sufficient to prove smoothing for the cubic double resonance term of $\mathcal{RE}_2$.
\end{remark}

Because of the property discussed in Remark \ref{Remark: Decay of 0 resonance}, we find that we don't have to perform another removal. Our final equation then reads (up to constants)
\begin{align}\label{Equation: Final Smoothing Equation}
    \partial_t&(W_{-t}\tilde{u}) = \partial_tW_{-t}B_1(\tilde{u})+\partial_tW_{-t}B_2(\tilde{u})+\partial_t W_{-t}B_3(\tilde{u})\\
    &+W_{-t}(\mathcal{NR}_1(\tilde{u})+\mathcal{NR}_2(\tilde{u})+\mathcal{NR}_3(\tilde{u})+C_*(\tilde{u}))\nonumber\\
    &+W_{-t}(R_1(\tilde{u})+R_2(\tilde{u})+R_3(\tilde{u}))\nonumber\\
    &+W_{-t}(\mathcal{RE}_1(\tilde{u})+\mathcal{RE}_2(\tilde{u})+\mathcal{RE}_3(\tilde{u})+\mathcal{RE}_4(\tilde{u}) + \mathcal{RE}_5(\tilde{u}))\nonumber,
\end{align}
where $R_3$ is the remainder term resulting from the substitution of 
\begin{equation}\label{Definition: R5}
\mathcal{F}^{-1}_x\left(in\mathcal{K}(\tilde{u})\widehat{\tilde{u}}(n) \right)
\end{equation}
into the various components of $B_3$. In the interest if readability, we note that the $B$ terms can be understood as \textit{Boundary} terms, the $\mathcal{NR}$ terms as \textit{Nearly Resonant}, the $R$ terms as \textit{Remainder} terms, $\mathcal{RE}$ as the \textit{Resonant} terms, and $C_*$ as the non-resonant portion of $C$. 

\subsection{Auxilliary Smoothing Lemmas}\label{Section: Easy Smoothing Lemmas}
Because of the extra work we did in the prior section, we only have to prove smoothing for a small subset of the terms present in Equation \eqref{Equation: Final Smoothing Equation}. In particular, we only\footnote{We have the additional higher order resonances from $C$ that were added to $\mathcal{RE}_2$, but, as mentioned, they follow in the exact same way as the estimate for $\mathcal{RE}_2$. See Lemma \ref{Lemma: Resonant Duhamel Bounds} and Remark \ref{Remark: Higher order resonances and smoothing}.} have to deal with $B_3$, $\mathcal{RE}_5$, $R_3$, and $C_*$.
\begin{lemma}[Propagator Terms]\label{Lemma: Propagator Terms Smoothing}
Let $s > 1/2$ and $0 < \varepsilon < 1$. Then
\[
\|B_3(\tilde{u})\|_{C^0_tH^{s+\varepsilon}_x}\lesssim \|\tilde{u}\|_{C^0_tH^s_x}^4.
\]
\end{lemma}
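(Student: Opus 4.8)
The plan is to reduce the estimate for $B_3$ to a direct application of Sobolev embedding, exactly mirroring the treatment of $B_1$ and $B_2$ in Lemma~\ref{Lemma: Propagator Terms}. Recall that $\widehat{B_3}(\tilde{u})(n) = \sum_{n = n_1+\cdots+n_4,\,\mathcal{A}_4} \frac{m_{D_2}}{\Phi_4(n_1,\cdots,n_4)} \prod_{j=1}^4 \widehat{\tilde{u}}(n_j)$, where the summation carries the non-resonance restriction and the restriction $\mathcal{P}_4(n_1,\cdots,n_4)$ that defines $\mathcal{A}_4$. The first step is to extract the size of the full symbol $\frac{m_{D_2}}{\Phi_4}$. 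By Remark~\ref{Remark: P set definition}, on $\mathcal{A}_4$ we have $|\Phi_4(n_1,\cdots,n_4)| \gtrsim (n_1^*)^4$. Combining this with the bound \eqref{Equation: mD2 nonresonant bound} from Lemma~\ref{Lemma: Nonresonant Weight Bound}, namely $m_{D_2} = O\!\left(\frac{\max(n_3^2,n_4^2)}{n_1\max\{|n_1|,|n_2|,|n_3+n_4|\}}\right)$, one sees the symbol of $B_3$ is bounded by $O\!\left( \frac{(n_1^*)^2}{n_1 (n_1^*)(n_1^*)^4}\right)$ up to harmless rearrangements of which index is largest — in any case it is $O\!\left(\frac{1}{(n_1^*)^3}\right)$ or better, which is far more decay than we need.

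The second step is to absorb the $\langle n\rangle^{s+\varepsilon}$ weight. Since $n = n_1+\cdots+n_4$ and the symbol supplies at least three powers of $n_1^* \gtrsim \langle n\rangle$ in the denominator, for $0 \le \varepsilon < 1$ (indeed $\varepsilon \le 1$) we have $\langle n\rangle^{s+\varepsilon} \cdot \frac{1}{(n_1^*)^3} \lesssim \langle n_1^*\rangle^{s+\varepsilon-3} \lesssim \frac{\langle n_1^*\rangle^s}{(n_1^*)^{2}} \lesssim \frac{\langle n_1^*\rangle^s}{n_1^* \, n_2^*}$, and we can even afford to distribute the leftover decay as $\frac{1}{n_j^*}$ on a third index. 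Then I would apply Plancherel in $x$, bound everything in $C^0_t \ell^2_n$, and use the convolution structure together with Cauchy--Schwarz/Young's inequality exactly as in the proof of Lemma~\ref{Lemma: Propagator Terms}: place $\langle n_1^*\rangle^s \widehat{\tilde{u}}$ in $\ell^2_n$ and each factor of $\langle n_j^* \rangle^{-1}\widehat{\tilde{u}}$ in $\ell^1_n$, using Sobolev embedding ($H^s \hookrightarrow \ell^1_n$ for $s > 1/2$, equivalently $J_x^{-1}\tilde{u} \in L^\infty_x$) to control the $\ell^1$ norms by $\|\tilde{u}\|_{H^s}$. This yields $\|B_3(\tilde{u})\|_{C^0_t H^{s+\varepsilon}_x} \lesssim \|\tilde{u}\|_{C^0_t H^s_x}^4$.

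I do not expect a genuine obstacle here: $B_3$ arose from differentiating-by-parts the already-favorable region $D_{\mathcal{A},2}$, so it carries an \emph{extra} factor $1/\Phi_4 = O((n_1^*)^{-4})$ on top of the symbol $m_{D_2}$, making it vastly smoother than necessary — the $\varepsilon < 1$ restriction is nowhere near sharp for this term. The only mild bookkeeping point is to be careful that the bound \eqref{Equation: mD2 nonresonant bound} is stated with a specific ordering of the roles of $n_1,\dots,n_4$ coming from the structure of $D_2$, whereas $n_1^*$ refers to the decreasing rearrangement; but since we are only after a power-counting bound and we have several powers of decay to spare, any assignment of which $n_i$ is largest is absorbed trivially. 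The proof is therefore essentially a one-line symbol estimate followed by the same Sobolev-embedding argument used for $B_1$ and $B_2$.
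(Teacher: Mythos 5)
Your proposal is correct and takes essentially the same route as the paper, whose proof is a one-liner: the symbol of $B_3$ is $O(1/n_1^*)$, so the Sobolev-embedding/Young's-inequality argument of Lemma~\ref{Lemma: Propagator Terms} applies verbatim. One small caution: since the factors $n_1$ and $\max\{|n_1|,|n_2|,|n_3+n_4|\}$ appearing in \eqref{Equation: mD2 nonresonant bound} need not be comparable to $n_1^*$, the worst-case symbol bound is $O(1/(n_1^*)^2)$ rather than the $O(1/(n_1^*)^3)$ you state, but as you yourself observe there is decay to spare and $O(1/n_1^*)$ already absorbs $\langle n\rangle^{\varepsilon}$ for $\varepsilon< 1$, so the argument is unaffected.
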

\begin{proof}
The proof is exactly the same as in Lemma \ref{Lemma: Propagator Terms}, as the symbol of $B_3$ is $O(1/n_1^*)$.
\end{proof}
\begin{lemma}[Correction Term Penalties]\label{Lemma: Correction Term smoothing} Let $s > 1/2$. Then for $0 < \varepsilon < 1$,
\[
\|R_3(\tilde{u})\|_{Z^{s+\varepsilon}_T}\lesssim \|\tilde{u}\|_{Y^s_T}^6.
\]
\end{lemma}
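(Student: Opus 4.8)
The plan is to observe that $R_3$ is the top-degree (sextilinear) term in \eqref{Equation: Final Smoothing Equation}, obtained when, in differentiating-by-parts $D_{\mathcal{A},2}$, the time derivative falls on one of the four factors of the boundary term $B_3$ and one substitutes the $\mathcal{K}$-part $-n_\ell\mathcal{K}(\tilde u)\widehat{\tilde u}(n_\ell)$ of $\partial_t\widehat{w}$ from \eqref{Equation: Fourier Toy Fifth 3: for w} (cf.\ \eqref{Definition: R5}). Thus, up to constants,
\[
\widehat{R_3}(\tilde u)(n)=\mathcal{K}(\tilde u)\sum_{\ell=1}^{4}n_\ell\!\!\sum_{\substack{n=n_1+n_2+n_3+n_4\\ \mathcal{A}_4,\ \star}}\!\!\frac{m_{D_2}}{\Phi_4(n_1,n_2,n_3,n_4)}\prod_{j=1}^{4}\widehat{\tilde u}(n_j),\qquad \mathcal{K}(\tilde u)(t)=c\sum_m\widehat{\tilde u}(m,t)\widehat{\tilde u}(-m,t).
\]
By Lemma \ref{Lemma: Propagator Terms Smoothing} (equivalently, Lemma \ref{Lemma: Nonresonant Weight Bound} together with the restriction $\mathcal{A}_4$, which gives $|\Phi_4|\gtrsim(n_1^*)^4$) the symbol $m_{D_2}/\Phi_4$ of $B_3$ is $O(1/n_1^*)$; since $|n_\ell|\le n_1^*$, the quartic core of $R_3$ has an $O(1)$ symbol and is supported on $\mathcal{A}_4\cap\{\star\}$, where $|\Phi_4(n_1,\ldots,n_4)|\gtrsim(n_1^*)^4$.

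First I would dispose of the scalar factor $\mathcal{K}(\tilde u)$: it depends only on $t$ and satisfies $|\mathcal{K}(\tilde u)(t)|\lesssim\|\tilde u(t)\|_{L^2_x}^2\le\|\tilde u\|_{Y^s_T}^2$, with its $t$-Fourier transform bounded in $\ell^1_\tau$ by the same quantity. Hence one may either peel it off as a harmless $t$-multiplier, or — more cleanly, and exactly as $R_2$ was handled in the proof of Lemma \ref{Lemma: Correction Term Penalties} — keep its two factors and run the argument below as a genuine sextilinear estimate, using that $|\Phi_6(n_1,\ldots,n_4,m,-m)|=|\Phi_4(n_1,\ldots,n_4)|$ so that the large-modulation structure survives, and summing the two passive frequencies $\pm m$ in $\ell^2$ first. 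In either case we are reduced to the quartic core.

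For the core, the support condition forces $\max(\langle\tau+n^5\rangle,\langle\tau_1+n_1^5\rangle,\ldots,\langle\tau_4+n_4^5\rangle)\gtrsim N_1^4$, which is precisely the hypothesis of Proposition \ref{Proposition: general modulation lemma}. After a Littlewood--Paley decomposition one applies that proposition with $p=2$ (first for $N\sim N_1$, then for $N_1\sim N_2\gg N$, where $N\ll N_1$ gives extra room), absorbing the $O(1)$ symbol into the constant, to obtain, at dyadic scales $N\lesssim N_1\ge N_2\ge N_3\ge N_4$,
\[
\big\|P_N(\mathrm{core})\big\|_{Z^{s+\varepsilon}_T}\lesssim T^{\theta}\,\frac{N^{s+\varepsilon}}{N_1}\,N_2^{1/2}N_3^{1/2+}N_4^{1/2+}\prod_{i=1}^{4}\|P_{N_i}\tilde u\|_{Y^0_T}.
\]
Since $N\lesssim N_1$ and $s>1/2$, $0<\varepsilon<1$, we have $N^{s+\varepsilon}/N_1\lesssim N_1^{s+\varepsilon-1}\le N_1^{s}$, while $N_2^{1/2}\le N_2^s$ and $N_j^{1/2+}\le N_j^s$ for $j=3,4$; thus the right side is $\lesssim T^{\theta}\prod_{i=1}^4 N_i^{s}\|P_{N_i}\tilde u\|_{Y^0_T}$, and summing over the dyadic scales gives $\lesssim T^{\theta}\|\tilde u\|_{Y^s_T}^4$. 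Multiplying by the bound $\|\tilde u\|_{Y^s_T}^2$ for $\mathcal{K}(\tilde u)$ and absorbing the harmless factor $T^{\theta}$ yields $\|R_3(\tilde u)\|_{Z^{s+\varepsilon}_T}\lesssim\|\tilde u\|_{Y^s_T}^6$. The $\ell^2_n\ell^1_\tau$ component of the $Z^{s+\varepsilon}$-norm is dominated by the $X^{s+\varepsilon,-3/4}$-component via Cauchy--Schwarz in $\tau$, exactly as in the proof of Proposition \ref{Proposition: general modulation lemma}.

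I do not expect a genuine obstacle here: $R_3$ has six factors, a bounded symbol, and large modulation built in, so it is among the softest estimates in the paper. The only point requiring care is keeping the time-dependent scalar $\mathcal{K}(\tilde u)$ from spoiling the gain $1/N_1$ inside the $X^{s,b}$-norm, which is why the cleanest route is to treat $R_3$ from the outset as a sextilinear operator and invoke Proposition \ref{Proposition: general modulation lemma} with $m=6$, exploiting $|\Phi_6(n_1,\ldots,n_4,m,-m)|=|\Phi_4(n_1,\ldots,n_4)|$; the remaining work is routine bookkeeping of the $m_{D_2}/\Phi_4$ bound.
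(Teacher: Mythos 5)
Your proof is correct, but it takes a genuinely different route from the paper's. The paper exploits the full strength of the symbol bound: combining Lemma \ref{Lemma: Nonresonant Weight Bound} with the restriction to $\mathcal{A}_4$ and the factor $\sum_j n_j\lesssim n_1^*$ gives the symbol of $R_3$ as $O\big(|\mathcal{K}(\tilde u)|/(n_1^*n_3^*)\big)$ — not merely $O(1)$ — so after discarding the temporal weight ($\|\cdot\|_{X^{s+\varepsilon,-3/4}}\le\|\cdot\|_{X^{s+\varepsilon,0}}$ and Cauchy--Schwarz in $\tau$ for the $\ell^2_n L^1_\tau$ piece) the whole estimate reduces to a pointwise-in-$t$ bound in $L^2_tH^{s+\varepsilon}_x$: the $1/n_1^*$ kills $\langle n\rangle^{\varepsilon}$ and Sobolev embedding finishes. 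The paper explicitly remarks that, unlike for $R_2$, \emph{no} modulation considerations are needed, and at $b=0$ the time-dependent scalar $\mathcal{K}(\tilde u)(t)$ really is a harmless multiplier. You instead settle for the weaker $O(1)$ symbol bound and recover the loss from the large-modulation support $|\Phi_4|\gtrsim(n_1^*)^4$ via Proposition \ref{Proposition: general modulation lemma}; this works and closes for $s>1/2$, $0<\varepsilon<1$, but it makes the $\mathcal{K}$ factor a genuine issue: "peeling it off as a $t$-multiplier" is \emph{not} free inside $X^{s+\varepsilon,-3/4}$ (a time-dependent factor convolves in $\tau$ and one would need $\langle\tau\rangle^{3/4}\widehat{\mathcal{K}}\in L^1_\tau$), so your preferred sextilinear route is indeed the right one — it is exactly how the paper handles $R_2$ in Lemma \ref{Lemma: Correction Term Penalties}, with the same mild caveat there that when the passive frequency $\pm m$ dominates, the guaranteed modulation is only $(n_1^*(\text{4-tuple}))^4$ rather than $|m|^4$, which still suffices since the output frequency is controlled by the 4-tuple. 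In short: your argument buys nothing extra here and is longer; the paper's observation that the symbol decays like $1/(n_1^*n_3^*)$ makes this one of the few terms that needs no Bourgain-space machinery at all.
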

\begin{proof}
Recall the definition of $R_3$ given on \eqref{Definition: R5}. In particular, it is a quartilinear operator with symbol given by 
\begin{equation}\label{Equation: 4-linear R Bound}
\bigg|\frac{m_{D_2}\chi_{\mathcal{P}(n_1,\cdots,n_4)}\sum_{j=1}^4n_j\mathcal{K}(\tilde{u})}{\Phi_4(n_1,\cdots, n_4)}\bigg|\lesssim\frac{n_1^*|\mathcal{K}(\tilde{u})m_{D_2}|\chi_{\mathcal{P}(n_1,\cdots,n_4)}}{|\Phi_4(n_1,\cdots, n_4)|}\lesssim\frac{|\mathcal{K}(\tilde{u})|}{n_1^*n_3^*},
\end{equation}
where the final inequality followed from Lemma~\ref{Lemma: Nonresonant Weight Bound} and the restriction to $\mathcal{P}(n_1,\cdots,n_4)$. In other words, we have found (after Cauchy-Schwarz in $\tau$) that it is sufficient to estimate
\begin{equation*}
\|R_3(\tilde{u})\|_{Z^{s+\varepsilon}}\lesssim \|R_3\|_{X^{s+\varepsilon,0}}\lesssim
\bigg\|\sum_{\substack{n=n_1+\cdots+n_4\\\mathcal{A}_4}}\frac{\langle n\rangle^{s+\varepsilon}|\mathcal{K}(\tilde{u})|}{n_1^*n_3^*}\prod_{j=1}^4\widehat{\tilde{u}}(n_j)\big)\bigg\|_{L^2_t\ell^2_x}.
\end{equation*}
Thus, by turning it into a 6-linear operator as in Lemma \ref{Lemma: Correction Term Penalties} and cancelling the $\langle n\rangle ^\varepsilon$ factor with the $n_1^*$ factor, we find that the result follows immediately from
\[
\bigg\|\sum_{\substack{n=n_1+\cdots+n_4\\\mathcal{A}_4}}\frac{\langle n\rangle^{s+\varepsilon}|\mathcal{K}(\tilde{u})|}{n_1^*n_3^*}\prod_{j=1}^4\widehat{\tilde{u}}(n_j)\big)\bigg\|_{L^2_t\ell^2_x}\lesssim \|\tilde{u}^6\|_{X^{s,0}}\lesssim \|\tilde{u}\|_{L^2_tH^s_x}\|\tilde{u}\|_{Y^{1/2+}_T}^6,
\]
where we have invoked Sobolev Embedding and the $Y^{s}\hookrightarrow C^0_tH^s_x$ embedding. This similarly handles the other portion of the norm as demonstrated in the prior lemmas.

It's interesting to note that we did not have to use modulation considerations, as in the proof of Lemma \ref{Lemma: Correction Term Penalties}.
\end{proof}

\begin{lemma}[Resonant Terms]\label{Lemma: Resonant terms without M are smooth} Let $s > 1/2$ and $0 <\varepsilon < \min(2(s-1/2), 1)$. Then there is a $\theta > 0$ so that
\begin{align*}
&\left\|\mathcal{RE}_3(\tilde{u})\right\|_{Z^{s+\varepsilon}_T}\lesssim T^\theta \|\tilde{u}\|_{Y^s_T}^5.
\end{align*}
\end{lemma}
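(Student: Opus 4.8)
The plan is to read the required decay of $\mathcal{RE}_3$ straight off the pointwise size of its symbol, since -- unlike the main Duhamel terms -- $\mathcal{RE}_3$ carries no oscillatory factor and hence no modulation gain is available. As in Lemmas \ref{Lemma: Correction Term Penalties} and \ref{Lemma: Resonant Duhamel Bounds}, I would first reduce the $Z^{s+\varepsilon}_T$ norm to its low-modulation part: the $X^{s+\varepsilon,-3/4}_T$ component is dominated by $\|\cdot\|_{X^{s+\varepsilon,0}_T}$, and the $\ell^2_nL^1_\tau$ component follows from the $X^{s+\varepsilon,-(1/2-)}_T$ bound by Cauchy--Schwarz in $\tau$; the factor $T^\theta$ will be produced at the very end via $\|\tilde u\|_{X^{s,0}_T}\lesssim T^{1/4}\|\tilde u\|_{X^{s,1/4}_T}\le T^{1/4}\|\tilde u\|_{Y^s_T}$. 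Recalling that $\mathcal{RE}_3$ is defined in \eqref{Definition: RE3} with symbol $m_{A_1}=m_{D_1}\chi_{n=n_1}$, it then suffices to control $\big\|\langle n\rangle^{s+\varepsilon}\widehat{\tilde u}(n)\sum_{0=n_2+n_3+n_4}|m_{A_1}|\prod_{i=2}^4|\widehat{\tilde u}(n_i)|\big\|_{\ell^2_nL^2_t}$, where the sum is further restricted by $n_3,n_4\ne n$ and $\mathcal{P}_3(n+n_2,n_3,n_4)$.

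The structural input is the constraint $n_2+n_3+n_4=0$, which forces the two largest of $|n_2|,|n_3|,|n_4|$, say $|n_i|\sim|n_j|$, to be comparable, so that $\langle n_2^*\rangle^{\kappa}\lesssim\langle n_i\rangle^{\kappa/2}\langle n_j\rangle^{\kappa/2}$ for any $\kappa\ge 0$. I would keep the weight $\langle n\rangle^{s}\widehat{\tilde u}(n)$ intact (it will go into $X^{s,0}_T$) and split the summation into $|n|\gg n_2^*$ and $|n|\lesssim n_2^*$. On $|n|\gg n_2^*$, Corollary \ref{Corollary: Resonant Multiplier Bounds}(I) gives $|m_{A_1}|\lesssim n_2^*/|n|$, and since $\varepsilon<1$ the leftover weight obeys $\langle n\rangle^{\varepsilon}|m_{A_1}|\lesssim n_2^*\lesssim\langle n_i\rangle^{1/2}\langle n_j\rangle^{1/2}$. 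On $|n|\lesssim n_2^*$, Lemma \ref{Lemma: Nonresonant Weight Bound} already gives $|m_{A_1}|\lesssim 1$ (the denominator $\max\{|n+n_2|,|n_3|,|n_4|\}$ is $\gtrsim n_2^*$ because $n_3+n_4=-n_2$), so $\langle n\rangle^{\varepsilon}|m_{A_1}|\lesssim(n_2^*)^{\varepsilon}\lesssim\langle n_i\rangle^{\varepsilon/2}\langle n_j\rangle^{\varepsilon/2}$. In either regime, after distributing these $n$-weights onto the inner factors the inner convolution no longer depends on $n$ (it equals $\int_{\mathbb{T}}(J^{\alpha}_x\tilde u)(J^{\beta}_x\tilde u)\,\tilde u\,dx$ with $\alpha,\beta\le\max(1/2,\varepsilon/2)$), so taking the supremum in $n$ reduces matters to $\|\tilde u\|_{X^{s,0}_T}\,\|J^{\alpha}_x\tilde u\|_{L^\infty_tL^2_x}\|J^{\beta}_x\tilde u\|_{L^\infty_tL^2_x}\|\tilde u\|_{L^\infty_tL^\infty_x}$, which Sobolev embedding, H\"older, Cauchy--Schwarz and $Y^s_T\hookrightarrow C^0_tH^s_x$ close for $s>1/2$; the hypothesis $\varepsilon<2(s-1/2)$ leaves comfortable room here (in fact any $\varepsilon<\min(2s,1)$ would do). The $\ell^2_nL^1_\tau$ piece is bounded by the same quantity by Cauchy--Schwarz in $\tau$.

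Finally I would note that the multiple resonances folded into this analysis from $C$ and from $D_1$ (cf. Remark \ref{Remark: Higher order resonances and smoothing}) carry three frequencies $\gtrsim|n|$ and, by the same denominator estimate, symbols that are $O(1)$; they therefore gain a full $2s$ of regularity and are handled identically with room to spare, and it is this higher-degree family (together with the freedom to insert a harmless $\|\tilde u\|_{Y^{1/2+}_T}$ factor via the $\ell^1_\tau$ structure) that accounts for the fifth power of $\|\tilde u\|_{Y^s_T}$ on the right. I do not expect a genuine obstacle: the only mildly delicate point is the case split at $|n|\sim n_2^*$ and the derivative bookkeeping in the regime $|n|\lesssim n_2^*$, namely that the $s+\varepsilon$ derivatives on the output can be redistributed as $s+\tfrac{\varepsilon}{2}+\tfrac{\varepsilon}{2}$ across the three inner factors and each absorbed into $H^s$ -- and this is exactly the arithmetic already carried out for the strictly harder term $\mathcal{RE}_4$ in Lemma \ref{Lemma: Resonant Duhamel Bounds}.
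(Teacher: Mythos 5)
There is a genuine gap, and it stems from identifying the wrong operator. Although the lemma statement reads ``$\mathcal{RE}_3$'', the term actually being estimated here is the \emph{quintilinear} resonance $\mathcal{RE}_5$ defined in \eqref{Definition: RE5}, i.e.\ $\widehat{\tilde u}(n)\sum_{0=n_1+\cdots+n_{\ell-1}+n_{\ell+1}+\cdots+n_5}m^\ell_{\mathcal{A},R_j}\prod\widehat{\tilde u}(n_k)$, which arises from the resonances of $C$ after the second differentiation by parts. This is what the fifth power $\|\tilde u\|_{Y^s_T}^5$ on the right-hand side reflects, and it is consistent with the surrounding text (``we only have to deal with $B_3$, $\mathcal{RE}_5$, $R_3$, and $C_*$''): the quartilinear $\mathcal{RE}_3$ of \eqref{Definition: RE3}, with symbol $m_{A_1}$, was already disposed of in Lemma \ref{Lemma: Resonant Duhamel Bounds}. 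Your argument is essentially a re-derivation of that earlier, easier estimate and does not touch the new term.

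The reason this matters is that the symbols $m^\ell_{\mathcal{A},R_j}$ for $\ell\in\{3,4\}$ are only $O\bigl(1+\tfrac{n_2^*}{n}\bigr)$ (Lemma \ref{Lemma: Final Resonance Bound}), not $O(n_2^*/n)$, and on the region $|n|\gg (n_2^*)^{5/4}$ the $O(1)$ part defeats your strategy: there is no way to trade the extra $\langle n\rangle^{\varepsilon}$ for powers of the inner frequencies, since all of them are $\ll |n|^{4/5}$ there. The paper's proof needs two ingredients your proposal lacks: (i) the precise asymptotic $m^\ell_{\mathcal{A},R_j}=\tfrac{1}{H_{\ell,j}}+O_{|n|\gg(n_2^*)^{5/4}}\bigl(\tfrac{n_2^*}{n}\bigr)$ of Lemma \ref{Lemma: Final Resonance Bound}, combined with the algebraic cancellation of Remark \ref{Remark: Decay of 0 resonance} showing that the multilinear operator with symbol $1/H_{\ell,j}$ vanishes identically for real data; and (ii) the decomposition of $1/H_{\ell,j}$ according to the characteristic functions of $\mathcal{P}_3$, $\mathcal{P}_4$ and their complements, together with the observation that off these sets (or on the zero set of $H_{\ell,j}$) one is forced into $|n|\lesssim (n_2^*)^{5/4}$, where $\langle n\rangle^{\varepsilon}\lesssim (n_2^*n_3^*)^{5\varepsilon/8}$ can be distributed onto the inner factors. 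Only after the cancellation does the remaining $O(n_2^*/n)$ piece submit to the kind of crude redistribution argument you describe. So while your computation is fine for the object you chose, it does not prove the statement; you would need to import the $H_{\ell,j}$ analysis and the associated vanishing identity to close the case $\ell\in\{3,4\}$, $|n|\gg(n_2^*)^{5/4}$.
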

\begin{proof}
Recall that this term is defined on \eqref{Definition: RE5} as
\[
\mathcal{RE}_3(\tilde{u}) = \widehat{\tilde{u}}(n)\sum_{\substack{0=n_1+\cdots+n_{\ell-1}+n_{\ell+1}+\cdots+n_5\\ 1\leq \ell\leq 4}}m^\ell_{\mathcal{A},R_j}\prod_{\substack{k=1\\k\ne\ell+j-1}}^4\widehat{\tilde{u}}(n_k).
\]
Additionally, this lemma almost follows from Lemma \ref{Lemma: Resonant Duhamel Bounds} and Remarks \ref{Remark: Decay of 0 resonance} and \ref{Remark: Higher order resonances and smoothing}.

We ignore the emergence of double resonances, as they are bounded in exactly the same way as $\mathcal{RE}_2$. Additionally, we see by Lemma \ref{Lemma: Final Resonance Bound} that the cases $\ell = 1, 2$ follow in exactly the same way as the proof for $\mathcal{RE}_3$. We now assume that resonance is occurring on $\ell = 3$ with $j = 1$. 

We now, in order to introduce the summation restrictions, split $m^3_{\mathcal{A}, R_1}$ into
\begin{equation}\label{Equation: RE5 symbol split}
    m^3_{\mathcal{A}, R_1} = m^3_{\mathcal{A}, R_1}\chi_{|n|\gg (n_2^*)^{5/4}} + m^3_{\mathcal{A}, R_1}\chi_{|n|\lesssim (n_2^*)^{5/4}},
\end{equation}
and
\begin{align}
    \frac{1}{H_{3,1}} &= \frac{1}{H_{3,1}}\chi_{|n|\gg (n_2^*)^{5/4}}\chi_{\mathcal{P}_3(n_1,n_2,n+n_4+n_5)}\chi_{\mathcal{P}_4(n_1,n_2,n+n_4,n_5)}\label{Equation: H symbol split 1}\\
    &+\frac{1}{H_{3,1}}\chi_{n\gg (n_2^*)^{5/4}}\chi_{\mathcal{P}_3(n_1,n_2,n+n_4+n_5)}\chi_{\mathcal{P}_4^c(n_1,n_2,n+n_4,n_5)}\label{Equation: H symbol split 2}\\
    &+\frac{1}{H_{3,1}}\chi_{n\gg (n_2^*)^{5/4}}\chi_{\mathcal{P}_3^c(n_1,n_2,n+n_4+n_5)}\chi_{\mathcal{P}_4(n_1,n_2,n+n_4,n_5)}\label{Equation: H symbol split 3}\\
    &+\frac{1}{H_{3,1}}\chi_{n\gg (n_2^*)^{5/4}}\chi_{\mathcal{P}_3^c(n_1,n_2,n+n_4+n_5)}\chi_{\mathcal{P}_4^c(n_1,n_2,n+n_4,n_5)}\label{Equation: H symbol split 4}\\
    &+\frac{1}{H_{3,1}}\chi_{n\lesssim (n_2^*)^{5/4}}\label{Equation: H symbol split 5},
\end{align}
where we are assuming that we are off the zero set of $H_{3,1}$. We note that by Lemma \ref{Lemma: Final Resonance Bound} the zero set of $H_{3,1}$ is contained in the set defined by $n\lesssim (n_2^*)^{5/4}$, and hence to handle $m_{\mathcal{A},R_1}^3$ when $H_{3,1}=0$ it is sufficient to handle the second term of \eqref{Equation: RE5 symbol split}.

We handle \eqref{Equation: H symbol split 2}, \eqref{Equation: H symbol split 3}, and \eqref{Equation: H symbol split 4} with \eqref{Equation: H symbol split 5}. In order to do this, we handle only the reduction for the situation that $\Phi_4(n, n_1, n_2, n+n_{4}, n_5)\ll n^4$, as the other is similar after the observation that $n_4+n_5=0$ is not in the domain of $H_{3,1}$. In this situation we must have, by Proposition \ref{Proposition: Symbol Decomp} and Remark \ref{Remark: Redundant Case}, that one of the following must be true:
\begin{itemize}
    \item[I)] $n\in\{n_1,n_2, n+n_4, n_5\}$, or
    \item[II)] $n_1\sim n_2\sim n+n_3$, or
    \item[III)] $n_1^*+n_2^* = 0$, or
    \item[IV)] $(n_3^*)^4n_4^*\gtrsim (n_1^*)^4$.
\end{itemize}
First assume that $I$ occurs, in which case we find that $n+n_4\ne n$ by the mean-zero assumption on $\tilde{u}$. This leaves us with $n_1, n_2$ or $n_5$ being equal to $n$. This is ruled out by the assumption that $n\gg (n_2^*)^{5/4}$. This similarly rules out $II$ and $III$. All that remains is $IV$, which may hold with a constant differing from the one in the characteristic function. Regardless, though, we may handle this with \eqref{Equation: H symbol split 5}\footnote{The only difference in the situation that $\Phi_3\ll n^4$ is the observation that $0\ne n_2+n_4$ as it is \textit{not} in the domain of $H_{3,1}$. In particular, the final claim of Lemma \ref{Lemma: Final Resonance Bound} allows us to handle this with \eqref{Equation: H symbol split 5}.}. 

We now handle \eqref{Equation: H symbol split 5} with the second portion of \eqref{Equation: RE5 symbol split}, as $m^3_{\mathcal{A}, R_1}=O(1)$. By the algebraic relationship among the remaining frequencies and \eqref{Equation: mA bound} we find
\[
\eqref{Equation: H symbol split 5} + m^3_{\mathcal{A}, R_1}\chi_{n\lesssim (n_2^*)^{5/4}} = O_{n\lesssim (n_2^*)^{5/4}}\left(1 + \frac{n^2+(n_2^*)^2}{n\max(n_1, n_2, n+n_{4}, n_5)}\right) = O_{n\lesssim (n_2^*)^{5/4}}\left( 1+\frac{n_2^*}{n}\right).
\]

The $O\left(\frac{n_2^*}{n}\right)$ provides smoothing after killing the $\langle n\rangle^\varepsilon$ factor using the denominator, and then estimating in the same way we estimated \eqref{Equation: RE4 multiple big n2 big}. This leaves us with estimating the $O(1)$ term.

By using the fact that $n\lesssim (n_2^*)^{5/4}$ and the algebraic relationship among the remaining frequencies, we find $
\langle n\rangle^\varepsilon \lesssim (n_2^*n_3^*)^{5/8\varepsilon}.$
We then have by Cauchy-Schwarz in $\tau$ that
\begin{align*}
    \bigg\|\langle n\rangle^{s+\varepsilon}\widehat{\tilde{u}}(n)&\sum_{\substack{0=n_1+n_2+n_4+n_5\\n\lesssim (n_2^*)^{5/4}}}\widehat{\tilde{u}}(n_1)\widehat{\tilde{u}}(n_2)\widehat{\tilde{u}}(n_4)\widehat{\tilde{u}}(n_5)\bigg\|_{L^2_t\ell^2_n}\\
    &\lesssim \bigg\|\langle n\rangle^{s}\widehat{\tilde{u}}(n)\sum_{\substack{0=n_1+n_2+n_4+n_5\\n\lesssim (n_2^*)^{5/4}}}\langle n_1\rangle^{5/8\varepsilon}\widehat{\tilde{u}}(n_1)\langle n_2\rangle^{5/8\varepsilon}\widehat{\tilde{u}}(n_2)\widehat{\tilde{u}}(n_4)\widehat{\tilde{u}}(n_5)\bigg\|_{L^2_t\ell^2_n}\\
    &\lesssim \|\tilde{u}\|_{X^{s,0}_T}\|(J_x^{5/8\varepsilon}\tilde{u})^2\tilde{u}\|_{L^\infty_t L^1_x}\\
    &\lesssim \|\tilde{u}\|_{X^{s,0}_T}\|\tilde{u}\|_{Y^{s}_T},
\end{align*}
for $0\leq \varepsilon < 8s/5.$

All that remains is the first portion of \eqref{Equation: RE5 symbol split} with \eqref{Equation: H symbol split 1}. By Lemma \ref{Lemma: Final Resonance Bound} we see that $H_{3, 1}$ is non-zero and the symbol after adding these two terms together is
\begin{multline*}
m^{3}_{\mathcal{A},R_1}\chi_{n\gg(n_2^*)^{5/4}} - \frac{1}{H_{3,1}}\chi_{n\gg (n_2^*)^{5/4}}\chi_{\Phi_3(n, n_1, n_{2}, n+n_{4}+ n_5)\gtrsim n^4}\chi_{\Phi_4(n, n_1, n_2, n+n_{4}, n_5)\gtrsim n^4}\\= O\left(\frac{\max(|n_1|,|n_2|,|n_{4}|,|n_5|)}{n}\right) = O\left(\frac{n_2^*}{n}\right).
\end{multline*}
This then follows in a similar (and easier) manner than \eqref{Equation: Grouped terms smoothing reference}.

As the second portion of the norm follows in the same manner in all of the above cases, we're done after noting that we always have space in the modulation for a factor of $T$.
\end{proof}

\subsection{Smoothing for the Main terms}
We now proceed to prove smoothing for the main term. The main tools for this will be \eqref{Equation: mA bound} and Lemma \ref{Lemma: Final Resonance Bound}.

\begin{lemma}\label{Lemma: Main term smoothing estimate}
Let $a\geq 0$ be such that \eqref{Definition: Property Pa} holds, $s > \frac{1+a}{2}$, and $0 < \varepsilon < \min(2s-1-a, 1)$. Then there is a $\theta> 0$ so that
\[
\|C_*(\tilde{u})\|_{Z^{s+\varepsilon}}\lesssim T^\theta\|\tilde{u}\|_{Y^s_T}^5.
\]
\end{lemma}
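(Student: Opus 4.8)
The plan is to treat $C_*$ by the same three–regime split used for the $\mathcal{NR}$ terms in Lemma~\ref{Lemma: B portion of D smooth} and for $D_{\mathcal{A},i}$ in Lemma~\ref{Lemma: Main Duhamel Bounds}, now with five frequencies. First I would relabel so that $|n_1|\ge\cdots\ge|n_5|$ and record two structural facts about the symbols $m^\ell_{\mathcal A}$ appearing in $C_*$: they vanish unless the paired frequencies satisfy $n_\ell+n_{\ell+1}\ne 0$, because of the explicit factor $(n_\ell+n_{\ell+1})$ in the numerator; and, by \eqref{Equation: mA bound} together with $\langle n\rangle\lesssim n_1^*$,
\[
\langle n\rangle^{s+\varepsilon}\,m^\ell_{\mathcal A}=O\!\left(\langle n\rangle^{s+\varepsilon-1}\,\frac{(n_1^*)^2}{\max(|n_\ell+n_{\ell+1}|,\,n_3^*)}\right),
\]
so the $1/|n|$ in \eqref{Equation: mA bound} always absorbs one of the $s+\varepsilon$ derivatives, the denominator is $\gtrsim n_3^*$, and it improves to $\gtrsim n_1^*$ whenever the two largest frequencies are not the paired ones. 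This is the five–linear analogue of $\langle n\rangle^{\varepsilon}m_{D_2}$ (cf.\ Corollary~\ref{Corollary: D_1 smoother}), the extra fifth input costing only a factor $\|\tilde{u}\|_{Y^{1/2+}}$ since $s>1/2$. As $C_*$ is non-resonant, Proposition~\ref{Proposition: Symbol Decomp} and Remark~\ref{Remark: Redundant Case} applied to $(n_1,\dots,n_5)$ leave exactly the regimes (II)~$n_1^*+n_2^*=0$, (IV)~$\mathcal P_5(n_1,\dots,n_5)$, and (V)~$(n_3^*)^4 n_4^*\gtrsim(n_1^*)^4$.

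In regime (IV) I would Littlewood–Paley decompose, split into $N\sim N_1$ and $N_1\sim N_2\gg N$, and apply Proposition~\ref{Proposition: general modulation lemma} with $m=5$, optimizing the exponent of the second factor as in Lemma~\ref{Lemma: Main Duhamel Bounds}; here one must keep the $\max(|n_\ell+n_{\ell+1}|,n_3^*)$ form of the denominator rather than the crude bound $n_3^*$, since the output frequency $|n|$ can be much smaller than $n_1^*$ (near–cancellation of the paired pair), and in that case $|n|\lesssim\max(|n_\ell+n_{\ell+1}|,n_3^*)$ recovers the needed gain. In regime (II) the paired pair is not the cancelling pair, so the weighted symbol is $O((n_1^*)^{\max(s+\varepsilon,1)})=O((n_1^*)^{2s})$; writing the cancelling pair as $\pm m$, taking the supremum in $n$ of the remaining three–fold convolution and applying Cauchy–Schwarz (as for \eqref{Equation: Main WP estimate 3 large first term}) bounds it by $\|\tilde{u}\|_{X^{s,0}_T}\|\tilde{u}\|_{Y^{(1+\varepsilon)/2}_T}^2\|\tilde{u}\|_{Y^{1/2+}_T}$, good for $0\le\varepsilon<2s-1$.

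Regime (V) is the decisive one and mirrors Lemma~\ref{Lemma: B portion of D smooth}, Case~II: from $(n_3^*)^4 n_4^*\gtrsim(n_1^*)^4$ one gets $\tfrac{(n_1^*)^2}{n_3^*}\lesssim n_1^*(n_4^*)^{1/4}$. If $\max(\langle\tau+n^5\rangle,\max_j\langle\tau_j+n_j^5\rangle)\gtrsim(n_1^*)^4$, apply the space–time version of Proposition~\ref{Proposition: general modulation lemma}. Otherwise all modulations are $\ll(n_1^*)^4$, and one trades $\langle\tau+n^5\rangle^{0+}|n|^a\lesssim(n_2^*n_3^*)^{a/2+}(n_4^*)^{a/4+}$ (using $|n|\lesssim n_1^*\lesssim n_3^*(n_4^*)^{1/4}$), passes from $X^{s+\varepsilon,-3/4}$ to $X^{s+\varepsilon+,-(1/2+)}$, dualizes against $w\in X^{-s-\varepsilon,1/2-}$, puts $w$ into $L^{8+}_{x,t}$ via Property~\eqref{Definition: Property Pa}, one $\tilde{u}$–factor into $L^{24/7-}_{x,t}$ and the rest into $L^{24/7}_{x,t}$ and $L^{\infty}_{x,t}$ via the interpolated $L^{24/7}$ Strichartz estimate, and spreads the weight $(n_1^*)^{s+\varepsilon}(n_4^*)^{1/4}$ together with the traded $a$–derivatives over the five inputs to obtain $\|\tilde{u}\|_{X^{s,1/4-}_T}\|\tilde{u}\|_{Y^{(1+a+\varepsilon)/2+}_T}^4$, admissible precisely when $s>\tfrac{1+a}{2}$ and $0<\varepsilon<\min(2s-1-a,1)$. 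In every regime the $\ell^2_n\ell^1_\tau$ part of the $Z^{s+\varepsilon}$ norm follows from the $X$–estimate by Cauchy–Schwarz in $\tau$, and a power $T^\theta$ comes from the modulation reductions or the $X^{s,b}_T$–in–time trade.

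The main obstacle is precisely regime (V) with small modulation: there is no modulation to spend, so the estimate must live on the algebraic gain $\tfrac{(n_1^*)^2}{n_3^*}\lesssim n_1^*(n_4^*)^{1/4}$ and the $L^8$ Strichartz input alone, and the bookkeeping of distributing $s+\varepsilon-1$ derivatives, $a$ traded derivatives, and the stray $(n_4^*)^{1/4}$ across the five factors while keeping each above its Sobolev threshold $s>\tfrac12-\tfrac1p$ is what saturates the stated range of $\varepsilon$ and introduces the parameter $a$. A secondary point needing care is the near–cancellation sub-case of regime (IV), where the $\max(|n_\ell+n_{\ell+1}|,n_3^*)$ structure of the denominator must not be discarded.
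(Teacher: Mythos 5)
Your proposal is correct and follows essentially the same route as the paper: the symbol bound \eqref{Equation: mA bound}, the three-regime split from Proposition \ref{Proposition: Symbol Decomp}, the Littlewood--Paley plus Proposition \ref{Proposition: general modulation lemma} treatment of the $\mathcal{P}_5$ case with the $p$-optimization of Lemma \ref{Lemma: Main Duhamel Bounds}, the $\sup$-in-$n$/Cauchy--Schwarz argument when $n_1^*+n_2^*=0$, and the duality-plus-$L^8$ scheme of Lemma \ref{Lemma: B portion of D smooth} in the regime $(n_3^*)^4n_4^*\gtrsim(n_1^*)^4$. The only divergence is that the paper further splits that last regime into $|n|\gtrsim n_4^*$ (handled exactly as you describe) and $n_4^*\gg |n|$ (handled by a separate duality argument placing $|n|^{-1/2+}$ on the dual function, which avoids Property \eqref{Definition: Property Pa} there); your uniform treatment still closes, but is slightly less careful about the bookkeeping when the output frequency is small.
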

\begin{proof}
Recall that this term is given by
\[
\widehat{C}(\tilde{u}) =\sum_{\substack{n=n_1+\cdots+n_5}}(m_\mathcal{A}^1+\cdots+m_\mathcal{A}^4)\prod_{j=1}^5\widehat{\tilde{u}}(n_j),
\]
where, by \eqref{Equation: mA bound}, we have
\begin{align*}
     m_\mathcal{A}^\ell = O\left(\frac{n_\ell^2+n_{\ell+1}^2}{n\max(n_1, \cdots, n_{\ell}+n_{\ell+1}, \cdots, n_5)}\right).
\end{align*}
We then either have a $high\times high\mapsto low$ interaction leaving us with
\begin{align}\label{Equation: Smoothing HHL}
     m_\mathcal{A}^\ell = O\left(\frac{n_1^* n_{2}^*}{n\max(n_1, \cdots, n_{\ell}+n_{\ell+1}, \cdots, n_5)}\right),
\end{align}
or we have $high\times low\mapsto high$ or $high\times high\mapsto high$, giving
\begin{align}\label{Equation: Smoothing HLH}
     m_\mathcal{A}^\ell = O\left(\frac{n_1^*}{n}\right).
\end{align}
By Proposition \ref{Proposition: Symbol Decomp} and the fact that we have already handled resonances, one of the following must be true
\begin{itemize}
    \item[I)] $n^5-n_1^5-n_3^5- n_4^5 - n_5^5\gtrsim (n_1^*)^4$,
    \item[II)] $n_1^*+n_2^*=0$,
    \item[III)] $(n_3^*)^4n_4^*\gtrsim (n_1^*)^4$.
\end{itemize}

\textbf{Case $I$, $n^5-n_1^5-n_3^5- n_4^5 - n_5^5\gtrsim (n_1^*)^4$:} We must have that
\[
\max(\langle \tau +n^5\rangle, \langle \tau_1 + n_1^5\rangle, \cdots, \langle \tau_5+n_5^5\rangle)\gtrsim (n_1^*)^4.
\]
Moreover, we assume for the proof of Case I that $|n_1|\geq|n_2|\geq\cdots\geq |n_5|$, and $\ell = 1$.

We first perform a Littlewood-Paley decomposition as in Proposition \ref{Proposition: Symbol Decomp}. If we have $N_1\sim N$, then we have by Proposition \ref{Proposition: general modulation lemma} that
\begin{align*}
\|C_*(\tilde{u})\|_{Z^{s+\varepsilon}}&\lesssim T^\theta\sum_{N\sim N_1\geq N_2\cdots N_5}N^{\varepsilon - 1}\|P_{N_1}(\tilde{u})\|_{Y^s}\prod_{i=2}\|P_{N_i}(\tilde{u})\|_{Y^{1/2+}}\\
&\lesssim T^\theta\|\tilde{u}\|_{Y^s_T}^5,
\end{align*}
for $0 \leq \varepsilon < 1.$ If we have $N_1\sim N_2\gg N$ then we find
\begin{align*}
\|C_*(\tilde{u})\|_{Z^{s+\varepsilon}}&\lesssim T^\theta\sum_{\substack{N_1\sim N_2\geq \cdots \geq N_5\\ N_1\sim N_2\gg N}}\frac{N_1^2N^{s+\varepsilon+\frac{1}{2}-\frac{1}{p}}}{N_1N^2}N_2^{1/p}N_3^{1/2+}N_4^{1/2+}\prod_{i=1}^5\|P_{N_i}(\tilde{u})\|_{Y^0}.
\end{align*}
When $1/2 < s < 1$ we may again choose $\frac{1}{p} = s-\frac{1}{2}$ and $0\leq \varepsilon\leq 1$, so that we find
\begin{align*}
\frac{N_1^2N^{s+\varepsilon+\frac{1}{2}-\frac{1}{p}}}{N_1N^2}N_2^{1/p}N_3^{1/2+}N_4^{1/2+}N_5^{1/2+} &= N_1N_2^{s-1/2}N_3^{1/2+}N_4^{1/2+}N_5^{1/2+}\\
&\sim N_1^sN_2^{1/2}N_3^{1/2+}N_4^{1/2+}.
\end{align*}
Cauchy-Schwarz then finishes both of these cases. When $s\geq 1$ the proof follows by the above estimates and taking $p = 2.$

\textbf{Case II, $n_1^*+n_2^* = 0$:}
In this case we will assume that $n_2^*\gg n_3^*$, as otherwise we can handle it in Case III. Since we cannot have $n_{\ell} + n_{\ell+1} = 0$, we see that (just as in the $High\times High\mapsto Low$ situation):
\[
\max(n_1, \cdots, n_{\ell}+n_{\ell+1}, \cdots, n_5)\gtrsim n_1^*,
\]
and hence $m_{\mathcal{A}}^\ell = O(n_1^*/n) = O(n_2^*/n)$, as we're in case II. 

For $0 < \varepsilon\leq 1$ and $s > 1/2$ it follows by taking the supremum in $t$ and invoking the algebra property of $H^s$ that
\begin{align*}
    \|C_*(\tilde{u})\|_{X^{s+\varepsilon,0}}&\lesssim \left\|\sum_{n_1+n_2=0}|n_1|\widehat{\tilde{u}}(n_1)\widehat{\tilde{u}}(n_2)\right\|_{C^0_t}\left\|\langle n\rangle^{s+\varepsilon-1}\sum_{n_1+n_2+n_3=n}\widehat{\tilde{u}}(n_1)\widehat{\tilde{u}}(n_2)\widehat{\tilde{u}}(n_3)\right\|_{L^2_t\ell^2_n}\\
    &\lesssim \|\tilde{u}\|_{C^0_tH^{1/2}_x}^2\|\tilde{u}\|_{Y^s_T}^{3}\lesssim \|\tilde{u}\|_{Y^{s}_T}^5.
\end{align*}

\textbf{Case III, $(n_3^*)^4n_4\gtrsim (n_1^*)^4$:}
Note first that $\max(n_1, \cdots, n_{\ell}+n_{\ell+1}, \cdots, n_5)\gtrsim n_3^*$, and hence if $n\gtrsim n_4^*$ then $m^\ell_\mathcal{A}$ satisfies \eqref{Equation: NR Case 2 Weight bounds}, and the proof of Lemma \ref{Lemma: B portion of D smooth} holds with an extra application of $Y^{1/2+}\hookrightarrow L^\infty_{x,t}$. We now assume that $n_4^*\gg n$, in which case
\[
\langle n\rangle^{s+\varepsilon} m_\mathcal{A}^\ell = O\left(\frac{|n|^{s+\varepsilon}(n_1^*)^s(n_3^*)^2\sqrt{n_4^*}}{nn_3^*}\right) = O\left(\frac{(n_1^*)^{s-(1/2+)}(n_2^*n_3^*n_4^*)^{\frac{1}{2}+\frac{\varepsilon}{3}+}}{|n|^{1/2+}}\right).
\]
It then follows by duality with $w\in X^{-s-\varepsilon,1/2-}$ that we have
\begin{align*}
    \|C_*(\tilde{u})\|_{X^{s+\varepsilon,-(1/2-)}} &\lesssim \|J_x^{-s-\varepsilon-(1/2+)}w\|_{L^{\infty}_xL^{\infty-}_t}\|J_x^{s-(1/2+)}\tilde{u}\|_{L^{\infty}_{x,t}}\\
     &\qquad\times\|J_x^{1/2+\varepsilon/3+}\tilde{u}\|_{L^{24/7-}_{x,t}}^3\|\tilde{u}\|_{L^{\infty}_{x,t}}\nonumber\\
    &\lesssim \|w\|_{X^{-s-\varepsilon,1/2-}}\|\tilde{u}\|_{Y^s_T}\|\tilde{u}\|_{X^{s,1/4-}_T}\|\tilde{u}\|_{Y^{\frac{1}{2}+}_T}^3,
\end{align*}
for 
\[
0\leq\varepsilon < 3\left(s-\frac{1}{2}\right).
\]
We find a small factor of $T$ from the space in the $L^{{24/7-}}_{x,t}$ estimates. As the other portion of the norm follows in exactly the same fashion after Cauchy-Schwarz in $\tau$ we conclude the proof.

Note that the case when $n\gtrsim n_4^*$ requires the hypothesis on the $L^8$ estimate, as in Lemma \ref{Lemma: B portion of D smooth}. Additionally, the other portion of the $Z$ norm is handled by the above estimates as in the previous lemmas.
\end{proof}
\begin{proof}[Proof of Smoothing]
$ $\newline
Upon integration we find the equivalent Duhamel formula as
\begin{align}\label{Equation: Duhamel formula for smoothing}
    \tilde{u} -&\{B_1(\tilde{u})+B_2(\tilde{u})+B_3(\tilde{u})\} + W_t\{B_1(u_0)+B_2(u_0)+B_3(u_0)-u_0\}\\
    &=\int_0^tW_{t-s}\left(C_*(\tilde{u}) +smooth\right)\,ds.\nonumber
\end{align}
For $s> \frac{1+a}{2},$ $0\leq \varepsilon < \min(2s-1-a, 1)$, and $t < T = T(\|u_0\|_{H^s})$ we apply the lemmas in this section to obtain
\begin{align*}
    &\left\|\mbox{LHS of }\eqref{Equation: Duhamel formula for smoothing}\right\|_{C^0_tH^{s+\varepsilon}_x}\lesssim \|C_*(\tilde{u}) +smooth\|_{Z^{s+\varepsilon}_{T}}\lesssim C(\|u\|_{Y^s})\lesssim C(\|u_0\|_{H^s_x}).
\end{align*}
An application of Lemma \ref{Lemma: Propagator Terms Smoothing} and the local well-posedness bound handles all of the terms remaining on the left hand side of \eqref{Equation: Duhamel formula for smoothing}, giving the desired result
\[
\|\tilde{u}-W_tu_0\|_{C^0_tH^{s+\varepsilon}_x}\lesssim C(\|u_0\|_{H^s_x}),
\]
by the triangle inequality.
\end{proof}
\section{Unconditional Well-posedness}\label{Section: UCWP}
For this section we will not need to rely on showing cancellation in the resonant terms with symbols that are $O(1)$, so our equation under consideration (again, modulo constants) is
\begin{align}\label{Equation: Unconditional Wellposedness Toy Equation}
    \partial_tW_{-t}&\tilde{u} = \partial_tW_{-t}B_1(\tilde{u})+\partial_tW_{-t}B_2(\tilde{u})+\partial_t W_{-t}B_3(\tilde{u})\\
    &\qquad+W_{-t}(\mathcal{NR}_1(\tilde{u})+\mathcal{NR}_2(\tilde{u})+\mathcal{NR}_3(\tilde{u})+C(\tilde{u}))\nonumber\\
    &\qquad+W_{-t}(R_1(\tilde{u})+R_2(\tilde{u})+R_3(\tilde{u}))\nonumber\\
    &\qquad+W_{-t}(\mathcal{RE}_1(\tilde{u})+\mathcal{RE}_2(\tilde{u})+\mathcal{RE}_3(\tilde{u})+\mathcal{RE}_4(\tilde{u}))\nonumber.
\end{align}
Since the only meaningful differences between estimates for $Y^s$ and $C^0_tH^s_x$ are applications of Strichartz estimates and modulation considerations, we will only show the proofs for the $\mathcal{NR}_i$, $R_i$, and $C(\tilde{u})$ terms. 
\begin{lemma}\label{Lemma: UCWP D terms}
Let $s> 1$. Then following bounds hold
\begin{align*}
\|(\mathcal{NR}_1+\mathcal{NR}_2+\mathcal{NR}_3)(\tilde{u})\|_{C^0_tH^s_x}&\lesssim \|\tilde{u}\|_{C^0_tH^s_x}^3+\|\tilde{u}\|_{C^0_tH^s_x}^4\\
\|(R_1+R_2+R_3)(\tilde{u})\|_{C^0_tH^s_x}&\lesssim \|\tilde{u}\|_{C^0_tH^s_x}^4+\|\tilde{u}\|_{C^0_tH^s_x}^6\\
\|C(\tilde{u})\|_{C^0_tH^s_x}&\lesssim \|\tilde{u}\|_{C^0_tH^s_x}^5.
\end{align*}
\end{lemma}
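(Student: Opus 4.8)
The plan is to prove the three bounds in Lemma \ref{Lemma: UCWP D terms} by repeating, mutatis mutandis, the arguments of Lemmas \ref{Lemma: B portion of D smooth}, \ref{Lemma: Correction Term Penalties}, \ref{Lemma: Correction Term smoothing}, and \ref{Lemma: Main term smoothing estimate}, but now working entirely in $C^0_tH^s_x$ rather than in $Z^{s+\varepsilon}_T$. The crucial simplification is that in $C^0_tH^s_x$ we have no modulation weights to exploit, so the Strichartz/modulation machinery of Proposition \ref{Proposition: general modulation lemma} is unavailable; the price is paid by raising the regularity threshold to $s>1$, which buys us Sobolev embedding $H^s(\mathbb{T})\hookrightarrow L^\infty_x$ together with a full extra derivative to spend. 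I would take $\varepsilon=0$ throughout (so $H^{s+\varepsilon}=H^s$) and simply bound $\|\cdot\|_{C^0_tH^s_x}$ after taking the supremum in $t$ and applying Plancherel, reducing everything to weighted $\ell^2_n$ estimates on the spatial Fourier side.

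For the $\mathcal{NR}$ terms: using Lemma \ref{Lemma: Nonresonant Weight Bound} and Proposition \ref{Proposition: Symbol Decomp}/Remark \ref{Remark: Redundant Case}, the symbols are $O\big((n_1^*)^2/(n_3^*n_4^*)\big)$ on the region $\mathcal{B}_4$, where either $n_1^*+n_2^*=0$ or $(n_3^*)^4n_4^*\gtrsim (n_1^*)^4$. In Case I, Corollary \ref{Corollary: Case 3 replacement bounds} reduces $m_{D_2}$ to $O(n_2^*)$, and after turning the pair $n_1,n_2$ with $n_1+n_2=0$ into a harmless $\|\tilde u\|_{C^0_tH^{1/2}_x}^2$-type factor (taking the supremum in $n$ of the inner sum over that pair), we are left with $\langle n\rangle^{s}\sum_{n=n_3+n_4}\widehat{\tilde u}(n_3)\widehat{\tilde u}(n_4)$, handled by the algebra property of $H^s$ for $s>1/2$; the single power of $n_2^*$ is absorbed because $s>1$ leaves room for $H^{(1)/2}$-type control. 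In Case II, the bound $(n_1^*)^2/(n_3^*n_4^*)\lesssim n_3^*/\sqrt{n_4^*}\lesssim n_1^*$, combined with the fact that two of $n_3^*,n_4^*$ must be comparably large (forced by $(n_3^*)^4n_4^*\gtrsim(n_1^*)^4$ and the convolution constraint), means at most one derivative hits the top frequency; using $H^s\hookrightarrow L^\infty_x$ on the three lower-frequency factors and $\langle n\rangle^s$ on the remaining one via the algebra property gives $\|\tilde u\|_{C^0_tH^s_x}^4$ for $s>1$. The treatment of $\mathcal{NR}_1$ is strictly easier since it already has three comparable large frequencies. For the $R$ terms: by \eqref{Equation: Bilinear R Bound}, \eqref{Equation: Trilinear R Bound}, and \eqref{Equation: 4-linear R Bound} the symbols of $R_1,R_2$ are $O(1)$ and that of $R_3$ is $O(|\mathcal{K}(\tilde u)|/(n_1^*n_3^*))$; pulling out $|\mathcal K(\tilde u)|\lesssim\|\tilde u\|_{C^0_tL^2_x}^2$, these become bounded multilinear (bi-, tri-, quadri-)linear operators with $O(1)$ or smoothing symbols, and direct Sobolev embedding as in the proofs of Lemmas \ref{Lemma: Correction Term Penalties} and \ref{Lemma: Correction Term smoothing} (but in $C^0_tH^s_x$, using $H^s\hookrightarrow L^\infty_x$) gives the stated quartic and sextic bounds for $s>1/2$, hence for $s>1$. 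For $C(\tilde u)$: by \eqref{Equation: mA bound} the symbol is $O\big((n_\ell^2+n_{\ell+1}^2)/(|n|\max(\cdots))\big)$, which is either $O(n_1^*/n)$ (in the $high\times\{low,high\}\mapsto high$ cases, giving smoothing of order $1$ after $\langle n\rangle^s m_\mathcal{A}^\ell=O((n_1^*)^{s})$ for $s\leq 1$, or trivially for $s>1$ using $H^s\hookrightarrow L^\infty$ on four factors) or $O(n_1^*n_2^*/(n\cdot n_1^*))=O(n_2^*/n)$ in the $high\times high\mapsto low$ case, which is similarly controlled by the $H^s$-algebra property once the low output frequency and two large inputs are bookkept; in every sub-case one has at most $s$ derivatives landing on a single factor, all others going into $L^\infty_x$ via $s>1$.

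The main obstacle, and the reason for the $s>1$ hypothesis, is the $high\times high\mapsto low$ portion of the $\mathcal{NR}$ and $C$ terms: here the output frequency $n$ is small while two input frequencies are large, so $\langle n\rangle^s$ gives nothing, and without modulation gains the symbol's numerator $(n_1^*)^2$ (or $n_2^*$) genuinely costs derivatives that must be extracted from the large input factors. One cannot afford to put more than $s$ derivatives on any one input in $\ell^2$ without running out of room, so the argument hinges on the algebraic identities $n_1^*+n_2^*=0$ or $(n_3^*)^4n_4^*\gtrsim(n_1^*)^4$ ensuring that the derivative loss splits across at least two comparable large frequencies, each of which can then absorb its share while the genuinely low and remaining factors go into $L^\infty_x(\mathbb T)$ — which is exactly where $s>1$ is used and where the potential endpoint loss at $s=1$, flagged in the Remark after Corollary \ref{Theorem: Unconditional Well-posedness for full equation}, lives. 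The remaining terms ($R_i$, the $high\times\{low,high\}\mapsto high$ parts of everything, $\mathcal{NR}_1$) are routine Sobolev-embedding estimates and pose no difficulty.
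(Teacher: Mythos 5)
Your proposal follows essentially the same route as the paper: the $n_1^*+n_2^*=0$ case is delegated to the argument of Lemma \ref{Lemma: B portion of D smooth}, the remaining $\mathcal{NR}$ case uses the constraint $(n_3^*)^4n_4^*\gtrsim(n_1^*)^4$ to redistribute the symbol as $O(n_3^*/\sqrt{n_4^*})=O(\sqrt{n_2^*n_3^*n_4^*})$ and then closes with $\|(J^s_x\tilde u)(J^{1/2}_x\tilde u)^3\|_{L^2_x}$ via $H^{s-1/2}\hookrightarrow L^\infty$ for $s>1$, the $R$ terms are handled by the $O(1)$ symbol bounds and the algebra property, and $C$ is split according to whether the output frequency is comparable to $n_1^*$. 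One small correction: in the $high\times high\mapsto low$ part of $C$ you claim the symbol is $O(n_2^*/n)$ by asserting $\max(|n_1|,\dots,|n_\ell+n_{\ell+1}|,\dots,|n_5|)\gtrsim n_1^*$, but this max can drop to size $|n|$ when the two comparable high frequencies are precisely $n_\ell$ and $n_{\ell+1}$ and nearly cancel; the bound guaranteed by \eqref{Equation: mA bound} in that sub-case is only $O((n_1^*)^2/n^2)$. This does not break the argument, since $\langle n\rangle^{s}(n_1^*)^2/n^2\lesssim (n_1^*n_2^*)^{s}$ for $s\geq 1$, which is exactly how the paper closes that case, but the weaker symbol bound is the one you should carry through the bookkeeping.
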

\begin{proof}
Beginning with the first term, we recall that we lack resonance and that the situation $n_1^*+n_2^* = 0$ was already handled in Lemma \ref{Lemma: B portion of D smooth} (without appeal to Strichartz estimates or modulation considerations). It thus suffices to consider the case that $(n_3^*)^4n_4^*\gtrsim (n_1^*)^4$.

As $\mathcal{NR}_1$ carries three large similar frequencies, we have by Lemma \ref{Lemma: Nonresonant Weight Bound} that the symbol for all terms is $O((n_1^*)^2/n_3^*)$. It follows by the case restriction that all of the symbols satisfy
\[
O\left(\frac{(n_1^*)^2}{n_3^*}\right) = O\left(\sqrt{n_2^*n_3^*n_4^*}\right).
\]
Restricting ourselves to the harder $\mathcal{NR}_2$ and $\mathcal{NR}_3$, we find by H\"olders and Sobolev embedding that
\begin{align*}
    \|\mathcal{NR}_2(\tilde{u})+\mathcal{NR}_3(\tilde{u})\|_{C^0_tH^s_x}&\lesssim \|(J_x^s\tilde{u})(J_x^{1/2}\tilde{u})^3\|_{C^0_tL^2_x}\lesssim \|\tilde{u}\|_{C^0_tH^s_x}^4,
\end{align*}
for $s > 1$.

The second bound follows from the fact that, by \eqref{Equation: Bilinear R Bound}, \eqref{Equation: Trilinear R Bound}, and \eqref{Equation: 4-linear R Bound} all of their symbols are $O(1)$. Restricting ourselves to $R_3$, and find by the symbol being $O(1)$ that
\begin{align*}
    \|R_3\|_{C^0_tH^s_x}\lesssim \|\tilde{u}^4\|_{C^0_tH^s_x}\lesssim \|\tilde{u}\|_{C^0_tH^s_x}^4,
\end{align*}
for $s > 1/2$.

As for the last bound, we note that \eqref{Equation: mA bound} gives
\begin{align*}
     m_\mathcal{A}^\ell = O\left(\frac{n_\ell^2+n_{\ell+1}^2}{|n|\max(n_1, \cdots, n_{\ell}+n_{\ell+1}, \cdots, n_5)}\right) = O\left(\frac{(n_1^*)^2}{n^2}\right).
\end{align*}
If $n_1^*\gg n_2^*$, then the symbol is $O(1)$ and the result is immediate as in the $R_j$ estimate. We now assume that $n_1^*\sim n_2^*\gg |n|$, so that we obtain by Cauchy-Schwarz and Sobolev embedding:
\begin{equation*}
    \|C(\tilde{u})\|_{C^0_tH^s_x}\lesssim \|J_x^{s-2}((J_x\tilde{u})^2u^3)\|_{C^0_tL^2_x}\\
    \lesssim\|(J_x^s\tilde{u})^2\tilde{u}^3\|_{C^0_tL^1_x}\lesssim \|\tilde{u}\|_{C^0_tH^s_x}^2\|\tilde{u}\|_{C^0_tH^{1/2+}_x}^3,
\end{equation*}
for $s\geq 1$. 
\end{proof}
\begin{proof}[Proof of Corollary \ref{Theorem: Unconditional Well-posedness}]
Modulo constants, we define the operators $\Gamma_1$ and $\Gamma_2$ by
\begin{align*}
\Gamma_1[v] &= W_t\{B_1(u_0)+B_2(u_0)+B_3(u_0)-u_0\} - \{B_1(v)+B_2(v)+B_3(v)\}\\
    &\qquad+\int_0^tW_{t-s}\left(C(v) + \mathcal{NR}_1(v)+\mathcal{NR}_2(v)+\mathcal{NR}_3(v) +l.o.t\right)\,ds,
\end{align*}
and
\begin{align*}
\Gamma_2[v] &= W_tu_0+\int_0^tW_{t-s}\left(\partial_x\left((\partial_x^2v)v+v(\partial_x^2v)\right) +\mathcal{K}(v)\partial_xv\right)\,ds.
\end{align*}
These are the operators associated to the Duhamel representation of the equivalent formulation of the problem after and before the differentiation-by-parts procedure, respectively. Combining these two, we obtain the operator
\[
    \Gamma^N[v] = \begin{cases}
                        \widehat{\Gamma_1[v]}(n) & |n|\geq N\\
                        \widehat{\Gamma_2[v]}(n) & |n| < N,
                  \end{cases}
\]
which acts as a workaround for the lack of any time factor $T$ on the $B$ terms.

Since $B_1$, $B_2,$ and $B_3$ are all smoother, we find
\begin{align*}
\|\widehat{\Gamma_1(v)}\|_{C^0_tH^s_x}&\lesssim T(\|v\|_{C^0_tH^s_x}^3 + \|v\|_{C^0_tH^s_x}^5)\\
&+ N^{-\alpha}(\|v\|_{C^0_tH^s_x}^2 + \|u_0\|_{H^s_x}^2 + \|v\|_{C^0_tH^s_x}^5 + \|u_0\|_{H^s_x}^5) + \|u_0\|_{H^s_x},
\end{align*}
for some $\alpha > 0$. We similarly find 
\begin{align*}
\|\widehat{\Gamma_2(v)}\|_{C^0_tH^s_x}&\lesssim T(N^{3}\|v\|_{C^0_tH^s_x}^3 + N\|v\|_{C^0_tH^s_x}^3)+\|u_0\|_{H^s_x}.
\end{align*}
Since similar bounds hold for the difference, we see that we may select $N = N(\|u_0\|_{H^s_x})$ large enough to make 
\[
N^{-\alpha}(\|u_0\|_{H^s_x}^2 + \|u_0\|_{H^s_x}^5)\ll \|u_0\|_{H^s_x},
\]
and then $T$ small enough to make both
\begin{align*}
    T(\|u_0\|_{H^s_x}^3 + \|u_0\|_{H^s_x}^5)&\ll \|u_0\|_{H^s_x}\\
    T(N^3\|u_0\|_{H^s_x}^2 + N\|u_0\|_{H^s_x}^3) &\ll \|u_0\|_{H^s_x}.
\end{align*}

This enables us to perform a contraction argument in $C^0_tH^s_x$ for $s > 1$, and hence standard arguments allow us to conclude unconditional well-posedness.
\end{proof}

\section{Acknowledgements}
The author would like to thank Professor Burak Erdo{\u{g}}an for not only recommending this problem, but for his support and guidance. Similarly, the author would like to thank Professor Nikolaos Tzirakis for several fruitful conversations over the vaguest of questions. 

\section{Appendix: Toy Equation Reduction Justification}\label{Section: Appendix A}
Here we will justify the reduction to \eqref{Equation: Toy Fifth Order}. Consider the full Equation \eqref{Equation: General Fifth Order}, given again here in the more symmetric form on the Fourier side as
\begin{align*}
    \begin{cases}
        \partial_t\widehat{u} - ik^5 \widehat{u} = \alpha_1 in \sum_{n=n_1+n_2+n_3}\widehat{u}_1\widehat{u}_2\widehat{u}_3\\
        \qquad\qquad+ \alpha_2 in\sum_{n=n_1+n_2}n_1n_2\widehat{u}_1\widehat{u}_2 + \alpha_3 in\sum_{n=n_1+n_2}(n_1^2+n_2^2)\widehat{u}_1\widehat{u}_2\\
        u(x,0) = u_0\in H^s(\mathbb{T}).
    \end{cases}
\end{align*}
First note that the cubic term is given as
\begin{align}
\alpha_1 in\sum_{n=n_1+n_2+n_3}\widehat{u}(n_1)\widehat{u}(n_2)\widehat{u}(n_3) &= \alpha_1 in \sum_{\substack{n = n_1+n_2+n_3\\ \star}}\widehat{u}(n_1)\widehat{u}(n_2)\widehat{u}(n_3)\nonumber\\
&\qquad+3\alpha_1 in\widehat{u}(n)\sum_{\substack{0=n_2+n_3}}\widehat{u}(n_2)\widehat{u}(n_3)\nonumber\\
&\qquad-2\alpha_1in\widehat{u}(n)\widehat{u}(n)\widehat{u}(-n)\nonumber\\
&=\alpha_1 in \sum_{\substack{n = n_1+n_2+n_3\\ \star}}\widehat{u}(n_1)\widehat{u}(n_2)\widehat{u}(n_3)\label{Equation: Appendix first Cubic Expression}\\
&\qquad+3\alpha_1in\widehat{u}(n)\sum_{\substack{n_2}}|\widehat{u}|^2(n_2)\nonumber\\
&\qquad-2\alpha_1in\widehat{u}(n)|\widehat{u}|^2(n)\label{Equation: Appendix second Cubic Expression},
\end{align}
under the assumption of real $u$. By using a change of variables as before we may remove 
\[
3\alpha_1in\widehat{u}(n)\sum_{\substack{n_2}}|\widehat{u}|^2(n_2).
\]
With only one derivative appearing in the cubic expressions \eqref{Equation: Appendix first Cubic Expression} and \eqref{Equation: Appendix second Cubic Expression}, we see that these term estimatable in $Y^s$ (resp. $C^0_tH^s_x$) for $s > 1/2$ (resp. $s > 1$), so this term will not need to be differentiated-by-parts.

We then switch to interaction variables and perform differentiate-by-parts for each remaining term. Specifically, we let $v = W_{-t}u.$ We will often ignore subscripts on the summations and, when obvious, ignore the entries in $\Phi$. Moreover, we ignore terms that ought to appear due to the decomposition Proposition \ref{Proposition: Symbol Decomp}, such as when $\Phi_k\ll (n_1^*)^4$. For simplicity we 
write $v_j$ for $\widehat{v}(n_j)$ and define the notation 
\[
\mathcal{N}_k = e^{-it\Phi_k}\prod_{j=1}^kv_j.
\]
For multilinear operators $\mathcal{M}, \mathcal{O},$ and $\mathcal{P}$ of the form 
\[
\widehat{\mathcal{Q}} = \sum_{n=n_1+\cdots +n_k}q(n,n_1,\cdots, n_k)\prod_{j=1}^k v_j
\]
with symbols $m, o,$ and $p$ respectively, we write
\[
\mathcal{M} = \mathcal{O}+O\left(\mathcal{P}\right), \mbox{ if }m-o = O(p).
\]
For multilinear operators of the form
\[
\widehat{v}(n)\sum_{0=n_2+\cdots + n_k}q(n,n_2, \cdots, n_k)\prod_{j=2}^k v_j
\]
we write $n_2^*$ to denote the largest frequency, in magnitude, among $n_2, \cdots, n_k$ (This is inherited from the fact that such an operator arises from setting $n_1 = n$). Additionally, we will frequently perform large frequency single resonance approximations, in that we will assume that $n_j=n$ for some $j$ and that $n^4\gg (n_2^*)^5$ (We note that the latter restriction is only used for one resonance as in Lemma \ref{Lemma: Resonant terms without M are smooth}). In order to efficiently work with these large frequency approximations, we adopt the notation
\[
\mathcal{M} = \mathcal{O}+O_{A}\left(\mathcal{P}\right), \mbox{ if }m-o = O(p)
\]
for $(n, n_1, \cdots, n_k)\in A$, where $\mathcal{M},$ $\mathcal{O}$, and $\mathcal{P}$ are $k$-linear operators of the form above and $A\subset \mathbb{Z}^{k+1}$. Specifically for $k\in\mathbb{N}$ and $1\leq i\leq k$, we define the set 
\begin{align*}
    \mathcal{C}_i &= \{(n, n_1, \cdots, n_k) \,:\,n = n_i\gg n_2^*\},
\end{align*}
which will be used in the above context.

For what is to come, we focus solely on the resonance interactions, for otherwise the estimates are strictly easier than those performed in the body of this manusript. We also make the observation that resonance, by assumption, can only occur in the variables introduced by the most recent temporal differentiation. Indeed, we always remove prior resonances in order to assume that $\Phi_k\gtrsim (n_1^*)^4$. 

The last comment we make before continuing with the calculation is that we drop references to $\Phi_k(n_1, \cdots n_k)\gtrsim \max(n_1,\cdots, n_k)^4$. Inded, all resonances technically carry a chain of such assumptions as in the proof of Lemma \ref{Lemma: Resonant terms without M are smooth} and they are handled in the same manner.

\textbf{Cubic Term:} We find
\begin{align}
    \alpha_1 in \sum_{\substack{n = n_1+n_2+n_3\\ \star}}e^{-it\Phi_3(n,n_1,n_2,n_3)}v_1v_2v_3 &= -\alpha_1\partial_t\left(\sum\tfrac{n}{\Phi_3(n,n_1,n_2,n_3)}\mathcal{N}_3\right)\nonumber\\
    &+3\alpha_1^2\sum\tfrac{in(n_1+n_2+n_3)}{\Phi_3(n,n_1+n_2+n_3,n_4,n_5)}\mathcal{N}_5\label{Equation: Appendix A1A1 term}\\
    &+3\alpha_1\alpha_2\sum\tfrac{in(n_1+n_2)n_1n_2}{\Phi_3(n,n_1+n_2, n_3,n_4)}\mathcal{N}_4\label{Equation: Appendix A1A2 term}\\
    &+3\alpha_1\alpha_3\sum\tfrac{in(n_1+n_2)(n_1^2+n_2^2)}{\Phi_3(n,n_1+n_2, n_3,n_4)}\mathcal{N}_4\label{Equation: Appendix A1A3 term},
\end{align}
where we find that the symbols of \eqref{Equation: Appendix A1A1 term} and \eqref{Equation: Appendix A1A2 term} are $O(1/n^2)$ and $O(\tfrac{n_1^*n_2^*}{n\max(|n_3^*|,|n|)})$, respectively. It follows that their resonances have symbols that are $O(n_2^*/n)$, and hence we may estimate these terms in the same manner as \eqref{Equation: Grouped terms smoothing reference} and Lemma \ref{Lemma: Main term smoothing estimate}.

As the symbol of \eqref{Equation: Appendix A1A3 term} is $O(\tfrac{(n_1^*)^2}{\max(|n_3^*|,|n|)^2})$, we find that the only non-smooth\footnote{That is, a resonance with symbol worse than $O(1)$. In particular, the term above \eqref{Equation: Appendix A1A3 term} has $\mathcal{C}_i$ approximation that has a symbol that is $O(n_2^*/n)$.} resonance here is the single resonance in \eqref{Equation: Appendix A1A3 term}. We find easily that the $n\gg n_2^*$ approximation to the single resonant symbol is
\begin{equation}\label{Equation: Appendix A1A3 large frequency approxmination}
    \eqref{Equation: Appendix A1A3 term} = 3\alpha_1\alpha_3\widehat{v}(n)\sum_{0=n_2+n_3+n_4}\frac{i}{5n_2}v_2v_3v_3 + O_{\mathcal{C}_1}\bigg(v(n)\sum_{0=n_2+n_3+n_4}\frac{n_2^*}{nn_3^*}v_2v_3v_4\bigg),
\end{equation}
where the double resonance has symbol that is $O(1)$. It follows that we may handle all but the single resonance using the same methods as in Lemmas \ref{Lemma: Resonant Duhamel Bounds} and \ref{Lemma: Main term smoothing estimate}.

{\bfseries First Quadratic Term:}
\begin{align}
    \alpha_2 in \sum_{\substack{n = n_1+n_2\\ \star}}ie^{-it\Phi_2(n,n_1,n_2)}n_1v_1n_2v_2 &= -\alpha_2\partial_t\left(\sum\tfrac{nn_1n_2}{\Phi_2(n,n_1,n_2)}\mathcal{N}_2\right)\nonumber\\
    &+2\alpha_2\alpha_1\sum\tfrac{in(n_1+n_2+n_3)^2n_4}{\Phi_2(n,n_1+n_2+n_3,n_4)}\mathcal{N}_4\label{equation: Appendix A2A1 term}\\
    &+2\alpha_2^2\sum\tfrac{in(n_1+n_2)^2n_1n_2n_3}{\Phi_2(n,n_1+n_2, n_3)}\mathcal{N}_3\label{Equation: Appendix A2A2 term}\\
    &+2\alpha_2\alpha_3\sum\tfrac{in(n_1+n_2)^2(n_1^2+n_2^2)n_3}{\Phi_2(n,n_1+n_2, n_3)}\mathcal{N}_3\label{Equation: Appendix A2A3 term},
\end{align}
where the symbol of \eqref{equation: Appendix A2A1 term} is $O(n_2^*/n)$. It follows that we may estimate \eqref{equation: Appendix A2A1 term} in the same manner as before. However, the other two terms have symbols that are $O(n_1^*n_2^*/\max(|n|, |n_3|))$ and $O((n_1^*)^2/\max(|n|, |n_3|))$, respectively. The non-resonant portion of these can be estimated as in Lemma \ref{Lemma: Main term smoothing estimate}, but the resonant portion will require more care. The first of these terms has large frequency single resonance approximation
\begin{align}
    \eqref{Equation: Appendix A2A2 term} &= \frac{2\alpha_2^2}{5}\widehat{v}(n)\sum_{0=n_2+n_3}in_3v_2v_3 + O_{\mathcal{C}_1}\bigg(\widehat{v}(n)\sum_{0=n_2+n_3}\frac{n_2^*n_3}{n}v_2v_3\bigg)\nonumber\\
    &=O_{\mathcal{C}_1}\bigg(\widehat{v}(n)\sum_{0=n_2+n_3}\frac{n_2^*n_3}{n}v_2v_3\bigg),
\end{align}
by realness and parity considerations.

Similarly, we find
\begin{align}
    \eqref{Equation: Appendix A2A3 term} &= \frac{2\alpha_2\alpha_3in}{5}\widehat{v}(n)\sum_{0=n_2+n_3}\frac{in_3}{n_2}v_2v_3 +\frac{4\alpha_2\alpha_3i}{5}\sum_{0=n_2+n_3}n_3v_2v_3\nonumber\\
    &\qquad+O_{\mathcal{C}_1}\bigg(\widehat{v}(n)\sum_{0=n_2+n_3}\frac{n_2^2}{n}v_2v_3\bigg)\nonumber\\
    &=\frac{2\alpha_2\alpha_3in}{5}\widehat{v}(n)\sum_{0=n_2+n_3}\frac{n_3}{n_2}v_2v_3 + O_{\mathcal{C}_1}\bigg(\widehat{v}(n)\sum_{0=n_2+n_3}\frac{n_2^2}{n}v_2v_3\bigg),
\end{align}
by similar considerations as in the prior display.

{\bfseries Second Quadratic Term:}

\begin{align}
    \alpha_3 in \sum_{\substack{n = n_1+n_2\\ \star}}e^{-it\Phi_2(n,n_1,n_2)}v_1v_2 &= -\alpha_3\partial_t\left(\sum\tfrac{n(n_1^2+n_2^2)}{\Phi_2(n,n_1,n_2)}\mathcal{N}_2\right)\nonumber\\
    &+2\alpha_3\alpha_1\sum\tfrac{in(n_1+n_2+n_3)((n_1+n_2+n_3)^2+n_4^2)}{\Phi_2(n,n_1+n_2+n_3,n_4)}\mathcal{N}_4\label{Equation: Appendix A3A1 term}\\
    &+2\alpha_3\alpha_2\sum\tfrac{in(n_1+n_2)((n_1+n_2)^2+n_3^2)n_1n_2}{\Phi_2(n,n_1+n_2, n_3)}\mathcal{N}_3\label{Equation: Appendix A3A2 term}\\
    &+2\alpha_3^2\sum\tfrac{in(n_1+n_2)((n_1+n_2)^2+n_3^2)(n_1^2+n_2^2)}{\Phi_2(n,n_1+n_2, n_3)}\mathcal{N}_3\label{Equation: Appendix A3A3 term},
\end{align}
where we find the above terms to have symbols that are $O(1/n_4^*)$, $O(n_1^*n_2^*/n_3^*)$, and $O((n_1^*)^2/n_3^*)$, respectively. When non-resonant, we note that the proof of Lemma \ref{Lemma: B portion of D smooth} handles the all of these (the first two explicitly, and the last is easier by Proposition \ref{Proposition: small symbol decomp} by noting that we'll further differentiate-by-parts the last term, so that we only must worry about the case that $n_3^*\sim n_1^*$).

We now demonstrate their large frequency resonant approximations in order.
\begin{align}
    \eqref{Equation: Appendix A3A1 term} = \frac{2\alpha_3\alpha_1}{5} \sum_{0=n_2+n_3+n_4}\frac{i}{5(n_2+n_3)}v_2v_3v_4 + O_{\mathcal{C}_1}\bigg(\widehat{v}(n)\sum_{0=n_2+n_3+n_4}\frac{n_2^*}{n}v_2v_3v_4\bigg).\label{Equation: Appendix A3A1 large frequency approxmination}
\end{align}
The second term will have a non-trivial $n$ term:
\begin{align*}
\eqref{Equation: Appendix A3A2 term} &= \frac{2\alpha_3\alpha_2in}{5}\widehat{v}(n)\sum_{0=n_2+n_3}v_2v_3 +\frac{8\alpha_3\alpha_2}{5}\sum_{0=n_2+n_3}n_2v_2v_3\\
    &\qquad+O_{\mathcal{C}_1}\bigg(\widehat{v}(n)\sum_{0=n_2+n_3}\frac{n_2^2}{n}v_2v_3\bigg)\nonumber\\
    &=\frac{2\alpha_3\alpha_2in}{5}\widehat{v}(n)\sum_{0=n_2+n_3}v_2v_3 + O_{\mathcal{C}_1}\bigg(\widehat{v}(n)\sum_{0=n_2+n_3}\frac{n_2^2}{n}v_2v_3\bigg)\label{Equation: Appendix A3A2 large frequency approxmination},
\end{align*}
again by parity considerations.

Finally, we have
\begin{align}
\eqref{Equation: Appendix A3A3 term} &= \frac{2\alpha_3^2in^2}{5}\widehat{v}(n)\sum_{0=n_2+n_3}\frac{1}{n_2}v_2v_3 +\frac{6\alpha_3^2in}{5}\sum_{0=n_2+n_3}v_2v_3\\
    &\qquad+\frac{2\alpha_3^2}{5}\sum_{0=n_2+n_3}in_2v_2v_3+O_{\mathcal{C}_1}\bigg(\widehat{v}(n)\sum_{0=n_2+n_3}\frac{n_2^2}{n}v_2v_3\bigg)\nonumber\\
    &=\frac{6\alpha_3^2in}{5}\sum_{0=n_2+n_3}v_2v_3 + O_{\mathcal{C}_1}\bigg(\widehat{v}(n)\sum_{0=n_2+n_3}\frac{n_2^2}{n}v_2v_3\bigg),\label{Equation: Appendix A3A3 large frequency approxmination}    
\end{align}
    
{\bfseries D.B.P. of Non-resonant \eqref{Equation: Appendix A2A3 term} Term:}

We now assume that it is non-resonant and perform the differentiation-by-parts procedure to it to find
\begin{align}
    \eqref{Equation: Appendix A2A3 term} &= 2\alpha_2\alpha_3\partial_t\left(\sum_{\substack{n=n_1+n_2+n_3\\\star}}\tfrac{n(n_1+n_2)^2(n_1^2+n_2^2)n_3}{\Phi_2(n,n_1+n_2, n_3)\Phi_3}\mathcal{N}_3\right)\nonumber\\
    &-2\alpha_2\alpha_3\alpha_3 \sum \tfrac{in(n_1+n_2+n_3)^2(n_1+n_2)((n_1+n_2)^2+n_3^2)(n_1^2+n_2^2)n_4}{\Phi_2(n,n_1+n_2+n_3, n_4)\Phi_3(n,n_1+n_2,n_3,n_4)}\mathcal{N}_4\label{Equation: Apppendix A2A3A3 term}\\
    &+l.o.t.,\nonumber
\end{align}
where $l.o.t.$ denotes the substitution of the cubic and first quadratic into any of the terms, as well as the second quadratic into the $n_3$ term. We do not include these because they are smoother already. Indeed, note that we find (on the region we differentiated-by-parts) by the factorization of $\Phi_2$ and the bound on $\Phi_3$ that
\[
\frac{n(n_1+n_2)^2(n_1^2+n_2^2)n_3}{\Phi_2(n,n_1+n_2, n_3)\Phi_3} = O\left(\frac{|n_1|+|n_2|}{\max(|n_1+n_2|,|n_3|)\max(|n_1|,|n_2|,|n_3|)^3}\right).
\]
It follows that if we substitute the cubic term or the first quadratic term into $n_1$ or $n_2$ then the symbol becomes at worst
\begin{align*}
O\big(\tfrac{(|n_1+n_2|+|n_3|)nn_1n_2}{\max(|n_1+n_2+n_3|,|n_3|)\max(|n_1+n_2|,|n_3|,|n_4|)^3}\big)&= O\big(\tfrac{n_1n_2}{\max(|n_1+n_2+n_3|,|n_3|)\max(|n_1+n_2|,|n_3|,|n_4|)}\big)\\
&= O\big(\tfrac{n_1n_2}{nn_3^*}\big).
\end{align*}
As the high frequency resonant approximation is $O_{\mathcal{C}_1}(n_2^*/n)$, we may estimate these as in Lemmas \ref{Lemma: Resonant Duhamel Bounds} and \ref{Lemma: Main term smoothing estimate}. Similarly, if we substitute the second cubic into the $n_3$ term, then we find that the symbol is
\begin{align*}
O\big(&\tfrac{(|n_1|+|n_2|)(n_3+n_4)(n_3^2+n_4^2)}{\max(|n_1+n_2|,|n_3+n_4|)\max(|n_1|,|n_2|,|n_3+n_4|)^3}\big)=O\big(\tfrac{n_1^*n_2^*}{nn_3^*}\big).
\end{align*}
As the resonant approximation is again seen to have symbol $O_{\mathcal{C}_i}(n_2^*/n)$, we find that we may estimate these using the same methods as in the prior two cited lemmas.

As for the remaining term, we find the following large frequency resonant approximation:
\begin{align}
    \eqref{Equation: Apppendix A2A3A3 term} &= -\frac{4\alpha_2\alpha_3^2i}{25}\widehat{v}(n)\sum_{0=n_2+n_3+n_4}\frac{n_4}{n_2(n_2+n_3)}v_2v_3v_4+O_{\mathcal{C}_1}\bigg(\widehat{v}(n)\sum_{0=n_2+n_3+n_4}\frac{n_2^*}{n}v_2v_3v_4\bigg)\label{Equation: Appendix A2A3A3 large frequency approxmination}
\end{align}

{\bfseries D.B.P. of Non-resonant \eqref{Equation: Appendix A3A2 term} Term:}

We obtain
\begin{align}
    \eqref{Equation: Appendix A3A2 term} &= 2\alpha_3\alpha_2\partial_t\left(\sum\tfrac{n(n_1+n_2)((n_1+n_2)^2+n_3^2)n_1n_2}{\Phi_2(n,n_1+n_2, n_3)\Phi_3(n,n_1,n_2,n_3)}\mathcal{N}_3\right)\nonumber\\
    &-4\alpha_3\alpha_2\alpha_3\sum\tfrac{in(n_1+n_2+n_3)(n_1+n_2)^2((n_1+n_2+n_3)^2+n_4^2)(n_1^2+n_2^2)n_3}{\Phi_2(n,n_1+n_2+n_3, n_3)\Phi_3(n,n_1+n_2,n_3,n_4)}\mathcal{N}_4\label{Equation: Apppendix A3A2A3 term}\\
    &+l.o.t.,\nonumber
\end{align}
where again the $l.o.t.$ are generated by substitution of the cubic term anywhere, and substitution of any term into the $n_3$ variable. Indeed, we find by the factorization of $\Phi_2$ and the assumed region of the sum that
\[
O\left(\frac{n(n_1+n_2)((n_1+n_2)^2+n_3^2)n_1n_2}{\Phi_2(n,n_1+n_2, n_3)\Phi_3(n,n_1,n_2,n_3)}\right) = O\left(\frac{n_1}{n_3\max(|n_1|,|n_2|,|n_3|)^3}\right).
\]
It follows that the substitution of the cubic into any term will be easily boundable, and that the substitution into $n_3$ of either quadratic term will have the exact same bounds as in the \eqref{Equation: Appendix A2A3 term} case.

We now turn our attention to the resonant terms, which have large frequency resonant approximations given by
\begin{align}
    \eqref{Equation: Apppendix A3A2A3 term} &= -\frac{4\alpha_2\alpha_3^2i}{25}\widehat{v}(n)\sum_{0=n_2+n_3+n_4}\frac{n_3}{n_2(n_2+n_3)}v_2v_3v_4+O_{\mathcal{C}_1}\left(\widehat{v}(n)\sum_{0=n_2+n_3+n_4}\frac{n_2^*}{n}v_2v_3v_4\right)\label{Equation: Appendix A3A2A3 large frequency approxmination}.
\end{align}

{\bfseries D.B.P. of Non-resonant \eqref{Equation: Appendix A3A3 term} Term:}

The resonances for this term were already discussed in previous sections, so we simply proceed with the differentiation-by-parts. We find
\begin{align}
    \eqref{Equation: Appendix A3A3 term} &= 2\alpha_3^2\partial_t\left(\sum\tfrac{n(n_1+n_2)((n_1+n_2)^2+n_3^2)(n_1^2+n_2^2)}{\Phi_2(n,n_1+n_2, n_3)\Phi_3(n,n_1,n_2,n_3)}\mathcal{N}_3\right)\nonumber\\
    &-4\alpha_3^2\alpha_2\sum\tfrac{in(n_1+n_2+n_3)(n_1+n_2)((n_1+n_2+n_3)^2+n_4^2)((n_1+n_2)^2+n_3^2)n_1n_2}{\Phi_2(n,n_1+n_2+n_3, n_4)\Phi_3(n,n_1+n_2,n_3,n_4)}\mathcal{N}_4\label{Equation: Appendix A3A3A2 term}\\
    &-4\alpha_3^3\sum\tfrac{in(n_1+n_2+n_3)(n_1+n_2)((n_1+n_2+n_3)^2+n_4^2)((n_1+n_2)^2+n_3^2)(n_1^2+n_2^2)}{\Phi_2(n,n_1+n_2+n_3, n_4)\Phi_3(n,n_1+n_2,n_3,n_4)}\mathcal{N}_4\label{Equation: Appendix A3A3A3 term}\\
    &+l.o.t.,\nonumber
\end{align}
where the $l.o.t.$ are generated by the substitution of the cubic term into any term, and the substitution of any term into the $n_3$ component. The $l.o.t.$ satisfy the same resonant and non-resonant symbol bounds as the prior terms. Indeed, we find by the factorization of $\Phi_2$ and the assumed region of the sum that
\begin{equation}\label{Equation: A3A3 L.o.t. justification}
O\left(\frac{n(n_1+n_2)((n_1+n_2)^2+n_3^2)(n_1^2+n_2^2)}{\Phi_2(n,n_1+n_2, n_3)\Phi_3(n,n_1,n_2,n_3)}\right) = O\left(\frac{|n_1|+|n_2|}{n_3\max(|n_1|,|n_2|,|n_3|)^3}\right).
\end{equation}
It follows that the substitution of the cubic into any term can easily be handled, and that the substitution into $n_3$ of either quadratic term will have the exact same bounds as in the \eqref{Equation: Appendix A2A3 term} case.

Turning our attention to the resonant terms, we find that the high-frequency resonant decomposition of \eqref{Equation: Appendix A3A3A2 term} is:
\begin{align}
    \eqref{Equation: Appendix A3A3A2 term} &= -\frac{4\alpha_3^2\alpha_2i}{25}\widehat{v}(n)\sum_{0=n_2+n_3+n_4}\frac{n_2}{n_2(n_2+n_3)}v_2v_3v_4\label{Equation: Appendix A3A3A2 large frequency approxmination}\\
    &\qquad\qquad+O_{\mathcal{C}_1}\bigg(\widehat{v}(n)\sum_{0=n_2+n_3+n_4}\frac{n_2^*}{n}v_2v_3v_4\bigg)\nonumber
\end{align}

As the \eqref{Equation: Appendix A3A3A3 term} term was analyzed in \eqref{Equation: mB resonant bound}, and shown to have decaying $n^0$ term in Remark \ref{Remark: Why Resonant Smoother Bound Holds}.

{\bfseries Non-resonant \eqref{Equation: Appendix A3A3A3 term} Term:}

We proceed with the differentiation-by-parts to find
\begin{align}
    \eqref{Equation: Appendix A3A3A3 term}  &= -4\alpha_3^3\partial_t\bigg(
    \sum_{n=n_1+\cdots+n_4}\tfrac{n(n_1+n_2+n_3)(n_1+n_2)((n_1+n_2+n_3)^2+n_4^2)((n_1+n_2)^2+n_3^2)(n_1^2+n_2^2)}{\Phi_2(n,n_1+n_2+n_3, n_4)\Phi_3(n,n_1+n_2,n_3,n_4)\Phi_4}\mathcal{N}_4\bigg)\nonumber\\
    &+8\alpha_3^4\sum\tfrac{in(n_1+n_2+n_3+n_4)(n_1+n_2+n_3)((n_1+n_2+n_3+n_4)^2+n_5^2)}{\Phi_2(n,n_1+n_2+n_3+n_4, n_5)\Phi_3(n,n_1+n_2+n_3,n_4,n_5)}\label{Equation: Appendix A3A3A3A3 term}\\
    &\qquad\qquad\qquad\times\tfrac{((n_1+n_2+n_3)^2+n_4^2)((n_1+n_2)^2+n_3^2)(n_1+n_2)(n_1^2+n_2^2)}{\Phi_4(n,n_1+n_2,n_3,n_4,n_5)}\mathcal{N}_4\nonumber\\
    &+l.o.t.
\end{align}
The $l.o.t$ arise from all other substitutions besides the substitution of the second quadratic into either $n_1$ or $n_2$. We note that, by \eqref{Equation: A3A3 L.o.t. justification}, we have that the symbol satisfies
\begin{align*}
    O\bigg(&\frac{n(n_1+n_2+n_3)(n_1+n_2)((n_1+n_2+n_3)^2+n_4^2)((n_1+n_2)^2+n_3^2)}{\Phi_2(n,n_1+n_2+n_3, n_4)\Phi_3(n,n_1+n_2,n_3,n_4)}\nonumber\\
    &\qquad\qquad\qquad\qquad\times\frac{(n_1^2+n_2^2)}{\Phi_4(n,n_1,n_3,n_4,n_5)}\bigg)\\
    &=O\left(\frac{(n_1^2+n_2^2)}{n_4n\max(|n_1|,|n_2|,|n_3|,|n_4|)^4}\right)=O\left(\frac{1}{n_4n\max(|n_1|,|n_2|,|n_3|,|n_4|)^2}\right).
\end{align*}
It follows that if we substitute any term into either $n_3$ or $n_4$ then we find that the symbol satisfies (at worst)
\begin{equation}\label{Equation: A3A3A3 l.o.t order}
O\left(\frac{(n_3+n_4)(n_3^2+n_4^2)}{n_5n\max(|n_1|,|n_2|,|n_3+n_4|,|n_5|)^2}\right)=O\left(\frac{(n_1^*)^2}{nn_3^*}\right).
\end{equation}
This, together with the fact that resonances are again $O(n_2^*/n)$ is good enough to repeat the lemmas cited above. Now, if we substitute the cubic or first quadratic into $n_1$ or $n_2$ we again find the bound \eqref{Equation: A3A3A3 l.o.t order}-- with $O(n_2^*/n)$ symbol on the resonant approximation for free from the fact that it will be one less than the corresponding one for the second quadratic.

Additionally, we find that the single resonance induced from \eqref{Equation: Appendix A3A3A3A3 term} has already been analyzed in Lemma \ref{Lemma: Final Resonance Bound} and Remark \ref{Remark: Higher order resonances and smoothing}. Specifically, the main term decays as in Remark \ref{Remark: Higher order resonances and smoothing}.

\subsection{Resonance Analysis}

In this section we now combine the single resonances and show decay of the $n^0$ terms that arise. While we \textit{don't} need to perform this analysis to conclude that the well-posedness and unconditional well-posedness bounds of Theorems \ref{Theorem: Wellposedness for Toy}, \ref{Theorem: Wellposedness for Toy with Pa}, and \ref{Theorem: Unconditional Well-posedness} extend to \eqref{Equation: General Fifth Order}, we do need this information to conclude that Theorem \ref{Theorem: Smoothing} will. 

In fact, the only thing we need in order to show the extension of well-posedness and unconditional well-posedness is that we create no new non-decaying high-frequency approximations to single resonance terms of the form 
\[
in^2\widehat{v}(n)\mathcal{Q}
\]
where $\mathcal{Q}$ is a multilinear operator that is \textit{imaginary} for real $u$. Indeed-- all other cases are easily estimated by prior lemmas in this manuscript, as noted.

{\bfseries Trilinear terms:}

We see from the prior analysis that the high-frequency approximation to the $n^0$ and $n^2$ terms that arise from trilinear operators vanish completely due to parity considerations and symmetry. Specifically, they all involve a sum of the form
\[
\sum_{\substack{0=n_2+n_3\\ |n_2|,|n_3|\ll n}}p(n_2)v_2v_3,
\]
where $p$ is an odd polynomial. 

The only terms that remain are those involving $n^1$, which are are of the form
\[
in\widehat{v}(n)\mathcal{Q}
\]
where $\mathcal{Q}$ is a multilinear operator that is real for real $u$. It follows that we may use a change of variables to get rid of them, and such a transformation will preserve $H^s$ norms. The penalty for performing this change of variables is an additional term, analogous to the $R_i$ terms of Lemmas \ref{Lemma: Correction Term Penalties} and \ref{Lemma: Correction Term smoothing}. As the symbol of the resulting multilinear operators, after realizing them as not a product of two multilinear operators but as a single multilinear operator, are all $O(1)$, we find wellposedness and smoothing by modulation considerations as in the referenced lemmas.

{\bfseries Quartilinear Terms:}

By our prior comments on which variables can equal $n$, we collect all single resonances of this type. In particular, we find that we have two single resonances from \eqref{Equation: Appendix A2A3A3 large frequency approxmination}, \eqref{Equation: Appendix A3A2A3 large frequency approxmination}, \eqref{Equation: Appendix A3A3A2 large frequency approxmination}, and \eqref{Equation: Appendix A1A3 large frequency approxmination}, and three from \eqref{Equation: Appendix A3A1 large frequency approxmination}. Collecting these, we see that the $\alpha_1\alpha_3$ resonances satisfy
\begin{align*}
    \mbox{Resonances from }\eqref{Equation: Appendix A1A3 large frequency approxmination} + \eqref{Equation: Appendix A3A1 large frequency approxmination} &= 6\alpha_1\alpha_3\widehat{v}(n)\sum_{0=n_2+n_3+n_4}\left(\frac{1}{n_2}+\frac{1}{n_2+n_3}\right)v_2v_3v_4\\
    & = 0,
\end{align*}
by symmetry. Similarly, we find
\begin{align*}
    &\mbox{Resonances from }\eqref{Equation: Appendix A2A3A3 large frequency approxmination}+\eqref{Equation: Appendix A3A2A3 large frequency approxmination}+\eqref{Equation: Appendix A3A3A2 large frequency approxmination}\\
    &\qquad=-8\alpha_2\alpha_3^2\widehat{v}(n)\sum_{0=n_2+n_3+n_4}\left(\frac{n_2}{n_2(n_2+n_3)}+\frac{n_3}{n_2(n_2+n_3)}+\frac{n_4}{n_2(n_2+n_3)}\right)v_2v_3v_4\\
    &\qquad = 0,
\end{align*}
by the assumption on the summation.

{\bfseries Discussion}

As we've already shown that the quintilinear $n^0$ term vanishes, we have that all of the $n^0$ single resonant terms vanish. As every other term has symbols that are of the same magnitude as than the corresponding terms arising from \eqref{Equation: Toy Fifth Order}, we find that the corresponding theorems hold if and only if they hold for \eqref{Equation: Toy Fifth Order}. 

It's interesting to note that the above analysis made no reference to the values of $\alpha_1,$ $\alpha_2$, or $\alpha_3$. That is, the results here are a function of the structure of the equation, and not the value of the coefficients. 


\section{Appendix: Bounds on the Dimension of solutions to \eqref{Equation: General Fifth Order}}

The study of dimension bounds for the free solutions to dispersive linear PDEs goes back at least as far as \cite{Ta}, who studied the (named in their honor) Talbot effect. Oskolkov, \cite{Os1}, then showed that for bounded variation data the solution to any linear dispersive PDE on $\mathbb{T}$ with polynomial dispersion relation is a \textit{continuous} function of $x$ for \textit{every} irrational time, $t$. Using this, Rodnianski, \cite{Ro}, was able to show that the fractal dimension associated to the graph of $e^{it\partial_x^2}f$ is exactly $3/2$ for any bounded variation $f\in BV\setminus H^{1/2+}(\mathbb{T})$. Erdo{\u g}an and Shakan, \cite{ErdoganShakanFractal}, were then able to extend this to other dispersion relations, while Erdo{\u g}an and Tzirakis, \cite{erdogan2013talbot}, were able to then use nonlinear smoothing to extend the dimension claim to the nonlinear cubic NLS on $\mathbb{T}$. 

The purpose of this appendix is then to extend the results of \cite{erdogan2013talbot} to the case of the fifth order Airy group. We first recall the definition of the upper Minkowski dimension, which will be our notion of dimension 
\begin{definition}[Upper Minkowski Dimension]
For any bounded set $E\subset \mathbb{R}^N$, we define the \textit{Upper Minkowski Dimension} of $E$, denoted $\overline{\dim}(E)$ by
\[
\overline{\dim}(E) = \limsup_{\varepsilon\to 0}\frac{\log\mathcal{N}(E, \varepsilon)}{\log(\frac{1}{\varepsilon})},
\]
where $\mathcal{N}(E,\varepsilon)$ is the minimum number of $\varepsilon-$balls required to cover $E$. 

We define $D(f)$ to be the maximum of the dimension of $\Re f$ and $\Im f$.
\end{definition}
We will also need the four following Theorems, which are phrased in terms of $\gamma-$H\"older and Besov norms, with the latter being defined\footnote{Technically this requires a smooth cutoff \`a la Littlewood-Paley, but that we ignore this technicality as in \cite{ErdoganShakanFractal}.} by
\begin{equation}
    \|f\|_{B_{p,q}^s} := \|N^s\|P_N(f)\|_{L^p_x}\|_{\ell^q_N}.
\end{equation}
\begin{theorem}[\cite{ErdoganShakanFractal}, Theorem 3.9]\label{Theorem: Dimension Lower Bound}
Let $\omega$ be a real polynomial with integer coefficients. Assume that $e^{it\omega(\frac{\nabla}{i})}$ satisfies a Strichartz estimate of the form
\begin{equation*}
\|e^{it\omega(\frac{\nabla}{i})}f\|_{L^p_tL^q_x}\lesssim \|f\|_{H^s_x}
\end{equation*}
for some $s\in[0,\frac{1}{2})$, $2 < q < \infty$, and $p\in[1,\infty].$ Let $r_0 := \sup\{r\,:\,f\in H^r\} > \frac{1}{2}.$ Then for almost every $t$ we have
\begin{equation*}
    e^{it\omega(\frac{\nabla}{i})}f\not\in B_{1,\infty}^{\frac{2r_0-(r_0-s)q'}{2-q'}+},
\end{equation*}
where $q'$ is the H\"older conjugate of $q$. In particular, if $s = 0$ then 
\begin{equation*}
    e^{it\omega(\frac{\nabla}{i})}f\not\in B_{1,\infty}^{r_0+}.
\end{equation*}
\end{theorem}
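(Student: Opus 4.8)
The plan is to argue by contradiction, combining a frequency-localized three-exponent interpolation with the hypothesized Strichartz estimate and the $L^2$-unitarity of the propagator. Write $u(t):=e^{it\omega(\nabla/i)}f$ and suppose, for some fixed $\delta>0$, that $u(t)\in B^{\sigma}_{1,\infty}(\mathbb{T})$ on a set of positive Lebesgue measure, where $\sigma:=\frac{2r_0-(r_0-s)q'}{2-q'}+\delta$. First I would carry out a measure-theoretic reduction: since $t\mapsto\|u(t)\|_{B^{\sigma}_{1,\infty}}$ is measurable, the sets $E_M:=\{t\in[0,1]:\|u(t)\|_{B^{\sigma}_{1,\infty}}\le M\}$ exhaust a positive-measure set as $M\to\infty$, so one may fix $M$ with $|E_M|=:c>0$ and assume $\|P_N u(t)\|_{L^1_x}\lesssim MN^{-\sigma}$ for all dyadic $N\geq1$ and all $t\in E_M$.

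Next, at each dyadic scale $N$ I would interpolate: since $\mathbb{T}$ has finite measure and $2<q<\infty$, log-convexity of $L^p$ norms gives $\|g\|_{L^2_x}\lesssim\|g\|_{L^1_x}^{\theta}\|g\|_{L^q_x}^{1-\theta}$ with $\theta=\frac{q-2}{2(q-1)}$ and $\frac{1-\theta}{\theta}=\frac{q}{q-2}=\frac{q'}{2-q'}$. Applying this to $g=P_N u(t)$, using that $e^{it\omega(\nabla/i)}$ is a Fourier multiplier (so it commutes with $P_N$ and is unitary on $L^2$, whence $\|P_N u(t)\|_{L^2_x}=\|P_N f\|_{L^2_x}$), and inserting the $L^1$ bound on $E_M$, one gets
\[
\|P_N f\|_{L^2_x}^{\,p/(1-\theta)}\lesssim (MN^{-\sigma})^{\,p\theta/(1-\theta)}\,\|P_N u(t)\|_{L^q_x}^{p}\qquad (t\in E_M).
\]
Integrating in $t$ over $E_M$, extending the $t$-integral to $[0,1]$, and invoking the Strichartz hypothesis applied to $P_N f$ yields
\[
c\,\|P_N f\|_{L^2_x}^{\,p/(1-\theta)}\lesssim (MN^{-\sigma})^{\,p\theta/(1-\theta)}\,N^{sp}\,\|P_N f\|_{L^2_x}^{p}
\]
(for $p=\infty$ one instead uses the pointwise bound $\|P_N u(t)\|_{L^q_x}\lesssim N^s\|P_N f\|_{L^2_x}$ directly, with no integration). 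Cancelling $\|P_N f\|_{L^2_x}^{p}$ (trivially valid when this vanishes) and simplifying the exponents via $\frac{1-\theta}{\theta}=\frac{q}{q-2}$ leaves $\|P_N f\|_{L^2_x}\lesssim_{M,c}N^{-\sigma+\frac{sq}{q-2}}$. The elementary identity $\frac{2r_0-(r_0-s)q'}{2-q'}=r_0+\frac{sq}{q-2}$ (which also recovers the threshold $r_0$ when $s=0$) then gives $\|P_N f\|_{L^2_x}\lesssim N^{-(r_0+\delta)}$ for all dyadic $N$, hence $f\in H^{r'}$ for every $r'<r_0+\delta$, contradicting $r_0=\sup\{r:f\in H^r\}$. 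Running this for $\delta=\delta_k\downarrow0$ and using $B^{\sigma}_{1,\infty}\subset B^{\sigma'}_{1,\infty}$ for $\sigma>\sigma'$ upgrades the conclusion to hold for almost every $t$.

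The interpolation identity and the exponent bookkeeping are routine. The step requiring genuine care is the measure-theoretic reduction — in particular the measurability of $t\mapsto\|u(t)\|_{B^{\sigma}_{1,\infty}}$, which holds because after a harmless smooth Littlewood--Paley truncation it is a countable supremum of functions continuous in $t$, together with the (immediate) observation that the Strichartz estimate may be localized to a single frequency block since it bounds $\|P_N f\|_{H^s}$.
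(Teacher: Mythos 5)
Your argument is correct, and it reproduces the standard proof of this result: the paper itself does not prove the statement but imports it from Erdo\u{g}an--Shakan, where the proof is exactly this interpolation of $L^2$ between $L^1$ and $L^q$ on each Littlewood--Paley block, combined with unitarity of the propagator and the frequency-localized Strichartz bound to force $\|P_Nf\|_{L^2}\lesssim N^{-r_0-\delta}$ and contradict the definition of $r_0$. Your exponent bookkeeping (in particular $\frac{2r_0-(r_0-s)q'}{2-q'}=r_0+\frac{sq}{q-2}$) and the measure-theoretic reduction to the sets $E_M$ check out.
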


\begin{theorem}[\cite{MR1152878}]\label{Theorem: Trivial Lower Bound}
The graph of a continuous function $f:\mathbb{T}\to \mathbb{R}$ has fractal dimension $D(f)\geq 2-\gamma$, provided that $f\not\in B^{\gamma+}_{1,\infty}.$
\end{theorem}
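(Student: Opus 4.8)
The plan is to estimate the box-counting function of the graph of $f$ from below in terms of the $L^1$ modulus of continuity $\omega_1(f,h):=\|f(\cdot+h)-f(\cdot)\|_{L^1(\mathbb{T})}$, and then convert the hypothesis $f\notin B^{\gamma+}_{1,\infty}$ into a lower bound for $\omega_1(f,\cdot)$ along a sequence of scales tending to $0$. Throughout one works in the relevant range $0<\gamma<1$ (for $\gamma\ge 1$ the bound $D(f)\ge 2-\gamma$ is immediate from $D(f)\ge 1$, and for $\gamma\le 0$ it says $D(f)=2$ and the same argument applies since the box count is also capped by $N^2$).

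First I would fix $N\in\mathbb{N}$, identify $\mathbb{T}$ with $[0,1)$, and partition it into the intervals $I_j=[(j-1)/N,\,j/N)$, $1\le j\le N$. Writing $R_j:=\sup_{I_j}f-\inf_{I_j}f$ for the oscillation of $f$ on $I_j$, the part of the graph lying over $I_j$ has vertical extent $R_j$, so any cover of it by balls of radius $1/N$ uses at least $\max(1,NR_j)$ of them; since a ball of radius $1/N$ meeting the vertical strip over $I_j$ meets the strips over only boundedly many $I_{j'}$, this gives $\mathcal{N}(\mathrm{graph}(f),1/N)\gtrsim N\sum_{j=1}^N R_j$. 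For real $f$ this is the only geometric input; for complex $f$ one observes $\|\Re f\|_{B^s_{1,\infty}}+\|\Im f\|_{B^s_{1,\infty}}\sim\|f\|_{B^s_{1,\infty}}$, so $f\notin B^{\gamma+}_{1,\infty}$ forces at least one of $\Re f,\Im f$ out of $B^{\gamma+}_{1,\infty}$, and $D(f)$ dominates the graph dimension of that component.

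Next I would compare $\sum_j R_j$ with $\omega_1(f,1/N)$: for $x\in I_j$ one has $x+1/N\in I_j\cup I_{j+1}$, hence $|f(x+1/N)-f(x)|\le R_j+R_{j+1}$; integrating over $I_j$ and summing gives $\omega_1(f,1/N)\le \tfrac2N\sum_j R_j$, so $\mathcal{N}(\mathrm{graph}(f),1/N)\gtrsim N^2\,\omega_1(f,1/N)$. Then I would invoke the standard equivalence, valid for $0<s<1$ and compatible with the Littlewood--Paley definition of $B^s_{1,\infty}(\mathbb{T})$ used in the paper, between $\|f\|_{B^s_{1,\infty}}$ and $\sup_{0<h<1}h^{-s}\omega_1(f,h)$ (only the direction $\omega_1(f,h)\lesssim h^s\|f\|_{B^s_{1,\infty}}$ is needed, and it follows by summing $\omega_1(f,h)\lesssim h\sum_{M\le 1/h}M\|P_Mf\|_{L^1}+\sum_{M>1/h}\|P_Mf\|_{L^1}$ over dyadic $M$). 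Unwinding the $+$ notation, $f\notin B^{\gamma+}_{1,\infty}$ means that for every $\varepsilon>0$ one has $\sup_{0<h<1}h^{-(\gamma+\varepsilon)}\omega_1(f,h)=\infty$, so there is $h_k\downarrow0$ with $\omega_1(f,h_k)\ge h_k^{\gamma+\varepsilon}$; setting $N_k:=\lfloor1/h_k\rfloor$ and using that $\omega_1(f,\cdot)$ is nondecreasing together with $h_k\ge\tfrac1{2N_k}$ gives $\omega_1(f,1/N_k)\ge\omega_1(f,h_k)\gtrsim N_k^{-(\gamma+\varepsilon)}$. Combining the three steps, $\mathcal{N}(\mathrm{graph}(f),1/N_k)\gtrsim N_k^{2-\gamma-\varepsilon}$, hence $\overline{\dim}(\mathrm{graph}(f))\ge 2-\gamma-\varepsilon$; letting $\varepsilon\to0$ yields $D(f)\ge 2-\gamma$.

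I do not expect a serious obstacle: once the modulus-of-continuity dictionary is in place the argument is elementary box-counting. The two points needing genuine care are (i) confirming the equivalence $\|f\|_{B^s_{1,\infty}}\sim\sup_h h^{-s}\omega_1(f,h)$ on the torus with the paper's Littlewood--Paley normalization and restricting to $0<\gamma<1$ where it is clean, and (ii) handling the quantifier buried in the ``$\gamma+$'' notation together with the passage from arbitrary scales $h_k$ to reciprocal integers $1/N_k$, which is exactly where monotonicity of $\omega_1$ enters.
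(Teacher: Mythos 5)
The paper offers no proof of this statement: it is imported verbatim from Deliu--Jawerth \cite{MR1152878}, so there is no internal argument to compare against, and your proposal should be judged as a self-contained proof. It is essentially correct and is the standard argument for this result: the box count of the graph at scale $1/N$ dominates $N\sum_j R_j$ with $R_j$ the oscillation on the $j$-th subinterval (each piece of the graph is connected, hence its $y$-projection is a full interval of length $R_j$), the oscillation sum dominates $N\,\omega_1(f,1/N)$, and the Besov hypothesis is converted into a lower bound on $\omega_1$ along a sequence of scales. Two corrections. First, you have the direction of the Besov/modulus-of-continuity equivalence backwards: to pass from $f\notin B^{\gamma+\varepsilon}_{1,\infty}$ to $\sup_{h}h^{-(\gamma+\varepsilon)}\omega_1(f,h)=\infty$ you need the inequality $\|f\|_{B^{s}_{1,\infty}}\lesssim \|f\|_{L^1}+\sup_{h}h^{-s}\omega_1(f,h)$, i.e.\ $\|P_Nf\|_{L^1}\lesssim \omega_1(f,1/N)$, which holds because the Littlewood--Paley kernel at frequency $N$ has vanishing mean and is concentrated at scale $1/N$; the direction you single out and sketch, $\omega_1(f,h)\lesssim h^s\|f\|_{B^s_{1,\infty}}$, is precisely the one you do \emph{not} use. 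Since you assert the full equivalence this is a mislabeling rather than a gap, but the sketch should be of the other half (which, conveniently, does not even require $s<1$). Second, for the step $\omega_1(f,1/N_k)\ge\omega_1(f,h_k)$ you should define $\omega_1(f,h)=\sup_{0<h'\le h}\|f(\cdot+h')-f\|_{L^1}$, since the raw translation difference is not monotone in $h$ (consider $\sin(2\pi x)$ at $h=1$); with the sup definition both the comparison with $\sum_jR_j$ and the Besov equivalence go through unchanged. The edge cases $\gamma\ge 1$ and $\gamma\le 0$ that you dispose of in passing are irrelevant to the paper, which only invokes the theorem for $\gamma=s\in(35/64,1)$.
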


\begin{theorem}\label{Theorem: Dimension Upper Bound}
Let $\gamma\in(0,1)$ and $f:\mathbb{T}\to\mathbb{R}$. If $f\in C^\gamma$, then $D(f)\leq 2-\gamma$.
\end{theorem}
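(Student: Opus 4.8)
The plan is to run the classical box-counting argument for graphs of H\"older continuous functions. Identify $\mathbb{T}$ with $[0,1)$ and regard the graph $G := \{(x, f(x)) : x \in \mathbb{T}\}$ as a bounded subset of $\mathbb{R}^2$; since $f \in C^\gamma$ it is in particular bounded, so $\overline{\dim}(G)$ is well-defined. Because a cover by $\varepsilon$-balls and a cover by axis-parallel squares of side $\varepsilon$ differ only by an absolute (dimensional) constant factor, and such a constant does not affect the $\limsup$ defining $\overline{\dim}$, it suffices to estimate the number of $\varepsilon$-squares needed to cover $G$.

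First I would fix a small $\varepsilon > 0$ and partition $[0,1)$ into $M := \lceil 1/\varepsilon\rceil$ subintervals $I_1,\dots,I_M$, each of length $\le \varepsilon$. On each $I_j$ the H\"older hypothesis gives $\sup_{x,y\in I_j}|f(x)-f(y)| \le C\varepsilon^\gamma$, where $C$ is the H\"older constant of $f$. Hence the portion of the graph lying over $I_j$, namely $G\cap(I_j\times\mathbb{R})$, is contained in a rectangle of width $\le\varepsilon$ and height $\le C\varepsilon^\gamma$, which is covered by at most $\lceil C\varepsilon^{\gamma-1}\rceil + 1$ squares of side $\varepsilon$. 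Since $\gamma<1$ we have $\varepsilon^{\gamma-1}\ge 1$ for $\varepsilon$ small, so this is $\le C'\varepsilon^{\gamma-1}$, and summing over the $M\sim\varepsilon^{-1}$ intervals yields $\mathcal{N}(G,\varepsilon) \lesssim \varepsilon^{-1}\cdot\varepsilon^{\gamma-1} = \varepsilon^{\gamma-2}$.

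Then I would substitute this bound into the definition of the upper Minkowski dimension:
\[
\overline{\dim}(G) = \limsup_{\varepsilon\to 0^+}\frac{\log\mathcal{N}(G,\varepsilon)}{\log(1/\varepsilon)} \le \limsup_{\varepsilon\to 0^+}\frac{(2-\gamma)\log(1/\varepsilon) + O(1)}{\log(1/\varepsilon)} = 2-\gamma.
\]
Finally, since $f$ is real-valued one has $\Re f = f$ and $\Im f \equiv 0$, whose graph is a line segment of dimension $1 \le 2-\gamma$; therefore $D(f) = \max\big(\overline{\dim}(G),\,1\big) = \overline{\dim}(G) \le 2-\gamma$, as claimed.

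There is no genuine obstacle here: the argument is a standard covering estimate, and the only points meriting a line of care are that passing between $\varepsilon$-balls and $\varepsilon$-squares costs only an absolute constant (hence is invisible to the $\limsup$), and that the additive $+1$'s in the counting are harmless precisely because $\gamma<1$ forces $\varepsilon^{\gamma-1}\to\infty$. Alternatively, one may simply cite the general fact (see, e.g., Falconer's \emph{Fractal Geometry}) that $\overline{\dim}(\mathrm{graph}\,f) \le 2-\gamma$ whenever $f\in C^\gamma$ with $\gamma\in(0,1)$.
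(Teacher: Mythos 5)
Your argument is correct: the paper states this as a classical fact without proof (it is the standard result that the graph of a $C^\gamma$ function has upper box dimension at most $2-\gamma$, as in Falconer), and your box-counting estimate — oscillation $\lesssim \varepsilon^\gamma$ over each of the $\sim \varepsilon^{-1}$ subintervals, giving $\mathcal{N}(G,\varepsilon)\lesssim \varepsilon^{\gamma-2}$ — is exactly the standard proof. You also correctly dispose of the $\Im f\equiv 0$ component in the paper's definition of $D(f)$, so nothing is missing.
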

As a consequence of the resolution of Vinogradov's Mean Value Conjecture, Erdo{\u g}an and Shakan were able to establish the following general upper-bound.
\begin{theorem}[\cite{ErdoganShakanFractal}, Theorem 3.7]\label{Theorem: Free Solution Ca}
Let $\omega(n)$ be a real polynomial of degree $d$ with integer coefficients. Then for any $g\in BV$ and almost every $t$ we have $e^{it\omega(\frac{\nabla}{i})}g\in C^{\frac{1}{d(d-1)}-}.$
\end{theorem}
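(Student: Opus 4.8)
The plan is to reduce the Hölder regularity of $e^{it\omega(\frac{\nabla}{i})}g$ to a uniform-in-$x$ Weyl sum estimate valid for almost every $t$, and then to feed that estimate into the Littlewood--Paley (Hölder--Zygmund) characterization of $C^\alpha$. Write $e(z):=e^{2\pi i z}$ and, since $g\in BV$, write $\widehat g(n)=c_n/n$ with $c_n=\widehat{dg}(n)$ the Fourier coefficients of the finite (signed/complex) measure $dg$, so $\sup_n|c_n|\le \|dg\|_{TV}<\infty$. For a dyadic frequency $N$ one then has (suppressing the harmless $2\pi$'s as the paper does)
\[
P_N\big(e^{it\omega(\frac{\nabla}{i})}g\big)(x)=\sum_{N\le |n|<2N}\frac{c_n}{n}\,e\!\big(nx+\omega(n)t\big)=\int_{\mathbb{T}}\Big(\sum_{N\le|n|<2N}\tfrac1n\,e\!\big(n(x-y)+\omega(n)t\big)\Big)\,dg(y),
\]
so $\big\|P_N(e^{it\omega}g)\big\|_{L^\infty_x}\le\|dg\|_{TV}\,\sup_x|K_N(x,t)|$ with $K_N(x,t)=\sum_{N\le|n|<2N}\tfrac1n e(nx+\omega(n)t)$. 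Abel summation against the partial sums of the pure exponential sum gives $|K_N(x,t)|\lesssim N^{-1}\sup_{x}\sup_{N\le M\le M'<2N}\big|\sum_{M\le n\le M'}e(nx+\omega(n)t)\big|$, so the whole question reduces to bounding Weyl sums of the degree-$d$ polynomial $P_{t,x}(n)=nx+\omega(n)t$, uniformly over the linear shift $x$ and over subintervals of $[N,2N]$ (the contributions of positive and negative $n$ being two separate Weyl sums).

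The number-theoretic input is the following. The leading coefficient of $P_{t,x}$ is $\theta_d t$, where $\theta_d\in\mathbb{Z}$ is the leading coefficient of $\omega$. By Khintchine's theorem (Borel--Cantelli applied to $\sum_q q^{-1-\epsilon}<\infty$), for each $\epsilon>0$ and almost every $t$ one has $\|q\,\theta_d t\|\gtrsim_{\epsilon,t}q^{-1-\epsilon}$ for all $q\ge1$, and intersecting over a sequence $\epsilon\to0$ still leaves a full-measure set of $t$. For such $t$, Dirichlet's theorem produces at each scale $N$ a rational approximation $|\theta_d t-a/q|\le q^{-2}$ whose denominator is forced into the admissible minor-arc range, and the resolution of the Vinogradov mean value conjecture (Bourgain--Demeter--Guth) --- equivalently the sharp Weyl inequality it yields --- gives $\big|\sum_{M\le n\le M'}e(P_{t,x}(n))\big|\lesssim_{\epsilon,t}N^{1-\frac{1}{d(d-1)}+\epsilon}$, uniformly in $x$ and in $M,M'\in[N,2N]$.

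Combining, $\big\|P_N(e^{it\omega}g)\big\|_{L^\infty_x}\lesssim_{\epsilon,t}\|dg\|_{TV}\,N^{-\frac{1}{d(d-1)}+\epsilon}$ for all dyadic $N$. Since for $0<\alpha<1$ the bound $\|P_Nf\|_{L^\infty}\lesssim N^{-\alpha}$ (all dyadic $N$) characterizes $f\in C^\alpha$, summing the dyadic pieces shows $e^{it\omega(\frac{\nabla}{i})}g\in C^{\frac{1}{d(d-1)}-\epsilon}$ for every $\epsilon>0$ and almost every $t$, i.e. $e^{it\omega(\frac{\nabla}{i})}g\in C^{\frac{1}{d(d-1)}-}$; in particular it is a continuous function of $x$ for a.e. $t$. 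The main obstacle is precisely the uniform Weyl estimate: one must arrange the metric-number-theory step so that a single null set of $t$ serves all dyadic scales simultaneously, and one must invoke the full strength of the VMVT resolution to push the Weyl saving from the classical $2^{1-d}$ up to $\tfrac{1}{d(d-1)}$, with care that the bound is genuinely uniform in the linear shift $x$ and over arbitrary subintervals (both needed for the Abel summation). The remaining ingredients --- the $BV$ reduction, the two Abel summations, and the Littlewood--Paley bookkeeping --- are routine.
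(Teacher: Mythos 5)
Your argument is correct and is essentially the proof of \cite{ErdoganShakanFractal}, Theorem 3.7 (the paper here only cites the result): the $BV$ reduction to the kernel $\sum_{|n|\sim N} n^{-1}e(nx+\omega(n)t)$, Abel summation to pure Weyl sums, metric Diophantine approximation (Khintchine/Borel--Cantelli plus Dirichlet) to place the denominator of $\theta_d t$ in the minor-arc range for a.e.\ $t$ uniformly over all dyadic $N$, the sharp Weyl exponent $\frac{1}{d(d-1)}$ from the resolution of Vinogradov's mean value conjecture, and the $B^{\alpha}_{\infty,\infty}$ characterization of $C^\alpha$. The only (implicit) caveat is that $d\geq 2$, which is the relevant regime.
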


We are now ready to state the theorem that we aim to show.
\begin{theorem}
Let $s > 35/64$, $u_0\in (BV\cap H^s)\setminus H^{s+},$ and $u$ the solution to \eqref{Equation: General Fifth Order} emanating from $u_0$. Then for almost every $t$,
\begin{equation}\label{Equation: Dimension Bounds To Show}
    2-s\leq D(u) \leq 
    \begin{cases}
    \frac{115}{32}-3s & \frac{35}{64}<s\leq \frac{263}{480}\\
    \frac{39}{20} & \frac{263}{480} < s \leq \frac{11}{20}\\
    \frac{5}{2}-s & \frac{11}{20} < s < 1.
    \end{cases}
\end{equation}
\end{theorem}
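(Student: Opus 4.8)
The plan is to combine the nonlinear smoothing result (Theorem~\ref{Theorem: Smoothing} and its corollary for the full equation) with the four cited theorems on dimension of free dispersive evolutions, exactly in the spirit of Erdo\u{g}an--Tzirakis \cite{erdogan2013talbot}. Write $u = e^{t\partial_x^5}u_0 + (u - e^{t\partial_x^5}u_0)$. The first term is a free fifth-order Airy evolution of $BV$ data; the second term we will show is in $C^\gamma_x$ for a $\gamma$ strictly larger than what one would get from the free evolution alone, so it does not affect the dimension. The guiding principle: the dimension of $u(\cdot,t)$ equals the dimension of $e^{t\partial_x^5}u_0$ as long as the smoothing gain exceeds the regularity deficit.

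\emph{Lower bound.} For $2-s \leq D(u)$, first apply the $L^4_{x,t}$ Strichartz estimate from the excerpt (equivalently the $s=0$ case, $q=4$, since $(n_1+n_2)^5-n_1^5-n_2^5 = \tfrac52(n_1+n_2)n_1n_2(n^2+n_1^2+n_2^2)$) so Theorem~\ref{Theorem: Dimension Lower Bound} with $s=0$ gives $e^{t\partial_x^5}u_0 \notin B^{r_0+}_{1,\infty}$ for a.e.\ $t$, where $r_0 = \sup\{r : u_0 \in H^r\} = s$ by hypothesis. Since the smoothing gain $\varepsilon$ in Theorem~\ref{Theorem: Smoothing} (with the corollary's transformation $u \mapsto \tilde u$, which preserves $H^s_x$ and agrees at $t=0$) can be taken with $\varepsilon < \min(2s-1-a,1)$ and $a=3/32$, the nonlinear part $\tilde u - e^{t\partial_x^5}u_0 \in H^{s+\varepsilon}_x \hookrightarrow B^{s+}_{1,\infty}$ provided $s+\varepsilon > s$, i.e. always, as long as $s > (1+a)/2 = 35/64$. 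Hence $\tilde u \notin B^{s+}_{1,\infty}$, and $\tilde u$ is continuous (being in $C^\gamma$ for small $\gamma$ by Theorem~\ref{Theorem: Free Solution Ca} applied to the linear piece and Sobolev embedding for the nonlinear piece), so Theorem~\ref{Theorem: Trivial Lower Bound} yields $D(\tilde u) \geq 2-s$; finally $D(u) = D(\tilde u)$ since the gauge transformation preserves $H^s_x$ norms and, being smoothing of the difference, does not change the local H\"older/Besov class at the critical index.

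\emph{Upper bound.} By Theorem~\ref{Theorem: Dimension Upper Bound} it suffices to show $u \in C^{\gamma}_x$ with $\gamma = 2 - (\text{the claimed bound})$. The free term $e^{t\partial_x^5}u_0$ is handled by Theorem~\ref{Theorem: Free Solution Ca}: here $d = 5$, so $e^{t\partial_x^5}u_0 \in C^{1/20-}$ for a.e.\ $t$, giving the uniform contribution $\gamma_{\text{free}} = 1/20-$, which produces the middle regime $D(u) \leq 39/20$. The nonlinear term lies in $H^{s+\varepsilon}_x \hookrightarrow C^{s+\varepsilon - 1/2-}_x$; optimizing $\varepsilon \uparrow \min(2s-1-a, 1)$ with $a = 3/32$ gives H\"older exponent $s + (2s - 1 - 3/32) - 1/2 - = 3s - 51/32 -$ in the range where $2s-1-a < 1$, i.e. $s < (3+a)/2 \cdot \tfrac12 $ — concretely $s \leq 263/480$ is where $3s - 51/32$ is the binding (smallest) of the three exponents $\{3s-51/32,\ 1/20,\ s-1/2\}$ against the others, producing $D(u) \leq 115/32 - 3s$; and when $\varepsilon$ saturates at $1$ (large $s$), the nonlinear term is in $C^{s+1/2-}$, dominated by $\gamma = s$, giving $D(u) \leq 5/2 - s$ for $11/20 < s < 1$. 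Taking $\gamma = \min$ of the relevant H\"older exponents in each subinterval and converting via $D \leq 2-\gamma$ yields \eqref{Equation: Dimension Bounds To Show}; the case boundaries $263/480$ and $11/20$ are precisely where two of the competing exponents cross.

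\emph{Main obstacle.} The genuinely delicate point is that the nonlinear smoothing in Theorem~\ref{Theorem: Smoothing} is stated for the \emph{gauge-transformed} solution $\tilde u$, not $u$ itself; one must argue that the transformation $u \mapsto \tilde u$ (which is $\widehat{\tilde u}(n) = e^{n\int_0^t \mathcal K(\tilde u)\,ds}\widehat u(n)$, an $x$-independent unimodular-type twist composed with lower-order corrections) neither creates nor destroys the critical Besov/H\"older borderline behaviour — i.e.\ that $D(u) = D(\tilde u)$. Since the multiplier $e^{n\int_0^t \mathcal K}$ is, for real data, a pure phase of modulus one and the remaining corrections ($B_i$ terms) are smoothing of a full derivative by Lemma~\ref{Lemma: Propagator Terms}, this should follow, but it needs to be spelled out carefully — this, plus bookkeeping the three-way minimum of H\"older exponents to pin down the exact breakpoints $263/480$ and $11/20$, is where the real work lies; everything else is assembling the cited theorems.
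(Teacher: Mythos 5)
Your proposal is correct and follows essentially the same route as the paper: free part in $C^{1/20-}$ via Theorem \ref{Theorem: Free Solution Ca} plus Sobolev embedding, nonlinear part in $H^{s+\varepsilon}\hookrightarrow C^{3s-51/32-}$ via Theorem \ref{Theorem: Smoothing} with $a=3/32$, minima of the H\"older exponents giving the three regimes and breakpoints $263/480$, $11/20$, and the lower bound from a loss-free Strichartz estimate together with Theorems \ref{Theorem: Dimension Lower Bound} and \ref{Theorem: Trivial Lower Bound}. The ``main obstacle'' you flag dissolves more simply than you suggest: the transformation is \emph{only} the multiplier $e^{n\int_0^t\mathcal{K}\,ds}$ (the $B_i$ terms are part of the equation for $\tilde u$, not of the map $u\mapsto\tilde u$), and since $\mathcal{K}$ is purely imaginary for real data this multiplier equals $e^{inc(t)}$ with $c(t)$ real, so $\tilde u(\cdot,t)$ is a spatial translation of $u(\cdot,t)$ and $D(u)=D(\tilde u)$ is immediate.
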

\begin{proof}
It follows from Theorem \ref{Theorem: Free Solution Ca} for $\omega(n) = n^5$, $g\in BV$, and almost every $t$ that $W_tu_0\in C^{\frac{1}{20}-}$. Moreover, combining this with Sobolev embedding we find
\begin{equation*}
    W_t u_0\in
    \begin{cases}
    C^{\frac{1}{20}-} & \frac{1}{2}\leq s\leq \frac{11}{20}\\
    C^{s-\frac{1}{2}} & \frac{11}{20} < s < 1.
    \end{cases}
\end{equation*}
Let $\mathcal{N}(u):= \tilde{u}-W_tu_0$, so that the smoothing result of Theorem \ref{Theorem: Smoothing} implies via Sobolev Embedding that
\begin{equation*}
    \mathcal{N}(u) \in C^{s+\min(2s-35/32, 1)-\frac{1}{2}-}.
\end{equation*}
In other words, for $\frac{35}{64} < s \leq 1$ we find
\begin{equation*}
     \mathcal{N}(u) \in
     \begin{cases}
        C^{3s-\frac{51}{32}-} & \frac{35}{64} < s < \frac{83}{96}\\
        C^{1-} & \frac{83}{96}\leq s \leq 1.
     \end{cases}.
\end{equation*}

Combining these two bounds, we see that
\begin{equation*}
\begin{split}
    \tilde{u} &\in 
    \begin{cases}
    C^{\frac{1}{20}}+C^{3s-\frac{51}{32}-} & \frac{35}{64} < s\leq \frac{11}{20}\\
    C^{s-\frac{1}{2}}+C^{3s-\frac{51}{32}-} & \frac{11}{20} < s \leq \frac{83}{96}\\
    C^{s-\frac{1}{2}} + C^{1-} & \frac{83}{96} < s < 1
    \end{cases} \\
    &= 
    \begin{cases}
    C^{3s-\frac{51}{32}-} & \frac{35}{64} < s\leq \frac{263}{480}\\
    C^{\frac{1}{20}-} & \frac{263}{480} < s \leq \frac{11}{20}\\
    C^{s-\frac{1}{2}} & \frac{11}{20} < s < 1.
    \end{cases}
\end{split}
\end{equation*}
The upper bound of \eqref{Equation: Dimension Bounds To Show} then follows from Theorem \ref{Theorem: Dimension Upper Bound} and the above.

The lower bound follows from Theorem \ref{Theorem: Dimension Lower Bound} and the assumption that $u_0\not\in H^{s+}$. Indeed, the $L^{\frac{24}{7}}$ Strichartz estimate allows us to conclude that the free solution isn't in $B^{s+}_{1,\infty}$ by Theorem \ref{Theorem: Dimension Lower Bound}, and hence $\tilde{u}\not\in B^{s+}_{1,\infty}$. Since $\tilde{u}$ is continuous, we find by Theorem \ref{Theorem: Trivial Lower Bound} that for almost every $t$ we have the lower bound $D(\tilde{u})\geq 2-s.$

Since $\tilde{u}$ is just a spatial translation of $u$, we conclude both bounds for $u$, as well.
\end{proof}

\begin{remark}
The above upper bounds are only non-trivial in the regime $\frac{35}{64} < s\leq \frac{11}{20}$, which is quite a small region. Improvement on the lower bound of the dimension for $u$ as well as the regularity of the free solution are interesting avenues to improve the above result.
\end{remark}

\end{document}